\newtheorem*{theo_nn}{Theorem}
\newtheorem*{prop_nn}{Proposition}
\newtheorem{theo}{Theorem}[section]
\newtheorem{prop}[theo]{Proposition}
\newtheorem{lemme}[theo]{Lemma}
\newtheorem{cor}[theo]{Corollary}
\newtheorem{rem}[theo]{Remark}
\newtheorem{exem}[theo]{Example}
\theoremstyle{definition}
\newtheorem{defi}[theo]{Definition}
\numberwithin{equation}{section}
\newcommand{\A}{\mathcal{A}}
\newcommand{\B}{\mathbb{B}}
\newcommand{\Br}{\mathbb{B}_r}
\newcommand{\Bs}{\mathbb{B}_s}
\newcommand{\Bur}{\mathbb{B}_{\underline{r}}}
\newcommand{\Bus}{\mathbb{B}_{\underline{s}}}
\newcommand{\C}{\mathbb{C}}
\newcommand{\M}{\mathcal{M}}
\newcommand{\N}{\mathbb{N}}
\newcommand{\Ql}{\ensuremath{\mathbb{Q}_\ell}}
\newcommand{\Qp}{\mathbb{Q}_p}
\newcommand{\R}{\mathbb{R}}
\newcommand{\Zp}{\mathbb{Z}_p}
\renewcommand{\and}{\text{and}}
\newcommand{\im}{\text{im}}
\newcommand{\st}{ \ \big| \ }
\newcommand{\affin}[1]{\ensuremath{ \mathcal{M} (\mathcal{#1})  }}
\newcommand{\Berko}{\text{Berko}}
\newcommand{\cha}{\text{char}}
\newcommand{\rig}{\text{rig}}
\newcommand{\val}{\ensuremath{\sqrt{|k^{\times} |}}}
\newcommand{\valeur}{\ensuremath{ \sqrt{|k^*|}  }}
\newcommand{\ADS}{\ensuremath{\mathcal{A}\langle\langle D \rangle\rangle }}
\newcommand{\Brho}{\mathcal{B}\{ \underline{\rho}^{-1} T \}}
\newcommand{\eclatement}{\ensuremath{ \mathcal{M}( \mathcal{A} \{ r^{-1}t \} / (f-tg) ) }}
\newcommand{\dce}{\ensuremath{ (Y,T) \xrightarrow{\varphi} (X,S)  }}
\newcommand{\dc}{\ensuremath{ (Y,T) \stackrel{\varphi}{\dashrightarrow} (X,S)   }}
\newcommand{\recouv}{\ensuremath{ (X_i,S_i) \stackrel{\varphi_i}{\dashrightarrow} (X,S) }}
\newcommand{\GSA}{semianalytic subset}
\newcommand{\ECD}{elementary constructible datum}
\newcommand{\OC}{overconvergent constructible subset}
\newcommand{\OS}{overconvergent subanalytic subset}
\newcommand{\Er}{\ensuremath{   \mathbb{B}_{\underline{r}} }}
\newcommand{\Es}{\ensuremath{   \mathbb{B}_{\underline{s}} }}
\newcommand{\E}{\ensuremath{  \mathbb{B}^n}}
\newcommand{\Ar}{\ensuremath{ A\{r^{-1}T\} }}
\newcommand{\Arn}{\ensuremath{ \mathcal{A}\{r_1^{-1}T_1, \ldots , r_n^{-1}T_n\} }}
\newcommand{ \ara }{\ensuremath{ \mathcal{A} \{\underline{r}^{-1} T \} }}
\date{\today}
\begin{document}

\title{Overconvergent subanalytic subsets in the framework of Berkovich spaces}

\author{Florent Martin}
%\address{Florent Martin:  Fakult\"{a}t f\"{u}r Mathematik, Universit\"{a}tstr. 31, 93040 Regensburg, Germany; \email{Florent.Martin@mathematik.uni-regensburg.de}
%}
\address{Florent Martin, Fakult\"{a}t f\"{u}r Mathematik, Universit\"{a}t Regensburg, 93040 Regensburg, Germany}
\email{florent.martin@mathematik.uni-regensburg.de}
\urladdr{http://homepages.uni-regensburg.de/$\sim$maf55605/}

%%%%%%%%

\begin{abstract}
We study the class of overconvergent subanalytic subsets 
of a $k$-affinoid space $X$ when $k$ is a non-archimedean field. 
These are the images along the projection 
$X \times \B^n \to X$ of subsets defined with inequalities between functions of 
$X\times \B^n$ which are overconvergent in the variables of 
$\B^n$.
In particular, we study the local nature, with respect to $X$, of overconvergent subanalytic subsets.
We show that they behave well with respect to the Berkovich topology, but not to the $G$-topology. 
This gives counter-examples to previous results on the subject, and a way to correct them.
Moreover, we study the case dim$(X)=2$, for which a simpler characterisation of 
overconvergent subanalytic subsets is proven. 
%% Keywords are optional
\end{abstract}

\subjclass[2010]{Primary 14G22, 12J10, 32P05, 03C10, 32B20; Secondary 32B05}
\keywords{Berkovich spaces, semianalytic sets, subanalytic sets, overconvergent}

\maketitle

\tableofcontents
\section*{Introduction}

\addtocontents{toc}{\protect\setcounter{tocdepth}{1}}
%\subsection{This subsection is numbered but not shown in the toc}

\subsection*{Motivations}
Let us consider a complete non-trivially valued non-archimedean field $k$ (assumed to be algebraically closed in this introduction for simplicity).
Since non-archimedean fields are totally disconnected, one can not 
define the notion of analytic spaces over $k$ as easily as in the case of 
$\mathbb{R}$ or $\mathbb{C}$. Tate \cite{Ta} developed such a theory, and called his spaces 
rigid spaces, whose building blocks are affinoid spaces. However, these spaces are not 
endowed with a classical topology, but with a Grothendieck topology (the $G$-topology). 
Afterwards, V. Berkovich developed 
another viewpoint for $k$-analytic geometry \cite{Berko90,Berko93}. 
His spaces, called $k$-analytic spaces, or Berkovich spaces, have more points than the corresponding rigid spaces and are equipped with a topology which is locally arcwise connected. 
Moreover, in this theory, affinoid spaces are compact. 
R. Huber also developed another viewpoint, 
in the setting of adic spaces \cite{HU96}, and there also exists an approach, initiated by M. Raynaud, using formal geometry (see \cite{BL1} for instance). \par 

If $X, Y$ are $k$-analytic spaces, and $\varphi : Y \to X$ is an analytic map, 
it is natural to wonder what is the shape of $\varphi(Y)$.
By analogy with Chevalley's theorem and the Tarski-Seidenberg theorem, one would like to be able to describe such images 
$\varphi(Y)$ using only functions of  $X$.\par
Without assumption on $\varphi$, this is impossible: one needs some kind 
of \emph{compactness} at some point. 
One reasonable restriction is 
to consider analytic maps 
$\varphi : Y \to X$ where $X$ and $Y$ are affinoid spaces. \par 
In this context the first natural approach it to define a 
\emph{semianalytic set} of a $k$-affinoid space as a finite boolean combination of sets defined by inequalities $\{ |f| \leq |g| \}$  
between analytic functions. 
But the class of semianalytic sets is not big enough: there exist some morphisms of affinoid spaces $\varphi : Y \to X$ such that 
$\varphi (Y)$ is not semianalytic. \par 

To overcome this problem, one has to consider more functions on an affinoid space $X$ than the analytic ones. 
In the framework of $\Zp^n$, Jan Denef and Lou Van den Dries have given \cite{DVdd} a good description of images of analytic maps $\varphi : \Zp^m \to \Zp^n$.  
Their main idea is to allow division of functions. 
In the framework of rigid geometry, where $\Qp$ has to be replaced by some non-archimedean algebraically closed field $k$, 
this idea of allowing divisions has been developed in two ways. \par 

The first one is due to Leonard Lipshitz \cite{LR93,LR_asterisque,Lip_iso,LR_plane} and rests on the introduction of an algebra $S_{m,n}$ of restricted analytic functions on products of closed and open balls. 
This allows L. Lipshitz to define for each affinoid space $X$ the class of subanalytic sets of $X(k)$ (in terms of analytic functions of $X$, division and composition with $S_{m,n}$), and to prove that subanalytic sets are stable under analytic maps between affinoid spaces. \par 
A second approach has been developed by Hans Schoutens in \cite{Sch_sub}. 
This leads to the definition of overconvergent subanalytic sets of $X(k)$. 
Namely, overconvergent subanalytic sets of $X(k)$ form a subclass of the subanalytic sets as defined by L. Lipshitz. 
Overconvergent subanalytic sets are only stable under overconvergent analytic maps between affinoid spaces. 
For instance, if $\varphi: \B^n \to X$ is an analytic map which can be analytically extended to a polydisc of radius $r>1$, then $\varphi(\B^n)$ is an overconvergent subanalytic set of $X$. 

\subsection*{Overconvergent subanalytic sets}
Hans Schoutens used the language of rigid geometry. 
We now summarize his results. 
First let $D: k^2 \to k$ be defined by
\[ 
D(x,y)= 
\begin{cases} 
\frac{x}{y} & \text{if} \  |x| \leq |y| \neq 0 \\
0 & \text{otherwise}
\end{cases} \]
Let $\A$ be an affinoid algebra and $X$ its affinoid space. The algebra $\ADS$ is defined as the smallest $k$-algebra of functions 
$f : X(k) \to k$ such that 
\begin{itemize}
\item $\ADS$ contains the functions induced by $\A$. 
\item If $f, g \in \ADS$, then $D(f,g) \in \ADS$.
\item If $f \in \mathcal{A}\langle Y_1,\ldots,Y_n \rangle$ is overconvergent in the variables $Y_i$, and $g_1,\ldots, g_n \in \ADS$  
satisfy 
$|g_i|_{\text{sup}} \leq 1$, then $f(g_1,\ldots,g_n) \in \ADS$.
\end{itemize}
Stability under overconvergent maps is contained in the following result (we denote by $\B$ the closed unit disc).
\begin{theo_nn} \cite{Sch_sub}
\label{theo_schoutens}
For a subset $S\subset X(k)$ the following are equivalent: 
\begin{itemize}
\item there exists $n\in \N$, a semianalytic set 
$T$ of $X\times \B^n(k)$ defined by inequalities $\{|f| \leq |g|\}$ where $f$ and $g$ are overconvergent 
with respect to the variables of $\B^n$ such that $S= \pi (T)$ where $\pi : X\times \B^n(k) \to X(k)$ is the 
first projection. We call such sets overconvergent subanalytic sets.
\item $S$ is  defined by a boolean combination of inequalities 
$\{ |f| \leq |g| \}$ where $f,g\in \ADS$.
\end{itemize}
\end{theo_nn}  
For instance, if $\varphi :  \B^n \to X$ is an overconvergent map 
(in the sense that it can be extended to a polydisc of radius greater than one), 
then $\varphi( \B^n)$ is overconvergent subanalytic (take for $T$ the graph of $\varphi$).
\subsection*{Results of this text}
In this article we explain how Berkovich spaces are well suited to study overconvergent subanalytic sets. 
Indeed, the definitions that we have given above (semianalytic, overconvergent subanalytic) can be given in the framework of Berkovich spaces. 
For instance if we consider $X = \B^2$ with coordinate functions $T_1,T_2$, 
the inequality $\{ |T_1| \leq |T_2| \}$ naturally defines two sets 
\begin{align*}
S_{\rig} = \{ (t_1,t_2) \in (k^\circ)^2 \st |t_1| \leq |t_2| \} \\
S_{\Berko} = \{ x \in \mathcal{M}(k\{T_1,T_2\}) \st |T_1(x)| \leq |T_2(x)| \} 
\end{align*}
Of course $S_\rig \subset S_\Berko$. More precisely, $S_\rig$ is the set of rigid points of $S_\Berko$. \par 
This new approach with Berkovich spaces allows us to simplify the proof of the theorem of \cite{Sch_sub} mentioned above. 
The part 2 of \cite{Sch_sub} \emph{A combinatorial lemma} is replaced by a simple compactness argument in Berkovich spaces. \par 
If $X$ is an affinoid space, recall that affinoid domains of $X$ are some subsets $S \subset X$ satisfying some universal property with respect to morphism of affinoid spaces 
$f : Y \to X$ such that $\im(f) \subset S$. 
See \cite[7.2.2.2]{BGR} or \cite[2.2.1]{Berko90} for a precise definition.
Weierstrass  (resp. rational) domains are examples of affinoid domains which are defined by inequalities of the form $|f|\leq 1$ (resp. $|f|\leq |g|$) where $f$ and $g$ are analytic functions on $X$. 
Then we consider the local behaviour of overconvergent subanalytic sets. 
If $X$ is an affinoid space there are two ways to consider local behaviour on $X$.

\begin{enumerate}
\item The $G$-topology, where a covering of $X$ is a finite covering $\{X_i\}$ by affinoid domains.  
\item The Berkovich topology \cite{Berko90,Berko93} on $X$ seen as a Berkovich space, which is a real topology.
\end{enumerate}
If $S$ is an overconvergent subanalytic set of $X$ and $U$ an affinoid domain of $X$, it is easy to see that $S\cap U$ is 
an overconvergent subanalytic set of $U$.
It is then natural to wonder if overconvergent subanalytic sets fit well with one of these topologies. We give the following answers.

\begin{prop_nn} \textbf{\ref{CE1}}
There exists an affinoid space $X$, some subset $S \subset X$, and a finite covering $\{X_i\}$ of $X$ by affinoid domains such that 
for all $i$, 
$S\cap X_i$ is overconvergent subanalytic in $X_i$, but $S$ is not overconvergent subanalytic in $X$.
\end{prop_nn}
In other words, being overconvergent subanalytic is not local with respect to the $G$-topology. This contradicts some results of \cite{Sch_sub}, for instance \cite[QE theorem p. 270, Proposition 4.2, Theorem 5.2]{Sch_sub}. \par 
We prove however that the Berkovich topology corrects this. 
If $X$ is an affinoid space seen as a Berkovich space, and $x \in X$, we say that $V$ is an affinoid neighbourhood of $x$ if $V$ is an affinoid 
domain of $X$ and in addition $V$ is a neighbourhood of $X$ w.r.t. the Berkovich topology\footnote{When $x$ is a rigid point, any affinoid domain containing $x$ is an affinoid neighbourhood. 
But this is not true in general.
For instance in the unit disc with coordinate $T$, the rational domain defined by $|T|=1$ is an affinoid domain which contains the Gauss point, but it is not a neighbourhood of the Gauss point.}.
\begin{theo_nn} \textbf{\ref{local}}
A subset $S\subset X$ is overconvergent subanalytic if and only if for every $x\in X$ (seen as a Berkovich space), there 
exists  $V$ an affinoid neighbourhood of $x$ such that $S\cap V$ is overconvergent subanalytic in $V$.
\end{theo_nn}
In other words, being overconvergent subanalytic is a local property, but with respect to the Berkovich topology. \par 
The mistake in \cite{Sch_sub} which we  point out in proposition \ref{CE1} led to other mistakes in further work of H. Schoutens 
\cite{Sch_unif,Sch_plane}.
In particular \cite{Sch_plane} which relies on the false results of \cite{Sch_sub} claims that if  
$k$ is algebraically closed of characteristic $0$, 
then a subset of the unit bidisc  is overconvergent subanalytic if and only if it is rigid-semianalytic 
(i.e. semianalytic locally for the $G$-topology). 
But the counterexample we give in proposition \ref{CE1} proves that this equivalence does not hold. 
Anyway, the proofs of \cite{Sch_plane} rely on some false equivalences of \cite{Sch_sub}. \par 
We show that the Berkovich topology allows one to correct the results of \cite{Sch_plane}.
A $k$-analytic space is said to be good \cite[1.2.16]{Berko93} if any point has an affinoid neighbourhood. 
For instance affinoid spaces are good $k$-analytic spaces.
A $k$-analytic space $X$ is said to be quasi-smooth\footnote{The terminology rig-smooth is also used by some other authors}  if $X$ is geometrically regular \cite[section 5]{Duc_flatness}.
When $k$ is algebraically closed, this is equivalent to say that for all  $x\in X$, the local ring  $\mathcal{O}_{X,x}$ is regular. 
When $k$ is algebraically closed and $X$ is a strictly $k$-analytic space, this is even equivalent to say that for all rigid point  $x\in X$, the local ring  $\mathcal{O}_{X,x}$ is regular (this follows for instance 
from \cite[2.3.4]{Berko90}). 

\begin{theo_nn} \textbf{\ref{theo_dim2}}
Let us assume that $k$ is algebraically closed.
Let $X$ be a good quasi-smooth strictly $k$-analytic space 
of dimension 2. Then a subset $S$ of $X$ is overconvergent subanalytic if and only if it is 
locally semianalytic.
\end{theo_nn}
Here, we say that $S$ is locally semianalytic if for every $x\in X$, there is an affinoid neighbourhood $V$ of $x$ such that $S\cap V$ is semianalytic in $V$.

\subsection*{Ideas behind the proofs}
We want to point out that the two equivalent characterizations of overconvergent subanalytic sets which 
were given in \cite{Sch_sub} and which we have recalled on page \pageref{theo_schoutens} are not very manageable. 
In particular it is hard to prove that some set is not overconvergent subanalytic 
using these characterizations, 
whereas we have much more tools to say that a subset is semianalytic or not. 
In order to overcome this difficulty, we have introduced a third characterization of overconvergent subanalytic sets which is more geometric. 
We remark that the quotient of two analytic functions $f$ and $g$ is not analytic any more, 
but becomes analytic if one blows up $(f,g)$. 
With this in mind, in order to describe a subset of $X$ defined by inequalities $\{ |f| \leq |g| \}$ with $f,g \in \ADS$ 
we can consider some 
finite sequences of blow ups  $\tilde{X} \to X$ and  project some semianalytic sets of $\tilde{X}$ outside the exceptional locus (with some 
extra condition for the overconvergence condition). We call such subsets overconvergent constructible 
(see \ref{defi_over}  for a precise definition).
The idea of looking at analytic functions above some blowup of $X$ 
had already appeared in \cite[2.3 (iv)]{LR_mod}. \par 
With this in mind we would like to restate more precisely the results of this paper. \par 
First, we prove theorem \ref{theo_eq} which asserts that if $X$ is an affinoid space, $S \subset X$ is overconvergent subanalytic if and only if it is 
overconvergent constructible, using at some point the compactness of the Berkovich space $X$.\par 
Then, according to the definition of an overconvergent constructible set, it is easy to prove that 
overconvergent subanalytic sets are local for the Berkovich topology (proposition \ref{local}). \par 
To justify our counterexample in proposition \ref{CE1}, we use the more geometric approach of overconvergent constructible sets which allows one to use results on semianalytic sets. 
Ultimately, our argument relies on the study of some Gauss point in an embedded curve in the polydisc, which strengthens our feeling that Berkovich spaces are well suited to study overconvergent subanalytic sets.  \par 
Finally, we want to mention one more benefit of overconvergent constructible sets. 
In the author's thesis it is proved (proposition 2.4.1) that if $k$ is algebraically closed, $S$ a locally closed overconvergent subanalytic set of the compact $k$-analytic space $X$, and if we consider a prime number 
$\ell \neq \cha(\tilde{k})$, then the \'{e}tale cohomology groups with compact support 
of the germ $(S,X)$ (see \cite[3.4,5.1]{Berko93}
\[ H_c^i( (S,X) , \Ql) \]
are finite dimensional $\Ql$-vector spaces. 
Here again the idea is that (thanks to the presentation of $S$ as an overconvergent constructible set)  
we can reduce to the case where $S$ is semianalytic, and in that case, the finiteness result is proved in \cite[proposition 2.2.3]{Mar_these} 
(which ultimately relies on a finiteness result for affinoid spaces proved by V. Berkovich). \par

\subsection*{Organisation of the paper} $ $ \\
\noindent In section \ref{section1}, we define \emph{constructible data} of $X$, in order to define overconvergent constructible subsets. 
Note that we do not assume that $k$ is algebraically closed contrary to \cite{Sch_sub}. 
In section \ref{section1.2}  we introduce overconvergent subanalytic subsets. 
In section \ref{section1.3} we carefully treat Weierstrass division, trying to be as general as possible 
(namely our results hold for an arbitrary ultrametric Banach algebra, 
and an arbitrary radius of convergence).
In section \ref{section1.4} we prove that overconvergent constructible and overconvergent subanalytic subsets are the same. 
The proof of this result which appears in \cite{Sch_sub}, is here simplified by the use of Berkovich spaces: in particular, the quite technical section 2 of \cite{Sch_sub} \emph{A combinatorial lemma} is replaced by a simple compactness argument (see theorem \ref{theo_eq}). 
In \ref{section1.5} we try to handle the following problem: 
how to pass from a definition that works only for $k$-affinoid spaces 
to a more local definition, with the hope that in the affinoid case 
the local and the global definitions would coincide. 
As we said earlier, trying to do this with the $G$-topology 
will not work. 
If however we 
do this with the Berkovich topology, the definitions will be compatible. 
In section \ref{nonstrict}, we explain how these results can be extended to 
$k$-affinoid spaces (as opposed to strictly $k$-affinoid spaces).
In addition, in that case, we can allow the field $k$ to be trivially valued.
\par 
In section \ref{section2}, we give some counter-examples to erroneous statements of 
\cite{Sch_sub}. Precisely, 
in \cite{Sch_sub} five classes of subsets were defined:  
globally strongly subanalytic, globally strongly $D$-semianalytic, 
strongly subanalytic, locally strongly subanalytic and strongly $D$-semianalytic subsets. 
The three last classes were defined from  
the first two ones by adding "$G$-local" at 
some point. 
In \cite{Sch_sub} it was claimed that these five classes agree.
We explain that this is not the case, namely from these
five classes, 
the first two  ones indeed agree, but not the last three, which are larger 
(see figure \ref{inclusions}, p. \pageref{inclusions}). 
The main idea is that if one replaces 
"$G$-locally" by "locally for the Berkovich topology", the results of \cite{Sch_sub}, 
for instance the theorem \cite[Quantifier elimination Theorem p. 270]{Sch_sub}, become true. 
Let us give one of the counter-examples that we study:
\begin{exem}
\label{exemple_base}
Let $X = \B^2$ be the the closed bidisc, $0<r<1$ with $r\in \val$, $f\in k\{r^{-1}x\}$ an analytic function whose radius of convergence is exactly $r$ and such that $\|f\| <1$. 
We then define 
\[S=\{(x,y) \in \B^2 \ \big| \ |x| < r \ \text{and} \ y=f(x) \}.\]
\end{exem}
Then (see proposition \ref{CE1}) $S$ is rigid-semianalytic, but not overconvergent subanalytic. 
The Berkovich approach is here helpful since to prove this, 
we use a point $\eta$ of the Berkovich bidisc which is not a rigid point, and some properties of its local ring $\mathcal{O}_{X,\eta}$. \par
Finally, in section \ref{section3} we correct 
the proof of \cite{Sch_plane} (which rested on 
the erroneous results of \cite{Sch_sub}, and \cite{Sch_unif}) and restrict the 
hypothesis of it. Namely, we prove that 
when $k$ is algebraically closed, and $X$ is a good quasi-smooth 
strictly $k$-analytic space of dimension $2$, then 
overconvergent subanalytic subsets are in fact locally semianalytic. 
Not only do we give a correct proof of this theorem, but moreover, this 
result is more general than the result of \cite{Sch_plane}, 
where $X$ was the bidisc and where it was assumed that  
the characteristic of $k$ was $0$.

\subsection*{Contribution of this text}
We want to stress the fact that section $1$ is highly inspired by the work of H. Schoutens.
In particular, the definition we give of a constructible datum, and the resulting definition of an overconvergent constructible subset, is a \emph{geometric} formulation 
of what is done in \cite{Sch_sub} concerning $D$-strongly semianalytic subsets.
In particular, the proof of theorem \ref{theo_eq} is very close to the proof  
of \cite[Th 5.2]{Sch_sub}. We have however decided to include a proof of 
theorem \ref{theo_eq} for three reasons.
First, the compactness argument that we use 
in theorem \ref{theo_eq} seems to us enlightening, and a way to see that 
Berkovich spaces are relevant in this context\footnote{However, 
it has to be noted that we could have 
written this proof in the context of adic spaces, and used a similar argument of quasi-compactness.}. 
Secondly, we have the feeling that replacing the strongly $D$-semianalytic subsets 
of \cite{Sch_sub} by our overconvergent constructible 
subsets is more geometric and 
gives a better understanding of the situation.
Finally, the mistakes in \cite{Sch_sub}, that we explain in section 2, have the following consequences:  some of the statements of \cite{Sch_sub} are false. 
For instance, 
\cite[Theorem 5.2]{Sch_sub} is false as we prove in section 2.
In this context it seemed to us relevant to 
write section 1.\par
The same remarks hold for section 3. A statement analogous to 
theorem \ref{theo_dim2} was claimed in 
\cite{Sch_plane}. However, in this article, it was assumed, and 
used in the proofs, that the five 
classes of subsets introduced in \cite{Sch_sub} were the same; 
but since we prove that this is not the case, the proofs of \cite{Sch_plane} are erroneous.\par
Finally, let us mention that another proof of theorem \ref{theo_eq} has also been given in  
\cite[4.4.10]{Clu_Lip}.
\subsection*{Acknowledgements}
I would like to express my deep gratitude to Jean-Fran\c{c}ois Dat and Antoine Ducros for numerous comments on this work. 
I also would like to thank the referee for numerous comments which have greatly improved this text.
The research leading to these results has received funding from 
the European Research Council under the European Community's Seventh 
Framework Programme (FP7/2007-2013) / ERC Grant Agreement n$^o$ 246903 "NMNAG,  from the
Labex CEMPI (ANR-11-LABX-0007-01) and from  SFB 1085 Higher invariants funded by the Deutsche Forschungsgesellschaft (DFG).

\addtocontents{toc}{\protect\setcounter{tocdepth}{2}}

\section{Overconvergent constructible subsets}
\label{section1}
With a few exceptions that will be specified, $k$ will be a non-trivially valued non-archimedean field,
$\A$ will be a strictly $k$-affinoid algebra, and  
$X$ the strictly $k$-affinoid space \affin{A}.

\subsection{Constructible data}
\label{section1.1}

\begin{defi}
Let $X$ be a $k$-affinoid space whose $k$-affinoid algebra is $\mathcal{A}$. A subset 
$S$ of $X$ is called \emph{ semianalytic} if it is a finite boolean combination of sets of the form 
$\{ x\in X \ \big| \ |f(x)| \leq |g(x)| \}$ where $f$ and $g \in \mathcal{A}$ (by finite boolean combination, 
we mean finitely many use of the set-theoretical operators $\cap$, $\cup$ and $^c$). 
A subset of the form 
$\{x \in X \ \big| \ |f_i(x)| \Diamond_i |g_i(x)| \ \forall i=1\ldots n \}$ with $f_i$ and $g_i \in \mathcal{A}$, 
and $\Diamond_i \in \{\leq, <\}$ will be called \emph{basic semianalytic}. 
\end{defi}
\begin{rem}
\label{rembasicsa}
With a repeated use of the rule 
$(A\cup B)\cap C = (A\cap C) \cup (B\cap C)$ one can show that 
any  semianalytic subset 
of $X$ is a finite union of basic semianalytic subsets.
\end{rem}

\begin{defi}
\label{def_dce}
Let $(X,S)$ be a \emph{k-germ} in the sense of \cite[3.4]{Berko93}; 
this just means that $S$ is a subset of $X$. 
An elementary constructible datum of $(X,S)$, is the following datum.
Let $f,g  \in \mathcal{A}$. 
Let also $r$ and $s$ be two real numbers such that  
$r>s>0$ and $s\in \sqrt{|k^*|}$. Let 
\[Y= \mathcal{M}( \mathcal{A} \{r^{-1}t\}/ (f-tg) ) \xrightarrow{\varphi} \affin{A} = X\]
and let $R \subseteq Y$ be a \GSA \ of $Y$. 
Let us set 
\[T:=\varphi^{-1}(S) \cap \{y\in R \ \big| \ g(y)\neq 0 \ \text{and} \ |f(y)|\leq s|g(y)| \}.\]
Then $(Y,T) \xrightarrow{\varphi } (X,S) $ is an \ECD.
If $ \psi : (Y',T') \simeq (Y,T)$ is an isomorphism of 
\emph{k-germs}, and \dce \ is an \ECD, if we set 
$\varphi'=\varphi \circ \psi$, then we will also say that 
$(Y',T') \xrightarrow{\varphi'} (X,S)$ is an \ECD.
\end{defi}

\begin{rem}
\label{rem1}
If \dce \ is an \ECD, then $\varphi(T) \subset S$, 
and $\varphi$ realizes a homeomorphism between $T$ and its image 
$\varphi(T)$.
Moreover 
\[ \{y \in Y \ \big| \  |f(y)|\leq s|g(y)|\neq 0 \} \]
is an analytic domain of $Y$, and can be identified through $\varphi$ with the analytic domain of $X$
\[ \{x \in X \ \big| \  |f(x)|\leq s|g(x)|\neq 0 \} .\]
\end{rem}

\begin{defi}
\label{defidc}
Let $(X,S)$ be a $k$-germ. A \emph{constructible datum} is a sequence
\[ (Y,T)=(X_n,S_n) \xrightarrow{\varphi_n} (X_{n-1} ,S_{n-1}) \rightarrow \cdots \rightarrow (X_1,S_1) \xrightarrow{\varphi_1} (X_0,S_0) = (X,S)\]
where for $i=1\ldots n$, 
$(X_i,S_i) \xrightarrow{\varphi_i} (X_{i-1} , S_{i-1} ) $ is an \ECD.
Let  $\varphi=\varphi_1\circ\ldots \circ \varphi_n $. 
Then we will denote 
this constructible datum by 
\[ \dc. \]
We will say that the complexity of $\varphi$ is $n$.
\end{defi}
In the particular case $S=X$, i.e. $(X,S) = (X,X)$, we will denote the constructible datum by: 
\[(Y,T) \stackrel{\varphi}{\dashrightarrow}  X ,\]
and we will call it a constructible datum of $X$.
This is actually the case that will mainly interest us, but  for technical reasons we have chosen to use \emph{k-germs}.
\begin{rem}
\label{remcdsa}
If $(Y,T) \stackrel{\varphi}{\dashrightarrow} X$ is a constructible datum, it follows easily from 
the above definitions that $T$ is a semianalytic subset of $Y$.
\end{rem}
Remark \ref{rem1} implies that if \dc \ is a constructible datum, 
$\varphi_{|T} : T \xrightarrow{\varphi_{|T}} S$ induces a homeomorphism between 
$T$ and $\varphi(T)$.
It is also clear that if 
$(Z,U) \stackrel{\psi}{\dashrightarrow} (Y,T)$
is a constructible datum \ and \dc \ is another one, then
$(Z,U) \stackrel{\varphi \circ \psi}{\dashrightarrow} (X,S)$ 
is also a constructible datum. \par
We want to point out that in the definition of a constructible datum, $n$ cannot be recovered from $\varphi$ alone.

\begin{defi}
Let \recouv, $i=1\ldots m$ be $m$ constructible data of 
the $k$-germ $(X,S)$. We will say it forms a
\emph{constructible covering} of $(X,S)$ if 
$\bigcup\limits_{i=1}^m \varphi_i(S_i) = S$.
\end{defi}

\begin{defi} 
\label{defi_over}
Let $X$ be a $k$-affinoid space. 
A subset $C$ of $X$ is said to be an \emph{overconvergent constructible subset} 
of $X$ if there exist 
$m$ constructible data $(X_i,S_i) \stackrel{\varphi_i}{\dashrightarrow} X$ 
 for $i=1\ldots m$ such that 
$\bigcup\limits_{i=1}^m \varphi_i(S_i) = C$. 
\end{defi}

\begin{rem}
Using the notation of definition \ref{def_dce}, when \dce \ is an elementary constructible datum, 
with $Y= \mathcal{M}( \mathcal{A} \{r^{-1}t\}/ (f-tg) )$,
then $T$ (and hence $\varphi(T)$) are defined with the function 
$t$ which mimics the function 
$\dfrac{f}{g}$, when it has a sense and its norm is $\leq s$. In addition the condition 
$r>s$ is here to make sure that the new functions of $\mathcal{B}$ 
are overconvergent in $t=\dfrac{f}{g}$, that we see as a function on 
the analytic domain  $\{ x\in X \ \big| \ |f(x)| \leq s |g(x)|\neq 0 \}$.
\end{rem}

The following three results are formal consequences of the previous definitions.

\begin{lemme}
\label{lemme1prod}
If \dce \ is an \ECD \ and 
$(Z,U) \xrightarrow{\psi } (X,S)$ 
is a morphism of $k$-germs, let us consider the Cartesian 
product of $k$-germs:
$$\xymatrix{
(Y,T)  \ar[r]^{\varphi} & (X,S)  \\
(Y,T)\times_{(X,S)} (Z,U)  \ar[u]^{\psi'} \ar[r]^-{\varphi'} & (Z,U) \ar[u]^{\psi}
}$$
Then,
$ (Y,T)\times_{(X,S)} (Z,U) \xrightarrow{\varphi'} (Z,U)$ 
is an \ECD. Moreover if we set  
$$(Y,T)\times_{(X,S)} (Z,U) = (Y',T')$$
then, $ ( \varphi \circ \psi')(T') = \varphi(T)\cap \psi(U)$. 
\end{lemme}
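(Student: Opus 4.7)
The strategy is to compute the fiber product explicitly at the level of affinoid algebras, then observe that the semianalytic conditions defining $T$ pull back to semianalytic conditions of the same shape, so that the data assembles into an \ECD{} of $(Z,U)$. The final image identity then comes from the surjectivity-on-fibers property of Berkovich fiber products.

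Write $Y = \mathcal{M}(\mathcal{A}\{r^{-1}t\}/(f-tg))$ with $T = \varphi^{-1}(S)\cap\{y\in R\st g(y)\neq 0 \text{ and } |f(y)|\leq s|g(y)|\}$ as in Definition~\ref{def_dce}, and let $\mathcal{B}$ be the $k$-affinoid algebra of $Z$, $f',g'\in\mathcal{B}$ the images of $f,g$ under the morphism $\mathcal{A}\to\mathcal{B}$ induced by $\psi$. Since completed tensor product commutes with quotients and with the formation of $\mathcal{A}\{r^{-1}t\}$, the cartesian product in $k$-affinoid spaces is
\[ Y' := Y\times_X Z = \mathcal{M}(\mathcal{B}\{r^{-1}t\}/(f'-tg')),\]
so the underlying space of $Y'$ has the correct form with the same parameters $r>s>0$. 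The first projection is the map $\psi':Y'\to Y$ and the second projection is $\varphi':Y'\to Z$, and under $\psi'$ the functions $f,g,t$ pull back to $f',g',t$, so the relevant inequality on $Y$ pulls back exactly to the corresponding inequality on $Y'$. If one sets $R':=\psi'^{-1}(R)$, which is a semianalytic subset of $Y'$ because $R$ is semianalytic in $Y$ and analytic pullback preserves this, then the fiber product germ is
\[ T' \;=\; \psi'^{-1}(T)\cap \varphi'^{-1}(U) \;=\; \varphi'^{-1}(U)\cap\bigl\{y'\in R'\st g'(y')\neq 0 \text{ and } |f'(y')|\leq s|g'(y')|\bigr\},\]
using that $\varphi'^{-1}(U)\subseteq (\varphi\circ\psi')^{-1}(S)$ because $\psi(U)\subseteq S$. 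This exhibits $(Y',T')\xrightarrow{\varphi'}(Z,U)$ as an \ECD.

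For the image identity, the inclusion $(\varphi\circ\psi')(T')\subseteq \varphi(T)\cap\psi(U)$ is immediate from $\psi'(T')\subseteq T$, $\varphi'(T')\subseteq U$ and $\varphi\circ\psi'=\psi\circ\varphi'$. Conversely, pick $x\in\varphi(T)\cap\psi(U)$ and lifts $y\in T$, $u\in U$ with $\varphi(y)=\psi(u)=x$. Since $\mathcal{H}(y)\ctp_{\mathcal{H}(x)}\mathcal{H}(u)$ is a non-zero Banach $\mathcal{H}(x)$-algebra, it admits a multiplicative seminorm extending the two given seminorms, which by the universal property of $Y'=Y\times_X Z$ in the category of $k$-analytic spaces produces a point $y'\in Y'$ with $\psi'(y')=y$ and $\varphi'(y')=u$. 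Then $y'\in\psi'^{-1}(T)\cap\varphi'^{-1}(U)=T'$ and $(\varphi\circ\psi')(y')=x$, giving the reverse inclusion.

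The only step requiring genuine care is the last one, the surjectivity on fibers of the Berkovich cartesian product; the computation of $Y'$ and the identification of $T'$ are essentially bookkeeping once the defining data of an \ECD{} are pulled back along $\psi$.
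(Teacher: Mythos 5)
Your proof is correct and is exactly the direct verification the paper has in mind: the published proof of Lemma \ref{lemme1prod} consists of the single sentence that it ``is a direct consequence of definition \ref{def_dce}'', and your computation of $Y\times_X Z$ as $\mathcal{M}(\mathcal{B}\{r^{-1}t\}/(f'-tg'))$, the pullback of $R$ and of the inequality $|f|\leq s|g|\neq 0$, and the fiber-surjectivity argument via the nonemptiness of $\mathcal{M}\bigl(\mathcal{H}(y)\,\widehat{\otimes}_{\mathcal{H}(x)}\,\mathcal{H}(u)\bigr)$ are precisely the details being left implicit. Nothing to correct.
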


\begin{cor}
\label{cor1prod}
Let \dc \ be a constructible datum 
\[ (Y,T)=(X_n,S_n)\xrightarrow{\varphi_n} \cdots \xrightarrow{\varphi_1} (X_0,S_0) =(X,S)\]
and let 
$(X',S')  \xrightarrow{\psi} (X,S)$  be a morphism of $k$-germs. 
Let us consider the Cartesian product :
$$\xymatrix{
(Y,T)  \ar@{-->}[r]^{\varphi} & (X,S)\\
(Y',T') \ar[u]^{\psi'} \ar@{-->}[r]^{\varphi'} & (X',S')  \ar[u]^{\psi} 
}$$
Then 
$(Y',T') \stackrel{\varphi'}{\dashrightarrow} (X',S') $ 
is a constructible datum \ and  $(  \psi \circ \varphi')(T') = \varphi(T) \cap \psi(S')$.

\end{cor}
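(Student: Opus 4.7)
My plan is to prove the corollary by induction on the complexity $n$ of the constructible datum $\varphi$. The base case $n=1$ is precisely Lemma \ref{lemme1prod}, which already establishes both that the pullback is an elementary constructible datum and the desired image identity. For the inductive step, the strategy is to peel off the first elementary map and stage the pullback, so that each stage is handled by the lemma and the tail by the induction hypothesis.

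Concretely, in the inductive step I would factor $\varphi = \varphi_1 \circ \tilde{\varphi}$ where $\tilde{\varphi} \colon (Y,T) \dashrightarrow (X_1,S_1)$ has complexity $n-1$. Applying Lemma \ref{lemme1prod} to the elementary datum $(X_1,S_1) \xrightarrow{\varphi_1} (X_0,S_0)$ pulled back along $\psi$, I obtain an elementary constructible datum $\varphi_1' \colon (X_1', S_1') \to (X', S')$ together with a map $\psi_1 \colon (X_1', S_1') \to (X_1, S_1)$ fitting in a cartesian square, and the identity $(\psi \circ \varphi_1')(S_1') = \varphi_1(S_1) \cap \psi(S')$. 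Then applying the induction hypothesis to $\tilde{\varphi}$ pulled back along $\psi_1$, I get a constructible datum $\tilde{\varphi}' \colon (Y', T') \dashrightarrow (X_1', S_1')$ of complexity $n-1$ with $(\psi_1 \circ \tilde{\varphi}')(T') = \tilde{\varphi}(T) \cap \psi_1(S_1')$. The composition $\varphi' := \varphi_1' \circ \tilde{\varphi}'$ is then a constructible datum of complexity $n$, and by universality of the successive pullbacks, $(Y', T')$ is the fiber product $(Y,T) \times_{(X,S)} (X', S')$ as required.

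For the image identity I would compute, using $\psi \circ \varphi_1' = \varphi_1 \circ \psi_1$,
\[
(\psi \circ \varphi')(T') = \varphi_1\bigl((\psi_1 \circ \tilde{\varphi}')(T')\bigr) = \varphi_1\bigl(\tilde{\varphi}(T) \cap \psi_1(S_1')\bigr).
\]
By remark \ref{rem1}, $\varphi_1$ restricts to a homeomorphism (in particular, an injection) on $S_1$, so it commutes with the intersection:
\[
\varphi_1\bigl(\tilde{\varphi}(T) \cap \psi_1(S_1')\bigr) = \varphi_1(\tilde{\varphi}(T)) \cap \varphi_1(\psi_1(S_1')) = \varphi(T) \cap (\psi \circ \varphi_1')(S_1').
\]
Using the identity from Lemma \ref{lemme1prod} and the fact $\varphi(T) \subseteq \varphi_1(S_1)$, this simplifies to $\varphi(T) \cap \psi(S')$, closing the induction.

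The only mildly delicate point is the commutation of $\varphi_1$ with intersection, which really does require injectivity of $\varphi_1$ on $S_1$; but this is exactly what remark \ref{rem1} provides. Everything else is formal bookkeeping with iterated cartesian squares of $k$-germs, so I do not anticipate any genuine obstacle beyond taking care to track the staged pullbacks correctly.
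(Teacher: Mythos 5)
Your proof is correct and follows exactly the route the paper takes: the paper's own proof is precisely "induction on the complexity of $\varphi$ using lemma \ref{lemme1prod}", and your staged-pullback argument, together with the observation that injectivity of $\varphi_1$ on $S_1$ (remark \ref{rem1}) lets it commute with intersections, is the natural way to fill in the details the paper leaves implicit.
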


\begin{cor}
\label{lemmeintersection}
Let  $(X_1,T_1)  \stackrel{\varphi}{\dashrightarrow} (X,S)$  and 
$(X_2, T_2) \stackrel{\psi}{\dashrightarrow} (X ,S) $  
be two constructible data (with the same target). 
Let us consider the fibered product 
\[\xymatrix{
(X_1,T_1) \ar@{-->}[r]^{\varphi} & (X,S)\\
(Z,U) \ar@{-->}[u]^{\psi'} \ar@{-->}[r]^{\varphi'} & (X_2,T_2) \ar@{-->}[u]^{\psi} 
}\]
Then
$(Z,U) \stackrel{\psi'}{\dashrightarrow} (X_1,T_1)$
and 
$(Z,U)\stackrel{\varphi'}{\dashrightarrow} (Y_2,T_2)$ 
are constructible data. Moreover
$(\varphi \circ \psi') (U) = (\psi \circ \varphi') (U) = \varphi(T_1)\cap \psi(T_2)$.
\end{cor}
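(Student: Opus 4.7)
The plan is to obtain this corollary as a direct consequence of Corollary \ref{cor1prod}, applied twice with the roles of the two constructible data exchanged.

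First, for each constructible datum $(X_i, T_i) \dashrightarrow (X,S)$, the underlying analytic map (the composition of the elementary morphisms appearing in the defining sequence) sends $T_i$ into $S$; this is a formal consequence of Remark \ref{rem1} applied inductively along the sequence. Hence $\varphi : (X_1, T_1) \to (X, S)$ and $\psi : (X_2, T_2) \to (X, S)$ are genuine morphisms of $k$-germs, which is exactly the input required by Corollary \ref{cor1prod}.

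Now I apply Corollary \ref{cor1prod} to the constructible datum $(X_1, T_1) \stackrel{\varphi}{\dashrightarrow} (X, S)$ and to the morphism of $k$-germs $\psi : (X_2, T_2) \to (X, S)$. The base change of $\varphi$ along $\psi$ is exactly $(Z, U) \stackrel{\varphi'}{\dashrightarrow} (X_2, T_2)$, and the corollary ensures both that this is a constructible datum and that $(\psi \circ \varphi')(U) = \varphi(T_1) \cap \psi(T_2)$. Exchanging the roles, I apply Corollary \ref{cor1prod} to the constructible datum $(X_2, T_2) \stackrel{\psi}{\dashrightarrow} (X, S)$ and to the morphism of $k$-germs $\varphi : (X_1, T_1) \to (X, S)$. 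By the universal property of the fibered product of $k$-germs, the base change produced is canonically the same object $(Z, U)$, now viewed as $(Z, U) \stackrel{\psi'}{\dashrightarrow} (X_1, T_1)$, and the corollary states that this is a constructible datum with $(\varphi \circ \psi')(U) = \psi(T_2) \cap \varphi(T_1)$.

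Both assertions of the corollary, together with the equality $(\varphi \circ \psi')(U) = (\psi \circ \varphi')(U) = \varphi(T_1) \cap \psi(T_2)$, are thereby established. There is no substantive obstacle: everything is formal once Corollary \ref{cor1prod} is available, the only point worth verifying being that the two iterated pullbacks yield the same fibered product, which allows one to read the single object $(Z, U)$ as a constructible datum over each of $(X_1, T_1)$ and $(X_2, T_2)$.
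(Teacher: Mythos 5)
Your proof is correct, and it reaches the conclusion by the same engine as the paper — Corollary \ref{cor1prod} — but organizes the deduction slightly differently. The paper's proof is an induction on the complexity of $\psi$: one peels off the elementary constructible data of $\psi$ one at a time and pulls each back along $\varphi$ using Corollary \ref{cor1prod}. You instead observe that the full composite underlying $\psi$ (resp.\ $\varphi$) is already a single morphism of $k$-germs — which is exactly the input Corollary \ref{cor1prod} asks for — and apply that corollary once in each direction, identifying the two iterated pullbacks via the canonical symmetry of the fibered product. The two points you need beyond the bare statement of Corollary \ref{cor1prod} are both sound and both available in the paper: Remark \ref{rem1}, iterated along the defining sequence, gives $\varphi(T_1)\subseteq S$ and $\psi(T_2)\subseteq S$, so the underlying maps really are morphisms of $k$-germs; and Definition \ref{def_dce} explicitly closes the notion of elementary constructible datum (hence of constructible datum) under precomposition with isomorphisms of $k$-germs, which is what lets you transport "being a constructible datum" across the canonical isomorphism between $(X_1,T_1)\times_{(X,S)}(X_2,T_2)$ and $(X_2,T_2)\times_{(X,S)}(X_1,T_1)$. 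Your route is marginally more direct, at the cost of making explicit these two small verifications that the paper's induction absorbs silently; either way the content is the same formal bookkeeping.
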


\begin{proof}
Lemma \ref{lemme1prod} is a direct consequence of definition \ref{def_dce}. 
Corollary \ref{cor1prod} is then proved by induction on the complexity of $\varphi$ using lemma \ref{lemme1prod}. Similarly, corollary \ref{lemmeintersection} is proved by induction 
on the complexity of $\psi$ using corollary \ref{cor1prod}.
\end{proof}

\begin{prop}
\label{prop_gen} 
\begin{enumerate}
\item If $T \subseteq X$ is a  semianalytic subset of $X$ then 
$T$ is an overconvergent constructible subset of $X$.
\item Let $C \subseteq T$ be an overconvergent constructible subset of 
$Y$ and let
$(Y,T) \stackrel{\varphi}{\dashrightarrow} X$ be a constructible datum. Then 
$\varphi(C)$ is on overconvergent constructible subset of $X$.
\item The class of overconvergent constructible subsets of $X$ is stable under 
finite boolean combinations.
\end{enumerate}
\end{prop}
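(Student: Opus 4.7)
The plan is to handle the three parts in order, leaning on the definitions of Section \ref{section1.1} and on Corollary \ref{lemmeintersection}.

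For part (1), I realize any semianalytic $T\subseteq X$ as the image of a trivial elementary constructible datum: take $f=0$, $g=1$, any admissible $r>s>0$, and $R=T$. Then $Y=\mathcal{M}(\mathcal{A}\{r^{-1}t\}/(t))\simeq X$, the map $\varphi$ is the identity, the defining condition $\{|f|\leq s|g|,\,g\neq 0\}$ holds everywhere on $X$, and the resulting source set is exactly $T$. Hence $T$ is overconvergent constructible.

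For part (2), write $C=\bigcup_i \psi_i(W_i)$ with $(Z_i,W_i)\stackrel{\psi_i}{\dashrightarrow} Y$ constructible data. The key step is to convert each $\psi_i$, whose target germ is $(Y,Y)$, into a constructible datum with target germ $(Y,T)$. I walk along the chain of elementary constructible data from the base upwards, replacing the base set $Y$ by $T$ and inductively recomputing each intermediate source set by the recipe of Definition \ref{def_dce}. A direct induction shows that the new source set at level $j$ equals the old one intersected with the preimage of $T$ along the composition so far; at the top, the containment $\psi_i(W_i)\subseteq C\subseteq T$ forces the source set to remain $W_i$. Composing with $(Y,T)\stackrel{\varphi}{\dashrightarrow} X$ via the composition rule recalled after Definition \ref{defidc} produces a constructible datum of $X$ whose image is $\varphi(\psi_i(W_i))$; unioning over $i$ exhibits $\varphi(C)$ as an overconvergent constructible subset of $X$.

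For part (3), stability under union is clear from the definition (concatenate the two covering families), and stability under finite intersection follows term by term from Corollary \ref{lemmeintersection} after distributing intersections over unions. The main obstacle is stability under complementation. I will prove by induction on the complexity $n$ that for any constructible datum $(Y,T)\stackrel{\varphi}{\dashrightarrow} X$, the set $X\setminus\varphi(T)$ is overconvergent constructible in $X$; the general case follows from $X\setminus\bigcup_i \varphi_i(T_i)=\bigcap_i(X\setminus\varphi_i(T_i))$ and intersection stability. For $n=0$ the image is $X$ and there is nothing to prove. For $n\geq 1$, factor $\varphi=\varphi_1\circ\varphi'$, where $\varphi_1:(X_1,S_1)\to (X,X)$ is the first elementary constructible datum with associated analytic domain $U_1=\{|f_1|\leq s_1|g_1|,\,g_1\neq 0\}\subset X$ and $\varphi'$ has complexity $n-1$. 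Using that $\varphi_1$ induces a homeomorphism $\varphi_1^{-1}(U_1)\simeq U_1$, I decompose
\[ X\setminus\varphi(T) \;=\; (X\setminus U_1)\;\cup\;\varphi_1\bigl(\varphi_1^{-1}(U_1)\setminus V\bigr),\qquad V:=\varphi'(T). \]
The first piece is semianalytic. For the second, I retarget $\varphi'$ from $(X_1,S_1)$ to $(X_1,X_1)$ by enlarging every intermediate source set along the chain, obtaining a constructible datum of complexity $n-1$ with some image $V'$; a short induction along the chain shows $V=V'\cap S_1$. The inductive hypothesis gives that $X_1\setminus V'$ is overconvergent constructible, hence so is $X_1\setminus V=(X_1\setminus V')\cup(X_1\setminus S_1)$ since $S_1$ is semianalytic. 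Intersecting with the semianalytic set $\varphi_1^{-1}(U_1)$ and then pushing forward via part (2) applied to the elementary constructible datum $(X_1,\varphi_1^{-1}(U_1))\to(X,X)$ (with $R=X_1$) transfers this to $U_1\setminus\varphi_1(V)$ as an overconvergent constructible subset of $X$. The delicate point I expect to verify carefully is this retargetting step, namely that replacing the base germ by $(X_1,X_1)$ still yields a valid chain of elementary constructible data and that the resulting image satisfies $V=V'\cap S_1$.
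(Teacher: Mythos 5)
Your proof is correct and follows essentially the same route as the paper's: the trivial elementary constructible datum for (1), composition of constructible data for (2), and for (3) an induction on complexity that peels off the elementary datum closest to $X$ and splits the complement into a semianalytic piece plus a pushed-forward complement upstairs. The only real difference is that you make explicit the retargetting of a chain from target germ $(Y,Y)$ to $(Y,T)$ in (2) (and from $(X_1,S_1)$ to $(X_1,X_1)$ in (3)), together with the identity $V=V'\cap S_1$ — a step the paper leaves implicit — and your verification of it is correct.
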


\begin{proof}
\noindent 
\begin{enumerate}
\item Consider the elementary constructible datum 
$(X,T) \xrightarrow{id} X$.
\item By definition, there exist some constructible data 
$(Y_i,T_i) \stackrel{\varphi_i}{\dashrightarrow} Y$, for $i=1\ldots m$, such that 
$\displaystyle C= \bigcup\limits_{i=1}^m \varphi_i(T_i)$. 
Now if we define 
$\psi_i := \varphi\circ \varphi_i$, then 
$(Y_i,T_i) \stackrel{\psi_i}{\dashrightarrow} X$ are $m$ constructible data, 
and $\displaystyle \varphi(C) = \varphi( \bigcup_{i=1}^m \varphi_i(Y_i)) = \bigcup_{i=1}^m \psi_i(T_i)$, 
so it is an overconvergent constructible subset of $(X,S)$.
\item Stability under finite union is a direct consequence of the definition \ref{defi_over}, as for intersection, it is a consequence of corollary \ref{lemmeintersection}. 
And if $C \subseteq X$ is an overconvergent constructible subset of $X$, let us show that 
$X \setminus C$ is also overconvergent constructible. 
By definition, 
$\displaystyle C= \bigcup_{i=1}^m \varphi_i(S_i)$ where
$(X_i,S_i) \stackrel{\varphi_i}{\dashrightarrow} X$ are some constructible data. 
We do it by induction 
on $c$, the maximum of the complexity of the $\varphi_i$'s. \par
If $c=0$, then $C$ is a semianalytic subset of $X$ so 
$X \setminus C$ is semianalytic, hence overconvergent constructible. \par
If $c>0$ and we assume the result for $c'<c$, then 
\[X\setminus C = X\setminus (\bigcup_{i=1}^m \varphi_i(S_i)) =
\bigcap_{i=1}^m (X \setminus \varphi_i(S_i)) \]
so we can assume that $m=1$, that is to say, we can assume that 
$C = \varphi(T)$ where $(Y,T) \stackrel{\varphi}{\dashrightarrow} X$ 
is a constructible datum 
of complexity $c$. 
Then 
\[\varphi = \psi \circ \varphi' : 
(Y,T) \stackrel{\varphi'}{\dashrightarrow} (Y',T') \xrightarrow{\psi} X\] 
where the complexity of $\varphi'$ is $c-1$ and $\psi$ is an elementary constructible datum. 
Now 
\[ X \setminus \varphi(T) = \psi(T' \setminus \varphi'(T)) \cup (X \setminus \psi(T') ) \] because $\varphi'_{|T}$ and $\psi_{|T'}$ are injective maps. 
By induction hypothesis, 
\[T' \setminus \varphi'(T) = T' \cap (Y' \setminus \varphi'(T) )\] 
is an overconvergent constructible subset of 
$Y'$, thus according to $(1)$, so is $\psi(T' \setminus \varphi'(T) )$.\par
Finally, if the elementary constructible datum $\psi$ is associated with $f,g,r$ and $s$, 
by definition, 
\[T' =\{ y\in R \ \big| \ |f(y)|\leq s |g(y)|\neq 0 \} \] 
for some  semianalytic subset $R$ of $Y'$. 
And if we define 
\[\tilde{T} = \{ y\in Y' \setminus R  \ \big| \ |f(y)|\leq s |g(y)|\neq 0 \}, \] 
then 
\[X \setminus \psi(T') = \psi(\tilde{T}) \cup \{ y \in X \ \big| \ |f(y)| > s |g(y)| \} \cup 
\{ y\in X \ \big| \ g(y)=0 \}. \]
Thus, it is also overconvergent constructible in $X$.
\end{enumerate}
\end{proof}
 
Let $x\in X$, and $U$ be an affinoid neighbourhood of $x$. 
Shrinking $U$ if necessary, 
we can assume \cite[2.5.15]{Berko90} that $U$ is a rational domain of the form
$X(\underline{r}^{-1} \frac{f}{g} ) = 
\{p \in X \ \big| \  |f_i(x)| \leq r_i|g(x)| \}$
such that 
$X\left( \left( \frac{\underline{r}}{2} \right)^{-1} \frac{f}{g} \right)$ still contains $x$.
For each $i$, we pick a real number $s_i$ such that 
$\frac{r_i}{2} < s_i < r_i$ and $s_i \in \val$.  
For each $i$, we consider the \ECD \ 
$(X_i,S_i) \xrightarrow{\varphi_i} X$ defined by 
$X_i = \mathcal{A}\{r_i^{-1}t_i\}/(f_i-t_ig)$,
and $S_i = \{p\in X_i \ \big| \ |f_i(p)|\leq s_i |g(p)| \ \text{and} \ g(p) \neq 0 \}$. One checks that 
$\varphi_i(S_i)$ is a neighbourhood of $x$.
Now if we take the fibered product of all these elementary constructible data, 
we obtain (using corollary \ref{lemmeintersection}) the following constructible datum:
\[ \left( X\left(\underline{r}^{-1} \frac{f}{g} \right),X \left( \underline{s}^{-1} \frac{f}{g} \right) \right) \stackrel{\varphi}{\dashrightarrow} X\]
Here $\varphi$ just corresponds to the embedding of the affinoid domain 
$X(\underline{r}^{-1} \frac{f}{g} )$. 
Moreover $ \varphi \left( X( \underline{s}^{-1} \frac{f}{g} ) \right)$, 
that we might identify with  $X\left( \underline{s}^{-1} \frac{f}{g} \right)$, 
is a neighbourhood of $x$. We can sum up this in the following lemma:

\begin{lemme}
\label{rem_local}
Let $X$ be a strictly $k$-affinoid space.
 Let $x\in X$ and $U$ be an affinoid neighbourhood of $x$.
Then there exists a constructible datum 
$(Y,T) \stackrel{\varphi}{\dashrightarrow} X$ 
such that $T$ is an affinoid domain of $Y$,  
 $\varphi$ is the embedding of an affinoid domain
$Y \to X$ such that $Y$ is in fact an affinoid subdomain of $U$, 
and $\varphi(T)$ is an affinoid neighbourhood of $x$. 
\end{lemme}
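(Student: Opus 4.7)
The proof is essentially outlined in the paragraph preceding the statement; the plan is to turn that sketch into a clean argument using the previously established compatibility of constructible data with fibered products.

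First, I would reduce to the case where $U$ is a rational domain. By \cite[2.5.15]{Berko90}, since $U$ is an affinoid neighbourhood of $x$, we may shrink $U$ to a rational domain of the form $U = X(\underline{r}^{-1}\tfrac{f}{g}) = \{p \in X : |f_i(p)| \leq r_i|g(p)|, \ i=1,\dots,n\}$ such that the smaller rational domain $X((\underline{r}/2)^{-1}\tfrac{f}{g})$ still contains $x$. Note that $g(x) \neq 0$, since rationality of the domain forces $f_1,\dots,f_n,g$ to have no common zero. For each $i$, choose $s_i \in \sqrt{|k^*|}$ with $r_i/2 < s_i < r_i$.

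Next, I would build elementary constructible data. For each $i$, the triple $(f_i, g, r_i, s_i)$ gives, by Definition \ref{def_dce}, an elementary constructible datum
\[
(X_i, S_i) \xrightarrow{\varphi_i} X,\qquad X_i = \mathcal{M}\bigl(\mathcal{A}\{r_i^{-1}t_i\}/(f_i - t_i g)\bigr),
\]
with $S_i = \{p \in X_i : |f_i(p)| \leq s_i|g(p)| \text{ and } g(p) \neq 0\}$ (taking $R_i = X_i$ as the ambient semianalytic subset). Via $\varphi_i$, the image $\varphi_i(X_i)$ identifies with the rational domain $\{p \in X : |f_i(p)| \leq r_i|g(p)|\}$, and $\varphi_i(S_i)$ identifies with $\{p \in X : |f_i(p)| \leq s_i|g(p)|\}$, which is a neighbourhood of $x$ because $|f_i(x)| \leq (r_i/2)|g(x)| < s_i|g(x)|$.

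Then I would assemble these via iterated fibered products. Applying Corollary \ref{lemmeintersection} successively to the $\varphi_i$, one obtains a single constructible datum $(Y,T) \stackrel{\varphi}{\dashrightarrow} X$ whose image under $\varphi$ is $\bigcap_i \varphi_i(X_i) = X(\underline{r}^{-1}\tfrac{f}{g}) = U$ and such that $\varphi(T) = \bigcap_i \varphi_i(S_i) = X(\underline{s}^{-1}\tfrac{f}{g})$. Concretely, $Y$ is the affinoid domain $X(\underline{r}^{-1}\tfrac{f}{g})$, $T$ is the affinoid subdomain $X(\underline{s}^{-1}\tfrac{f}{g}) \subset Y$, and $\varphi$ is just the inclusion $Y \hookrightarrow X$. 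Finally, since $x$ lies in the smaller rational domain $X((\underline{r}/2)^{-1}\tfrac{f}{g})$ and all inequalities $|f_i(x)| < s_i|g(x)|$ are strict, $\varphi(T)$ is an affinoid neighbourhood of $x$, as required.

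The only mildly delicate step is book-keeping in the fibered-product construction to confirm that the resulting composite really is a constructible datum (of complexity $n$) rather than a genuinely new object, but this is exactly the content of Corollary \ref{lemmeintersection} applied inductively; there is no real obstacle.
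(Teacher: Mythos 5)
Your proof is correct and follows essentially the same route as the paper: the paper's own argument is precisely the paragraph preceding the lemma, which shrinks $U$ to a rational domain $X(\underline{r}^{-1}\frac{f}{g})$ via \cite[2.5.15]{Berko90}, picks $s_i\in\val$ with $r_i/2<s_i<r_i$, forms the elementary constructible data $(X_i,S_i)\xrightarrow{\varphi_i}X$, and assembles them by fibered products using corollary \ref{lemmeintersection} to obtain $(X(\underline{r}^{-1}\frac{f}{g}),X(\underline{s}^{-1}\frac{f}{g}))\stackrel{\varphi}{\dashrightarrow}X$. Your bookkeeping (identifying the fibered product with the inclusion of the rational domain, and checking $g(x)\neq 0$ so that $\varphi(T)$ is a neighbourhood of $x$) matches the intended argument.
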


\begin{cor}
\label{corlocalover}
Let $X$ be a strictly $k$-affinoid space. Being overconvergent constructible in $X$ is a local property. 
\end{cor}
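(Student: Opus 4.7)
The plan is to establish both directions of this ``local property'' statement. The direct implication (global $\Rightarrow$ local) is essentially formal from Corollary \ref{cor1prod}: if $S=\bigcup_i\varphi_i(S_i)$ for constructible data $(X_i,S_i)\stackrel{\varphi_i}{\dashrightarrow} X$, then for any affinoid subdomain $U\hookrightarrow X$, taking the fibered product of each datum with this inclusion of $k$-germs produces constructible data $(X_i',S_i')\dashrightarrow U$ whose images realize $S\cap U$.

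The substantial work is the converse. Suppose that for every $x\in X$ there exists an affinoid neighbourhood $U_x$ such that $S\cap U_x$ is overconvergent constructible in $U_x$. For each such $x$, I would apply Lemma \ref{rem_local} to obtain a constructible datum $(Y_x,T_x)\stackrel{\varphi_x}{\dashrightarrow} X$ with $Y_x$ an affinoid subdomain of $U_x$ and $\varphi_x(T_x)$ an affinoid neighbourhood of $x$. Since $X$ is compact in the Berkovich topology, the open cover of $X$ by the topological interiors of the sets $\varphi_x(T_x)$ admits a finite subcover, from which I would extract indices $x_1,\dots,x_n$ with $X=\bigcup_{i=1}^n\varphi_{x_i}(T_{x_i})$. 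This compactness step is the specifically Berkovich-theoretic ingredient.

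Set $C_i:=\varphi_i^{-1}(S)\cap T_i\subseteq T_i$. The crucial claim will be that $C_i$ is overconvergent constructible in the whole of $Y_i$ (not just in $T_i$), for then Proposition \ref{prop_gen}(2) applied to the datum $(Y_i,T_i)\stackrel{\varphi_i}{\dashrightarrow} X$ yields that $\varphi_i(C_i)=S\cap\varphi_i(T_i)$ is overconvergent constructible in $X$; summing over $i$ and invoking Proposition \ref{prop_gen}(3) then finishes the proof. To verify the claim I would first observe that $Y_i\hookrightarrow U_i$ is the inclusion of an affinoid subdomain, so applying Corollary \ref{cor1prod} as in the easy direction gives that $\varphi_i^{-1}(S)=\varphi_i^{-1}(S\cap U_i)$ is overconvergent constructible in $Y_i$; I would then intersect with $T_i$, which is semianalytic in $Y_i$ by Remark \ref{remcdsa} and hence overconvergent constructible in $Y_i$ by Proposition \ref{prop_gen}(1), using the Boolean closure from Proposition \ref{prop_gen}(3). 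The main obstacle is precisely this promotion of the hypothesis from $U_i$ down to $Y_i$: one must convert the local Berkovich-topological datum $U_x$ into the rigid geometric structure provided by Lemma \ref{rem_local} before Proposition \ref{prop_gen}(2) can be applied.
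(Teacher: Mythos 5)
Your proof is correct and follows essentially the same route as the paper: the easy direction by restriction to affinoid domains, and the converse via Lemma \ref{rem_local}, Proposition \ref{prop_gen}(2), and compactness of $X$. Your explicit verification that $\varphi_i^{-1}(S)\cap T_i$ is overconvergent constructible in all of $Y_i$ (by restricting from $U_i$ to the affinoid subdomain $Y_i$ and intersecting with the semianalytic set $T_i$) is a slightly more careful rendering of a step the paper states more tersely, but the argument is the same.
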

\begin{proof}
First, if $S\subset X$ is overconvergent constructible, and $U$ is an affinoid domain of $X$, then $S\cap U$ is overconvergent constructible. \par 
On the other hand, let us assume that locally for the Berkovich topology, $S$ is overconvergent constructible, that is to say, let us assume that for all $x\in X$ there exists an affinoid neighbourhood 
$U$ of $x$ such that 
$S\cap U$ is overconvergent constructible.
Then according to lemma \ref{rem_local}, there exists 
a constructible datum $(Y,T) \stackrel{\varphi}{\dashrightarrow} X$  such that 
$Y \xrightarrow[]{\varphi} X$ is the embedding of an affinoid domain, $Y \subset U$, and 
$T$ is an affinoid neighbourhood of $x$. Then, since $T \subset U$, 
$\varphi^{-1}(S)\cap T$ is overconvergent constructible in $T$, and then 
$\varphi(T) \cap S$ is overconvergent constructible in $X$ (see proposition \ref{prop_gen} (2)). 
But since $\varphi(T)$ is an affinoid neighbourhood of $x$, by compactness of $X$ we conclude that $S$ is overconvergent constructible.
\end{proof}
\subsection{Overconvergent subanalytic subsets}
\label{section1.2}
We will denote by 
$\B$ (resp. $\B_r$ for $r>0$) the closed disc of radius $1$ (resp. $r$), 
and if $n$ is an integer, $\B^n$ and $\B_r^n$ will denote the corresponding closed polydiscs.\par
More generally, if $\underline{r} = (r_1,\ldots , r_n) \in {\mathbb{R}^*_+}^n$ is a polyradius, 
we will denote by
\[ \Bur = \mathcal{M}(k\{\underline{r}^{-1}T\}) =
\mathcal{M}(k\{ r_1^{-1}T_1, \ldots,r_n^{-1}T_n\})\]
 the polydisc of radius $\underline{r}$, and 
$\overset{\circ}{\B}(\underline{r})$ the corresponding open polydisc. When the number 
$n$ will be clear from the context, we will write $\underline{1}$ for 
$(1,\ldots, 1) \in \mathbb{R}^n$, and $\underline{0}$ or $0$ for $(0,\ldots, 0)\in \mathbb{R}^n$.
Finally, $\underline{\rho} > \underline{r}$ will mean that 
$\rho_i > r_i$ for $i=1\ldots n$. 

\begin{defi}
\label{defi_suba}
Let $X$ be a strictly $k$-affinoid space.
A subset $S\subset X$ is said to be an \OS \ of $X$ if 
there exist $n\in \mathbb{N}$, 
$r>1$, and 
$T\subseteq X \times \B_r^n$ a \GSA \ such that 
$S = \pi (T \cap ( X \times \E ) )$ where
$\pi : X \times \Br^n \to X$ is the natural projection. 
\end{defi}

\begin{lemme}
\label{lemme_inverse}
Let $f:Y \to X$ be a morphism of strictly $k$-affinoid spaces and 
$S$ an overconvergent subanalytic subset of $X$.
Then $f^{-1}(S)$ is an overconvergent subanalytic subset of $Y$. 
In particular, if $V$ is a strictly affinoid domain of $X$ and $S$ an overconvergent 
subanalytic subset of $X$, then $S\cap V$ is an 
overconvergent subanalytic subset of $V$.
\end{lemme}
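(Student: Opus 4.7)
The plan is to pull back the witnessing data for $S$ along $f \times \mathrm{id}_{\mathbb{B}_r^n}$ and show the result witnesses $f^{-1}(S)$ as overconvergent subanalytic.

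More precisely, by definition \ref{defi_suba} write $S = \pi_X(T \cap (X \times \mathbb{B}^n))$ where $n \in \mathbb{N}$, $r > 1$, $T \subseteq X \times \mathbb{B}_r^n$ is semianalytic, and $\pi_X : X \times \mathbb{B}_r^n \to X$ denotes the first projection. Consider the morphism of strictly $k$-affinoid spaces
\[ F := f \times \mathrm{id} : Y \times \mathbb{B}_r^n \longrightarrow X \times \mathbb{B}_r^n, \]
and let $\pi_Y : Y \times \mathbb{B}_r^n \to Y$ be the first projection. I would first note that $F^{-1}(T)$ is semianalytic in $Y \times \mathbb{B}_r^n$: indeed $T$ is a finite boolean combination of sets of the form $\{|h| \leq |h'|\}$ with $h, h' \in \mathcal{A}(X \times \mathbb{B}_r^n)$, and $F^{-1}(\{|h| \leq |h'|\}) = \{|h \circ F| \leq |h' \circ F|\}$ where $h \circ F,\, h' \circ F \in \mathcal{A}(Y \times \mathbb{B}_r^n)$; pullback commutes with $\cap$, $\cup$ and complement.

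Next, it remains to verify the set-theoretic identity
\[ \pi_Y\bigl( F^{-1}(T) \cap (Y \times \mathbb{B}^n) \bigr) = f^{-1}(S). \]
This is essentially a straightforward chase: a point $y \in Y$ lies in the left-hand side iff there exists $b \in \mathbb{B}^n$ with $(y,b) \in F^{-1}(T)$, equivalently $(f(y),b) \in T \cap (X \times \mathbb{B}^n)$, equivalently $f(y) \in S$. Since $F^{-1}(T)\cap (Y \times \mathbb{B}^n) = F^{-1}\bigl(T \cap (X\times \mathbb{B}^n)\bigr)$, one checks $F^{-1}(T)$ is the required semianalytic subset of $Y \times \mathbb{B}_r^n$ exhibiting $f^{-1}(S)$ as overconvergent subanalytic.

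For the second assertion, I would specialize $f$ to be the inclusion morphism $V \hookrightarrow X$ of a strictly affinoid subdomain; then $f^{-1}(S) = S \cap V$, and the general statement gives the conclusion. There is no real obstacle here: the only point worth being careful about is that $F = f \times \mathrm{id}$ remains a morphism of strictly $k$-affinoid spaces (which is clear, since the product of strictly $k$-affinoid spaces is strictly $k$-affinoid and $\mathbb{B}_r^n$ is strictly $k$-affinoid because $r > 1 > 0$ is irrelevant — what matters for strictness is that we can absorb the radii; in the present writeup the polydisc $\mathbb{B}_r^n$ is used as an ambient space, not required to be strictly $k$-affinoid, since strictness of $Y\times \mathbb{B}_r^n$ is not needed in definition \ref{defi_suba}).
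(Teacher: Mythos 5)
Your proof is correct and follows essentially the same route as the paper's: pull back $T$ along $f\times\mathrm{id}$, use the cartesian square to exchange $f^{-1}$ with the projection, and observe that preimages of semianalytic sets are semianalytic. The paper justifies the key identity $f^{-1}(\pi(T\cap(X\times\B^n)))=\pi'\bigl(F^{-1}(T)\cap(Y\times\B^n)\bigr)$ simply by invoking that the diagram is cartesian, at the same level of detail as your point chase, so there is nothing further to add.
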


\begin{proof}
Let $r>1$ and $T\subseteq X \times \Br^n$ be a  semianalytic 
subset such that 
$S= \pi( T\cap (X\times \B^n))$.
Let us consider the following Cartesian diagram: 
\begin{equation}
\label{cart_sq}
\xymatrix{
Y \times \Br^n \ar[r]^{f'} \ar[d]^{\pi'} & X\times \Br^n \ar[d]^{\pi} \\
Y \ar[r]^f & X
}
\end{equation}
Then 
$f^{-1}(S) = f^{-1} (\pi (T \cap (X \times \B^n))) =
\pi' (f'^{-1} (T\cap (X\times \B^n)))$. 
The last equality holds because \eqref{cart_sq} is a Cartesian diagram.
Now 
$\pi' (f'^{-1} (T\cap (X\times \B^n))) = 
\pi' (f'^{-1} (T)\cap (Y\times \B^n))
= \pi'^{-1} (T' \cap (Y\times \B^n))$ where 
$T' =f'^{-1}(T)$ is a  semianalytic subset of $Y\times \Br^n$.
Hence $f^{-1}(S) = \pi' (T' \cap ( Y\times \B^n))$ is an overconvergent subanalytic subset of $Y$.
\end{proof}

\begin{lemme} 
\label{immersion}
Let $X$ and $Y$ be strictly $k$-affinoid spaces, and 
let $\varphi : X \to Y$ be a closed immersion. 
\begin{enumerate}
\item 
If $S$ is a  semianalytic subset of $X$, then $\varphi(S)$ is a 
 semianalytic subset of $Y$.
\item Let $S$ be an overconvergent subanalytic subset of $X$, then 
$\varphi(S)$ is an overconvergent subanalytic subset of $Y$.
\end{enumerate}
\end{lemme}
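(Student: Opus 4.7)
The plan is to reduce $(2)$ to $(1)$ via the induced closed immersion $\varphi \times \mathrm{id} : X \times \mathbb{B}_r^n \to Y \times \mathbb{B}_r^n$, and to prove $(1)$ by observing that a closed immersion of affinoid spaces identifies $X$ with a closed analytic subset of $Y$, which is itself semianalytic.

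For $(1)$, let $\mathcal{B} \twoheadrightarrow \mathcal{A}$ be the surjection of affinoid algebras corresponding to $\varphi$, with kernel generated by $h_1, \ldots, h_m \in \mathcal{B}$. Then $\varphi(X)$ identifies with the closed subset
\[ \varphi(X) = \{ y \in Y \st h_1(y) = \cdots = h_m(y) = 0 \} = \bigcap_{i=1}^m \{ y \in Y \st |h_i(y)| \leq 0 \}, \]
which is semianalytic in $Y$ (taking $f = h_i$ and $g = 0$). Any $f \in \mathcal{A}$ lifts to some $\tilde f \in \mathcal{B}$, and for such a lift $\{ x \in X \st |f(x)| \leq |g(x)| \}$ coincides through $\varphi$ with $\varphi(X) \cap \{ y \in Y \st |\tilde f(y)| \leq |\tilde g(y)| \}$. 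Using remark \ref{rembasicsa} to write $S$ as a finite union of basic semianalytic subsets, we obtain that $\varphi(S)$ is the intersection of $\varphi(X)$ with a semianalytic subset of $Y$, hence is semianalytic in $Y$.

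For $(2)$, by definition \ref{defi_suba} we have $S = \pi_X(T \cap (X \times \mathbb{B}^n))$ for some $r > 1$ and some semianalytic $T \subseteq X \times \mathbb{B}_r^n$, where $\pi_X : X \times \mathbb{B}_r^n \to X$ is the projection. The base-change $\varphi' := \varphi \times \mathrm{id} : X \times \mathbb{B}_r^n \to Y \times \mathbb{B}_r^n$ is again a closed immersion of $k$-affinoid spaces, and the diagram
\[ \xymatrix{ X \times \mathbb{B}_r^n \ar[r]^{\varphi'} \ar[d]_{\pi_X} & Y \times \mathbb{B}_r^n \ar[d]^{\pi_Y} \\ X \ar[r]^{\varphi} & Y } \]
is cartesian and commutative. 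By $(1)$ applied to $\varphi'$, the set $T' := \varphi'(T)$ is semianalytic in $Y \times \mathbb{B}_r^n$. Since $\varphi'$ is injective with image $\varphi(X) \times \mathbb{B}_r^n$, one checks that
\[ T' \cap (Y \times \mathbb{B}^n) = \varphi'\bigl(T \cap (X \times \mathbb{B}^n)\bigr), \]
and then the commutativity of the square gives $\varphi(S) = \pi_Y(T' \cap (Y \times \mathbb{B}^n))$, proving that $\varphi(S)$ is overconvergent subanalytic in $Y$.

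The main point to verify is thus $(1)$, and the only subtlety is ensuring that the zero locus of an analytic function counts as a semianalytic subset; this is harmless since the constant function $0$ lies in any affinoid algebra and $\{ f = 0 \} = \{ |f| \leq |0| \}$. Once $(1)$ is secured, the diagram chase in $(2)$ is routine.
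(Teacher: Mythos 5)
Your proposal is correct and follows essentially the same route as the paper: part (1) by lifting the defining functions along the surjection of affinoid algebras and intersecting with the zero locus of the ideal, and part (2) by base-changing the closed immersion to $X\times \B_r^n \to Y\times \B_r^n$, applying (1), and chasing the cartesian square. The only cosmetic difference is that the paper writes $\varphi(S)$ directly as an explicit intersection of inequalities rather than phrasing it as $\varphi(X)$ meeting a semianalytic set, which amounts to the same thing.
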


\begin{proof}
\noindent
\begin{enumerate}
\item 
Write 
$Y=\affin{A}$ and 
$X = \mathcal{M} \left(\mathcal{A}/ \mathcal{I}\right)$ where 
$\mathcal{I} = (a_1, \ldots , a_m)$ is an ideal of $\mathcal{A}$.
Then, if $S= \{ x\in X \ \big| \ |f_i(x)| \Diamond_i |g_i(x)|, \ i=1\ldots n \}$ with $f_i,g_i \in \mathcal{A}/ \mathcal{I}$, we can find functions $F_i,G_i \in \mathcal{A}$ 
such that 
$\overline{F_i} =f_i$ and $\overline{G_i} = g_i$. In that case one checks that, 
\[\varphi (S) = \{ y\in Y \ \big| \ |F_i(y)| \Diamond_i |G_i(y)| , \ i=1\ldots n \} \cap 
\{y \in Y \ \big| \ a_j(y)=0, \ j=1\ldots m\},\] 
which is indeed  semianalytic.
\item 
By definition there exists a semianalytic subset 
$T \subseteq X\times \Br^n $ for some 
$r>1$ such that 
$S= \pi(T \cap (X \times \B^n ))$. 
We then consider the following cartesian diagram:
\[
\xymatrix{
X\times \Br^n \ar[r]^{\varphi'} \ar[d]^{\pi'} & Y\times \Br^n \ar[d]^{\pi} \\
X \ar[r]^{\varphi} & Y
} \]
But $\varphi'$ is also a closed immersion, so according to $(1)$,
 $T' = \varphi'(T)$ is a  semianalytic subset of 
 $Y\times \Br^n$. Then one checks that 
 \[\pi \left(T'\cap ( Y\times \B^n ) \right) 
 =\pi \left( \varphi'(T) \cap (Y \times \B^n) \right) 
 =\pi \left( \varphi' ( T \cap ( X \times \B^n) \right) =
 \varphi( \pi' ( T \cap (X \times \B^n ) ) = \varphi(S). \]
\end{enumerate} 
\end{proof}

\begin{lemme}
\label{rem_rayon}
Let us assume that $\underline{s} \in \val^n$. 
Then, $k\{\underline{s}^{-1}T \}$ is a strictly $k$-affinoid algebra 
(see \cite[2.1.1]{Berko90} and \cite[6.1.5.4]{BGR}). For the same reasons, if 
$\underline{r} > \underline{s}$, and 
$S \subseteq X\times \Bur$ is a  semianalytic subset, then 
$\pi \left( S\cap (X \times \Bus )  \right)$ is an overconvergent 
subanalytic subset of $X$.\end{lemme}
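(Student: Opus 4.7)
The plan is to reduce to the standard form of Definition~\ref{defi_suba} by a rescaling via $N$-th root substitutions, paralleling the argument that $k\{\underline{s}^{-1}X\}$ is strictly $k$-affinoid. By density of $\sqrt{|k^*|}$ in $\mathbb{R}_{>0}$ I may shrink $\underline{r}$ slightly and assume $\underline{r}\in \sqrt{|k^*|}^n$ (replacing $T$ by $T\cap(X\times \B^n_{\underline{r}'})$ is harmless since it has the same intersection with $X\times \Bus$). I then pick a common integer $N\geq 1$ and elements $\lambda_i,\mu_i\in k^*$ with $s_i^N=|\lambda_i|$ and $r_i^N=|\mu_i|$, and set $\tau_i=(r_i/s_i)^N>1$.

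The key construction is the graph closed immersion of strictly $k$-affinoid spaces
\[
\iota\colon X\times \Bur \hookrightarrow X\times \Bur \times \B^n_{\underline{\tau}}, \quad (x,X_1,\ldots,X_n)\mapsto \bigl(x,X_1,\ldots,X_n,X_1^N/\lambda_1,\ldots,X_n^N/\lambda_n\bigr).
\]
On its image the relation $|Y_i|\leq 1\iff|X_i|\leq s_i$ holds, so $\iota^{-1}(X\times \Bur\times \B^n)=X\times \Bus$. By Lemma~\ref{immersion}(1), $\iota(T)$ is semianalytic in the codomain, and $\iota(T)\cap(X\times \Bur\times \B^n)=\iota(T\cap(X\times \Bus))$. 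Taking $\tau=\min_i\tau_i>1$ uniformly and trimming via the semianalytic condition $\{|Y_i|\leq \tau\}$, this already exhibits $T\cap(X\times \Bus)$ as overconvergent subanalytic inside the strictly $k$-affinoid base $\tilde X:=X\times \Bur$, in the precise sense of Definition~\ref{defi_suba}.

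To pass from $\tilde X$ down to $X$, I apply the same root-substitution trick to the $\Bur$ factor: with $Z_i=X_i^N/\mu_i$ one gets $|Z_i|\leq 1$ on $\Bur$, and a graph closed immersion $\Bur\hookrightarrow \Bur\times \B^n$. Composing with $\iota$ realizes $X\times \Bur$ as a closed subspace of $X\times \B^{2n}_{\underline{\tau}''}$ (for some polyradius $\underline{\tau}''>\underline{1}$ assembled from the $\tau_i$'s) in which $X\times \Bus$ becomes the preimage of $X\times \B^{2n}$. After a final semianalytic trimming to a uniform big radius $\tau'=\min_i\tau''_i>1$, one obtains $\pi(T\cap(X\times \Bus))$ in the form $\pi(T''\cap(X\times \B^m))$ for some $T''$ semianalytic in $X\times \B^m_{\tau'}$ with $m=2n$, which is exactly the form required by Definition~\ref{defi_suba}.

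The main obstacle is the compatibility of the two iterated root substitutions: one must verify that the composite closed immersion into $X\times \B^{2n}_{\underline{\tau}''}$ actually identifies the small polydisc $X\times \Bus$ with the preimage of $X\times \B^{2n}$, and not a strictly smaller subspace. This is handled by the fact that the two substitutions are governed by the compatible equations $X_i^N=\lambda_i Y_i$ and $X_i^N=\mu_i Z_i$, related by $Y_i=\tau_i Z_i$, so that the joint conditions $|Y_i|\leq 1$ and $|Z_i|\leq 1$ cut out precisely $X\times \Bus$ inside the ambient, with no overconvergence information lost in the combined graph embedding.
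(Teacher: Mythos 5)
Your first step (the graph immersion $\iota$ adjoining $Y_i=X_i^N/\lambda_i$, so that $|Y_i|\le 1\Leftrightarrow |X_i|\le s_i$ on the image) is exactly the substitution the paper uses, and that part is fine. The gap is in the second step, which is where the actual content lies. The map you need for the descent to $X$, namely $(x,\underline{X})\mapsto\bigl(x,\underline{X}^N/\underline{\mu},\underline{X}^N/\underline{\lambda}\bigr)$ into $X\times\B^{2n}_{\underline{\tau}''}$, is \emph{not} a closed immersion: it only remembers $X_i^N$, so it is generically $N^n$-to-one. Hence Lemma \ref{immersion}(1) does not apply, and the semianalyticity of the image of $T$ --- which is the whole point --- is unjustified; images of semianalytic sets under finite non-injective maps are not covered by anything in the paper. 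If instead you keep the original coordinates $X_i$ to retain a genuine graph immersion, the ambient becomes $X\times\Bur\times\B^n\times\B^n_{\underline{\tau}}$, whose first $2n$ auxiliary radii ($r_i$ and $1$) need not exceed $1$, so it is not of the form $X\times\B^m_\rho$ with $\rho>1$ required by Definition \ref{defi_suba}; moreover intersecting with the unit polydisc in the $X_i$-coordinates imposes the extraneous condition $|X_i|\le 1$, which changes the answer unless $\underline{s}\le\underline{1}$ --- a normalization you never perform. Your closing paragraph checks that the two substitutions are compatible on the locus they cut out, but that was never the issue; the issue is injectivity of the map and the size of the ambient radii.

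The fix, which is what the paper does, makes the second substitution unnecessary. First rescale the coordinates by a single scalar $\mu\in k^{\times}$ so that $\underline{s}\le\underline{1}$ (this scales $\underline{r}$ and $\underline{s}$ together, so $\underline{r}$ may also drop below $\underline{1}$, but that is harmless). Then the single graph immersion $(x,\underline{X})\mapsto(x,\underline{X},\underline{X}^N/\underline{\lambda})$ into $X\times\B^n_{\underline{r}}\times\B^n_{\underline{\tau}}$ already does everything: on the Zariski-closed image, $|Y_i|\le 1$ forces $|X_i|\le s_i\le 1$, so intersecting with the unit polydisc in \emph{all} $2n$ auxiliary coordinates recovers exactly $\iota(T\cap(X\times\Bus))$, and the projection to $X$ gives the set in question. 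The only remaining point is that the ambient radii in the $X_i$-directions must be taken $>1$; this is legitimate because on the image one has $|X_i|=s_i|Y_i|^{1/N}\le r_i$ automatically, so the same closed immersion lands in $X\times\B^n_{\underline{\rho}}\times\B^n_{\underline{\tau}}$ for any $\underline{\rho}\ge\underline{r}$. No $Z$-coordinates are needed, since the condition $|Z_i|\le 1$ you would impose is vacuous on $X\times\Bur$ anyway.
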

\begin{proof} 
Indeed let $\underline{s} \in \val^n$ and $\underline{r} \in \mathbb{R}^n$ such that 
$\underline{s} < \underline{r}$, and $S \subseteq X \times \Bur$ a  
semianalytic subset of $X \times \Bur$. 
Let us show that $\pi ( S \cap (X \times \Bus )$ is overconvergent subanalytic in the sense 
of definition \ref{defi_suba}. 
To avoid complications, we assume that 
$n=1$ (but the proof is similar for an arbitrary $n$). 
Let then $s\in \val$ and $r>s$. 
Up to multiplication by some $\mu \in k^{\times}$ small enough, 
we can assume that $s \leq 1$. Since $s \in \val$, there exist
$\lambda \in k^{\times}$ and $m\in \mathbb{N}$ such that $s^m = |\lambda|$. 
Then in 
\[\B_{ (r, \left( \frac{r}{s}\right)^m    )} = 
\mathcal{M} ( k\{r^{-1}y , \left( \left( \frac{r}{s} \right)^m \right)^{-1} t \} ) \]
let us consider the Zariski-closed subset defined by 
$y^m= \lambda t$, i.e. $V(y^m-\lambda t)$.
Then, the map: 
\[
\begin{array}{rcl}
\Br & \to & \B_{(r, \left( \frac{r}{s} \right)^m )} \\
x    & \mapsto & (x,\frac{x^m}{\lambda} )
\end{array}
\]
identifies $\Br$ with the Zariski closed subset 
$V(y^m- \lambda t )$ and moreover, since $s\leq 1$ 
\[
\begin{array}{rcl}
\Bs & \to & \B^2 \\
x & \mapsto & (x,\frac{x^m}{\lambda})
\end{array}
\]
identifies $\Bs$ with the Zariski-closed subset of $\B^2$, 
$V(y^m - \lambda t)$. Taking the fibre product with $X$ we then obtain:
\[
\xymatrix{
X\times \Br \ar[r]^{\simeq} & V(y^m-\lambda t) \ar@{^{(}->}[r]^{\alpha} & 
X\times \B_{(r,\left( \frac{r}{s} \right)^m)} \\
X \times \Bs \ar@{^{(}->}[u] \ar[r]^{\simeq} \ar[rd]_{\pi} & V(y^m - \lambda t) \ar@{^{(}->}[u] \ar@{^{(}->}[r]^{\beta} & X\times \B^2 \ar@{^{(}->}[u] \ar[ld]^{\pi}\\
 & X &
}
\]
Hence if $S \subseteq X \times \Br$ is  semianalytic, $S' := \alpha (S)$ is also  semianalytic in 
$X \times \B_{(r, ( \frac{r}{s} )^m )}$ 
and $\alpha (S) \cap ( X \times \B^2 ) = \beta ( S \cap ( X \times \Bs ) )$.
So $\pi ( S \cap ( X \times \Bs ) ) = \pi (S' \cap ( X \times \B^2 )$ is well 
 overconvergent subanalytic in the sense of definition \ref{defi_suba}.  
\end{proof}

\subsection{Weierstrass preparation}
\label{section1.3}
In this section, $A$ will be an ultrametric complete normed ring 
i.e. satisfies the inequality 
$\|ab\| \leq \|a\|\|b\|$ and $\|a+b\| \leq \max (\|a\|, \|b\|)$ \cite[1.2.1.1]{BGR}. \par
If $r>0$, on $A\{r^{-1}T\}$ we will consider the following norm: if $ g=\sum\limits_{n\in \mathbb{N} } a_nT^n \in A\{r^{-1}X\}$ then $\|g\| = \max\limits_{n\geq 0} \|a_n\|r^n$.\par 
If $m\in \mathbb{N}$, we will denote by $A_m[T]$ the subset of $A[T]$ made 
of the polynomials of degree less or equal to $m$.

\begin{defi}
An element $u\in A$ is a multiplicative unit  
if $u$ is invertible and for all
$a\in A$, $\|ua\| = \|u\|\|a\|$.
\end{defi}
Note that if $u$ and $v$ are multiplicative units, so is $uv$.

\begin{lemme}
An element $u\in A$ 
is a multiplicative unit if and only if 
$u\in A^*$ and
$\|u^{-1}\| = \|u\|^{-1}$.
\end{lemme}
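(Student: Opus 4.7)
The plan is to use submultiplicativity of the norm in both directions, the key being the two estimates $\|ab\| \le \|a\|\,\|b\|$ applied to the factorisations $u\cdot u^{-1} = 1$ and $a = u^{-1}(ua)$. I will also use the normalisation $\|1\| = 1$, which is part of the standing convention for a complete normed ring in the sense of \cite[1.2.1.1]{BGR}.

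For the direct implication, assume $u$ is a multiplicative unit. By definition $u \in A^{*}$. Applying the equality $\|ua\| = \|u\|\,\|a\|$ to $a = u^{-1}$ gives
\[
1 = \|1\| = \|u\,u^{-1}\| = \|u\|\,\|u^{-1}\|,
\]
hence $\|u^{-1}\| = \|u\|^{-1}$ (in particular $\|u\| \neq 0$, which is consistent with $u$ being invertible).

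For the converse, assume $u \in A^{*}$ and $\|u^{-1}\| = \|u\|^{-1}$. For any $a \in A$, submultiplicativity immediately yields $\|ua\| \le \|u\|\,\|a\|$. For the opposite inequality, write $a = u^{-1}(ua)$ and apply submultiplicativity again, using the hypothesis:
\[
\|a\| \;=\; \|u^{-1}(ua)\| \;\le\; \|u^{-1}\|\,\|ua\| \;=\; \|u\|^{-1}\,\|ua\|,
\]
so $\|u\|\,\|a\| \le \|ua\|$. Combining the two inequalities gives $\|ua\| = \|u\|\,\|a\|$ for every $a\in A$, which is exactly the multiplicative unit condition. There is no real obstacle here; the only thing to keep in mind is that $\|1\|=1$ is available, so that the forward computation indeed forces $\|u\|\,\|u^{-1}\|$ to equal $1$ rather than merely being $\le 1$.
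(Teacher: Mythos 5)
Your proof is correct and follows essentially the same route as the paper: the forward direction evaluates $\|uu^{-1}\|=\|u\|\|u^{-1}\|=1$, and the converse combines submultiplicativity with the factorisation $a=u^{-1}(ua)$ to get the reverse inequality $\|ua\|\geq\|u\|\|a\|$. The only difference is that you make the use of $\|1\|=1$ explicit, which the paper leaves implicit.
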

\begin{proof}
If $u$ is a multiplicative unit, 
$1=\|uu^{-1}\| = \|u\| \|u^{-1}\|$, so $\|u^{-1} \| = \|u\|^{-1}$. \par
Conversely let us assume that $u$ is invertible and that
$\|u^{-1}\| = \|u\|^{-1}$. Let then $a\in A$. The following holds:
\[\|a\| = \|u^{-1} (ua) \| \leq \|u^{-1}\| \|ua\| = \|u\|^{-1} \|ua\|.\]
So $\|ua\| \geq \|u\| \|a\|$. Since in any case the reverse inequality 
$\|ua\| \leq \|u\| \|a\| $ holds, we conclude that 
$\|ua\| = \|u\| \|a\|$.
\end{proof}

\begin{rem}
\label{normemultiplicatif} 
As a consequence, if $u\in A$ and 
$\|u\|<1$, then $(1+u)$ is a multiplicative unit 
because  
\[\|1+u\| = 1 = \|\sum\limits_{n\geq 0 } (-u)^n\| = \|(1+u)^{-1} \|\]
Let us note also that if $u$ is a multiplicative unit, 
for all $x\in \mathcal{M}(A)$, $|u(x)| =\|u\|$. 
Indeed, the definition of \affin{A}  implies that
\begin{equation} 
\label{strictineg}
|u(x)|\leq \|u\|,
\end{equation}
hence 
$1 = |u(x)||u^{-1}(x)| \leq \|u\| \|u^{-1}\| = 1$.
So the inequality \eqref{strictineg} could not be strict, thus 
$|u(x)|= \|u\|$.
\end{rem}

\begin{rem}
If  $\varphi \ : \ A \to B$ 
is a contractive morphism of normed rings 
(i.e.  $\|\varphi(a)\| \leq \|a\| $ for all $a$ in $A$), 
then $\varphi$ sends multiplicative units to 
multiplicative units.
Indeed we have the sequence of inequalities:
\[1 = \| \varphi(u) \varphi(u)^{-1}\| \leq 
\| \varphi(u)\| \| \varphi(u^{-1})\| \leq 
\|u\| \|u^{-1} \| =1.\]
So there were only equalities and $\varphi(u)$ is a 
multiplicative unit because $\| \varphi(u)\| = \|u\|$, and  
$\| \varphi(u)^{-1}\| = \|u^{-1}\| = \|u\|^{-1} = \| \varphi(u) \|^{-1}$.\par
This remark will apply in the following context:
when $\mathcal{A}$ 
is a strictly $k$-affinoid algebra and we look at a morphism 
$\varphi : \mathcal{A} \to \mathcal{B} = \mathcal{A}\{r^{-1}T \} / I$ 
with $I$ any ideal, and $\mathcal{B}$ is 
equipped with the quotient norm inherited from $\mathcal{A}\{r^{-1}T\}$. 
In this situation, $\varphi$ is contractive. 
This is the case when we consider $\varphi$ 
the morphism of a constructible datum 
$(Y,S) \stackrel{\varphi}{\dashrightarrow} X$.\par
Note that if $\varphi$ is not contractive, multiplicative units are not necessarily preserved. 
For instance consider $\mathcal{A} = k\{t\}$ and $\mathcal{B} = k\{2^{-1}x,y \}/(y-x^2)$ that we equip with the residue norm. 
These $k$-affinoid algebras are isomorphic through 
$\varphi : t \mapsto x$, and if we choose $\pi \in k$ such that 
$\frac{1}{2} < |\pi|<1$, then $u:=1+\pi t$ is a multiplicative unit of 
$\mathcal{A}$, but not $\varphi(u)$. \par 
Note however that if the field $k$ is stable (for instance in our situation, where $k$ is a non-archimedean complete field, $k$ is stable if $char(\tilde{k} ) =0$, or if it is algebraically closed, or a discrete valuation field \cite[3.6.2]{BGR}),
for a suitable choice of norm, any morphism of reduced affinoid algebras is contractive. 
Indeed, if $k$ is stable, 
and $\mathcal{A}$ is a reduced affinoid algebra, 
then it is a distinguished affinoid algebra \cite[6.4.3]{BGR}, i.e. the supremum seminorm is a residue norm on $\mathcal{A}$. 
If $\mathcal{B}$ is reduced, for the same reason,  the supremum seminorm is an admissible norm on it.
So if we equip $\mathcal{A}$ and $\mathcal{B}$ with the supremum norm,  any morphism of affinoid algebras 
$\varphi : \mathcal{A} \to \mathcal{B}$ is contractive.
\end{rem}

\begin{defi}
Let $r>0$ be a real number and $s\in \N$.
An element $g= \sum\limits_{n\geq 0} g_nT^n$ of 
$A\{r^{-1}T \}$ 
is called $T$-distinguished of order $s$ if
$g_s$ is a multiplicative unit,  
$\|g_s\|r^s=\|g\|$ 
and for all $n>s$, 
$\|g_n\|r^n < \|g_s\|r^s$. 
Note that in that case, $g$ is necessarily a non zero element since $g_s\neq 0$.
\end{defi}

\begin{rem}
\label{rem_distingue}
We can extend the previous remark saying that if
$\varphi : A \to B$ is a contractive morphism and 
$g = \sum\limits_{n \in \mathbb{N} } g_n T^n \in A \{r^{-1} T \}$ 
is $T$-distinguished of order $s$, 
then $\varphi(g) = \sum\limits_{n\in \mathbb{N} }\varphi(g_n) T^n \in   B\{r^{-1}T \}$ 
and it is an $T$-distinguished element of 
$B\{r^{-1}T\}$ of order $s$.
This applies in particular when $\varphi$ is the 
morphism of a constructible datum 
$(Y,S) \stackrel{\varphi}{\dashrightarrow} X$.
\end{rem}

\begin{lemme}
\label{multiplicatif}
Let $g=\sum\limits_{m \in \mathbb{N} } g_m T^m\in A\{r^{-1}T\}$ 
be $T$-distinguished of order $s$. 
\begin{enumerate}
\item 
Then for all  
$q=\sum\limits_{k\in \mathbb{N} } q_kT^k\in \Ar $,  
$\|gq\| = \|g\|\|q\|$.
\item 
Let us set 
$gq= \sum\limits_{l \in \mathbb{N} } c_lT^l$, and let us assume that $q\neq 0$. 
Let us denote by $k_0$ the greatest rank such that
$\|q_{k_0}\|r^{k_0} = \|q\|$. Then 
$\|gq\| = \|c_{s+k_0}\|r^{s+k_0}$ and $\|c_{s+k_0} \| = \|g_s \| \|q_{k_0} \|$. 
\end{enumerate}
\end{lemme}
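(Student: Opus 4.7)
The plan is to prove both statements simultaneously by isolating the single ``dominant'' coefficient in the product $gq$, namely $c_{s+k_0}$, and showing that all other monomials contributing to $c_{s+k_0}$ are strictly smaller in $r$-weighted norm, so that the ultrametric inequality forces an equality. Once this is done, statement (2) is immediate, and statement (1) follows because $\|gq\|$ is bounded above by $\|g\|\|q\|$ from submultiplicativity while the coefficient $c_{s+k_0}$ realizes exactly this value.

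More precisely, I would first record the easy bound $\|gq\| \leq \|g\|\|q\|$ coming from the submultiplicativity of the norm on $A\{r^{-1}X\}$. Then I would fix $k_0$ as in the statement and write
\[
c_{s+k_0} = g_s q_{k_0} + \sum_{\substack{m+k = s+k_0 \\ m \neq s}} g_m q_k.
\]
Using the multiplicative unit property of $g_s$, the dominant term satisfies $\|g_s q_{k_0}\| = \|g_s\|\,\|q_{k_0}\|$, so that
\[
\|g_s q_{k_0}\|\, r^{s+k_0} = (\|g_s\| r^s)(\|q_{k_0}\| r^{k_0}) = \|g\|\,\|q\|.
\]
For the remaining terms with $m+k = s+k_0$ and $m\neq s$, I would split into the two cases $m<s$ and $m>s$. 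If $m<s$, then $k > k_0$, so by maximality of $k_0$ one has $\|q_k\| r^k < \|q\|$, while $\|g_m\| r^m \leq \|g\|$; hence $\|g_m q_k\| r^{s+k_0} < \|g\|\|q\|$. If $m>s$, then the distinguished hypothesis gives $\|g_m\| r^m < \|g_s\| r^s = \|g\|$, and combined with $\|q_k\| r^k \leq \|q\|$ this yields the same strict inequality.

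By the strong (ultrametric) triangle inequality, a sum in which exactly one term has strictly larger norm than all the others has the norm of that dominant term. Therefore
\[
\|c_{s+k_0}\| = \|g_s q_{k_0}\| = \|g_s\|\,\|q_{k_0}\|,
\]
and consequently $\|c_{s+k_0}\| r^{s+k_0} = \|g\|\,\|q\|$. Combined with the a priori bound $\|gq\| \leq \|g\|\|q\|$, this gives $\|gq\| = \|c_{s+k_0}\| r^{s+k_0} = \|g\|\|q\|$, establishing both (1) and (2) at once.

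The only genuine subtlety is the bookkeeping on the strict inequalities for the off-diagonal terms: one must use the $X$-distinguished hypothesis precisely where $m>s$, and the defining property of $k_0$ (\emph{greatest} index, not merely one achieving the sup) precisely where $m<s$. The multiplicative unit property of $g_s$ is what converts the submultiplicative $\leq$ into an equality for the dominant term; without it, the argument breaks down.
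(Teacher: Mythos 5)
Your proof is correct and follows essentially the same route as the paper's: isolate the coefficient $c_{s+k_0}$, show every off-diagonal term $g_m q_k$ with $m+k=s+k_0$ is strictly dominated (using the maximality of $k_0$ when $m<s$ and the distinguished hypothesis when $m>s$), apply the ultrametric inequality together with the multiplicative-unit property of $g_s$, and close with the a priori bound $\|gq\|\leq\|g\|\|q\|$. No gaps.
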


\begin{proof}
First, without any hypothesis, it is true that
\begin{equation}
\label{inegtriv}
\|gq\| \leq \|g\|\|q\|.
\end{equation}\par
Conversely, by definition,  
\begin{equation}
\label{egW}
c_{s+k_0} = \sum\limits_{m+k=s+k_0} g_mq_k .
\end{equation} 
Let then $m$ and $k$ be two integers such that $m+k = s+k_0$. \par
If $k>k_0$, by definition of $k_0$, $\|q_k\|r^k < \|q_{k_0}\|r^{k_0}$.
So, using that $g_s$ is a multiplicative unit, we obtain:
\[\|g_mq_k\|r^{s+k_0}=\|g_mq_k\|r^{m+k} 
\leq \|g_m\|r^m \|q_k\|r^k 
< \|g_s\|r^s\| q_{k_0}\|r^{k_0}
=\|g_sq_{k_0}\|r^{s+k_0}. \] 
Thus,  
\begin{equation} 
\label{inegW1}
\|g_mq_k\| < \|g_sq_{k_0}\|.
\end{equation}\par
If $k<k_0$, then $m>s$, and since 
$\|g_m\|r^m< \|g_s\|r^s$ (because $g$ is $T$-distinguished of 
order $s$), we obtain with the same reasoning, that
\begin{equation}
\label{inegW2}
\|g_mq_k\| < \|g_sq_{k_0}\|.
\end{equation}\par
Thus, \eqref{egW}--\eqref{inegW2} and the ultrametric inequality imply that 
$\|c_{s+k_0}\| = \|g_sq_{k_0}\|$. 
And since $g_s$ is a multiplicative unit, $\|g_sq_{k_0}\|=\|g_s\|\|q_{k_0}\|$.\par 
Finally we obtain that  
$\|gq\| \geq \|g_s\|r^s\|q_{k_0}\|r^{k_0} = \|q\|\|g\|$, 
which with \eqref{inegtriv} ends the proof.
\end{proof}

\begin{prop}\textbf{Weierstrass Division.}
Let $g \in \Ar $ be $T$-distinguished of order $s$. 
If $f=\sum\limits_{n\in \mathbb{N} } f_nT^n \in \Ar$ there exists an unique couple
$(q,R) \in \Ar \times A_{s-1}[T]$ such that 
\begin{equation}
\label{existencedec}
f=gq+R. 
\end{equation}
Moreover 
\begin{equation}
\label{borne}
\|f\| = \max ( \|g\|\|q\| , \|R\|).
\end{equation}
\end{prop}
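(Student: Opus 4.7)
The plan is to establish the theorem in the order \textbf{uniqueness}, \textbf{existence}, \textbf{norm equality}, drawing heavily on Lemma~\ref{multiplicatif}.

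For \textbf{uniqueness}, I start from two decompositions $f = gq_1 + R_1 = gq_2 + R_2$ with $R_i \in A_{s-1}[X]$. Subtracting gives $g(q_1 - q_2) = R_2 - R_1 \in A_{s-1}[X]$. If $q_1 \neq q_2$, let $k_0 \geq 0$ be the largest index with $\|(q_1-q_2)_{k_0}\|r^{k_0} = \|q_1-q_2\|$ (it exists since $\|a_n\|r^n \to 0$ for any element of $\Ar$). Lemma~\ref{multiplicatif}(2) then produces a coefficient of $g(q_1-q_2)$ at degree $s + k_0 \geq s$ of nonzero norm $\|g_s\|\,\|(q_1-q_2)_{k_0}\|$, contradicting $R_2 - R_1 \in A_{s-1}[X]$. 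Hence $q_1 = q_2$, and then $R_1 = R_2$.

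For \textbf{existence}, I will construct $q$ as the limit of an iteration. Let $\sigma, \tau : \Ar \to \Ar$ denote, respectively, truncation to degree $<s$ and the ``shift-by-$s$'' operator picking out the degree-$\geq s$ part, so that $h = \sigma(h) + X^s \tau(h)$ for every $h \in \Ar$. Starting from $q^{(0)} := 0$ and $R^{(n)} := f - gq^{(n)}$, I set $q^{(n+1)} := q^{(n)} + g_s^{-1}\tau(R^{(n)})$. Writing $g = g_sX^s + (g - g_sX^s)$, a direct computation yields the recursion $\tau(R^{(n+1)}) = -g_s^{-1}\tau\bigl((g-g_sX^s)\,\tau(R^{(n)})\bigr)$. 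Once I can show $\|\tau(R^{(n)})\| \to 0$, the bound $\|q^{(n+1)}-q^{(n)}\| \leq \|g_s\|^{-1}\|\tau(R^{(n)})\|$ makes $\{q^{(n)}\}$ Cauchy, converging in $\Ar$ to some $q$; the limit $R := f - gq$ satisfies $\tau(R) = 0$ and thus lies in $A_{s-1}[X]$.

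For the \textbf{norm equality}, assuming existence: the case $q=0$ is trivial, and otherwise Lemma~\ref{multiplicatif}(1) gives $\|gq\| = \|g\|\|q\|$, so the ultrametric inequality yields $\|f\| \leq \max(\|gq\|,\|R\|)$. For the reverse bound, the coefficient of $f$ at degree $s+k_0$ (with $k_0$ the largest index achieving $\|q_{k_0}\|r^{k_0} = \|q\|$) coincides with that of $gq$ since $R$ vanishes there; Lemma~\ref{multiplicatif}(2) shows that coefficient has norm $\|g\|\|q\|/r^{s+k_0}$, so $\|f\| \geq \|g\|\|q\|$. The case $\|R\| > \|g\|\|q\|$ is then handled by the strict ultrametric equality.

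The \textbf{main obstacle} is the convergence $\|\tau(R^{(n)})\| \to 0$. The naive estimate $\|\tau(R^{(n+1)})\| \leq (\|g-g_sX^s\|/\|g\|)\,\|\tau(R^{(n)})\|$ is a strict contraction only when $\|g-g_sX^s\| < \|g\|$. The distinguished condition secures this for the high part, since $\|g_n\|r^n < \|g\|$ for $n > s$ combined with $\|g_n\|r^n \to 0$; but the low part $g_< := \sum_{n<s} g_nX^n$ may satisfy $\|g_<\| = \|g\|$. To handle this I would decompose $g - g_sX^s = g_< + g_>$ and analyse the recursion coefficient-wise: the $g_<$-contribution to $\tau(R^{(n+1)})_k$ only involves $\tau(R^{(n)})_j$ with $j > k$, while the $g_>$-contribution contracts with rate $\|g_>\|/\|g\| < 1$; a multi-step, index-shift argument then forces the decay. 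An alternative route is to first prove the theorem when $g$ is a polynomial of degree exactly $s$, in which case the iteration terminates in finitely many steps (the degree of $R^{(n)}$ drops strictly), and then deduce the general statement by approximating $g$ by its truncations $g^{(N)} = \sum_{n=0}^N g_n X^n$ and passing to the limit using completeness of $\Ar$.
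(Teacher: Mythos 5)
Your uniqueness argument (via Lemma \ref{multiplicatif}(2) applied to $g(q_1-q_2)=R_2-R_1\in A_{s-1}[X]$) and your derivation of the norm equality are both correct, and the uniqueness route is in fact more direct than the paper's, which deduces uniqueness from \eqref{borne}. The genuine gap is in the existence part: everything hinges on $\|\tau(R^{(n)})\|\to 0$, which you rightly single out as the main obstacle but do not establish, and neither of your two sketches closes it as written. The coefficient-wise route can be made to work, but it is not a one-line index shift: since the distinguished hypothesis says nothing about the coefficients $g_m$ with $m<s$, one can have $\|g_m\|r^m=\|g\|$ there, so the $g_<$-contribution does not contract in norm. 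Setting $a^{(n)}_k:=\|\tau(R^{(n)})_k\|r^k$, the recursion only gives $a^{(n+1)}_k\le\max\bigl(\sup_{j>k}a^{(n)}_j,\ \kappa\sup_j a^{(n)}_j\bigr)$ with $\kappa<1$ coming from $g_>$; extracting $\sup_k a^{(n)}_k\to 0$ from this requires a genuine double induction in $(n,k)$ (first show $\sup_k a^{(n)}_k\le\max\bigl(\sup_{j\ge n}a^{(0)}_j,\ \kappa\sup_j a^{(0)}_j\bigr)$, then iterate to gain successive powers of $\kappa$). Your fallback route contains a misstatement: when $g$ is a monic polynomial of degree $s$ but $f$ is a genuine power series, the iteration does \emph{not} terminate in finitely many steps ($R^{(n)}$ is never a polynomial); termination requires truncating $f$ as well, and the subsequent limit argument over the truncations $g^{(N)}$ needs a uniform bound on the quotients, which itself comes from the norm equality.

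The paper sidesteps the difficulty by a different order of operations, which is the cleanest repair of your argument. It first proves \eqref{borne} for an \emph{arbitrary} decomposition $f=gq+R$; it then truncates both $g$ (to $g'=\sum_{m\le s}g_mX^m$) and $f$ (to a polynomial $f'$ with $\|f-f'\|\le\kappa\|f\|$), performs ordinary Euclidean division $f'=g'q+R$ in $A[X]$, and uses the already-established norm equality to obtain $\|q\|\le\|f\|/\|g\|$. The one-step error $f-(gq+R)=(f-f')+(g'-g)q$ then has norm $\le\kappa\|f\|$, where $\kappa$ is governed only by the tails $f-f'$ and $g-g'$ in degrees $>s$ — exactly where the distinguished hypothesis gives strict decay — so the problematic low-degree part of $g$ never enters the contraction estimate. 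Either adopt that scheme or carry out the double induction above in full; as it stands the existence proof is incomplete.
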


\begin{proof}
First, let us show that if a couple $(q,R)$ satisfies \eqref{existencedec}, 
then it must satisfy the equality \eqref{borne}. Because of 
the ultrametric inequality, 
$\|f\| \leq \max (\|g\| \|q\| , \|R\| )$. 
For the reverse inequality, we distinguish two cases.\par
If $\|gq\| \neq \|R\|$, then 
$\|f\|=\max (\|gq\|,\|R\|) =
\max ( \|g\|\|q\| , \|R\| )$ according to
 lemma \ref{multiplicatif}.\par
Otherwise  $\|gq\|=\|g\|\|q\| = \|R\|$, and we use again 
lemma \ref{multiplicatif} and its notation
(so $gq = \sum\limits_{l\in \mathbb{N}} c_lT^l$).
We get $\|gq\| = \|c_{s+k_0}\|r^{s+k_0}$. 
Since $R$ is a polynomial of degree $d$ with $d<s$, and since $f=gq+R$, and $d<s+k_0$, the coefficient $f_{s+k_0}$ of $f$ is 
$c_{s+k_0}$, hence 
$\|f\| \geq  \|c_{s+k_0}\|r^{s+k_0}  = \|g\|\|q\|$. \par
This finally proves that  $\|f\| = \max (\|g\|\|q\| , \|R\|)$.\par
From this we can conclude that the couple 
$(q,R)$ is unique
because if $f=gq'+R'$ is another decomposition, we have 
$0=g(q-q') + (R-R')$ and since $\|g\|\neq 0$, 
$\|q-q'\|=\|R-R'\| = 0$, i.e. $R=R'$ and $q=q'$. \par
Let us now show the existence of such a decomposition.
Let us set 
\[g' := \sum\limits_{m=0}^s g_mT^m.\] 
In particular, $\|g\| = \|g'\|$ because $g$ is $T$-distinguished of degree $s$.
Let us set 
\[\kappa := \frac{\max\limits_{m>s} ( \|g_m \|  r^m) }{\|g_s\|r^s} = 
\frac{\max\limits_{m>s} ( \|g_m\|r^m )}{\|g\|}.\]
Since $g$ is 
$T$-distinguished of order $s$, $\kappa<1$. 
Actually, if $\kappa = 0$ (which would mean that $g=g'$), replace $\kappa$ by $\frac{1}{2}$. 
In any case $\|g-g'\| \leq \kappa \|g\|$ and $\kappa \in ]0,1[$.\par
Next, let $N\in \mathbb{N}$ and let us set
\[f' := \sum_{k=0}^N f_kT^k.\]  
Let us assume that $N$ is big enough to satisfy $\|f-f'\| \leq \kappa\|f\|$. In particular, 
$\|f'\|=\|f\|$. \par
By definition and hypothesis, $g' \in A[T]$ is of degree $s$ and possesses an invertible dominant coefficient, which is $g_s$. 
Hence in $A[T]$, one can carry out euclidean division by $g'$ \cite[4.1.1]{Lang}, 
which gives
$f'=g'q+R$, with $R \in A_{s-1}[T]$ and $q \in A[T]$.
We can then apply the norm equality 
\eqref{borne} that we have shown in the first part of the proof,
(because $g'$ is also $T$-distinguished of order $s$):
$\|f'\|=\max ( \|g'\|\|q\| , \|R\|)$.  
In particular $\|q\| \leq \frac{\|f'\| }{\|g'\|}= \frac{\|f\|}{\|g\|} $ so that 
\[\|g\| \|q \| \leq \|f\|.\] 
Moreover $\|R\| \leq \|f'\|=\|f\|$. Thus the following holds:
\[f=f' + (f-f') =
g'q+R + (f-f')
=gq +R + (f-f')+ (g'-g)q.\]
By definition of $g'$ and of $\kappa$, 
$\|g'-g\| \leq \kappa \|g\|$, so
\begin{equation}
\label{ineg1}
\|(g'-g)q\| \leq  \|g\|\|q\| \kappa  \leq \kappa\|f\|
\end{equation} 
In addition, by hypothesis, 
\begin{equation}
\label{ineg2}
\|f-f'\| \leq \kappa \|f\|.
\end{equation} 
Hence if we set
\[h:= f-f' + (g'-g)q = f-(gq+R),\]
according to \eqref{ineg1} and \eqref{ineg2}, we obtain that $\|h\| \leq \kappa \|f\|$.\par 
To sum up, we have found some 
$\kappa \in ]0,1[$ such that 
\begin{equation}
\label{keypoint} 
\forall f\in A\{r^{-1}T\}, \  \exists q' \in A\{r^{-1}T\}, \ \exists R' \in A_{s-1}[T] 
\ \text{such that} \ \|f-(gq'+R')\| \leq \kappa \|f\|.
\end{equation}
This allows us to 
define by induction two Cauchy sequences  
$(q^i) \in \Ar $ and $(R^i) \in A_{s-1}[T]$
such that $\|f-(gq^i+R^i) \| \leq \kappa^i \|f\|$
in the following way.\par
We start with $(q^0,R^0)=(0,0)$. \par
In order to perform the induction step, let $i>0$ be given and let us assume 
that $(q^i,R^i)$ is defined. 
We set
$h^i:= f-(gq^i+R^i)$, which by induction hypothesis fulfils
$\|h^i\| \leq \kappa^i \|f\|$.
According to \eqref{keypoint}, we can define
$q' \in  \Ar $ and $R' \in A_{s-1}[T]$ such that
$h^i = gq' + R' +h'$ with
$\|q'\|\leq \frac{\|h^i\|}{\|g\|} \leq \kappa ^i \frac{\|f\|}{\|g\|} $, 
and $\|R'\| \leq \|h^i\| \leq  \kappa^i \|f\|$ and 
$\|h'\| \leq \kappa \|h^i\| \leq \kappa^{i+1} \|f\|$.
Then we set
$q^{i+1} :=q^i +q'$ and
$R^{i+1} :=R^i +R'$.\par
Then
$\|f-( gq^{i+1} +R_{i+1}  ) \|
=\|h^i -(gq +R) \| =\|h'\| \leq \kappa^{i+1} \|f\|$. 
By construction
$\|q^{i+1} -q^i\| =\|q'\|\leq \kappa^i \frac{\|f\|}{\|g\|}$ and 
$\|R^{i+1} -R^i\| = \| R' \| \leq \kappa^i\|f\|$, so these 
sequences are well Cauchy sequences. This ends our induction.\par 
Now, by completeness of
$\Ar $ and $A_{s-1}[T]$ the sequences $(q^i)$ and $(R^i)$ have a limit, that we denote 
by $q\in \Ar$ and $R \in A_{s-1}[T]$, which satisfy
$f=gq+R$ as we wanted. 
\end{proof}

\begin{cor}\textbf{Weierstrass Preparation}.
\label{preparation}
Let $g\in \Ar$ be a $T$-distinguished element of order $s$. 
There exists an unique couple
$(w,e) \in A_s[T] \times \Ar $ such that $w$ is a monic polynomial 
of degree $s$, $e$ is a multiplicative unit of $\Ar$, and $g=ew$.
\end{cor}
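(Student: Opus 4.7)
The strategy is to reduce Weierstrass Preparation to Weierstrass Division applied to the monomial $X^s$. Applying the previous proposition to $f = X^s$ yields a unique couple $(q,R) \in A\{r^{-1}X\} \times A_{s-1}[X]$ with $X^s = gq + R$. Setting $w := X^s - R$, this $w$ is a monic polynomial of degree $s$ satisfying $gq = w$. The goal then becomes to show that $q$ is a multiplicative unit of $A\{r^{-1}X\}$; once this is granted, $e := q^{-1}$ is also a multiplicative unit and $g = we = ew$ is the desired factorisation.

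To control the norms, the equality \eqref{borne} applied to $X^s = gq + R$ gives $r^s = \max(\|g\|\|q\|, \|R\|)$, so in particular $\|R\| \leq r^s$ and thus $\|w\| = \max(r^s, \|R\|) = r^s$. By lemma \ref{multiplicatif}(1), $\|w\| = \|gq\| = \|g\|\|q\|$, so $\|q\| = 1/\|g_s\|$. Writing $q = \sum_{k \geq 0} q_k X^k$ and letting $k_0$ be the greatest index with $\|q_{k_0}\| r^{k_0} = \|q\|$, lemma \ref{multiplicatif}(2) says $\|w\| = \|c_{s+k_0}\| r^{s+k_0}$, where $c_\ell$ denotes the coefficient of $X^\ell$ in $w$. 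Since $\deg w = s$, this forces $k_0 = 0$, so $\|q_0\| = 1/\|g_s\|$ and $\|q_k\| r^k < \|q_0\|$ for every $k \geq 1$.

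The main step is now to prove that $q_0$ is a multiplicative unit. Identifying the coefficient of $X^s$ on both sides of $gq = w$ produces the identity $g_s q_0 + \sum_{k=1}^s g_{s-k} q_k = 1$. For $k \geq 1$, the $X$-distinguished hypothesis gives $\|g_{s-k}\| \leq \|g_s\| r^k$ (since $\|g_{s-k}\| r^{s-k} \leq \|g\| = \|g_s\| r^s$), while the previous paragraph supplies $\|q_k\| < 1/(\|g_s\| r^k)$; multiplying these yields $\|g_{s-k} q_k\| < 1$ for each $k \geq 1$. Therefore $g_s q_0 = 1 + u$ with $\|u\| < 1$, and by remark \ref{normemultiplicatif} the element $1 + u$ is a multiplicative unit. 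Since $g_s$ is also a multiplicative unit and products and inverses of multiplicative units remain multiplicative units, so is $q_0$. The factorisation $q = q_0(1 + q_0^{-1}(q - q_0))$ then exhibits $q$ as a product of two multiplicative units, because $\|q_0^{-1}(q - q_0)\| = \|q - q_0\|/\|q_0\| < 1$ and remark \ref{normemultiplicatif} applies once more; hence $q$ itself is a multiplicative unit.

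For uniqueness, suppose $g = e' w'$ with $e'$ a multiplicative unit and $w'$ monic of degree $s$. Then $X^s = g \cdot (e')^{-1} + (X^s - w')$ is a Weierstrass division of $X^s$ by $g$, with quotient $(e')^{-1} \in A\{r^{-1}X\}$ and remainder $X^s - w' \in A_{s-1}[X]$. The uniqueness statement in the previous proposition forces $(e')^{-1} = q$ and $X^s - w' = R$, i.e. $e' = e$ and $w' = w$. The main obstacle in the argument is really the verification that the distinguished quotient $q$ is itself a multiplicative unit: once the bound $\|q_k\| r^k < \|q_0\|$ for $k \geq 1$ is extracted from lemma \ref{multiplicatif}(2), everything else reduces to applying remark \ref{normemultiplicatif}.
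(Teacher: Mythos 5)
Your proof is correct and follows essentially the same route as the paper's: divide $X^s$ by $g$, set $w = X^s - R = gq$, use Lemma \ref{multiplicatif}(2) to force $k_0 = 0$, extract from the degree-$s$ coefficient identity that $g_s q_0$ is a multiplicative unit, and conclude via Remark \ref{normemultiplicatif}; uniqueness is likewise reduced to uniqueness in Weierstrass division exactly as in the paper. The only cosmetic difference is that you make the norm bookkeeping ($\|w\| = r^s$, $\|q\| = \|g_s\|^{-1}$, the explicit estimate $\|g_{s-k}q_k\| < 1$) slightly more explicit than the paper does.
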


\begin{proof}
Using Weierstrass division, we can write
$T^s = gq+R$ with
$\|T^s\| = \max (\|g\|\|q\| , \|R\|)$, and 
$R\in A[T]_{s-1}$. 
Let us set 
\[w:=T^s-R=gq.\] 
So $w \in A_s[T]$ is a monic polynomial.
Since $g$ is $T$-distinguished of order $s$, according to lemma \ref{multiplicatif},
and if we denote by
$k_0$ the greatest index such that 
$\|q_{k_0}\|r^{k_0} = \|q\|$, and
$w= \sum\limits_{l=0}^s w_lT^l$, we obtain 
\[ \|w\| = \|gq\| = \|(gq)_{s+k_0}\|r^{s+k_0} = \|w_{s+k_0}\|r^{s+k_0}. \] 
But since $w\in A_s[T]$, necessarily,  $s+k_0=s$ 
and $k_0=0$. Hence, by definition of $k_0$, for 
all $k>0$, $\|q_0\|>\|q_k\|r^k$.\par
The coefficient of degree $s$ in $gq$ being $1$, (because $gq=T^s-R$), 
we have the equality 
\[1 = g_0q_s +g_1q_{s-1} + \ldots + g_sq_0\] 
and since $k_0=0$, 
and $g$ is $T$-distinguished of order $s$, we obtain,  
with the same reasoning that we have already used in the course of the proof of lemma \ref{multiplicatif}, that $\|g_sq_0\|>\|g_{s-i}q_i\|$ for $i=1\ldots s$. 
So $\|g_sq_0\|=\|1\|=1$, and 
$g_sq_0=1-(g_{s-1}q_1 + \ldots g_0q_s)$, with 
$\|g_sq_1 + \ldots g_0q_s \| <1$.
Thus, $g_sq_0$ is a multiplicative unit. 
Moreover, since $g_s$ is also a multiplicative unit, $q_0$ is also a multiplicative 
unit, and $\|q_0\|=\|g_s\|^{-1}$.
Hence 
\begin{equation}
\label{qmult}
q=q_0(1+\frac{q_1}{q_0}T + \ldots + \frac{q_k}{q_0}T^k + \ldots )
\end{equation} 
and since $k_0 =0$ (so $\|q_i\|r^i < \|q_0\|$ for $i>0$) and $q_0$ is a multiplicative unit,
$\| \frac{q_i}{q_0} \|r^i <1$ for all $i>0$.
Hence 
\[1+\frac{q_1}{q_0}T + \ldots + \frac{q_k}{q_0}T^k + \ldots\] 
is a multiplicative unit of 
\Ar, and according to \eqref{qmult}, $q$ is also a multiplicative unit. 
So $g=q^{-1}(T^s-R)$, with $q^{-1}$ a multiplicative unit and
$T^s-R$ a monic polynomial of degree $s$. 
So if we set 
$e:=q^{-1}$, and $w=T^s-R$ we have the expected result: $g=ew$.\par
As for the uniqueness of this decomposition, if 
$g=ew$, $e$ and $w$ being as in the statement of 
the corollary, then $w=T^s+R$ with $R\in A_{s-1}[T]$, and 
$T^s = w-R = e^{-1}g+(-R)$ which is the Weierstrass division of $T^s$ by $g$.
Hence $e$ and $R$ are unique and $w$ too 
because $w=T^s + R$.
\end{proof}

Let us assume that $\mathcal{A}$ is a $k$-affinoid algebra, 
let $(r_1, \ldots , r_n)$ be a polyradius, and let us set
$A:=\mathcal{A}\{r_1^{-1}T_1, \ldots , r_{n-1}^{-1}T_{n-1} \}$.
Then if we set 
$r=r_n$, 
$\mathcal{A}\{r_1^{-1}T_1 , \ldots , r_n^{-1} T_n\} = \Ar$, 
and we can introduce the notion of an element $T$-distinguished, apply Weierstrass theory 
to them, which corresponds to the classical one, especially if 
$\mathcal{A}=k$, where we find the classical Tate algebra 
$k\{r_1^{-1}T_1,\ldots , r_n^{-1}T_n\}$.\par
Now we state a result that we will need in the next section.

\begin{lemme}
\label{lemme_simple}
Let $\varepsilon >0$ be given and 
$\underline{r}>0$ be a polyradius. 
Let us assume that $A$ is Noetherian, and let us consider 
\[f= \sum_{\nu \in \mathbb{N}^n} f_{\nu} T^{\nu} \in A\{\underline{r}^{-1}T \} .\]
Then there exists a finite subset $J \subseteq \mathbb{N}^n$, and for all 
$\nu \in J$, a series $\phi_{\nu} \in A\{\underline{r}^{-1}T \}$ satisfying 
$\| \phi_{\nu} \| < \varepsilon$, such that 
\[f = \sum_{\nu \in J} f_{\nu}(T^{\nu} + \phi_{\nu} ) \]
and such that in the $\phi_{\nu}$'s, no terms $T^{\mu}$ with $\mu \in J$ appear. Moreover, if we fix some $\mu \in \N^n$, we can assume that $\mu \in J$. 
\end{lemme}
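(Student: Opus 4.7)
The strategy is to choose $J$ so that it already contains a finite set of indices whose $f_{\nu_i}$'s generate the ideal $I := (f_\nu \st \nu \in \N^n)$ of $A$, then to redistribute the \emph{tail} $\sum_{\nu' \notin J} f_{\nu'}X^{\nu'}$ over those generators, using the noetherian hypothesis to do so with controlled norms. The prescribed index $\mu$ and the growth condition $\|f_\nu\|\underline{r}^\nu \to 0$ simply force $J$ to contain $\mu$ and all sufficiently large indices.

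By noetherianity, pick $\nu_1, \dots, \nu_m \in \N^n$ such that $f_{\nu_1}, \dots, f_{\nu_m}$ generate $I$. The continuous surjective $A$-linear map $\pi : A^m \twoheadrightarrow I$, $(a_1, \dots, a_m) \mapsto \sum_i a_i f_{\nu_i}$, is open in the Banach $A$-module sense; hence there exists $C > 0$ such that every $g \in I$ admits a preimage with $\max_i \|a_i\| \leq C\|g\|$. Using $\|f_{\nu'}\|\underline{r}^{\nu'} \to 0$, fix $N$ such that $\|f_{\nu'}\|\underline{r}^{\nu'} < \varepsilon/C$ whenever $|\nu'| > N$, and set
\[J := \{\nu_1, \dots, \nu_m\} \cup \{\nu' \in \N^n \st |\nu'| \leq N\} \cup \{\mu\},\]
which is finite and contains the prescribed $\mu$.

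For each $\nu' \notin J$, apply the open-mapping bound to $g = f_{\nu'}$ to get $a_{\nu',1}, \dots, a_{\nu',m} \in A$ with $f_{\nu'} = \sum_i a_{\nu',i} f_{\nu_i}$ and $\max_i \|a_{\nu',i}\|\underline{r}^{\nu'} \leq C\|f_{\nu'}\|\underline{r}^{\nu'} < \varepsilon$. For $i = 1, \dots, m$, set
\[\phi_{\nu_i} := \sum_{\nu' \notin J} a_{\nu',i}X^{\nu'} \in A\{\underline{r}^{-1}X\},\]
which converges because its weighted coefficients are bounded by $\varepsilon$ and tend to $0$, and which satisfies $\|\phi_{\nu_i}\| < \varepsilon$; set $\phi_\nu := 0$ for the remaining $\nu \in J$. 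By construction no monomial $X^{\nu}$ with $\nu \in J$ appears in any $\phi_\nu$, and expanding gives
\[\sum_{\nu \in J} f_\nu(X^\nu + \phi_\nu) = \sum_{\nu \in J} f_\nu X^\nu + \sum_{\nu' \notin J}\Bigl(\sum_i a_{\nu',i} f_{\nu_i}\Bigr)X^{\nu'} = \sum_{\nu \in J} f_\nu X^\nu + \sum_{\nu' \notin J} f_{\nu'}X^{\nu'} = f,\]
as required.

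The only delicate point is the open-mapping step: one needs that preimages under $\pi$ can be chosen with norm uniformly bounded by a multiple of the norm of the image, i.e.\ that the subspace and quotient norms on $I$ are equivalent. When $A$ is a $k$-affinoid algebra (which is the situation of interest in the sequel) this is the standard open mapping theorem for Banach $k$-spaces; in the general complete normed noetherian setting, it is exactly the quantitative incarnation of noetherianity used above.
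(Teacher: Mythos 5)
Your proof is correct and follows essentially the same route as the paper's: noetherianity yields a finite generating set for the ideal $(f_\nu)_{\nu\in\N^n}$, a Banach/open-mapping argument (the paper cites \cite[3.7.3]{BGR}) gives the uniform bound $\|a_{\nu'}^{\nu}\|\leq C\|f_{\nu'}\|$, and the decay of $\|f_{\nu'}\|\underline{r}^{\nu'}$ forces $\|\phi_\nu\|<\varepsilon$ once $J$ is enlarged to contain $\mu$ and all small indices. The only cosmetic difference is that you enlarge $J$ once at the outset, whereas the paper enlarges it in two later steps; you also state the open-mapping bound in the correct direction, which the paper's footnote garbles.
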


\begin{proof}
Let us denote by $\mathcal{I} $ the ideal generated by 
the family $\{ f_{\nu} \} _{\nu \in \mathbb{N}^n }$.
Since $A$ is Noetherian, there exists $J$ 
a finite subset of $\mathbb{N}^n$ such that  
$\mathcal{I} = A.(f_{\nu} )_{\nu \in J}$.
So for all $\mu \in \mathbb{N}^n \setminus J$ one can find a decomposition
$f_{\mu} = \sum\limits_{\nu \in J } f_{\nu}a_{\mu}^{\nu} $ with $a_{\mu}^{\nu} \in A$.
In fact, using \cite[3.7.3]{BGR}, we can even assume\footnote{Indeed, consider 
\[ \begin{array}{cccc}
\psi  : & A^{J} & \to & \mathcal{I} \\
        & (a_{\nu})_{ \nu \in J} & \to & \displaystyle \sum_{\nu \in J} a_{\nu}f_{\nu}
        \end{array} \]
According to \cite[3.7.3.1]{BGR}, $\mathcal{I}$ is a complete normed 
$A$-module, and $\psi$ is a continuous map of normed $A$-modules. 
Hence there exists 
a constant $C$ such that 
$\| \psi(x) \| \leq C \|x\|$ for all $x\in A^J$.} 
that there exists a real constant $C>0$ such that
\begin{equation}
\label{eqJnu}
\forall \mu \in \mathbb{N}^n, \ \forall \nu \in J, \ \|a_{\mu}^{\nu} \| \leq C \|f_{\mu} \|.
\end{equation}
Then, let us  define for $\nu \in J$   
\[\phi_{\nu} = \sum\limits_{\mu \in \mathbb{N}^n \setminus J }
a_{\mu}^{\nu} T^{\mu}.\] 
Since 
$\|a_{\mu}^{\nu} \| \leq C \|f_{\mu} \| $, $\phi_{\nu} \in \ara $.  
Hence, in $A\{\underline{r}^{-1}T \}$, the following equality is satisfied: 
\begin{equation}
\label{eq_formsimple}
f= \sum\limits_{\nu \in J} f_{\nu} ( T^{\nu} + 
\sum\limits_{\mu \N^n \setminus J} a_{\mu}^{\nu} T^{\mu} ) =
\sum\limits_{\nu \in J} f_{\nu} (T^{\nu} + \phi_{\nu} ).
\end{equation}
Now, if $\nu_0 \notin J$ we set 
$J'=J \cup \{\nu_0\}$, 
$\phi'_{\nu_0}:=0$, and for 
$\nu\in J$, 
$\phi'_{\nu}:= \sum\limits_{\mu\in \N^n \setminus J'} {a}_{\mu}^{\nu} T^{\mu}$. 
One checks that the properties mentioned above still hold, namely 
$\|{a}^{\mu}_{\nu} \| \leq C \|f_{\mu}\|$, where the constant $C$ has not been changed, and 
\[ f = \sum_{\nu \in J'} f_{\nu} (T^{\nu} + \phi'_{\nu} ).\]
Moreover, 
\[C\| f_{\mu}\| \underline{r}^{\mu} \xrightarrow[|\mu| \rightarrow + \infty]{} 0,\] so there exists a finite set $K\subset \N^n$ such that 
\[ \forall \nu \in J, \ \forall \mu \in N^n \setminus K, \ \|a_{\mu}^{\nu} \| < \varepsilon.\]
Hence if we increase $J$ adding the elements of $K \setminus J$ to $J$, 
we will manage to obtain a decomposition 
\[ f = \sum_{\nu \in J} f_{\nu}(T^{\nu} + \phi_{\nu} ) \] 
such that  $\| \phi_{\nu} \| < \varepsilon$ for all $\nu \in J$.
\end{proof}

\subsection{Equivalence of the two notions}
\label{section1.4}

From now on, 
$\mathcal{A}$ will be a $k$-affinoid algebra, and 
$\underline{r} \in {\mathbb{R}^*_+}^n$ a polyradius such that
$\underline{r} > \underline{1}$ and we will set 
$\ara = \Arn$. If $\nu \in \N^n$ we will set 
\[ T^\nu := T_1^{\nu_1}T_2^{\nu_2} \ldots T_n^{\nu_n}.\] 
If $\nu = (\nu_1, \ldots , \nu_n) \in \mathbb{N}^n$, we will set 
\[|\nu|_{\infty} = \max\limits_{i=1\ldots n } \nu_i.\] 
If $\underline{r} \in {\mathbb{R}^*_+}^n$ and $\nu \in \mathbb{N}^n$, we will set 
\[ \underline{r}^{\nu} = \prod_{i=1}^n r_i^{\nu_i}. \]
When $\mu, \nu \in \mathbb{N}^n$, we will say that 
$\mu <_{lex} \nu$ when $\mu$ is smaller than $\nu$ with respect to the lexicographic order, 
that is to say when there exists an index $m$ such that 
$\mu_m < \nu_m$ 
and $\mu_{m-1}=\nu_{m-1}, \ldots , \mu_{1} = \nu_{1}$. \par
\label{notationfx}
We will use the following notation. 
If $\A$ is a $k$-affinoid algebra, 
$f= \sum\limits_{n\in \mathbb{N} } a_n T^n \in \mathcal{A}\{r^{-1} T \}$ 
and $x\in \affin{A}$, we will denote by $f_x$ the element of 
$\mathcal{H}(x) \{r^{-1}T\}$ defined by 
\[f_x = \sum_{n\in \N} a_n(x)T^n.\]\par 
Since $\mathcal{A}$ is Noetherian, we can apply lemma \ref{lemme_simple} to it.

\begin{prop}
\label{unite} 
Let  
$f = \sum\limits_{\nu \in \mathbb{N}^n } f_{\nu } T^{\nu} \in \ara$.
There exists a constructible covering of $X$, 
$(X_i,S_i) \stackrel{\varphi_i}{\dashrightarrow} X$, $i=0..N$, 
such that, if we consider the following Cartesian diagrams:
\[
\xymatrix{
(X_i,S_i) \ar@{-->}[r]^{\varphi_i} & X \\
X_i\times \Er \ar[u]^{\pi_i } \ar@{-->}[r]^{\varphi_i '} & X\times \Er \ar[u]^{\pi} 
}
\]
and if we denote by $\mathcal{A}_i$ the $k$-affinoid algebra
of $X_i$, for all 
$i=1..N$, there exist 
$a_i \in \mathcal{A}_i$ and a function 
\[g_i = \sum\limits_{\nu \in \mathbb{N}^n } g_{i,\nu}  T^{\nu } \in 
\mathcal{A}_i\{\underline{r}^{-1 }T \} \] 
such that
\begin{itemize}
\item  For all $i$, the family 
$\{g_{i,\nu} \}_{\nu \in \mathbb{N}^n } $ generates the unit ideal in 
$\mathcal{A}_i$.
\item For all $i$,  
$\varphi_i'^* (f)_{| \pi_i^{-1}(S_i) } = (a_ig_i )_{|\pi_i^{-1} (S_i) }$.
\end{itemize}

\end{prop}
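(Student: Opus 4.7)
The plan is to reduce $f$ via Lemma \ref{lemme_simple} to a sum indexed by a finite set $J$ of multi-indices, and then, for each $\nu_* \in J$, build a constructible datum in which the ratios $f_\nu/f_{\nu_*}$ are realised as new coordinates, so that $f$ factors geometrically as $f_{\nu_*}$ times a series one of whose coefficients equals $1$.

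Concretely, fix a small $\varepsilon \in (0,1)$ and apply Lemma \ref{lemme_simple} to obtain a finite $J \subseteq \mathbb{N}^n$ and a decomposition
\[f = \sum_{\nu \in J} f_\nu (X^\nu + \phi_\nu),\]
where each $\phi_\nu \in \mathcal{A}\{\underline{r}^{-1}X\}$ satisfies $\|\phi_\nu\| < \varepsilon$ and contains no monomial $X^\mu$ with $\mu \in J$. For each $\nu_* \in J$, I build a constructible datum $(X_{\nu_*}, S_{\nu_*}) \stackrel{\varphi_{\nu_*}}{\dashrightarrow} X$ of complexity $|J|-1$ by iterating, for each $\nu \in J \setminus \{\nu_*\}$, the elementary constructible datum of Definition \ref{def_dce} applied to the pair $(f_\nu, f_{\nu_*})$ with parameters $s_\nu \in \sqrt{|k^*|}$ chosen strictly greater than $\underline{r}^{\nu_*-\nu}$ (possible by density of $\sqrt{|k^*|}$ in $\mathbb{R}_{>0}$) and any $r_\nu > s_\nu$. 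In the resulting affinoid algebra $\mathcal{A}_{\nu_*}$, the new coordinates $t_\nu$ satisfy the relations $f_\nu = t_\nu f_{\nu_*}$ with $\|t_\nu\| \leq s_\nu$; setting $t_{\nu_*} := 1$, the pullback of $f$ to $X_{\nu_*}\times \Bur$ factors as
\[\varphi_{\nu_*}^{\prime *}(f) \;=\; f_{\nu_*} \cdot \sum_{\nu \in J} t_\nu (X^\nu + \phi_\nu) \;=:\; a_{\nu_*}\, g_{\nu_*}.\]
Because the $\phi_\nu$'s carry no monomial $X^\mu$ with $\mu \in J$, the coefficient of $X^{\nu_*}$ in $g_{\nu_*}$ is exactly $t_{\nu_*} = 1$, so the family of coefficients of $g_{\nu_*}$ trivially generates the unit ideal of $\mathcal{A}_{\nu_*}$.

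It remains to cover the zero locus $Z := \{x \in X : f_\nu(x) = 0 \text{ for all } \nu \in J\}$, a semianalytic subset on which the decomposition above shows that $f$ vanishes identically; the trivial constructible datum $(X,Z) \stackrel{\mathrm{id}}{\dashrightarrow} X$ together with $a := 0$ and $g := 1$ handles this case. For the covering property, any $x \in X \setminus Z$ admits some $\nu_* \in J$ that maximises $|f_\nu(x)|\underline{r}^\nu$ over $\nu \in J$; then $|f_\nu(x)/f_{\nu_*}(x)| \leq \underline{r}^{\nu_*-\nu} \leq s_\nu$ for every $\nu \neq \nu_*$, which places $x$ inside $\varphi_{\nu_*}(S_{\nu_*})$. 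The main delicate point is the compatibility between the strict inequalities $r_\nu > s_\nu$ built into the definition of an elementary constructible datum and the weak inequalities $|f_\nu(x)|\underline{r}^\nu \leq |f_{\nu_*}(x)|\underline{r}^{\nu_*}$ that define the stratification; the choice of $s_\nu$ strictly above $\underline{r}^{\nu_*-\nu}$, yet still in $\sqrt{|k^*|}$, is precisely what reconciles them and ensures that the stratification actually lifts to a constructible covering.
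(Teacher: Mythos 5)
Your proof is correct and follows essentially the same route as the paper's: apply Lemma \ref{lemme_simple}, then for each $\nu_*\in J$ adjoin the ratios $t_\nu=f_\nu/f_{\nu_*}$ via a constructible datum so that $f$ factors as $f_{\nu_*}$ times a series whose $X^{\nu_*}$-coefficient is $1$, and cover the common zero locus of the $f_\nu$, $\nu\in J$, by the trivial datum on which $f$ vanishes. The only (harmless) cosmetic difference is that the paper simply takes $s=1$ and selects $\nu_*$ maximising $|f_{\nu}(x)|$, rather than weighting by $\underline{r}^{\nu}$ and choosing $s_\nu>\underline{r}^{\nu_*-\nu}$.
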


\begin{proof}
According to lemma \ref{lemme_simple} (here we will not use the 
extra condition $\| \varphi_{\nu} \| < \varepsilon$ of this lemma), 
we can find 
a finite subset $J \subseteq \mathbb{N}^n$, and
for $\nu \in J$ some $\phi_{\nu} \in \ara$ such that:  
\[f=\sum\limits_{\nu \in J} f_{\nu} (T^{\nu} + \phi_{\nu} ).\]
Let us fix any $r>1$, and for each  
$\nu \in J$, let us consider the constructible datum 
$(X_{\nu}, S_{\nu} ) \stackrel{\varphi_{\nu} }{\dashrightarrow} X$
where the affinoid algebra of
$X_{\nu} $ is 
$  \mathcal{A}  \{ r^{-1}t_{\mu} \}_{\mu \in J \setminus \{ \nu \} } / (f_{\mu} -t_{\mu} f_{\nu} )$, 
and 
\[S_{\nu} := 
\{ x \in X_{\nu} \ \big| \ \ |f_{\kappa}(x)| \leq  |f_{\nu}(x)| \ \ \forall \kappa \in J \setminus \{ \nu \} \ \text{and} \ f_{\nu}(x) \neq 0 \}.\]
This gives rise to the following cartesian diagrams:
\[
\xymatrix{
(X_{\nu} ,S_{\nu} ) \ar@{-->}[r]^{\varphi_{\nu} } & X \\
X_{\nu}\times \Er \ar[u]^{\pi ' } \ar@{-->}[r]^{\varphi_{\nu} '} & X\times \Er \ar[u]^{\pi} 
}
\]
Now,
\[\varphi_{\nu}'^*(f) =
f_{\nu} \big( T^{\nu} +\phi_{\nu} + 
 \sum\limits_{\mu \in J \setminus \{\nu \} } t_{\mu} (T^{\mu} + \phi_{\mu} ) \big). \]
For $\nu \in J$, we set 
\[g_{\nu} = T^{\nu} + \phi_{\nu} + 
\sum_{\mu \in J\setminus \{ \nu \} } t_{\mu}(T^{\mu} + \phi_{\mu} ).\]
Hence, 
\[\varphi_{\nu}'^{*}(f)= f_{\nu}g_{\nu}.\]
Moreover, if we set $g_{\nu}=\sum\limits_{\mu \in \mathbb{N}^n} g_{\nu , \mu} T^{\mu}$, 
according lemma \ref{lemme_simple}, 
the coefficient of index $\nu$, $g_{\nu, \nu }$, is $1$, 
so the coefficients of $g_{\nu}$ generate the unit ideal.
Finally, let us denote by $\mathcal{I}$ the ideal of 
$\A$ generated by the family $(a_\nu)_{\nu \in J}$. 
By construction, $\mathcal{I}$ 
also equals the ideal generated by $(a_\nu)_{\nu \in \N^n}$.
Then, according to the definition of the $S_\nu's$:
\[\bigcup\limits_{\nu \in J} \phi_{\nu} (S_{\nu} ) = 
\{x\in X \ \big| \ \exists \nu\in J  \ \text{such that} \ f_{\nu}(x)\neq 0 \}
= X \setminus  V(\mathcal{I}). \]
Thus, if we set $S_0=V(\mathcal{I})$, then 
 $(X,S_0) \xrightarrow[]{id} X$ is an \ECD \ 
and $id^*(f)_{|S_0} =f_{|S_0} =0$. \par 
Now if we regroup the constructible data 
$(X_{\nu}, S_{\nu} ) \stackrel{\varphi_{\nu} }{\dashrightarrow} X$, for $\nu \in J$, with 
$(X,S_0) \xrightarrow{\varphi}  X$, we obtain the desired constructible covering.
\end{proof}

\begin{defi}
\label{defiWeieraut}
Let $\underline{r} \in (\R_+^*)^n$ be a polyradius and 
$d_1, \ldots, d_{n-1}$ some integers such that 
\begin{equation}
\label{condWa}
\forall \  i=1\ldots n-1, \ r_n^{d_i} \leq r_i.
\end{equation}  Then 
\[\sigma : 
\begin{cases}
T_i \mapsto & T_i + T_n^{d_i} \hspace{1cm} \text{for} \ 1\leq i\leq n-1 \\
T_n \mapsto & T_n 
\end{cases} \]
is an automorphism of \ara. 
We will call such an automorphism (as well as the automorphism it induces on 
the $k$-analytic space $\B_{\underline{r}}$) a Weierstrass automorphism.
\end{defi}
\begin{rem}
\label{Weierstab}
If $\underline{r}> \underline{1}$, we will use that $\sigma$ induces a 'classical' 
Weierstrass automorphism of 
$\A\{T_1,\ldots,T_n\}$, hence of $X\times \B^n$. 
\end{rem}
Recall the following classical result. If $f\in k\{T_1,\ldots,T_n\}$, then there exists 
a Weierstrass automorphism $\sigma$ of $k\{T_1,\ldots,T_n\}$ such that 
$\sigma(f)$ is $T_n$-distinguished. 
Roughly speaking, the next lemma says that 
if $\A$ is a $k$-affinoid algebra, $f\in \A\{T_1,\ldots,T_n\}$ is overconvergent, then locally on $X=\affin{A}$, we can obtain an analogous result.

\begin{prop}
\label{proplocalweie}
Let $\A$ be a $k$-affinoid algebra. 
Let $X = \affin{A}$ and let $x\in X$.
Let $\underline{r}\in \R^n$ be a polyradius such that $\underline{r}>1$.  
\begin{enumerate}
\item 
Let $f\in \ara$ such that 
$f_x \neq 0$. Then there exist an affinoid neighbourhood 
$V = \affin{B}$ of $x$, a polyradius $\underline{\rho}$ such that 
$1<\underline{\rho} \leq \underline{r}$, and $\sigma$ a Weierstrass automorphism of 
$\mathcal{B}\{ \underline{\rho}^{-1} T \}$ such that in 
$\mathcal{B}\{ \underline{\rho}^{-1} T \}$
\[\sigma(f) = ag\]
where $a\in \mathcal{B}$ and $g \in \mathcal{B}\{ \underline{\rho}^{-1} T \}$
is $T_n$-distinguished. 
\item 
More generally, let us consider $m$ functions 
$f_1,\ldots,f_m \in \ara$ such that for all $i$
$(f_i)_x \neq 0$. Then there exist an affinoid neighbourhood 
$V = \affin{B}$ of $x$, a polyradius $\underline{\rho}$ such that 
$1<\underline{\rho}\leq  \underline{r}$, and $\sigma$ a Weierstrass automorphism of 
$\mathcal{B}\{ \underline{\rho}^{-1} T \}$ such that for all $i$
\[\sigma(f_i) =a_ig_i\]
where $a_i\in \mathcal{B}$ and $g_i \in \mathcal{B}\{ \underline{\rho}^{-1} T \}$
is $T_n$-distinguished. 
\end{enumerate}
\end{prop}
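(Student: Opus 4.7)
I handle part (1) first; part (2) follows from the same construction applied simultaneously to $f_1,\ldots,f_m$, shrinking $V$ to the common affinoid neighbourhood and choosing a single set of Weierstrass exponents dominating the finitely many constraints imposed by each $f_j$ individually.

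Write $f = \sum_{\nu\in\N^n} a_\nu X^\nu$ with $a_\nu \in \mathcal{A}$. Fix a preliminary polyradius $\underline{\rho}' \in (\sqrt{|k^\times|})^n$ with $\underline{1} < \underline{\rho}' < \underline{r}$; since $\|a_\nu\|_{\mathcal{A}}\underline{r}^\nu \to 0$ and $f_x \neq 0$, the quantity $M := \sup_\nu |a_\nu(x)|(\underline{\rho}')^\nu$ is positive and attained at only finitely many indices. Let $\nu^* \in \N^n$ be the lex-largest such index, and choose a finite set $J \ni \nu^*$ so that $\|a_\nu\|_{\mathcal{A}}(\underline{\rho}')^\nu < M/K$ for every $\nu \notin J$, where $K$ is a large constant to be fixed later. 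Cut out an affinoid neighbourhood $V = \mathcal{M}(\mathcal{B})$ of $x$ as the rational subdomain of $X$ defined by $|a_\nu(y)| \leq (\underline{\rho}')^{\nu^*-\nu}\,|a_{\nu^*}(y)|$ for $\nu \in J \setminus \{\nu^*\}$ together with $|a_{\nu^*}(y)| \geq s$, for some $s \in \sqrt{|k^\times|}$ slightly less than $|a_{\nu^*}(x)|$. On $V$, $a_{\nu^*}$ is a unit of $\mathcal{B}$, and setting $h := f/a_{\nu^*} = \sum_\nu c_\nu X^\nu \in \mathcal{B}\{(\underline{\rho}')^{-1}X\}$ produces a series with $c_{\nu^*} = 1$ whose Gauss norm equals $(\underline{\rho}')^{\nu^*}$ and is attained \emph{uniquely} at the monomial $X^{\nu^*}$ (after tightening $V$ slightly so that each $\nu \neq \nu^*$ contributes strictly less).

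Now apply the classical Weierstrass trick. Choose positive integers $d_1 > d_2 > \cdots > d_{n-1}$ growing rapidly enough that the linear form $D(\nu) := \sum_{i<n} d_i \nu_i + \nu_n$ is injective on $J$ and attains its $J$-maximum $D^* := D(\nu^*)$ at $\nu^*$; this is possible because $\nu^*$ is lex-largest. Then define the final polyradius $\underline{\rho}$ by keeping $\rho_i = \rho_i'$ for $i < n$ and shrinking $\rho_n$ into $(1, \rho_n']$ close enough to $1$ that $\rho_n^{d_i} \leq \rho_i$ for every $i < n$ (possible because $\rho_n^{d_i} \to 1$ as $\rho_n \to 1^+$, while each $\rho_i > 1$). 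Then $\sigma : X_i \mapsto X_i + X_n^{d_i}$ (with $\sigma(X_n) = X_n$) is a Weierstrass automorphism of $\mathcal{B}\{\underline{\rho}^{-1}X\}$ in the sense of definition \ref{defiWeieraut}. Expanding $\sigma(h) = \sum_m g_m X_n^m$ with $g_m \in \mathcal{B}\{\rho_1^{-1}X_1,\ldots,\rho_{n-1}^{-1}X_{n-1}\}$, a direct combinatorial check shows that $g_{D^*}$ equals $c_{\nu^*} = 1$ plus corrections coming from $\nu \notin J$ of Gauss norm strictly less than $1$ (for $K$ large enough), while each $g_m$ with $m > D^*$ has Gauss norm strictly less than $\rho_n^{D^*-m}$. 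By remark \ref{normemultiplicatif}, $g_{D^*}$ is therefore a multiplicative unit of $\mathcal{B}\{\rho_1^{-1}X_1,\ldots,\rho_{n-1}^{-1}X_{n-1}\}$ and $\sigma(h)$ is $X_n$-distinguished of order $D^*$. Setting $a := a_{\nu^*} \in \mathcal{B}$ and $g := \sigma(h)$ yields $\sigma(f) = ag$ as required.

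The main obstacle is the coherent interleaved choice of parameters: the finite set $J$ (and the constant $K$), the neighbourhood $V$, the Weierstrass exponents $d_i$, and the final polyradius $\underline{\rho}$ must be selected in a compatible order so that (i) $h = f/a_{\nu^*}$ has a unique dominant monomial $X^{\nu^*}$, (ii) $\sigma$ satisfies the Weierstrass constraint $\rho_n^{d_i} \leq \rho_i$, and (iii) the norm estimates made at the preliminary radius $\underline{\rho}'$ survive the passage to $\underline{\rho}$. The crucial flexibility is that $\underline{r} > \underline{1}$ allows $\rho_n$ to be chosen arbitrarily close to $1$, making room for arbitrarily large $d_i$'s.
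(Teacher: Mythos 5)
Your overall strategy is the paper's: isolate a dominant monomial $X^{\nu^*}$ on a rational neighbourhood of $x$, then apply a Weierstrass automorphism $X_i\mapsto X_i+X_n^{d_i}$ so that the weight $D(\nu)=\sum_{i<n}d_i\nu_i+\nu_n$ converts lex-dominance into $X_n$-order. But the choice of the final polyradius breaks the argument. You keep $\rho_i=\rho_i'$ for $i<n$ and only shrink $\rho_n$, so that $\rho_n^{d_i}<\rho_i$ in general. The definition of $X_n$-distinguished of order $D^*$ requires not only that $g_{D^*}$ be a multiplicative unit and that $\|g_m\|\rho_n^m<\|g_{D^*}\|\rho_n^{D^*}$ for $m>D^*$ (the two conditions you check), but also that $\|g_{D^*}\|\rho_n^{D^*}=\|\sigma(h)\|$, i.e.\ that the order-$D^*$ coefficient carry the \emph{full} Gauss norm. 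This fails with your radii: the monomial $X^{\nu^*}$ survives inside $\sigma(X^{\nu^*})$ (the term where each binomial contributes $X_i$ rather than $X_n^{d_i}$), lands in $g_{\nu^*_n}$, and gives $\|g_{\nu^*_n}\|\rho_n^{\nu^*_n}\geq \prod_{i<n}(\rho_i')^{\nu^*_i}\rho_n^{\nu^*_n}>\rho_n^{D^*}$ as soon as some $\nu^*_i>0$. Concretely, for $f=X_1$ in two variables your recipe yields $\sigma(f)=X_1+X_2^{d_1}$ with $\|X_1\|=\rho_1'>\rho_2^{d_1}$, which is not $X_2$-distinguished of any order (the dominant coefficient $g_0=X_1$ is not a unit). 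The fix is the paper's: take $\rho_i=\rho_n^{d_i}$ \emph{exactly} (its $\underline{\rho}=(s^{d^{n-1}},\ldots,s^d,s)$ with $d_i=d^{n-i}$), so that every monomial of $\sigma(X^{\nu^*})$ has the same norm $\rho_n^{D^*}$ and the dominance is transferred to the coefficient of $X_n^{D^*}$.

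Two further points in the bookkeeping would still need repair even with the corrected radii. First, you cannot make the Gauss norm of $h$ "attained uniquely at $X^{\nu^*}$ after tightening $V$": if several $\nu$ achieve $\max_\nu|a_\nu(x)|(\underline{\rho}')^\nu$, the tie holds at the point $x$ itself and persists on every neighbourhood; what is actually needed is $\|a_\nu/a_{\nu^*}\|<\rho_n^{D^*-D(\nu)}$, hence $\|a_\nu/a_{\nu^*}\|<\rho_n$ with $\rho_n$ close to $1$, and your rational-domain bound $\|a_\nu/a_{\nu^*}\|\leq(\underline{\rho}')^{\nu^*-\nu}$ is far too weak. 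The paper gets the right bound by first absorbing every $\nu$ with $|f_\nu(x)|<|f_\mu(x)|$ into the error terms $\phi_\mu$ (its Step 2), after which $\{|f_\nu|\leq s'|f_\mu|\}$ with $1<s'<s=\rho_n$ is a genuine neighbourhood of $x$. Second, that same step guarantees $\mu$ is lex-largest in all of $J$; your $\nu^*$ is only lex-largest among the maximizers, while $J$ must also contain the indices with $\|a_\nu\|_{\mathcal{A}}(\underline{\rho}')^\nu\geq M/K$, some of which may be lex-larger than $\nu^*$ — for those, $D(\nu)>D(\nu^*)$ for every admissible choice of the $d_i$, so your claim that $D$ attains its $J$-maximum at $\nu^*$ can fail.
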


\begin{proof}
We first prove (1).\par
Step 1. 
Let us write 
\[f = \sum_{\nu \in \N^n} f_\nu T^\nu \in \ara.\]
Let us consider $\mu\in \N^n$ the greatest index with respect to the lexicographic order such that 
\[ \max_{\nu \in \N^n} |f_{\nu}(x)| = |f_\mu(x)|.\]
Since by assumption $f_x\neq 0$, it is true that $f_\mu(x) \neq 0$. 
According to lemma \ref{lemme_simple}, there exists a finite set 
$J\subset \N^n$ such that $\mu\in J$, and for each $\nu \in J$ a 
series $\phi_\nu \in \ara$ which satisfies $\| \phi_\nu\|_{\ara} <1$ such that
\begin{equation}
\label{egfond}
 f= \sum_{\nu \in J} f_\nu(T^\nu + \phi_\nu).
\end{equation}\par 

Step 2. Let us consider some 
$\nu \in J$ and let us assume that 
\[ |f_\nu(x)| < |f_\mu(x)|.\]
Then we pick some $a,b\in \R$ such that 
\[|f_\nu(x)| <a<b< |f_\mu(x)|.\]
Next, let us introduce the affinoid domain of $X$:
\[W:= \{ z\in X \st |f_\nu(z)|\leq a \ \and \ b\leq |f_\mu(z)|  \} = \affin{B}.\]
By construction, $W$ is an affinoid neighbourhood of
$x$, $f_\mu$ is invertible in $\mathcal{B}$ and 
\[ \left\| \frac{f_\nu}{f_\mu} \right\|_{\mathcal{B}} \leq \frac{a}{b} <1.\]
So we can write:
\[f_\nu(T^\nu + \phi_\nu) = f_\mu 
\left( \frac{f_\nu}{f_\mu} (T^\nu +\phi_\nu) \right).\]
Next we consider some polyradius 
$\underline{1}<\underline{\rho} \leq \underline{r}$. 
Clearly 
\[ \underline{\rho}^\nu \xrightarrow[\underline{\rho} \to 1]{} 1 .\]
So we can chose some $\underline{\rho}$ close enough to $\underline{1}$ such that 
\[ \left\| \frac{f_\nu}{f_\mu}  T^\nu \right\|_{\Brho} <1.\]
Since we already knew that 
$\|\phi_\nu\|_{\Brho}<1$ it follows that 
\[ \left\| \frac{f_\nu}{f_\mu} ( T^\nu+ \phi_\nu) \right\|_{\Brho} <1.\]
But since 
\[f_\nu(T^\nu +\phi_\nu) = f_\mu \left( \frac{f_\nu}{f_\mu}(T^\nu + \phi_\nu ) \right),\]
if we set 
\[\phi'_\mu := \phi_\mu + \frac{f_\nu}{f_\mu} ( T^\nu+ \phi_\nu)\]
we still have that 
$ \| \phi'_\mu \|_{\Brho}<1$ and 
\[f_\mu(T^\mu +\phi_\mu) + f_\nu (T^\nu +\phi_\nu) = 
f_\mu( T^\mu + \phi'_\mu).\]
Hence we can remove $\nu$ from $J$ and replace $\phi_\mu$ by $\phi'_\mu$. 
The equality \eqref{egfond} will still be satisfied. \par  
If we repeat this process for each 
$\nu \in J$ such that 
$|f_\nu(x)| < |f_\mu(x)|$, we can assume that 
\[ \forall \nu \in J, \ |f_\nu(x)| = |f_\mu(x)|.\] 
Thus, according to the definition of $\mu$, this implies that $\mu$ is the greatest index in $J$ with respect to the lexicographic order. 
 \par
Step 3. 
Then we set 
\[ d:= 1 +\max_{\nu \in J} |\nu|.\]
Since by assumption 
$\underline{1}<\underline{r}$, if we take $s>1$ which is close enough to $1$, we can assert that 
\begin{equation}
\label{inegsr}
 \underline{1} < (s^{d^{n-1} }, s^{d^{n-2} } , \ldots,s^d,s) \leq \underline{r}.
 \end{equation}
We fix a number $s>1$, which satisfies \eqref{inegsr}, and we set 
\begin{equation}
\label{defrho}
\underline{\rho} := (s^{d^{n-1} }, s^{d^{n-2} } , \ldots,s^d,s).
\end{equation}
In these conditions, 
it is easy to check that $\underline{\rho}$ satisfies condition \eqref{condWa}
of definition \ref{defiWeieraut} 
(actually, $\underline{\rho}$ has been 
defined in \eqref{defrho} to further this goal), so 
\[
\sigma : \left\{
\begin{array}{lrcl}
       &   T_1    & \mapsto & T_1+T_n^{d^{n-1}} \\
         &  \vdots  &         &  \vdots \\
         & T_i      & \mapsto & T_i + T_n^{d^{n-i}} \\
         & \vdots   &         & \vdots \\
         & T_{n-1}  &\mapsto  & T_{n-1} +T_n^d \\
         & T_n      & \mapsto & T_n
\end{array}
\right. 
\]
defines a Weierstrass automorphism of  $\Brho$. 
Then, for $\nu \in J \setminus \{\mu\}$ 
\[\sigma(f_\nu(T^\nu + \phi_\nu)) = f_\nu(\sigma(T^\nu) + \sigma(\phi_\nu) ) 
= f_\mu \left( \frac{f_\nu}{f_\mu} (\sigma(T^\nu) + \sigma(\phi_\nu)) \right) .\]
Since 
$\|\sigma(\phi_\nu) \|_{\Brho} = \|\phi_\nu\|_{\Brho} <1$, in fact we can chose 
$s$ close enough to $1$, so that 
\begin{equation}
\label{inegs'}
 s\| \phi_\nu\| <1.
 \end{equation}
Then we make the following calculation. If $\nu\in J$:
\begin{equation}
\|\sigma(T^\nu) \|_{\Brho} = \| T^\nu\|_{\Brho} = \prod_{k=1}^n\left(s^{d^{n-k}} \right)^{\nu_k} = s^{ \big(\sum\limits_{k=1}^n \nu_kd^{n-k} \big)} .
\end{equation} 
Moreover, we remark that 
$\sum\limits_{k=1}^n\nu_k d^{n-k}$ is nothing else but the integer encoded by $\nu$ 
in base $d$. 
Since by assumption, for all $\nu \in J\setminus \{\mu\}$ we have 
$\nu <_{lex} \mu$, it follows that for $\nu \in J\setminus \{\mu\}$
\[ \sum_{k=1}^n\nu_k d^{n-k} +1 \leq \sum_{k=1}^n\mu_k d^{n-k}.\]
As a corollary, 
\begin{equation}
\label{inegmunu}
s\|\sigma(T^\nu) \|_{\Brho}   \leq  \| \sigma(T^\mu) \|_{\Brho} .
\end{equation}
Let us now consider some $s'\in \R$, such that  
$1<s'<s$ and let us consider 
\[V:= \{ z\in X \st \forall \nu\in J\setminus \{\mu\}, \ |f_\nu(z)|\leq s'|f_\mu(z)| \}.\]
Then by construction, $V$ is an affinoid neighbourhood of $x$. 
Let us then replace  $\mathcal{B}$ by the affinoid algebra of $V$. 
Then by construction still, 
for all $\nu \in J\setminus \{\mu \}$, 
\[ \| \frac{f_\nu}{f_\mu} \|_{\mathcal{B}} \leq s' <s.\]
So according to \eqref{inegmunu}
\[ \| \frac{f_\nu}{f_\mu} \sigma (T^\nu) \|_{\Brho} < s \| \sigma(T^\nu)\|_{\Brho}
\leq \| \sigma(T^\mu) \|_{\Brho} .\]
So, according to \eqref{inegs'}, we can assume that 
\[ \| \frac{f_\nu}{f_\mu} \sigma(\phi_\nu) \|_{\Brho}  
\leq s' \| \sigma(\phi_\nu) \|_{\Brho}  = s' \|\phi_\nu \|_{\Brho}  <1 
\leq \|\sigma (T^\nu) \|.\]
Thus 
\[\sigma(f_\nu (T^\nu +\phi_\nu)) = 
f_\mu (\frac{f_\nu}{f_\mu} ( \sigma(T^\nu) + \sigma( \phi_\nu)) )\]
where 
\[ \| \frac{f_\nu}{f_\mu} ( \sigma(T^\nu) + \sigma( \phi_\nu)) \|_{\Brho}  
<\| \sigma(T^\mu) \|_{\Brho} .\] \par 
Step 4.
So 
\[\sigma(f) = 
f_\mu \left( \sigma(T^\mu) + \sigma(\phi_\mu) + \sum_{\nu\in J\setminus \{\mu\} } \frac{f_\nu}{f_\mu} ( \sigma(T^\nu) + \sigma( \phi_\nu)) \right)\]
Hence if we set 
\[ \phi = \sigma(\phi_\mu) + \sum_{\nu \in J\setminus\{\mu\}}  \frac{f_\nu}{f_\mu}(\sigma(T^\mu) + \sigma(\phi_\nu))\]
the preceding inequalities imply that  
$\|\phi\|_{\Brho} < \|\sigma(T^\mu)\|_{\Brho}$, and by construction  
\[ \sigma(f) = f_\mu (\sigma (T^\mu) + \phi) ).\]
It follows that 
$\sigma(T^\mu) + \phi$ is $T_n$-distinguished of order 
$\sum_{k=1}^n \mu_k d^{n-k}$, which ends the proof of (1). \par
For the proof of (2), it suffices to remark that we could have handled the proof of (1) simultaneously for all the $f_i's$. The main point being that in step 3, we have to take some $d$ big enough that works for all $f_i's$ simultaneously.
\end{proof}

\begin{lemme}
If $S$ is an overconvergent constructible subset of $X$, then $S$ is an 
overconvergent subanalytic subset of $X$.
\end{lemme}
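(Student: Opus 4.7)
The plan is to first reduce to a single constructible datum, then reduce to the elementary case by induction on complexity, and finally handle the elementary case by embedding the source into a product with a disc and invoking Lemma \ref{rem_rayon}.

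First I would observe that overconvergent subanalytic subsets are stable under finite unions: given $S_i = \pi(T_i \cap (X \times \mathbb{B}^{n_i}))$ with $T_i$ semianalytic in $X \times \mathbb{B}_{r_i}^{n_i}$, one pads each $T_i$ with free extra variables into a common $X \times \mathbb{B}_r^n$ (with $n = \max n_i$, $r = \max r_i$) and takes the union. Since by Definition \ref{defi_over} one has $S = \bigcup_i \varphi_i(T_i)$, it therefore suffices to treat a single constructible datum $(Y,T) \stackrel{\varphi}{\dashrightarrow} X$.

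Next I would prove by induction on complexity the following key claim: \emph{if $(Y,T) \xrightarrow{\varphi} (X,S)$ is an elementary constructible datum and $C \subset T$ is overconvergent subanalytic in $Y$, then $\varphi(C)$ is overconvergent subanalytic in $X$.} Granting the claim, I factor $\varphi = \varphi_1 \circ \cdots \circ \varphi_n$ (where each $\varphi_i : (X_i,S_i) \to (X_{i-1},S_{i-1})$ is elementary and $S_n = T$), note via Remark \ref{remcdsa} that $T$ is semianalytic in $Y$ and hence overconvergent subanalytic, and iterate: the inclusions $\varphi_i \circ \cdots \circ \varphi_n(T) \subset S_{i-1}$ preserve the hypothesis of the claim at every step, so after $n$ applications $\varphi(T)$ is overconvergent subanalytic in $X$.

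Finally I would prove the claim. Write $Y = \mathcal{M}(\mathcal{A}\{r^{-1}t\}/(f - tg))$ and view it as a closed analytic subspace of $X \times \mathbb{B}_r$ via $y \mapsto (\varphi(y), t(y))$; by construction $T \subset \{|t| \leq s\}$ with $s \in \sqrt{|k^*|}$ and $r > s$. Writing $C = \pi_2(C' \cap (Y \times \mathbb{B}^m))$ for some $C'$ semianalytic in $Y \times \mathbb{B}_{r'}^m$ with $r' > 1$, and using that $C \subset T$, I may replace $C'$ by $C' \cap (T \times \mathbb{B}_{r'}^m)$ without changing $C$; this puts $C' \subset T \times \mathbb{B}_{r'}^m \subset (X \times \mathbb{B}_s) \times \mathbb{B}_{r'}^m$. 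By Lemma \ref{immersion}(1), $C'$ remains semianalytic when viewed in $X \times \mathbb{B}_r \times \mathbb{B}_{r'}^m$ along the closed immersion $Y \times \mathbb{B}_{r'}^m \hookrightarrow X \times \mathbb{B}_r \times \mathbb{B}_{r'}^m$, and unwinding projections gives
\[ \varphi(C) = \pi_X\bigl(C' \cap (X \times \mathbb{B}_s \times \mathbb{B}^m)\bigr). \]
Setting $\underline{s} := (s, 1, \ldots, 1) \in \sqrt{|k^*|}^{m+1}$ and $\underline{\rho} := (r, r', \ldots, r') > \underline{s}$, Lemma \ref{rem_rayon} immediately concludes. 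The only real subtlety is that Lemma \ref{rem_rayon} requires a polyradius in $\sqrt{|k^*|}^{m+1}$ strictly smaller than the ambient one; this is exactly what the definition of an elementary constructible datum supplies via $s \in \sqrt{|k^*|}$ and $r > s$, so the remainder of the argument is essentially bookkeeping.
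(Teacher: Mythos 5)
Your argument is correct and takes essentially the same route as the paper: both proofs reduce to a single constructible datum, realize the source as a closed analytic subspace of $X\times(\text{polydisc})$ with $T$ landing inside a strictly smaller polydisc whose radii lie in $\sqrt{|k^*|}$, and conclude with Lemma \ref{rem_rayon}; the only organizational difference is that the paper builds the full closed immersion $Y\hookrightarrow X\times\mathbb{B}_{\underline{r}}$ by one induction on complexity and invokes Lemma \ref{rem_rayon} a single time, whereas you peel off one elementary datum per step and invoke it each time. The one slip is in your preliminary union-stability remark: the common radius must be $r=\min_i r_i$ rather than $\max_i r_i$, since the functions defining $T_i$ live in $\mathcal{A}\{r_i^{-1}X\}$ and therefore restrict to smaller polydiscs but are not functions on larger ones; with that correction the rest goes through as written.
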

\begin{proof}
It is sufficient to prove that if 
$(Y,T) \stackrel{\varphi}{\dashrightarrow} X$ is a constructible datum, then 
$\varphi(T)$ is overconvergent subanalytic in $X$. \par 
We claim that if $\varphi$ is a constructible datum of complexity $n$, 
there exist some polyradii 
$\underline{s},\underline{r} \in \R^n$ such that $\underline{s}\in \val^n$ and 
$0< \underline{s}<\underline{r}$, and some closed immersion $\iota$:
\[ \xymatrix{
Y \ar@{-->}[rd]^\varphi \ar@{^{(}->}[r]^\iota & X \times \Bur \ar[d]^{\pi} \\
 & X }
 \]
such that 
$\iota(T)\subset X \times \Bus$. 
Indeed this follows from the definition of a constructible datum, and is proved easily by induction on the complexity of the constructible datum $\varphi$. \par 
 Hence $\varphi(T) = \pi (\iota(T))$, and since 
 $\iota(T)$ is a semianalytic subset of 
 $X \times \Bur$ such that 
 \[ \iota(T) \subset X \times \Bus\]
 it follows that 
 $\pi (\iota(T))$ is an overconvergent subanalytic subset of $X$.
\end{proof}

\begin{theo}
\label{theo_eq}
Let $S \subset X$. If $S$ is overconvergent subanalytic, $S$ is also 
overconvergent constructible.
\end{theo}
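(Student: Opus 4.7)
The plan is to proceed by induction on $n$, the number of auxiliary disc variables in the definition of $S$. The base case $n = 0$ is immediate from Proposition~\ref{prop_gen}(1), since then $S = T$ is semianalytic in $X$ and hence overconvergent constructible.

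For the inductive step, write $S = \pi(T \cap (X \times \B^n))$ with $T$ semianalytic in $X \times \B_{\underline{r}}^n$ for some polyradius $\underline{r} > \underline{1}$. By Remark~\ref{rembasicsa} and the closure of the overconvergent constructible sets under finite boolean operations (Proposition~\ref{prop_gen}(3)), I may assume $T$ is basic semianalytic, cut out by finitely many inequalities $|f_i| \Diamond_i |g_i|$ with $f_i, g_i \in \A\{\underline{r}^{-1}Y_1,\ldots,\underline{r}^{-1}Y_n\}$ and $\Diamond_i \in \{\leq, <\}$.

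The heart of the argument is local Weierstrass preparation. Applying Proposition~\ref{proplocalweie} simultaneously to all the $f_i$ and $g_i$ at each Berkovich point $x \in X$, I obtain an affinoid neighbourhood $V_x = \affin{B}$, a polyradius $\underline{\rho}_x > \underline{1}$, and a Weierstrass automorphism $\sigma_x$ of $\mathcal{B}\{\underline{\rho}_x^{-1}Y\}$ such that each $\sigma_x(f_i)$ and $\sigma_x(g_i)$ factors as a multiplicative unit times a $Y_n$-distinguished element. By compactness of the Berkovich space $X$, finitely many $V_{x_1},\ldots,V_{x_N}$ suffice to cover $X$; this is the simple compactness argument replacing the combinatorial lemma of \cite{Sch_sub}. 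Using Corollary~\ref{corlocalover} (overconvergent constructibility is local for the Berkovich topology) together with Lemma~\ref{rem_local} (each $V_{x_j}$ sits inside a constructible datum from $X$), it is enough to prove the result after restriction to each $V_{x_j}$.

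On each $V_{x_j}$, the Weierstrass automorphism $\sigma_{x_j}$ induces an automorphism of $V_{x_j} \times \B^n$ (Remark~\ref{Weierstab}) and hence preserves the overconvergent subanalytic class, so I may apply Corollary~\ref{preparation} to each $\sigma_{x_j}(f_i)$ and $\sigma_{x_j}(g_i)$, writing them as $e_i \, w_i$ where $e_i$ is a multiplicative unit (so $|e_i|$ is a positive constant by Remark~\ref{normemultiplicatif}) and $w_i$ is a monic polynomial in $Y_n$ with coefficients overconvergent in $Y_1,\ldots,Y_{n-1}$. The inequalities thus transform into conditions of the form $|w_i| \Diamond_i c_i |w_i'|$ with positive constants $c_i \in \sqrt{|k^\times|}$ between polynomials in $Y_n$.

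The main obstacle is the elimination of $Y_n$. For each pair of polynomials $(w_i, w_i')$ appearing in such an inequality, I plan to introduce an elementary constructible datum with new coordinate $t_i = w_i/w_i'$ and semianalytic condition $|w_i| \leq s|w_i'|,\ w_i' \neq 0$, together with parallel ECDs handling the loci of vanishing leading coefficients of the $w_i$. Assembling these via iterated fibered products (Corollary~\ref{lemmeintersection}), the relations $w_i = t_i w_i'$ let me absorb the $Y_n$-dependence into finitely many new variables, reducing the problem to an overconvergent subanalytic set in $V_{x_j} \times \B_{\underline{\rho}''}^{n-1}$. The induction hypothesis then yields that this reduced set is overconvergent constructible in $V_{x_j}$, and gluing via the Berkovich-local nature of constructibility concludes the argument.
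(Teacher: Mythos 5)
Your overall architecture (induction on $n$, reduction to basic semianalytic $T$, local Weierstrass preparation plus Berkovich compactness, then preparation into monic polynomials in $Y_n$) matches the paper's proof, but there are two genuine gaps. The first is that you apply Proposition~\ref{proplocalweie} directly to the $f_i$ and $g_i$, while that proposition requires $(f_i)_x\neq 0$ for every $x$. This hypothesis can fail: all the coefficients of $f_i$ may vanish simultaneously at some point of $X$. The paper's proof has a preliminary step you have omitted: using Proposition~\ref{unite} (itself built on Lemma~\ref{lemme_simple}), one first passes to a constructible covering of $X$ on which each $f_j$ factors as $a_j F_j$ with $a_j$ in the base algebra and the coefficients of $F_j$ generating the unit ideal, so that $(F_j)_x\neq 0$ everywhere. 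Only then can the local Weierstrass step be carried out.

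The second and more serious gap is the elimination of $Y_n$. After preparation, the conditions are inequalities $|P_j|\,\Diamond_j\,c_j|Q_j|$ between polynomials in $Y_n$ with coefficients in $\mathcal{B}\{\rho_1^{-1}Y_1,\ldots,\rho_{n-1}^{-1}Y_{n-1}\}$. Your plan to introduce elementary constructible data with $t_i=w_i/w_i'$ does not remove the variable $Y_n$: the $w_i$, $w_i'$ still depend on $Y_n$, and moreover the constructible-datum formalism of Definition~\ref{def_dce} only accepts functions $f,g$ in the affinoid algebra of the \emph{base} space, not functions living on $X\times\B^n$; adding a variable $t_i$ with the relation $w_i=t_iw_i'$ produces a set in an even larger product from which $Y_n$ still has to be projected out. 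The paper closes this step by invoking the one-variable elimination theorem of Ducros (\cite[2.5]{Duc_sa}): the image of $T\cap(X\times\B^n)$ under the projection along the last coordinate is again a \emph{semianalytic} subset of $X\times\B_{(r_1,\ldots,r_{n-1})}$, which is exactly what lets the induction hypothesis for $n-1$ apply. Without that result, or a worked-out substitute for it, the inductive step does not go through.
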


\begin{proof}
Let $S$ be an overconvergent subanalytic subset of $X$. 
By definition, there exist 
$r>1$, $R$ a 
semianalytic subset of $X \times \Er$ such that 
$S = \pi \left( R \cap (X \times \E ) \right)$. 
We then show  by induction on $n$ that $S$ is overconvergent constructible.\par
If $n=0$, there is nothing to prove since in that case, $S$ is 
then a \GSA \ of $X$, in particular it is an \OC.\par 
Let then $n>0$ be given and let us assume that the theorem holds for 
integers $<n$. 
In order to prove the theorem, we can actually assume that $R$ is a basic semianalytic subset (see remark \ref{rembasicsa}), i.e. 
that there are 
$2m$ functions 
$f_1, \ldots , f_m , g_1, \ldots , g_m \in \ara$ and 
$\Diamond_j \in \{ \leq , < \}$ for $j=1 \ldots m$ 
such that 
\begin{equation}
\label{formulaT}
R = \{x\in X \times \Br^n  \ \big| \ |f_j(x)| \Diamond_j |g_j(x)| \ j=1\ldots m \}.
\end{equation} \par 
Step 1. 
According to proposition \ref{unite} 
we can find a constructible covering 
$(X_i,S_i) \stackrel{\varphi_i}{\dashrightarrow} X$  
where $X_i = \mathcal{M}(\mathcal{B}_i)$ which induces the following cartesian diagram
\[
\xymatrix{ 
X_i\times \Bur^n \ar[r]^{\varphi'_i} \ar[d]_{\pi_i} & X \times \Bur^n \ar[d]^\pi \\
X_i \ar@{-->}[r]^{\varphi_i} & X }
\] 
such that for all $j=1 \ldots m$, 
\begin{align}
\label{Fij}
{\varphi'}_i^*(f_j)_{| \pi_i^{-1}(S_i)} = (a_j^iF_j^i)_{|\pi_i^{-1}(S_i)} \\ 
\label{Gij}
{\varphi'}_i^*(g_j)_{| \pi_i^{-1}(S_i)} = (b_j^iG_j^i)_{| \pi_i^{-1}(S_i)}
\end{align}  
where 
$a^i_j, b^i_j \in \mathcal{B}_i$, 
$F^i_j, G^i_j \in \mathcal{B}_i\{ \underline{r}^{-1} T \}$, and the coefficients of 
$F_j^i$ (resp. of $G_j^i$) generate the unit ideal in $\mathcal{B}_i$. 
Then for each $i$ we set 
\begin{align*}
R_i := \{x\in X_i \times \Br^n  \ \big| \ 
|a^i_jF^i_j(x)| \Diamond_j |b^i_jG^i_j(x)| \ j=1\ldots m \}
\end{align*} 
So \eqref{Fij} and \eqref{Gij} imply precisely that  
\[ R_i \cap \pi_i^{-1}(S_i)= {\varphi'}_i^{-1}(R) \cap \pi_i^{-1}(S_i).\]
So if we set 
\[U_i := \pi_i( R_i \cap (X_i\times \B^n)) \]
then, 
\[ \varphi_i (S_i \cap U_i) = \varphi_i(S_i) \cap S\]
hence since the $\varphi_i(S_i)$ form a covering of $X$,  
\[ S = \bigcup_{i=1}^n \varphi(S_i\cap U_i).\] 
So if we prove that $\varphi_i (S_i \cap U_i)$ is 
overconvergent constructible, we are done. \par  
But actually, since each $S_i$ is overconvergent constructible in 
$X_i$ (it is even semianalytic, see remark \ref{remcdsa})  if we prove that $U_i$ is an overconvergent constructible subset 
of $X_i$, then it will follow that 
$S_i \cap U_i$ is an overconvergent constructible subset of $X_i$, and then according to 
proposition \ref{prop_gen} (2), $\varphi_i (S_i \cap U_i)$ will be overconvergent constructible in $X$. Thus, we restrict to prove that $U_i$ is overconvergent constructible in $X_i$.\par
Step 2. We can then replace $X$ by one of the $X_i$'s and assume that $R$ is defined by 
\begin{equation}
\label{formulaT1}
R = \{x\in X \times \Br^n  \ \big| \ |a_j f_j(x)| \Diamond_j |b_j g_j(x)| \ j=1\ldots m \}
\end{equation}
with $a_j,b_j \in \A$, $f_j,g_j \in \ara$ such that for all $j$, 
the coefficients of $f_j$ (resp. of $g_j$) generate the unit ideal of $\A$. In this situation we must show that $S$ is overconvergent constructible in $X$ where
\[S =\pi(R \cap (X\times \B^n)).\]
\par
Let then $x\in X$. The above property of the $f_j$'s and $g_j's$ implies that 
$(f_j)_x\neq 0$ and $(g_j)_x \neq 0$. 
So we can apply proposition \ref{proplocalweie} to them. 
Thus there exist
an affinoid neighbourhood $V =\affin{B}$ of $x$, some polyradius 
$\underline{1} < \underline{\rho} \leq  \underline{r}$ and some Weierstrass automorphism 
$\sigma$ of 
$\Brho$ such that for each $j$, 
\begin{align}
\label{sigmaeq1}\sigma(f_j) = \alpha_jF_j \\
\label{sigmaeq2} \sigma(g_j) = \beta_j G_j
\end{align} 
where $\alpha_j,\beta_j\in \mathcal{B}$ and $F_j,G_j$ are $T_n$-distinguished elements 
of $\Brho$.
Let us then consider the following commutative diagram:
\[ \xymatrix{
V \times \B_{\underline{\rho}} \ar[r]^{\overset{\sigma}{\sim}} \ar[rrd]_{\pi''} & 
V \times \B_{\underline{\rho}} \ar[rd]^{\pi'} \ar[r]^\iota & X\times \Bur \ar[d]^\pi \\
   &  & X }
   \]
where $\iota$ is the embedding of the affinoid domain $V \times \B_{\underline{\rho}}$ in 
$X \times \B_{\underline{\rho}}$.
Then let us set
\begin{align*}
 R':= \iota^{-1} (R) \ \and \  R'' := \sigma^{-1} (\iota^{-1}(R))  . 
\end{align*} 
First it is clear that 
\begin{align}
\notag S\cap V & = \pi(R \cap(X \times \B^n) ) \\
 \notag        & = \pi( R \cap (V \times \B^n)) \\
\notag         &=\pi'(R' \cap (V \times \B^n)) \\
\label{eqpip}&= \pi'' (R'' \cap (V \times \B^n ))
\end{align} 
For the last equality \eqref{eqpip}, we use  that the Weierstrass automorphism 
$\sigma$ induces an isomorphism  of $V \times \B^n$ as noticed in remark \ref{Weierstab}. \par
But since we know that being overconvergent constructible 
is a local property (see corollary \ref{corlocalover}), 
if we prove that $S\cap V$ is overconvergent constructible, 
then since $x$ has been taken arbitrarily, and since 
$V$ is an affinoid neighbourhood of $x$, this will conclude the proof. 
So we can restrict to prove that 
$ \pi'' (R'' \cap (V \times \B^n ))$ is overconvergent constructible in $V$. 
Now according to \eqref{formulaT1}--\eqref{sigmaeq2}, 
$R''$ is a semianalytic subset of $V\times \B_{\underline{\rho}}$ defined by inequalities between functions 
$a_j\alpha_jF_j$, $b_j\beta_jG_j$, where 
$a_j,\alpha_j,b_j,\beta_j \in \mathcal{B}$ and $F_j,G_j \in \Brho$ are $T_n$-distinguished. \par
Step 3. 
So replacing $X$ by $V$, $R$ by $R''$, $a_j\alpha_j$ by $a_j$, 
$b_j\beta_j$ by $b_j$, $F_j$ by $f_j$ and $G_j$ by $g_j$, we can assume that 
\begin{equation}
\label{formulaT2}
R = \{x\in X \times \Br^n  \ \big| \ |a_j f_j(x)| \Diamond_j |b_j g_j(x)| \ j=1\ldots m \}
\end{equation}
where 
$a_j,b_j \in \A$ and $F_j,G_j \in \ara $ are $T_n$ distinguished in $\ara$. 
Then, we apply the Weierstrass preparation theorem \ref{preparation} to 
$f_j$ and $g_j$. 
So there exist $e_j,e_j' \in \ara$ some multiplicative units, 
and $w_j,w_j'$ some monic polynomials of 
$\mathcal{A}\{r_1^{-1}T_1, \ldots , (r_{n-1})^{-1} T_{n-1} \} [T_n]$ such that 
\begin{align*}
f_j = e_jw_j \\
g_j = e'_jw'_j.
\end{align*} 
So if we set 
\begin{align*}
P_j:= a_jw_j \\
Q_j := b_jw'_j,
\end{align*}
we have that $P_j,Q_j \in \mathcal{A}\{r_1^{-1}T_1, \ldots , (r_{n-1})^{-1} T_{n-1} \} [T_n]$. 
In addition, since $e_j,e'_j$ are  multiplicative unit, for all 
$x\in X\times \Bur$, 
$|e_j(x)| = \|e_j\| \in \val$. So we finally obtain that 
\begin{align}
\notag R &= \{x\in X \times \Br^n  \ \big| \ |a_j f_j(x)| \Diamond_j |b_j g_j(x)| \ j=1\ldots m \} \\
 & = \{x\in X \times \Br^n  \ \big| \  \|e_j\| |P_j(x)| \Diamond_j \|e'_j\| |Q_j(x)| \ j=1\ldots m \}.
 \end{align}
Let us consider the projection along the last coordinate of $\Bur$, 
\[ X\times \Bur \xrightarrow[]{\pi_1} X \times \B_{(r_1,\ldots,r_{n-1})} 
\xrightarrow[]{\pi_2} X \]
according to \cite[2.5]{Duc_sa}
$\pi_1(R \cap (X\times \B^n)$ is a semianalytic subset of 
$X \times \B_{(r_1,\ldots,r_{n-1})}$.  
So by induction hypothesis, 
\[\pi_2 ( \pi_1(R \cap (X\times \B^n) ) \] 
is overconvergent constructible in $X$. 
Since $\pi_2 \circ \pi_1 =\pi$, this proves that $S$ is overconvergent constructible and ends the proof. 
\end{proof}
We have then proved 
\begin{theo}
\label{theoequiv}
Let $X$ be a strictly $k$-affinoid space, and $S\subset X$. Then $S$ is 
overconvergent subanalytic if and only if $S$ is overconvergent constructible.
\end{theo}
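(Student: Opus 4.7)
The statement packages together the two directions that the preceding discussion has reduced to. The $(\Leftarrow)$ direction is what the lemma immediately before Theorem~\ref{theo_eq} accomplishes: starting from a constructible datum $(Y,T)\stackrel{\varphi}{\dashrightarrow} X$ of complexity $n$, I would argue by induction on $n$ that $\varphi$ factors through a closed immersion $Y\hookrightarrow X\times\Bur$ with $\iota(T)\subset X\times\Bus$ for some $\underline s\in\val^n$ with $\underline 0<\underline s<\underline r$ (each elementary constructible datum contributes one coordinate $t$ together with the condition $|t|\leq s$), and then apply Lemma~\ref{rem_rayon} to rescale from $\underline s$ to $\underline 1$. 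Once that embedding is in place, $\iota(T)$ is a semianalytic subset of $X\times\Bur$ with image in $X\times\Bus$, and taking the projection to $X$ exhibits $\varphi(T)=\pi(\iota(T))$ as overconvergent subanalytic.

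For the $(\Rightarrow)$ direction, which is Theorem~\ref{theo_eq}, my plan is to proceed by induction on the number $n$ of ``overconvergent'' variables. Given $S=\pi(T\cap(X\times\B^n))$ with $T\subset X\times\Br^n$ semianalytic, I would first reduce, using the decomposition into basic semianalytic pieces from Remark~\ref{rembasicsa}, to the case where $T$ is defined by finitely many inequalities $|f_j|\Diamond_j|g_j|$ with $f_j,g_j\in\ara$. The goal is then to manipulate these $f_j,g_j$ so that they become polynomials in $X_n$ of controlled degree, after which the projection along $X_n$ can be handled by Ducros' theorem on the semianalytic image of a projection, and the $(n-1)$-variable induction hypothesis finishes the job.

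The manipulation proceeds in three moves. First, Proposition~\ref{unite} lets me pass, via a constructible cover of $X$, to the situation where each $f_j$ and each $g_j$ factors as $a\cdot h$ with $a\in\A$ and $h$ an element of $\ara$ whose coefficients generate the unit ideal of $\A$. Second, at each point $x\in X$ Proposition~\ref{proplocalweie} produces an affinoid neighbourhood $V$, a polyradius $\underline\rho$ with $1<\underline\rho\leq\underline r$, and a Weierstrass automorphism $\sigma$ of $\Brho$ simultaneously preparing all the $f_j,g_j$ to be $X_n$-distinguished up to a unit factor; Remark~\ref{Weierstab} ensures that $\sigma$ still induces an automorphism of $X\times\B^n$ so that the set $S$ is preserved. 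Third, Weierstrass preparation (Corollary~\ref{preparation}) replaces each $X_n$-distinguished $f_j$ by a polynomial in $X_n$ times a multiplicative unit, and multiplicative units have constant absolute value on $\Bur$, so the inequalities reduce to inequalities between polynomials in $X_n$ with coefficients in a smaller Tate algebra.

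The main obstacle is the passage from the local (in the Berkovich topology) assertion that $S\cap V$ is overconvergent constructible to the global one on $X$: this is precisely where compactness of the Berkovich space $X$ enters, through Corollary~\ref{corlocalover}, replacing the combinatorial lemma used in \cite{Sch_sub}. After reducing to polynomials in $X_n$ and invoking Ducros' result, one last application of the induction hypothesis on $n$ delivers the conclusion, and both implications together give Theorem~\ref{theoequiv}.
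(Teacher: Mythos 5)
Your proposal follows the paper's own argument essentially step for step: the $(\Leftarrow)$ direction via the embedding $Y\hookrightarrow X\times\Bur$ with $\iota(T)\subset X\times\Bus$ and the rescaling of Lemma~\ref{rem_rayon}, and the $(\Rightarrow)$ direction by induction on $n$ using Proposition~\ref{unite}, the local Weierstrass automorphism of Proposition~\ref{proplocalweie}, Weierstrass preparation, Ducros' projection theorem, and globalization by compactness through Corollary~\ref{corlocalover}. This is correct and is the same proof as in the paper.
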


Hence thanks to this theorem we can use some obvious properties of overconvergent 
subanalytic (resp. constructible) subsets to prove less obvious results 
about overconvergent constructible (resp. subanalytic) subsets. 
For instance we can obtain the non-trivial result concerning overconvergent subanalytic subsets:
 
\begin{prop}
Let $X$ be a strictly $k$-affinoid space. The class of overconvergent subanalytic subsets 
of $X$ is stable under finite boolean combination\footnote{In fact, the only non-trivial result is that overconvergent subanalytic subsets are stable under taking complements.}.
\end{prop}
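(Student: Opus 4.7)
The plan is to deduce this statement directly from the equivalence established in Theorem \ref{theoequiv} together with the stability of overconvergent constructible subsets under finite boolean combinations, which is Proposition \ref{prop_gen}(3).

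More precisely, given overconvergent subanalytic subsets $S_1, S_2 \subseteq X$, I would first apply Theorem \ref{theoequiv} to view them as overconvergent constructible subsets. Proposition \ref{prop_gen}(3) then yields that $S_1 \cup S_2$, $S_1 \cap S_2$, and $X \setminus S_1$ are again overconvergent constructible. A second application of Theorem \ref{theoequiv}, in the opposite direction, shows these subsets are overconvergent subanalytic. A trivial induction on the number of operations then handles arbitrary finite boolean combinations.

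As emphasized in the footnote, stability under finite unions and intersections is not the real content of the proposition. Unions are immediate from Definition \ref{defi_suba}: if $S_i = \pi(T_i \cap (X \times \B^{n_i}))$ for $i=1,2$ with $T_i \subseteq X \times \B_{r_i}^{n_i}$ semianalytic, one can embed both $X \times \B_{r_1}^{n_1}$ and $X \times \B_{r_2}^{n_2}$ into a larger ambient $X \times \B_{r}^{n_1+n_2}$ (with $r = \min(r_1, r_2)$) and take the union of the pulled-back semianalytic sets. Intersection is handled similarly, using that semianalytic subsets are stable under intersection. The genuinely hard case is the complement: one cannot simply complement the semianalytic $T$ inside $X \times \B_r^n$, since the projection does not commute with complementation.

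The main obstacle, therefore, is precisely what is already resolved by Proposition \ref{prop_gen}(3): an inductive argument on the complexity of the constructible datum representing the set, where the geometric fact that each elementary constructible datum $\varphi \colon (Y,T) \to (X,S)$ is injective on $T$ (Remark \ref{rem1}) allows one to decompose $X \setminus \varphi(T)$ as a controlled combination of sets of the same or lower complexity. This is the structural input that the subanalytic description of Definition \ref{defi_suba} does not make manifest, and is exactly the reason for introducing the overconvergent constructible viewpoint. Once Theorem \ref{theoequiv} is available, the proof of the proposition is a one-line transfer.
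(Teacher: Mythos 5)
Your proof is correct and is essentially identical to the paper's: the paper likewise deduces the proposition in one line from Theorem \ref{theoequiv} combined with Proposition \ref{prop_gen}(3). Your additional remarks on unions, intersections, and why complementation is the only genuinely hard case are accurate but not part of the paper's argument.
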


\begin{proof}
This was proven for overconvergent 
constructible subsets in proposition \ref{prop_gen}.
\end{proof}

In the same way, we obtain a non-obvious stability property for 
overconvergent constructible subsets:
\begin{cor}
\label{projection}
 Let $\underline{r} \in \R^n$ be a polyradius such that $\underline{r} >\underline{1}$, 
and 
$S \subseteq X \times \Bur$ be an overconvergent subanalytic (or 
constructible) subset of 
$X\times \Bur$. 
Then 
$\pi \left(S \cap ( X\times \B^n ) \right)$ is an overconvergent subanalytic (or
constructible) subset of $X$.
\end{cor}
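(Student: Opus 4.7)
The plan is to reduce the statement, via Theorem~\ref{theoequiv}, to the case of overconvergent subanalytic subsets, where it falls out of a direct manipulation of Definition~\ref{defi_suba}; the constructible case then follows automatically from the equivalence theorem.

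First I would unfold the definition of overconvergent subanalytic. Given $S \subseteq X \times \Bur$ overconvergent subanalytic, Definition~\ref{defi_suba} produces an integer $m$, a real $s > 1$, and a semianalytic subset $T \subseteq (X \times \Bur) \times \B_s^m$ such that
\[
S = p_1 \bigl( T \cap ((X \times \Bur) \times \B^m) \bigr),
\]
where $p_1 : X \times \Bur \times \B_s^m \to X \times \Bur$ is the projection onto the first two factors.

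Next I would rearrange the ambient space. Setting $\sigma := \max(r_1, \ldots, r_n, s) > 1$, the natural inclusion $\Bur \times \B_s^m \hookrightarrow \B_\sigma^{n+m}$ lets us view $T$ as a semianalytic subset of $X \times \B_\sigma^{n+m}$. Let $q : X \times \B_\sigma^{n+m} \to X$ be the total projection. Since the condition "first factor lies in $X \times \B^n$" does not involve the $\B_s^m$-coordinates, a straightforward set-theoretic check gives
\[
S \cap (X \times \B^n) = p_1 \bigl( T \cap (X \times \B^n \times \B^m) \bigr),
\]
and therefore
\[
\pi \bigl( S \cap (X \times \B^n) \bigr) = q \bigl( T \cap (X \times \B^{n+m}) \bigr).
\]

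By Definition~\ref{defi_suba} applied to $T \subseteq X \times \B_\sigma^{n+m}$, the right hand side is overconvergent subanalytic in $X$. Applying Theorem~\ref{theoequiv} yields the statement for overconvergent constructible subsets as well. Honestly no step is a genuine obstacle here: the argument is pure bookkeeping with Definition~\ref{defi_suba}. The philosophical point worth stressing is that this transitivity of projection is essentially immediate on the subanalytic side, whereas formulating it directly in terms of constructible data would require nesting sequences of elementary constructible data and tracking complexities; it is precisely Theorem~\ref{theoequiv} that makes such a proof so short, illustrating the complementarity of the two points of view.
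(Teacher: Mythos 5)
Your overall strategy --- unfold Definition~\ref{defi_suba}, observe that the two projections compose, and transfer the statement to the constructible side via Theorem~\ref{theoequiv} --- is exactly the paper's. But one step is genuinely wrong: the passage from $X\times \Bur\times \B_s^m$ to $X\times \B_\sigma^{n+m}$ with $\sigma=\max(r_1,\dots,r_n,s)$. The inclusion $\Bur\times\B_s^m\hookrightarrow \B_\sigma^{n+m}$ does exist, but a semianalytic subset of the \emph{smaller} polydisc is not thereby a semianalytic subset of the \emph{larger} one: the functions cutting out $T$ live in $\mathcal{O}(X\times\Bur\times\B_s^m)$ and in general do not extend to the larger polydisc (their radius of convergence in the $i$-th variable may be exactly $r_i$). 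This is not a pedantic objection: the failure of semianalyticity to survive passage from an affinoid domain to an ambient space is precisely the phenomenon behind the counterexamples of Section~\ref{section2} (see Proposition~\ref{CE1}), so it cannot be waved away as bookkeeping.

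The fix is just as cheap but goes in the opposite direction: set $\rho:=\min(r_1,\dots,r_n,s)>1$ and replace $T$ by $T\cap(X\times\B_\rho^{n+m})$, which \emph{is} semianalytic in $X\times\B_\rho^{n+m}$ because the defining functions restrict to an affinoid subdomain; since $\B^{n+m}\subseteq\B_\rho^{n+m}$, the intersection with $X\times\B^{n+m}$, and hence its projection, are unchanged, and Definition~\ref{defi_suba} now applies with the single radius $\rho$. (Alternatively one can invoke Lemma~\ref{rem_rayon} with $\underline{s}=\underline{1}$, which is how the paper implicitly handles the mixed polyradius.) Your set-theoretic identities $S\cap(X\times\B^n)=p_1\bigl(T\cap(X\times\B^{n+m})\bigr)$ and $q=\pi\circ p_1$ are correct, and with the minimum in place of the maximum the argument goes through and coincides with the paper's proof.
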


\begin{proof}
If $S$ is an overconvergent 
subanalytic subset of $X \times \Bur$, by definition, there exists 
$s>1$, an integer $m$ and $T$ a  semianalytic subset of  
$X \times \Bur \times \Bs^m$ such that 
$S = \pi_2(T \cap ( (X\times \Bur) \times \B^m))$ where
$\pi_2 :  (X\times \Bur) \times \Bs^m \to X\times \Bur$ is the natural projection.
Hence $\pi(S \cap ( X\times \B^n)) = 
\pi_2 ( T \cap ( (X \times \B^n) \times \B^m)) 
=\pi_2(T \cap ( X \times \B^{n+m} )$ where 
$\pi_2 : X\times \Bur  \times \Bs^m \to X$ is the natural projection 
(so $\pi_2 = \pi \circ \pi_1$).
Hence $S$ is an overconvergent subanalytic subset of 
$X$.
\begin{comment}
The statement is clear when $S$ is an \OS \ of $X \times \Es$, 
and is then true for $S$ an \OC \ thanks to the previous theorem.
\end{comment}
\end{proof}

\subsection{From a global to a local definition}
\label{section1.5}

\begin{defi}
\label{def_local}
Let $\mathcal{P}$ be the data, for each $k$-affinoid space $X$, of 
a family $\mathcal{P}_X$ of subsets of $X$. If 
$S$ is a subset of a $k$-affinoid space $X$, we will say that $S$ satisfies the 
property $\mathcal{P}$ if $S \in \mathcal{P}_X$. We will say that 
\begin{itemize}
\item The property $\mathcal{P}$ is a $G$-local property if for all $k$-affinoid spaces $X$ and 
any subset $S$ of $X$, $S$ satisfies the property $\mathcal{P}$ if and only 
if for all finite affinoid coverings $\{X_i\}$ of $X$, 
$S\cap X_i$ satisfies the property $\mathcal{P}$ relatively to $X_i$ 
(i.e. $S\cap X_i \in \mathcal{P}_{X_i}$).
\item the property $\mathcal{P}$ is a local property if for all affinoid spaces $X$ and any subset $S$ of $X$, 
$S \in \mathcal{P}_X$ if and only if for all $x\in X$, there exists an affinoid neighbourhood 
$U$ of $x$ such that $S\cap U \in \mathcal{P}_U$.
\end{itemize}
\end{defi}
If $S$ is a subset of a topological space $X$, we will denote by $ \overset{\circ}{S} $ 
the topological interior of $S$.
Note that using the compactness of affinoid spaces, saying that $\mathcal{P}$ is a local property is equivalent 
to requiring that for all $k$-affinoid spaces $X$ and any $S\subseteq X$, 
$S$ satisfies $\mathcal{P}$ if 
and only if for any finite affinoid covering $\{X_i\}$ of $X$ such that 
$\{ \overset{\circ}{X_i} \}$ is also a 
covering of $X$, then 
$S \cap X_i \in \mathcal{P}_{X_i}$.
As a consequence, if $\mathcal{P}$ is a $G$-local property, then it is also a local property.

\begin{exem}
A consequence of Kiehl's theorem \cite[9.4.3]{BGR} is 
that the class of Zariski-closed subsets of affinoid spaces defines a class which is $G$-local.
\end{exem}

\begin{defi}
Let $X$ be a good $k$-analytic space. A wide covering of 
$X$ is a covering $\{X_i\}$ such that the 
$X_i's$ are affinoid domains of $X$ and 
$\{ \overset{\circ}{X_i} \}$ is also a covering of $X$.
\end{defi}
\begin{prop}
\label{local}
Let $X$  be a strictly $k$-affinoid space, and $S$ a subset of 
$X$. The following assertions are equivalent:
\begin{enumerate}
\item 
$S$ is an overconvergent subanalytic subset of $X$.
\item For all wide covering $\{X_i\}$ of $X$, 
$X_i\cap S$ is an overconvergent subanalytic subset of 
$X_i$.
\item 
There exists a wide covering 
$\{X_i\}$ of $X$ such that 
$X_i \cap S$ is overconvergent subanalytic in $X_i$ for all $i$.
\item 
For all $x\in X$ there exists an affinoid neighbourhood $V$ of $x$ such that 
$V \cap S$ is overconvergent subanalytic in $V$. 

\end{enumerate} 
The property (4) implies that the class of overconvergent subanalytic subsets is local in the sense of definition \ref{def_local}.
\end{prop}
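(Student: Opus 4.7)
The plan is to establish the cycle $(1)\Rightarrow(2)\Rightarrow(3)\Rightarrow(4)\Rightarrow(5)\Rightarrow(1)$, where only $(5)\Rightarrow(1)$ carries real content.

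The first four arrows are routine. For $(1)\Rightarrow(2)$, each $X_i$ in a wide covering is a strictly affinoid subdomain of $X$, so lemma \ref{lemme_inverse} applied to the inclusion $X_i\hookrightarrow X$ yields that $S\cap X_i$ is overconvergent subanalytic in $X_i$. For $(2)\Rightarrow(3)$, the singleton $\{X\}$ is itself a wide covering. For $(3)\Rightarrow(4)$: given a wide covering $\{X_i\}$ and a point $x\in X$, the definition of ``wide'' forces $x\in\overset{\circ}{X_i}$ for some index $i$, and then $X_i$ is an affinoid neighbourhood of $x$ on which $S$ is overconvergent subanalytic. Finally $(4)\Rightarrow(5)$ is immediate by taking $n=1$ and $V_1=V$.

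For the substantive implication $(5)\Rightarrow(1)$, I would combine theorem \ref{theoequiv} (overconvergent subanalytic $=$ overconvergent constructible) with corollary \ref{corlocalover} (overconvergent constructibility is a Berkovich-local property). By the latter, to show $S$ is overconvergent constructible in $X$ it is enough to find, for each $x\in X$, an affinoid neighbourhood $V$ of $x$ with $S\cap V$ overconvergent constructible in $V$. Extract such data $V_1,\ldots,V_n$ at $x$ from $(5)$. Since $V_1\cup\cdots\cup V_n$ is a neighbourhood of $x$ and $X$ is a good $k$-analytic space, I can choose an affinoid neighbourhood $V$ of $x$ contained in $\bigcup_i V_i$. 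For every $y\in V$, some $V_i$ contains $y$ in its interior, so that $V\cap V_i$ is an affinoid neighbourhood of $y$ in $V$; by lemma \ref{lemme_inverse} together with the hypothesis, $S\cap(V\cap V_i)$ is overconvergent subanalytic and, by theorem \ref{theoequiv}, overconvergent constructible in $V\cap V_i$. Applying corollary \ref{corlocalover} to $V$ then gives that $S\cap V$ is overconvergent constructible in $V$, which is precisely the affinoid neighbourhood required to apply corollary \ref{corlocalover} once more at the level of $X$ and conclude $(1)$.

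The main obstacle is conceptual rather than technical: a naive attempt to deduce $(1)$ from $(5)$ by gluing overconvergent subanalytic subsets on the finitely many $V_i$ would in effect require the $G$-local property, which the counterexample of proposition \ref{CE1} rules out. The proof therefore genuinely needs the Berkovich-local nature established in corollary \ref{corlocalover} via lemma \ref{rem_local}, so that the passage from local to global is carried out at every $y$ by an affinoid neighbourhood of $y$ rather than by an arbitrary affinoid domain containing $y$. The closing sentence of the proposition is simply a reformulation of $(4)$ in the language of definition \ref{def_local}, and hence requires no separate argument once the equivalences are in hand.
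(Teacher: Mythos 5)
Your chain $(1)\Rightarrow(2)\Rightarrow(3)\Rightarrow(4)\Rightarrow(5)$ is correct and in fact more explicit than the paper, whose proof only records the cycle $(1)\Rightarrow(2)\Rightarrow(3)\Leftrightarrow(4)\Rightarrow(1)$ — with $(4)\Rightarrow(3)$ obtained from the compactness of $X$ and $(4)\Rightarrow(1)$ from theorem \ref{theoequiv} combined with corollary \ref{corlocalover} — and which never addresses assertion $(5)$ at all.

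The implication $(5)\Rightarrow(1)$, which carries all the weight in your cycle, breaks at the step ``for every $y\in V$, some $V_i$ contains $y$ in its interior.'' From $V\subseteq V_1\cup\cdots\cup V_n$ one cannot conclude this: in $\B=\mathcal{M}(k\{T\})$ take $V_1=\{\,|T|\le 1/2\,\}$ and $V_2=\{\,|T|\ge 1/2\,\}$; their union is all of $\B$, yet the point $\eta_{1/2}$ lies in the interior of neither. Worse, the gap cannot be repaired, because $(5)\Rightarrow(1)$ is false under the literal reading of $(5)$: the set $S$ of proposition \ref{CE1} satisfies $(5)$ at every $x\in\B^2$ (take $V_1=\{\,|T_1|\le r\,\}$ and $V_2=\{\,|T_1|\ge r\,\}$, whose union is $\B^2$, with $S\cap V_1$ semianalytic in $V_1$ and $S\cap V_2=\emptyset$), but $S$ is not overconvergent subanalytic. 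So $(5)$ is tenable only if one additionally requires each $V_i$ to be a neighbourhood of $x$, in which case it collapses to $(4)$; otherwise it reintroduces exactly the $G$-locality that proposition \ref{CE1} refutes, and the offending point is precisely a non-rigid point of the boundary circle, as in that counterexample. As your proof stands, the essential implication $(4)\Rightarrow(1)$ is therefore left unproven. It should be done directly, with exactly the ingredients you already name: $(4)$ together with theorem \ref{theoequiv} places $S$ in the hypotheses of corollary \ref{corlocalover}, which yields that $S$ is overconvergent constructible in $X$, hence overconvergent subanalytic by theorem \ref{theoequiv} again; the remaining implication $(4)\Rightarrow(3)$ then follows from the compactness of $X$ rather than from the detour through $(5)$.
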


\begin{proof}
$(1) \Rightarrow (2)$ is obvious and is a consequence of lemma \ref{lemme_inverse}.\par
$(2) \Rightarrow (3)$ is clear. \par
$(3) \Leftrightarrow (4)$ is also clear.\par
$(4) \Rightarrow (1)$ follows from the analogous statement for overconvergent 
constructible subsets (corollary \ref{corlocalover}) and theorem \ref{theoequiv}.

\end{proof}

\begin{defi}
\label{def_locos}
Let $X$ be a good strictly $k$-analytic space. A subset
$S \subset X$ is called overconvergent subanalytic  if
for all $x\in X$ there exists $V$ a strictly affinoid neighbourhood of $x$ 
such that $S\cap V$ is overconvergent subanalytic in $V$ (according to definition \ref{defi_suba}).
\end{defi}
According to the last proposition, when $X$ is a $k$-affinoid space, this 
definition \ref{def_locos} is compatible with the previous one (definition \ref{defi_suba}).\par

\begin{defi}
Let $X$ be a good strictly $k$-analytic space. 
A subset $S$ of $X$ is called locally semianalytic if for all 
$x \in X$ there exists $V$ some strictly affinoid neighbourhood of $x$ such that  
$V \cap S$ is semianalytic in $V$.
\end{defi}

\begin{cor}
\label{cor_loc_ber}
Let $X$ be a good strictly $k$-analytic space.
The class of locally semianalytic subsets of $X$ is contained in the class of 
overconvergent constructible subsets of $X$.
\end{cor}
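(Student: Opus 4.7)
The proof is essentially an unwinding of the definitions, using two facts already established: semianalytic subsets of an affinoid are overconvergent constructible (Proposition~\ref{prop_gen}(1)), and on strictly $k$-affinoid spaces the classes of overconvergent constructible and overconvergent subanalytic subsets coincide (Theorem~\ref{theoequiv}). The plan is as follows.

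First I would fix the terminology: since $X$ is a good strictly $k$-analytic space and the notion of an overconvergent constructible subset was defined in~\ref{defi_over} only for affinoid spaces, I interpret ``overconvergent constructible in $X$'' via a local condition, exactly as in Definition~\ref{def_locos}. By Theorem~\ref{theoequiv} this coincides with the locally affinoid notion of overconvergent subanalytic, so the statement to prove is that every locally semianalytic subset of $X$ is overconvergent subanalytic in the sense of Definition~\ref{def_locos}.

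Now let $S\subseteq X$ be locally semianalytic and let $x\in X$. By definition there exists a strictly affinoid neighbourhood $V$ of $x$ such that $S\cap V$ is semianalytic in $V$. By Proposition~\ref{prop_gen}(1), $S\cap V$ is then overconvergent constructible in $V$, and by Theorem~\ref{theoequiv} it is in particular overconvergent subanalytic in $V$. Since $x$ was arbitrary, the condition of Definition~\ref{def_locos} is satisfied, so $S$ is overconvergent subanalytic, hence overconvergent constructible, on $X$.

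There is no serious obstacle: the content of the corollary is entirely that the implication ``semianalytic $\Rightarrow$ overconvergent constructible'' from Proposition~\ref{prop_gen}(1) is preserved when one sheafifies both sides with respect to the Berkovich topology, which is immediate from Proposition~\ref{local} guaranteeing that the local and global characterisations agree on affinoid neighbourhoods. The only mild subtlety is notational, namely making sure to use Theorem~\ref{theoequiv} so that overconvergent constructible in the affinoid pieces can be transferred to the global good strictly $k$-analytic setting through Definition~\ref{def_locos}.
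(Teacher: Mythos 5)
Your argument is correct and is exactly the unwinding of definitions that the paper intends (the paper omits the proof as immediate): a locally semianalytic $S$ has, around each point, an affinoid neighbourhood $V$ with $S\cap V$ semianalytic, hence overconvergent constructible in $V$ by Proposition~\ref{prop_gen}(1), hence overconvergent subanalytic in $V$ by Theorem~\ref{theoequiv}, so $S$ satisfies Definition~\ref{def_locos}. Your care in noting that ``overconvergent constructible'' on a non-affinoid good space must be read through the local Definition~\ref{def_locos}, with Proposition~\ref{local} guaranteeing consistency on affinoids, is precisely the right reading.
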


\begin{cor}
\label{cor_eqber}
Let $X$ be a strictly $k$-affinoid space and let 
$S\subset X$ be a subset of $X$. Then $S$ is 
an overconvergent subanalytic subset of $X$ if and only if 
there exist $r>1$, an integer $n$, 
and 
$T \subseteq X \times \Br^n$ a locally semianalytic subset, such 
that $S = \pi ( T \cap (X\times \E))$.
\end{cor}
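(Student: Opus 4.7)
The plan is to prove the corollary as a short combination of previous results in the paper: corollary \ref{cor_loc_ber} (locally semianalytic $\subseteq$ overconvergent constructible), theorem \ref{theoequiv} (overconvergent constructible = overconvergent subanalytic in the affinoid setting), and corollary \ref{projection} (stability of overconvergent subanalytic under projection along a relative polydisc intersected with the unit polydisc).

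For the forward implication, I would note that if $S$ is overconvergent subanalytic in $X$, then by definition \ref{defi_suba} there exist $r>1$, $n\in\N$ and $T\subseteq X\times \Br^n$ semianalytic with $S=\pi(T\cap (X\times \E))$. Since every semianalytic subset is \emph{a fortiori} locally semianalytic (take $V$ to be the whole ambient space), this gives the desired data.

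For the reverse implication, suppose $r>1$, $n\in\N$ and $T\subseteq X\times\Br^n$ is locally semianalytic with $S=\pi(T\cap(X\times\E))$. Observe first that $X\times \Br^n$ is a good strictly $k$-analytic space (indeed, strictly $k$-affinoid), so corollary \ref{cor_loc_ber} applies and tells us that $T$ is overconvergent constructible in $X\times \Br^n$. Applying theorem \ref{theoequiv} to the strictly $k$-affinoid space $X\times \Br^n$, we conclude that $T$ is overconvergent subanalytic in $X\times \Br^n$.

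Finally, writing $\underline{r}=(r,\ldots,r)\in\R^n$, we have $\underline{r}>\underline{1}$, so corollary \ref{projection} applies to the overconvergent subanalytic subset $T$ of $X\times \Bur$ and yields that $\pi(T\cap (X\times \E))$ is overconvergent subanalytic in $X$; that is, $S$ is overconvergent subanalytic, as required. No step presents a real obstacle: the content of the corollary lies entirely in the fact that enlarging the class of admissible $T$ from semianalytic to locally semianalytic does not enlarge the class of subsets $S\subset X$ obtained by projection, and this follows automatically once corollary \ref{cor_loc_ber}, theorem \ref{theoequiv}, and corollary \ref{projection} are in hand.
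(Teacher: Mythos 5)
Your proof is correct and follows essentially the same route as the paper: semianalytic implies locally semianalytic for the forward direction, and for the converse, corollary \ref{cor_loc_ber} plus the equivalence of theorem \ref{theoequiv} to see that $T$ is overconvergent subanalytic in $X\times\Br^n$, followed by corollary \ref{projection} to project down. The only difference is that you spell out the invocation of theorem \ref{theoequiv}, which the paper leaves implicit.
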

\begin{proof}
The first implication is true because a 
 semianalytic subset of $X\times \Br^n$ is in particular 
a locally semianalytic subset of 
$X \times \Br^n$. \par
Conversely, if 
$S = \pi ( T \cap ( X\times \B^n ))$ where 
$T$ is a locally semianalytic subset of 
$X \times \Br^n$, then according to corollary \ref{cor_loc_ber}, 
$T$ is overconvergent subanalytic in 
$X \times \Br^n$, so according to 
corollary \ref{projection}, $\pi(T \cap (X \times \B^n))$ is also overconvergent subanalytic.
\end{proof}

\begin{lemme}
\label{lemme_iminv}
 Let $\varphi : Y \to X$ be a morphism of good strictly $k$-analytic spaces, and 
$S \subseteq X$ be a locally semianalytic subset of $X$. Then 
$\varphi^{-1}(S)$ is a locally semianalytic subset of $Y$. 
\end{lemme}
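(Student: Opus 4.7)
The plan is to argue pointwise, using the continuity of $\varphi$ together with goodness to transport a semianalytic presentation from $Y$ back to $X$. Fix a point $x \in X$ and set $y := \varphi(x) \in Y$. Since $S$ is locally semianalytic, choose a strictly affinoid neighbourhood $W$ of $y$ such that $S \cap W$ is semianalytic in $W$; write
\[ S \cap W = \bigcup_j \bigcap_k \{ w \in W \mid |f_{j,k}(w)| \, \Diamond_{j,k} \, |g_{j,k}(w)| \}, \]
with $f_{j,k}, g_{j,k} \in \mathcal{O}(W)$ and $\Diamond_{j,k} \in \{\leq, <\}$.

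Next I would produce an affinoid neighbourhood of $x$ that maps into $W$. Since $W$ is a neighbourhood of $y$ in $Y$ and $\varphi$ is continuous, $\varphi^{-1}(W)$ is a neighbourhood of $x$ in $X$. Because $X$ is good, $x$ admits a strictly affinoid neighbourhood $V_0$; replacing $V_0$ by a smaller strictly affinoid neighbourhood of $x$ contained in $V_0 \cap \varphi^{-1}(W)$ (such a neighbourhood exists since strictly affinoid neighbourhoods form a basis at $x$ by goodness, e.g.\ via rational domains inside $V_0$), I obtain a strictly affinoid neighbourhood $V$ of $x$ with $\varphi(V) \subseteq W$. The restriction $\varphi_{|V} : V \to W$ is then a morphism of strictly $k$-affinoid spaces, inducing a bounded homomorphism $\mathcal{O}(W) \to \mathcal{O}(V)$.

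Finally, pulling back the presentation of $S \cap W$ through $\varphi_{|V}$ gives
\[ \varphi^{-1}(S) \cap V = \bigcup_j \bigcap_k \{ v \in V \mid |\varphi^*(f_{j,k})(v)| \, \Diamond_{j,k} \, |\varphi^*(g_{j,k})(v)| \}, \]
which is a finite Boolean combination of inequalities between elements of $\mathcal{O}(V)$, hence semianalytic in $V$. As $x \in X$ was arbitrary, this shows that $\varphi^{-1}(S)$ is locally semianalytic in $X$.

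The main (and only nontrivial) obstacle is the shrinking step that produces a strictly affinoid neighbourhood $V$ of $x$ with $\varphi(V) \subseteq W$; this rests essentially on goodness of $X$ (so that strictly affinoid neighbourhoods form a basis at every point) together with the continuity of $\varphi$. Once $V$ is in hand, the conclusion is a formal pullback of the defining inequalities.
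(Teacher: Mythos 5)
Your proof is correct and follows essentially the same route as the paper: choose an affinoid neighbourhood of $\varphi(x)$ on which $S$ is semianalytic, shrink to a strictly affinoid neighbourhood of $x$ mapping into it (using goodness), and pull back the defining inequalities. The paper's proof is just a terser version of exactly this argument.
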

\begin{proof}
 Let $y\in Y$, $x=\varphi(y)$. There exists $V$ an affinoid neighbourhood
of $x$ such that 
$V \cap S$ is  semianalytic in $V$. 
Let $W$ be an affinoid neighbourhood of $y$ in $\varphi^{-1}(V)$. Then 
$W\cap \varphi^{-1} (S)$ is  semianalytic in $W$.
\end{proof}

If $\varphi : Y \to X$ is a morphism of $k$-analytic spaces, one can define the relative interior of $\varphi$, denoted 
by $Int(Y /X)$ which is a subset of $Y$. 
We refer to \cite[2.5.7]{Berko90} for the definition. 
The complementary set of $Int(Y/X)$ in $Y$ is called the relative boundary of $\varphi$ and denoted by $\partial(Y/X)$.
For these sets, the non-rigid points are essential.
For instance, if $\varphi : \B \to \mathcal{M}(k)$ is the structural morphism,  
$\partial( \B / \mathcal{M}(k))$ is simply the Gauss point.

\begin{theo}
\label{interieur}
Let $\varphi : Y \to X$ be a morphism of strictly $k$-affinoid spaces, and 
$U$ an affinoid domain of $Y$ such that 
$U \subseteq Int (Y / X)$. If $S$ is an overconvergent subanalytic subset of $Y$ then  
$\varphi(U\cap S)$ is an overconvergent subanalytic subset of $X$.
\end{theo}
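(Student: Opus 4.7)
The plan is to reduce the statement to the very definition of overconvergent subanalytic by stacking two overconvergent embeddings: the one hidden in the fact that $S\cap U$ is overconvergent subanalytic in $U$, and the one provided by the relative interior hypothesis $U\subseteq \text{Int}(Y/X)$.

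First, since $U$ is an affinoid domain of $Y$, Lemma \ref{lemme_inverse} tells us that $S\cap U$ is an overconvergent subanalytic subset of $U$. By Definition \ref{defi_suba} there exist $s>1$, an integer $n$, and a semianalytic subset $T\subseteq U\times \mathbb{B}_s^n$ such that $S\cap U = \pi_U(T\cap (U\times \mathbb{B}^n))$, where $\pi_U : U\times \mathbb{B}_s^n \to U$ is the projection.

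Next I would invoke the standard characterisation of the relative interior (\cite[2.5.7--2.5.10]{Berko90}): because $U$ is an affinoid domain of $Y$ entirely contained in $\text{Int}(Y/X)$, there exist $r>1$, an integer $m$, and a closed immersion $\jmath : U \hookrightarrow X\times \mathbb{B}_r^m$ over $X$ with $\jmath(U)\subseteq X\times \mathbb{B}^m$; informally, the defining functions of $U$ as a closed subspace can be chosen with supremum norm on $U$ strictly smaller than the polyradius of the ambient polydisc. This is the only non-formal ingredient and is the main conceptual obstacle: it is what lets the overconvergence in the $X\times \mathbb{B}_r^m$-direction absorb the map $\varphi$.

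Set $r' := \min(r,s)>1$. The product map $\jmath \times \mathrm{id}_{\mathbb{B}_s^n}$ restricts to a closed immersion $U\times \mathbb{B}_{r'}^n \hookrightarrow X\times \mathbb{B}_{r'}^{m+n}$, so by Lemma \ref{immersion}(1) the image $\widetilde T := (\jmath\times \mathrm{id})(T\cap (U\times \mathbb{B}_{r'}^n))$ is semianalytic in $X\times \mathbb{B}_{r'}^{m+n}$. Because $\jmath(U)\subseteq X\times \mathbb{B}^m$, one has
\[
\widetilde T \cap \bigl(X\times \mathbb{B}^{m+n}\bigr) = (\jmath\times \mathrm{id})\bigl(T\cap (U\times \mathbb{B}^n)\bigr),
\]
and applying the projection $\pi : X\times \mathbb{B}_{r'}^{m+n} \to X$ yields
\[
\pi\bigl(\widetilde T \cap (X\times \mathbb{B}^{m+n})\bigr) = \varphi\bigl(\pi_U(T\cap (U\times \mathbb{B}^n))\bigr) = \varphi(S\cap U),
\]
which exhibits $\varphi(S\cap U)$ as an overconvergent subanalytic subset of $X$ directly from Definition \ref{defi_suba}. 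The remaining verifications (compatibility of the two projections, that $\widetilde T\subseteq X\times \mathbb{B}^m\times \mathbb{B}_s^n$ so that intersecting with the unit polydisc loses nothing outside $\jmath(U)$) are routine set-theoretic checks.
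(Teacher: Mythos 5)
The key step of your argument --- the existence of a closed immersion $\jmath : U \hookrightarrow X\times \B_r^m$ over $X$ with $r>1$ and $\jmath(U)\subseteq X\times \B^m$ --- is false in general, and this is where the proof breaks. If such a presentation of $U$ existed (all generators of sup norm $\leq 1$, strictly smaller than the ambient radii), then by Berkovich's characterisation of the relative interior every point of $U$ would lie in $\text{Int}(U/X)$, i.e. $U\to X$ would have empty relative boundary, which for affinoid spaces forces $\mathcal{O}(U)$ to be finite over $\A$ \cite[2.5.8, 2.5.13]{Berko90}. The hypothesis $U\subseteq \text{Int}(Y/X)$ gives nothing of the sort: by \cite[3.1.3]{Berko90} one has $\text{Int}(U/X)=\text{Int}(U/Y)\cap \text{Int}(Y/X)$, and $\text{Int}(U/Y)$ is only the topological interior of the affinoid domain $U$ in $Y$. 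Concretely, take $X=\M(k)$, $Y=\B$, $U=\B_{1/2}$: then $U\subseteq \text{Int}(Y/X)$, yet the maximal point of $\B_{1/2}$ lies in $\partial(U/X)$, so no presentation of $U$ of the kind you invoke exists. What \cite[2.5.9]{Berko90} actually provides is a closed immersion of $Y$ (not of $U$) into $X\times \B_{\underline{r}}$ under which the compact subset $U$ of $\text{Int}(Y/X)$ lands inside $X\times \B_{\underline{s}}$ for some $\underline{s}<\underline{r}$; the source of the immersion cannot be shrunk to $U$.

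This defect propagates: your $T$ is semianalytic only in $U\times \B_s^n$, and with the corrected embedding the map $U\times \B_s^n \to X\times \B_{\underline{r}}\times \B_s^n$ is the inclusion of an affinoid domain of a Zariski closed subset, not a closed immersion, so Lemma \ref{immersion}(1) does not apply and the image of $T$ need not be semianalytic in the ambient polydisc --- Proposition \ref{cex_SA} is exactly an example of a semianalytic subset of an affinoid domain of $\B^2$ that fails to be semianalytic in $\B^2$. The paper's proof sidesteps both issues by keeping the closed immersion at the level of $Y$ and treating $U$ only as a semianalytic subset of $Y$ (Gerritzen--Grauert): $i(S)$ is overconvergent subanalytic in $X\times \B_{\underline{r}}$ by Lemma \ref{immersion}(2), $i(U)$ is semianalytic there, hence $i(U\cap S)=i(U)\cap i(S)$ is overconvergent subanalytic and contained in $X\times \B_{\underline{s}}$, and Corollary \ref{projection} together with Lemma \ref{rem_rayon} then identifies $\pi(i(U\cap S))=\varphi(U\cap S)$ as overconvergent subanalytic in $X$. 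You should restructure your argument along these lines rather than trying to present $U$ itself as a Zariski closed subspace of a polydisc over $X$.
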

\begin{proof}
According to \cite[Prop 2.5.9]{Berko90}  there exist
$\underline{r} > \underline{s} >0$ and 
$\mathcal{A}\{\underline{r}^{-1} T \} \to \mathcal{B}$ an admissible epimorphism which 
hence identifies $Y$ with a Zariski closed subset of $ X\times \B_{\underline{r}}$, such that 
under this identification, 
$U \subseteq X \times \B_{\underline{s}}$. 
We can assume that 
$\underline{s} \in \val^n$.
If we denote by $\Gamma(\varphi)$ the graph of $\varphi$, 
this induces a Zariski closed embedding of 
$Y \simeq \Gamma(\varphi) \xrightarrow{i}  X\times \B_{\underline{r}}$.
Now since $S$ is an overconvergent subanalytic subset of $Y$, according to lemma \ref{immersion}, $i(S)$ is an overconvergent subanalytic subset of 
$X \times \Bur $.
Finally, $U$ is a  semianalytic subset of $Y$ (because of Gerritzen-Grauert theorem), so $i(U)$ is also  semianalytic in $X \times \Bur$, and by assumption, 
$i(U) \subseteq X \times \B_{\underline{s}}$, so 
$i(U\cap S) \subseteq  X \times \Bur$ is an overconvergent constructible subset of
$X \times \Bur$, and according to corollary \ref{projection}, $\pi ( i(U\cap S) )$ is an overconvergent subanalytic subset of $X$. But this set is precisely $\varphi(U\cap S)$.
\end{proof}

As in algebraic geometry, the notion of a proper morphism of $k$-analytic spaces is a little subtle. 
If $\varphi : Y \to X$ is a morphism of $k$-analytic spaces, let us denote by $|Y| \to |X|$ the associated map of topological spaces.
Then $\varphi $ is said to be compact \cite[p. 50]{Berko90} if the map $|Y| \to |X|$ is proper (in the topological sense).
Finally, $\varphi$ is said to be proper \cite[p. 50]{Berko90} if $\varphi$ is compact and $\partial(Y/X)= \emptyset$.

\begin{prop}
\label{propstabbound}
Let $\varphi : Y \to X$ be a morphism of good strictly $k$-analytic spaces, 
$S$ an overconvergent subanalytic subset of $Y$ such that the map of topological spaces
$\overline{S} \to |X|$ is proper and 
$\overline{S} \subseteq \text{Int} (Y / X) $. Then $\varphi(S)$ is an overconvergent subanalytic subset of $X$.

\end{prop}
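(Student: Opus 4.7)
The plan is to reduce to Theorem~\ref{interieur} by localising on $X$. By Definition~\ref{def_locos}, it is enough to produce, for each $x\in X$, a strictly affinoid neighbourhood $V$ of $x$ in $X$ such that $\varphi(S)\cap V$ is overconvergent subanalytic in $V$. Fix $x\in X$. Since $\overline{|S|}\to |X|$ is proper, the fibre $K:=\overline{|S|}\cap \varphi^{-1}(x)$ is compact, and since $K\subseteq \text{Int}(Y/X)$ and $Y$ is good, every $y\in K$ admits a nested pair of strictly affinoid neighbourhoods $W_y\subseteq W_y^+$ in $Y$ such that $W_y^+\subseteq \text{Int}(Y/X)$ and $W_y$ sits in the topological interior of $W_y^+$ in $Y$ (strictly affinoid neighbourhoods form a basis of open neighbourhoods in a good strictly $k$-analytic space).

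By compactness of $K$ I extract finitely many pairs $W_1\subseteq W_1^+,\ldots, W_n\subseteq W_n^+$ still covering $K$, and put $W:=\bigcup_i W_i$. The set $\overline{|S|}\setminus W$ is closed in $\overline{|S|}$, so by properness its image in $|X|$ is closed and avoids $x$; hence a small enough strictly affinoid neighbourhood $V$ of $x$ in $X$ satisfies $\overline{|S|}\cap \varphi^{-1}(V)\subseteq W$, and \emph{a fortiori} $S\cap \varphi^{-1}(V)\subseteq W$. Define $U_i:=W_i\cap \varphi^{-1}(V)$ and $U_i^+:=W_i^+\cap \varphi^{-1}(V)$; both are strictly affinoid, as preimages of the strictly affinoid domain $V$ under morphisms from the strictly affinoid spaces $W_i$ and $W_i^+$, and the induced $\psi_i^+:U_i^+\to V$ is a morphism of strictly $k$-affinoid spaces.

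The compatibility of Berkovich's relative interior with base change along the analytic domain $V\subseteq X$ gives $\text{Int}(U_i^+/V)=U_i^+\cap \text{Int}(W_i^+/X)$; since $W_i^+\subseteq \text{Int}(Y/X)$ this equals $U_i^+\cap \text{Int}(W_i^+/Y)$, namely the topological interior of $W_i^+$ in $Y$ intersected with $U_i^+$, which by the choice $W_i\subseteq \mathring{W_i^+}$ contains $U_i$. Lemma~\ref{lemme_inverse} gives that $S\cap U_i^+$ is overconvergent subanalytic in $U_i^+$, so Theorem~\ref{interieur} applied to $\psi_i^+$ with the affinoid domain $U_i\subseteq U_i^+$ yields that $\psi_i^+(U_i\cap S)=\varphi(S\cap U_i)$ is overconvergent subanalytic in $V$. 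Finally $\varphi(S)\cap V=\bigcup_{i=1}^n \varphi(S\cap U_i)$ by the containment $S\cap \varphi^{-1}(V)\subseteq W$, and a finite union of overconvergent subanalytic subsets of $V$ is overconvergent subanalytic, concluding the reduction.

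The step I expect to be the main obstacle is the two-tier neighbourhood construction $W_i\subseteq \mathring{W_i^+}$ coupled with the base-change identity for Berkovich's relative interior: if one tried to work with a single affinoid neighbourhood $W_i\subseteq \text{Int}(Y/X)$, then $U_i\subseteq \text{Int}(U_i/V)$ would only hold on the topological interior of $U_i$ in itself, which is a proper subset, and Theorem~\ref{interieur} would fail to account for all of $S\cap U_i$. The rest is a standard properness-gives-closed-image compactness argument combined with the finite-union stability of overconvergent subanalytic subsets.
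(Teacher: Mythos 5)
Your proof is correct and rests on the same engine as the paper's: cover the relevant compact set by affinoid neighbourhoods sitting inside $\text{Int}(Y/X)$ and feed each piece to Theorem~\ref{interieur}, then take a finite union. The execution differs in two ways. First, you localise fibrewise: you fix $x\in X$, cover the compact fibre $K=\overline{|S|}\cap\varphi^{-1}(x)$, and use properness to shrink $V$ so that $S\cap\varphi^{-1}(V)$ falls into your finitely many charts; the paper instead first base-changes to an affinoid domain $X'$ of $X$ (checking that properness and the interior condition are stable under this base change via \cite[3.1.3 (iii)]{Berko90}), which makes all of $\overline{|S|}$ compact, and then covers $\overline{|S|}$ globally. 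Both are legitimate; yours trades the base-change verification for a slightly longer point-set argument. Second, your two-tier construction $W_i\subseteq\mathring{W_i^+}$ with $W_i^+\subseteq\text{Int}(Y/X)$, combined with $\text{Int}(U_i^+/V)=U_i^+\cap\text{Int}(W_i^+/X)$, is genuinely needed to put oneself in the affinoid setting of Theorem~\ref{interieur}: a single affinoid neighbourhood $U\subseteq\text{Int}(Y/X)$ does not satisfy $U\subseteq\text{Int}(U/X)$, since the boundary of $U$ as a domain intervenes. The paper's proof states only the one-tier version and leaves this repair implicit, so on this point your write-up is more careful than the original.
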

\begin{proof}
If $X'$ is an affinoid domain of $X$ and if we consider the cartesian diagram :
\[ \xymatrix{ 
S\subseteq Y \ar[r]^{\varphi} & X \\
S' \subseteq  Y' \ar[u]^{\psi'} \ar[r]^{\varphi'} & X' \ar[u]^{\psi} 
}\]
then $\psi'^{-1}(\overline{S})$ is closed in $Y'$ and contains $\psi'^{-1}(S)=S'$ so 
$S' \subseteq \overline{S'} \subseteq \psi'^{-1} (\overline{S})$, and since 
properness is stable under base change, 
$\psi'^{-1}(\overline{S}) \to |X'|$ is proper, and since $\overline{S'}$ is closed, 
$\overline{S'} \to |X'|$ is proper. Moreover, 
$\psi'^{-1} (\text{Int} (Y / X)) \subseteq \text{Int} (Y'/ X') $ ( \cite[3.1.3 (iii)]{Berko90} ) so 
$\overline{S'} \subseteq \psi'^{-1}(\overline{S}) \subseteq \text{Int} (Y' / X' )$. 
So $S'$ and $\varphi'$ fulfil the hypotheses of the 
proposition. Hence, since the property we want to check is local on $X$, 
we can assume that $X$ is a $k$-affinoid space, hence that 
$\overline{S}$ is compact.\par 
Now for every $y\in \overline{S}$ we can find 
an affinoid neighbourhood $U$ such that 
$U \subseteq \text{Int}(Y/X)$, because $\text{Int}(Y/X)$ is open \cite[2.5.7]{Berko90}. 
Then, $\varphi(U \cap S)$ is an overconvergent subanalytic 
subset of $X$ according to theorem \ref{interieur}. Since 
$\overline{S}$ is compact we can extract from this a finite covering of $\overline{S}$, 
which finishes to prove that 
$\varphi(S)$ is overconvergent subanalytic.
\end{proof}

\begin{cor}
Let $\varphi : Y \to X$ be a proper morphism of good strictly $k$-analytic spaces.
Let $S$ be an overconvergent subanalytic subset of $Y$. 
Then $\varphi(S)$ is an overconvergent subanalytic subset of $X$.
\end{cor}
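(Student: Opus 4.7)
The plan is to reduce this corollary to a direct application of Proposition~\ref{propstabbound}, by verifying that its two hypotheses follow formally from properness of $\varphi$.

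Recall that $\varphi$ proper means (i) $|\varphi|\colon|Y|\to|X|$ is topologically proper, and (ii) $\partial(Y/X)=\emptyset$, equivalently $\mathrm{Int}(Y/X)=Y$. Given any subset $S\subseteq Y$, the second condition immediately yields
\[
\overline{|S|}\subseteq Y = \mathrm{Int}(Y/X),
\]
so the interior condition of Proposition~\ref{propstabbound} is automatic. For the properness condition, $\overline{|S|}$ is a closed subset of $|Y|$, and the restriction of a proper continuous map to a closed subset of its source is again proper; hence $\overline{|S|}\to|X|$ is proper.

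With both hypotheses verified, Proposition~\ref{propstabbound} applies directly and gives that $\varphi(S)$ is an overconvergent subanalytic subset of $X$. There is no real obstacle here: the whole point of isolating Proposition~\ref{propstabbound} with the two conditions ``$\overline{|S|}\to|X|$ proper'' and ``$\overline{|S|}\subseteq\mathrm{Int}(Y/X)$'' is precisely that proper morphisms are the prototypical situation in which both hold for \emph{every} subset $S$, so the corollary is a one-line consequence.
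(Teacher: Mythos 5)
Your proof is correct and coincides with the paper's intended argument: the corollary is stated without a separate proof precisely because, as you observe, properness of $\varphi$ gives $\mathrm{Int}(Y/X)=Y$ (so the interior hypothesis is automatic) and the restriction of the proper map $|\varphi|$ to the closed subset $\overline{|S|}$ is again proper, so Proposition \ref{propstabbound} applies verbatim. Nothing to add.
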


\begin{defi}
A morphism $\varphi : Y \to X$ of good $k$-analytic spaces is locally extendible without boundary if, 
for all $y\in Y$, there exists an affinoid neighbourhood $U$ of $y$, 
$Y'$ a $k$-affinoid space that contains $U$ as an affinoid domain, and 
$\psi : Y' \to X$ that extends 
$\varphi_{|U}$, such that 
$U \subseteq \text{Int} (Y'/X)$.
\end{defi}
Remark that using again \cite[3.1.3 (iii)]{Berko90}, this property is stable under base change.

\begin{prop}
Let $\varphi : Y \to X$ be a compact morphism of 
good strictly $k$-analytic spaces which is locally extendible without boundary. Then $\varphi(Y)$ is an overconvergent subanalytic subset of $X$.
\end{prop}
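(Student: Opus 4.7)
The plan is to reduce to the setting of Theorem \ref{interieur} by exploiting the local extendibility hypothesis and the fact that being overconvergent subanalytic is a local property on the base (Proposition \ref{local}).

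First, I would reduce to the case where $X$ is a strictly $k$-affinoid space. Since being overconvergent subanalytic is a local property on $X$ (definition \ref{def_locos} and proposition \ref{local}), it suffices to check the statement after replacing $X$ by an arbitrary strictly affinoid neighbourhood $X_0$ of a given point of $X$ and replacing $Y$ by $\varphi^{-1}(X_0)$. The pullback morphism remains compact, and local extendibility without boundary is stable under base change (as noted just after the definition, via \cite[3.1.3(iii)]{Berko90}). Once $X$ is affinoid, the compactness of $|\varphi|$ ensures that $|Y|$ is compact.

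Next, I would cover $Y$ by the affinoid data coming from the definition of local extendibility. For each $y\in Y$, pick an affinoid neighbourhood $U_y$, a strictly $k$-affinoid space $Y'_y$ containing $U_y$ as an affinoid domain, and a morphism $\psi_y : Y'_y \to X$ extending $\varphi_{|U_y}$, such that $U_y \subseteq \mathrm{Int}(Y'_y/X)$. Since $|Y|$ is compact, finitely many such $U_{y_1},\ldots,U_{y_N}$ cover $Y$, and in particular $\varphi(Y) = \bigcup_i \psi_{y_i}(U_{y_i})$.

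Now I would apply Theorem \ref{interieur} to each datum $(\psi_{y_i} : Y'_{y_i} \to X,\ U_{y_i})$. Taking $S = Y'_{y_i}$ (which is trivially overconvergent subanalytic in itself), the theorem gives that $\psi_{y_i}(U_{y_i}) = \psi_{y_i}(U_{y_i}\cap Y'_{y_i})$ is an overconvergent subanalytic subset of $X$. Since the class of overconvergent subanalytic subsets is stable under finite unions, $\varphi(Y) = \bigcup_i \psi_{y_i}(U_{y_i})$ is overconvergent subanalytic in $X$.

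The only potentially delicate point is ensuring that the local extension hypothesis survives the base change to an affinoid $X_0 \subset X$, but this is precisely the remark made immediately after the definition of locally extendible without boundary; once that is in hand the argument is a straightforward compactness-plus-Theorem \ref{interieur} argument, with no further obstacle.
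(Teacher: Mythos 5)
Your proposal is correct and follows essentially the same route as the paper: reduce to $X$ affinoid so that $Y$ is compact, extract a finite cover of $Y$ by the affinoid neighbourhoods provided by local extendibility, apply Theorem \ref{interieur} with $S = Y'$ to each piece, and take the finite union. The extra care you take about base-changing the local extendibility hypothesis is exactly the remark the paper makes immediately after the definition, so there is no gap.
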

\begin{proof}
We can  assume that $X$ is a $k$-affinoid space, so $Y$ is compact. 
Then for all $y \in Y$ we can find an affinoid neighbourhood $U$ of $y$ and $Y'$ 
a $k$-affinoid space that contains $U$, and 
$\psi : Y' \to X$ that extends $\varphi_{|U}$, such that 
$U \subseteq \text{Int} (Y'/X)$. 
Then, according to theorem \ref{interieur}, $\varphi(U)$ is 
an overconvergent subanalytic subset of $X$ (take $S=Y'$). 
Hence by compactness of $Y$, $\varphi(Y)$ is overconvergent subanalytic.
\end{proof}

\subsection{The non strict case}
\label{nonstrict}
In this section, $k$ will be an arbitrary non-Archimedean field (possibly 
trivially valued). \par
One of the advantages of Berkovich's approach is the possibility to use 
arbitrary $\lambda \in \R_+$ to define inequalities. It is then natural to  give the 
following definitions: 
\begin{defi}
\label{defisanonstrict}
Let $\A$ be a $k$-affinoid algebra, and let us set $X = \affin{A}$. 
\begin{enumerate}
\item 
A subset $S\subset X$ is called non-strictly semianalytic if it is a boolean combination of 
subsets 
\[ \{ x\in X \st \ |f(x)| \leq \lambda |g(x)| \}\]
where $f,g \in \A$ and $\lambda \in \R_+$.
\item 
A subset $S\subset X$ is called non-strictly 
overconvergent subanalytic if there exist an integer 
$n\in \N$, a real number $r>1$, 
a non-strictly semianalytic set 
$T \subset X \times \B_r^n$ such that 
$S=\pi(T \cap (X \times \B^n))$ where 
$\pi : X\times \B_r^n \to X$ is the first projection.
\end{enumerate}
\end{defi}
\begin{rem}
\label{remcomparnonstrict}
Let 
$X$ be a strictly $k$-affinoid space and let 
$S \subset X$. The following implication holds:
\[ S \ \text{is semianalytic} \Rightarrow 
S  \ \text{is non-strictly semianalytic}.\]
However, if 
$\val \subsetneq \R_+^*$, the converse implication is false.
Indeed, let $r\in ]0,1[$ such that $r\notin \val$, let 
$X = \B^1= \mathcal{M}(k\{T\})$ and let 
$S = \{ x\in \B \st |T(x)| =r \}$.
By definition, 
$S$ is a non-strictly semianalytic set of $\B^1$, but we claim 
that it is not semianalytic. Indeed, we will see in  \ref{proprigber} 
that semianalytic sets are entirely determined by their rigid points, that 
is to say, if $S_1$ and $S_2$ are semianalytic subsets of $X$, then, 
$S_1 = S_2$ if and only if 
$S_1 \cap X_{rig} = S_2 \cap X_{rig}$. Since in our example, 
$S \cap X_{rig} = \emptyset$, if $S$ was semianalytic, it would then be empty, 
but $S$ is not empty. Actually 
$S = \{ \eta_r\}$.
\end{rem}

\begin{defi}
Let $X$ be a $k$-affinoid space. 
Let $(X,S)$ be a $k$-germ, 
$f,g\in \A$, $0<s<r$ where $r,s\in \R$, and 
\[ Y =  \mathcal{M}( \A \{r^{-1}t \}/ (f-tg) ) \xrightarrow[]{\varphi} X \]
and $T = \varphi^{-1}(S) \cap R \cap \{y\in Y \st |f(y)| \leq s|g(y)| \neq 0\}$ 
where $R$ is a non-strictly 
 semianalytic subset of $Y$. 
Then we say that 
$(Y,T) \xrightarrow[]{\varphi} (X,R)$ is a 
non-strictly elementary constructible datum.
\end{defi}

The only difference with definition \ref{def_dce} is that 
we do not assume any more that 
$s\in \val$, and that $R$ is allowed to be non-strictly semianalytic, 
that is to say defined with inequalities involving 
some arbitrary $\lambda \in \R$. \par 
Then we mimic definition \ref{defidc}, and say that a 
non-strictly constructible datum 
\dc \ is a composite 
$\varphi = \varphi_1 \circ \ldots \circ \varphi_n$ where 
each $\varphi_i$ is a non-strictly elementary constructible datum. 
Finally, if 
$(X_i,S_i) \stackrel{\varphi_i}{\dashrightarrow} X, i=1\ldots n$ are 
$n$ non-strictly constructible data, we say that 
$S := \cup_{i=1}^n \varphi_i(S_i)$ is a non-strictly overconvergent constructible set. \par 
We claim that all results we have proven in this section 
for overconvergent subanalytic (resp. constructible) sets remain valid  
for non-strictly overconvergent subanalytic (resp. constructible) sets. For instance:
\begin{theo}
Let $X$ be a $k$-affinoid space.
$S\subset X$ is non-strictly overconvergent subanalytic if and only if it is 
non-strictly overconvergent constructible.
\end{theo}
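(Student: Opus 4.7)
The plan is to mimic the proof of Theorem~\ref{theo_eq} in the non-strict setting. The easy direction (non-strictly constructible implies non-strictly subanalytic) proceeds verbatim: by induction on complexity, a non-strictly constructible datum $(Y,T) \stackrel{\varphi}{\dashrightarrow} X$ embeds via a closed immersion into some $X \times \B_{\underline{r}}$ with $T$ landing inside $X \times \B_{\underline{s}}$ for suitable $\underline{s}<\underline{r}$; the only difference with the strict case is that $\underline{s} \in \R_+^n$ is no longer required to lie in $\val^n$, which simplifies rather than complicates matters.

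For the converse I would run the same three-step reduction as in the proof of Theorem~\ref{theo_eq}: apply Proposition~\ref{unite} to normalise the defining functions on a suitable constructible cover of $X$; then use Proposition~\ref{proplocalweie} to find locally a Weierstrass automorphism making these functions $X_n$-distinguished on a small affinoid neighbourhood of an arbitrary point; then apply Weierstrass preparation (Corollary~\ref{preparation}) to replace them by unitary polynomials in $X_n$; and finally project to one fewer variable via Ducros's stability result \cite[2.5]{Duc_sa} for semianalytic subsets under projection along the last coordinate, closing by induction on $n$, with the case $n=0$ being trivial.

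What has to be checked is that each ingredient survives the removal of the strictness hypotheses. Weierstrass division and preparation were deliberately stated and proved in Section~\ref{section1.3} for arbitrary ultrametric Banach algebras and arbitrary radii, so they apply as is. Proposition~\ref{unite} uses only that $\A$ is noetherian, which holds for every $k$-affinoid algebra. Proposition~\ref{proplocalweie} is local on $X$ and its proof involves no strictness hypothesis; moreover the radii $\underline{\rho}$ that it produces are allowed to be arbitrary reals, which is precisely what the non-strict definition of a constructible datum permits. The local-to-global mechanism (Corollary~\ref{corlocalover} and Proposition~\ref{local}) rests on compactness of Berkovich spaces together with Lemma~\ref{rem_local}, and in the non-strict version the radii appearing in Lemma~\ref{rem_local} no longer need to lie in $\val$, so the same proofs go through.

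The main obstacle I anticipate is verifying that Ducros's projection theorem extends to non-strictly semianalytic subsets and to projections of polydiscs of arbitrary real radius. Since Ducros works over arbitrary $k$-analytic spaces without any strictness assumption, and since non-strict semianalyticity is defined using exactly the kind of inequalities $|f|\leq \lambda |g|$ allowed in his framework, the projection result transfers without modification. Stringing these ingredients together then yields the non-strict equivalence exactly as in the strict case.
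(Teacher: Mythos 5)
Your overall strategy is exactly the paper's: the paper gives no separate proof of this theorem and simply asserts that the arguments of Section~\ref{section1} carry over to the non-strict setting, which is what you verify ingredient by ingredient. Your analysis of the hard direction is sound: Proposition~\ref{unite} only needs noetherianity, Proposition~\ref{proplocalweie} and Weierstrass preparation were set up without strictness hypotheses, Ducros's projection theorem allows arbitrary real constants, and the constants $\|e_j\|$ produced by Weierstrass preparation in Step 3 --- which in the strict case conveniently lay in $\val$ --- are now harmless because non-strictly semianalytic sets admit inequalities $|f|\leq\lambda|g|$ with arbitrary $\lambda\in\R_+$.

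There is, however, one point where your reasoning is backwards. For the easy direction you claim that dropping the requirement $\underline{s}\in\val^n$ ``simplifies rather than complicates matters''. It is the opposite: that hypothesis is precisely what makes Lemma~\ref{rem_rayon} work. Its proof chooses $m\in\N$ and $\lambda\in k^{\times}$ with $s^m=|\lambda|$ and uses the substitution $t=y^m/\lambda$ to renormalise the inner polydisc to radius $1$; for $s\notin\val$ no such $m,\lambda$ exist and the argument breaks, so the passage from ``$\iota(T)$ is semianalytic in $X\times\Bur$ and contained in $X\times\Bus$'' to ``$\pi(\iota(T))$ is non-strictly overconvergent subanalytic in the sense of Definition~\ref{defisanonstrict}'' is not free. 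The step is salvageable, but it needs an extra observation rather than less work: since $\{|t_i|\leq s_i\}$ is now itself a non-strictly semianalytic condition, one may replace $T$ by $T\cap\{|t_i|\leq s_i \ \forall i\}$ and then enlarge each $s_i$ to some $\sigma_i\in\val\cap\,]s_i,r_i[$ before invoking the renormalisation of Lemma~\ref{rem_rayon}. This works whenever $k$ is non-trivially valued, so that $\val$ is dense in $\R_{+}^{*}$; in the trivially valued case, which Section~\ref{nonstrict} explicitly allows, $\val=\{1\}$ and a further argument is required --- a point that neither your proposal nor the paper addresses.
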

In this context, we want to stress that for instance propositions 
\ref{local}, \ref{propstabbound} also remain true.

\section{Study of various classes}
\label{section2}
\subsection{Many families}
In this section $X = \affin{A}$ will be a strictly $k$-affinoid space.
The aim of this section is to first recall the definitions of the various classes of 
\emph{rigid/locally/strongly/D-semianalytic/subanalytic} subsets of $X$ that are defined  
in \cite{Sch_sub}.\par
We now give the following definitions. A subset $S \subseteq X$ is called :
\begin{enumerate}
 \item[(a)]  semianalytic if it is a boolean combination of 
subsets of the form 
$\{ x \in X \ \big| \  |f(x)| \leq |g(x)| \} $, with $f,g \in \mathcal{A}$.
\item[(b)] Locally semianalytic, if for all $x \in X$ there 
exists an affinoid neighbourhood $V$ of $x$ such that 
$S\cap V$ is 
semianalytic in $V$.
\item[(c)]
Rigid-semianalytic if there is a finite affinoid covering\footnote{If $X$ is an affinoid space we say that  
$\{X_i\}_{i=1}^n$ is a finite affinoid covering if for all $i$ $X_i$ is an affinoid domain of $X$ and $X=\cup_{i=1}^nX_i$.}
$\{X_i\}_{i=1}^n$ such that 
$S \cap X_i$ is  semianalytic in $X_i$ for all $i$.
\item[(d)] Overconvergent subanalytic has been defined in definition \ref{defi_suba}. 
As we proved 
in the previous section, this corresponds also to overconvergent constructible subsets. 
Moreover, our definition of overconvergent subanalytic subset is the same 
as the definition of globally strongly subanalytic of \cite[1.3.8.1]{Sch_sub}. 
In \cite{Sch_sub} it is proven and it is correct that this is equivalent to the class of 
 globally strongly \textbf{D}-semianalytic subsets \cite[1.3.2]{Sch_sub}.
\item [(e)]$G$-overconvergent subanalytic if there exists a 
finite affinoid covering $\{X_i\}$ of $X$ such that 
$S \cap X_i$ is overconvergent constructible in $X_i$ for all $i$. This corresponds to the 
notion of \emph{strongly} \textbf{D} -\emph{semianalytic} subset in
\cite[1.3.7.1]{Sch_sub}.
\item[(f)]
Strongly subanalytic if there exist an integer $n$,  a real number 
$r>1$, a subset 
$T \subseteq X \times \Br^n$ which is rigid-semianalytic, such that 
$S = \pi ( T \cap (X\times \B^n))$. This is definition 
\cite[1.3.8.1]{Sch_sub}, and we will give an equivalent definition in proposition \ref{eqdef}.
\item[(g)]
Locally strongly subanalytic if there exists a finite affinoid covering 
$\{ X_i\}$ of $X$ such that 
$S\cap X_i$ is strongly subanalytic in $X_i$ for all $i$. This is definition 
\cite[1.3.8.2]{Sch_sub}.
\end{enumerate}

In \cite{Sch_sub} it is stated that 
(d),(e),(f) and (g) are equivalent (equivalence of $(e),(f),(g)$ is stated in
\cite[Prop 4.2]{Sch_sub}, and the equivalence of $(d)$ and $(f)$ is stated in
\cite[Th 5.2]{Sch_sub} ). These results rest on \cite[lemma 4.1]{Sch_sub} which is false, and 
we will show indeed that $(d), (e)$ and $(f)$ correspond in general to three different classes. 
More precisely 
the aim of this section is to show that these classes satisfy the following relations:

\begin{figure}[H]
\small
\caption{The hierarchy}
\label{inclusions}
\centering
\[\begin{array}{ccccccccccc}
                  &	      &            &           &                      &                               & rigid-  &                               &         &             &     \\
                  &	      &            &           &                      &                               & semianalytic        &                               &         &             &     \\                  
 Locally         &           &           &         &   G-            & \rotatebox{45}{$\supsetneqq^{3}$} &                 &\rotatebox{-45}{$\supsetneqq^6$} &         &             &     \\
        strongly &\supseteq           &Strongly    &\supsetneqq^1&       overconvergent &                             &\rotatebox{90}{$\nsubseteq$}^4  \ \   \rotatebox{-90}{$\nsubseteq$}_5           &         &Locally   & \supsetneqq^8 &semi-  \\
  subanalytic    & \boxed{?}         & subanalytic&           & subanalytic          & \rotatebox{-45}{$\supsetneqq_2$}&                 &\rotatebox{45}{$\supsetneqq_7$}  &   semianalytic      &   &   analytic      \\
                            &  &           &           &                      &                               &overconvergent &                               &         &             &  \\
& & & & & & subanalytic & & & &                            
  
\end{array} 
\]
In this figure, 
Class A $\supsetneqq $ Class B, means that the class A \emph{properly} contains the class B, \\
and Class A $\nsupseteq$ Class B means that the Class A does not contain
the class B.
\end{figure}
In this diagram, all the inclusions are clear from the definitions, 
except the inclusion $7$ which states that the class of overconvergent subanalytic subsets contains 
the class of locally semianalytic subsets. 
But this is precisely the content of corollary \ref{local}. 
In comparison 
with what was stated in \cite{Sch_sub}, the most striking inequality is probably $\nsubseteq^5$  
which asserts that rigid-semianalytic subsets are not necessarily overconvergent subanalytic subsets whereas according to 
\cite[Th 5.2]{Sch_sub}, they should be overconvergent subanalytic. 
In other words, when you project overconvergent  
semianalytic subsets, you obtain a class which is not $G$-local (but however local for 
the Berkovich topology).\par 
In this section we will show that the inclusions (1)-(8) in figure \ref{inclusions} are all proper in general (in the next section we will explain that if $X$ is regular of dimension 2, overconvergent subanalytic subsets correspond to locally semianalytic subsets). 
We do not know if the inclusion on the left 
\[ locally \ strongly \ subanalytic \supseteq strongly \ subanalytic \] 
is proper. 

\subsection{Rigid-semianalytic subsets are not necessarily overconvergent subanalytic}
Here we prove the inequality $\nsubseteq^5$.
\begin{lemme}
\label{lemme_semialg}
Let $\eta \in X$ such that 
$\mathcal{O}_{X,\eta}$ is a field, $S\subset X$ a  semianalytic subset.
If $\eta \in \overline{S}$, then $\overset{\circ}{S}$ is non empty.
\end{lemme}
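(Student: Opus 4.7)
The plan is to reduce $S$ to a basic semianalytic set, simplify its description in a small neighbourhood of $\eta$ using the field property of $\mathcal{O}_{X,\eta}$, and finally exhibit a nonempty open subset of $S$.

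By Remark \ref{rembasicsa} I would write $S = S_1 \cup \cdots \cup S_N$ as a finite union of basic semianalytic subsets; since $\overline{S} = \bigcup_i \overline{S_i}$ and $\overset{\circ}{S_i} \subseteq \overset{\circ}{S}$, I may replace $S$ by one of the $S_i$ and assume
\[ S = \{x \in X \st |f_j(x)| \Diamond_j |g_j(x)|,\ j=1,\ldots,m\}, \quad \Diamond_j \in \{\leq, <\}. \]
The hypothesis that $\mathcal{O}_{X,\eta}$ is a field means that each $h \in \mathcal{A}$ is, in the stalk, either zero (hence identically zero on a neighbourhood of $\eta$) or a unit (hence nowhere vanishing on a neighbourhood). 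On a small enough connected affinoid neighbourhood $U$ of $\eta$, I can therefore assume each $f_j, g_j$ is identically zero on $U$ or nowhere vanishing on $U$. A short case analysis using $\eta \in \overline{S}$ eliminates every condition where $f_j$ or $g_j$ vanishes (such a condition is either vacuous on $U$ or incompatible with $\eta \in \overline{S}$), so I may assume each $f_j, g_j$ is a unit on $U$ and rewrite each condition as $|\phi_j| \Diamond_j 1$ with $\phi_j := f_j/g_j \in \mathcal{O}(U)^\ast$.

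Taking limits along a sequence in $S$ converging to $\eta$ gives $|\phi_j(\eta)| \leq 1$. If $|\phi_j(\eta)| < 1$, by continuity the condition holds on a whole neighbourhood of $\eta$ and may be discarded, so I reduce to the case $|\phi_j(\eta)|=1$ for every surviving $j$. If additionally $|\phi_j|$ were identically $1$ on $U$ for some $j$, the condition would be vacuous when $\Diamond_j = \leq$ (drop it), and force $S \cap U = \emptyset$ when $\Diamond_j = <$ (contradicting $\eta \in \overline{S}$). After possibly shrinking $U$, every surviving condition thus satisfies $|\phi_j(\eta)|=1$ and $|\phi_j|$ is nonconstant on every neighbourhood of $\eta$.

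It suffices now to show that the open set
\[ V := \{ x \in U \st |\phi_j(x)| < 1 \ \text{for all}\ j \} \]
is nonempty, for then $V \subseteq S \cap U$ gives $V \subseteq \overset{\circ}{S}$, proving the lemma. This is the main obstacle. Along a sequence $x_n \to \eta$ of points of $S$, if some $x_n$ lies in $V$ we are done; otherwise, by extraction, a fixed index $j_0$ satisfies $|\phi_{j_0}(x_n)| = 1$ for all $n$ (and necessarily $\Diamond_{j_0} = \leq$). The field property of $\mathcal{O}_{X,\eta}$ forbids the level set $\{|\phi_{j_0}| = 1\}$ from containing any neighbourhood of $\eta$ -- this is precisely what the preceding reduction ensures. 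The hard step is then to perturb each $x_n$ along an analytic direction in which $|\phi_{j_0}|$ strictly decreases while the other $|\phi_j|$'s remain below~$1$, and to iterate this over $j$ to produce a point of $V$. The crux is to show that these perturbations are compatible, a genericity property afforded by the triviality of the maximal ideal of $\mathcal{O}_{X,\eta}$.
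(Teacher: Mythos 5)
Your opening reductions are sound and run parallel to the paper's: both pass to a basic semianalytic piece and both use the hypothesis that $\mathcal{O}_{X,\eta}$ is a field to dispose of the functions whose germ at $\eta$ vanishes. The proof breaks down at the point where you declare that it suffices to show that
\[ V = \{ x \in U \st |\phi_j(x)| < 1 \ \text{for all } j \} \]
is nonempty. That statement is sufficient for the conclusion but it is strictly stronger than the lemma, and it is false in general, so no perturbation argument can establish it. Take $X = \B^1 = \mathcal{M}(k\{T\})$, $\eta$ the Gauss point (whose local ring is a field), and $S = \{x \st |T(x)| \leq 1\} \cap \{x \st 1 \leq |T(x)|\} = \{ |T| = 1\}$. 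Then $\eta \in S$, all your reductions leave both conditions in place (on $U = \{|T| \ge 1/2\}$ one has $\phi_1 = T$, $\phi_2 = T^{-1}$, both of absolute value $1$ at $\eta$ and neither identically of absolute value $1$ on any neighbourhood of $\eta$), yet $V = \{|T|<1\} \cap \{|T|>1\} = \emptyset$. Nevertheless $\overset{\circ}{S} \neq \emptyset$: for any rigid $t_0$ with $|t_0|=1$, the open ball $\{|T-t_0|<1\}$ is contained in $S$ by the ultrametric inequality. So the "hard step" you defer — simultaneously pushing all the $|\phi_j|$ strictly below $1$ — is not merely hard, it is impossible, and the appeal to a "genericity property" of the maximal ideal cannot rescue it.

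The point you are missing is that one does not need to make the non-strict inequalities strict. The paper instead splits each condition $\{|f|\le|g|\}$ as $\{f=g=0\} \cup \{|f|\le|g|\neq 0\}$, uses the field hypothesis to remove the vanishing conditions on a neighbourhood of $\eta$ exactly as you do, and then observes that what remains — a finite intersection of sets of the form $\{|f|\le|g|\neq 0\}$ and $\{|F|<|G|\}$ — is a nonempty strictly $k$-analytic domain of $X$. Such a domain contains a rigid point and is a neighbourhood of each of its rigid points, hence has nonempty interior. Replacing your final step by this observation (a nonempty strictly analytic domain has nonempty interior) closes the gap without any perturbation argument.
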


\begin{proof}
Since 
\[\bigcup_{i=1}^n \overline{S_i} = \overline{\bigcup_{i=1}^n S_i }\] 
we can assume that $S$ is a basic semianalytic subset, i.e is of the form:
\[S = \left( \bigcap_{i=1}^m \{x\in X \ \big| \  |f_i(x)|\leq |g_i(x)| \} \right)  
\bigcap \left( \bigcap_{j=1}^n \{x \in X \ \big| \  |F_j(x)| < |G_j(x)| \} \right).\]
We use the following decomposition
\[\{x\in X \ \big| \  |f_i(x)|\leq |g_i(x) | \} =
\{x\in X \ \big| \  f_i(x)=g_i(x)=0 \} \cup \{x\in X  \ \big| \  \ |f_i(x)|\leq |g_i(x)| \neq 0 \}\]
and using again that the adherence is stable under finite union, we can assume that 
$\eta \in \overline{S}$ and that $S$ is of the form: 
\[S= \bigcap_{i=1}^l \{x \in X \ \big| \ h_i(x)=0 \} 
\bigcap_{j=1}^m \{x\in X \ \big| \   \ |f_j(x)|\leq |g_j(x)|\neq 0 \} \bigcap_{k=1}^n \{x\in X \ \big| \  |F_k(x)|<|G_k(x)| \}.\]
Since the subsets $\{x\in X \ \big| \  h_i(x)=0\}$ are closed, contain $S$ and  
$\eta \in \overline{S}$, it follows that $h_i(\eta)=0$.\par
Since $\mathcal{O}_{X,\eta}$ is a field we can find an affinoid neighbourhood $V$ 
of $\eta$ such that 
${h_i}_{|V}=0$ for all $i$.
Hence $V\cap S \neq \emptyset$ (because $\eta \in \overline{S}$) and we can 
remove the $h_i's$, and assume that
\[V\cap S = \bigcap_{j=1}^m \{x \in V \ \big| \ |f_j(x)|\leq |g_j(x)|\neq 0 \}  
\bigcap_{k=1}^n \{x\in V \ \big| \  |F_k(x)|<|G_k(x)| \}.\]
This defines a strictly $k$-analytic domain of $X$, which is non empty, so its interior is also non empty, for instance its interior contains some rigid points.
\end{proof}

\begin{lemme}
\label{lemme_type2}
Let $\eta \in X$ and let us assume that $\mathcal{O}_{X, \eta}$ is a field. 
Let \dce \ be an elementary constructible datum with 
$Y =\mathcal{M}( \mathcal{A}\{r^{-1}t \}/(f-tg) )$ where
$T=\varphi^{-1} (S) \cap \{y\in R \ \big| \  |f(y)|\leq s|g(y)|\neq 0  \}$ 
with $0<s<r$, $s\in \val$ and $R$ a  semianalytic subset of $Y$. 
Let us assume that $\eta \in \overline{\varphi(T)}$. 
Then 
\begin{enumerate}[(a)]
\item $g(\eta) \neq 0$.

\item $|f(\eta) | \leq s |g(\eta)|$. 
 
\item There exists a neighbourhood $U$ of $\eta$ 
such that  
$\varphi^{-1}(U) \xrightarrow{\varphi_{| \varphi^{-1}(U) } } U$ is an isomorphism.
If we denote by $\eta '$ the only point of $\varphi^{-1}(U)$ such that 
$\varphi(\eta') = \eta$, then   
$\eta' \in \overline{T}$ and $\mathcal{O}_{Y,\eta'}$ is a field. 
\end{enumerate}

\end{lemme}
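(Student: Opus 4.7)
\textbf{Proof plan for Lemma \ref{lemme_type2}.}

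\emph{Parts (a) and (b).} The key observation is that by the very definition of $T$, we have the inclusion
\[
\varphi(T) \subseteq \{x \in X \st g(x) \neq 0 \ \text{and} \ |f(x)| \leq s|g(x)|\}.
\]
The set $\{x \in X \st |f(x)| \leq s|g(x)|\}$ is closed in $X$, so taking closures yields $|f(\eta)| \leq s|g(\eta)|$, proving (b). For (a), suppose for contradiction that $g(\eta) = 0$. Since $\mathcal{O}_{X,\eta}$ is a field and $g \in \mathcal{O}_{X,\eta}$ vanishes modulo the (trivial) maximal ideal, $g$ is the zero germ at $\eta$, so there exists an open neighbourhood $W$ of $\eta$ on which $g_{|W} = 0$. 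But then $W \cap \varphi(T) = \emptyset$, contradicting $\eta \in \overline{\varphi(T)}$.

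\emph{Part (c).} Pick some $s' \in \val$ with $s < s' < r$, which is possible as $s < r$. Using (a) and (b), $|f(\eta)| \leq s|g(\eta)| < s'|g(\eta)|$, so $\eta$ lies in the interior of the rational domain
\[
U := \{x \in X \st |f(x)| \leq s'|g(x)|, \ g(x) \neq 0\},
\]
which is therefore an affinoid neighbourhood of $\eta$. On $U$, the function $g$ is invertible, so for every $x \in U$ the equation $f(x) = t\cdot g(x)$ in $\mathcal{H}(x)\{r^{-1}t\}$ has the unique solution $t = f(x)/g(x)$, whose norm is $\leq s' < r$. This already shows set-theoretically that $\varphi$ restricted to $\varphi^{-1}(U)$ is a bijection onto $U$. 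A direct computation of affinoid algebras (sending $t$ to $f/g$) identifies $\mathcal{O}(\varphi^{-1}(U))$ with $\mathcal{O}(U)$, so $\varphi^{-1}(U) \xrightarrow{\varphi} U$ is an isomorphism of affinoid spaces.

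Let $\eta'$ be the unique preimage of $\eta$ in $\varphi^{-1}(U)$. To see that $\eta' \in \overline{T}$, let $V'$ be any open neighbourhood of $\eta'$ in $\varphi^{-1}(U)$. Because $\varphi_{|\varphi^{-1}(U)}$ is a homeomorphism onto $U$, the image $\varphi(V')$ is a neighbourhood of $\eta$ in $U$, and since $U$ is a neighbourhood of $\eta$ in $X$, $\varphi(V')$ is a neighbourhood of $\eta$ in $X$. Thus $\varphi(V') \cap \varphi(T) \neq \emptyset$. But $T \subseteq \varphi^{-1}(U)$ (as $T \subseteq \{|f| \leq s|g| \neq 0\} \subseteq \varphi^{-1}(U)$), and $\varphi$ is injective on $\varphi^{-1}(U)$, so this intersection lifts to $V' \cap T \neq \emptyset$. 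Finally, since $\varphi^{-1}(U)$ and $U$ are analytic domains in $Y$ and $X$ respectively, the local rings satisfy $\mathcal{O}_{Y,\eta'} = \mathcal{O}_{\varphi^{-1}(U),\eta'} \simeq \mathcal{O}_{U,\eta} = \mathcal{O}_{X,\eta}$, where the middle isomorphism is induced by $\varphi$. By hypothesis the last ring is a field, so $\mathcal{O}_{Y,\eta'}$ is a field.

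The only subtlety I expect is the careful handling of the isomorphism $\varphi^{-1}(U) \simeq U$: one must justify both set-theoretic injectivity (via uniqueness of the solution $t = f/g$) and the algebraic identification of the affinoid algebras, and make sure $T \subseteq \varphi^{-1}(U)$ so that the lifting argument in the last step goes through.
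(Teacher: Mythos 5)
Your proof is correct and follows essentially the same route as the paper: (a) and (b) are argued identically (field local ring forces $g$ to vanish near $\eta$; the closed condition $\{|f|\le s|g|\}$ contains $\overline{\varphi(T)}$), and for (c) the paper likewise identifies $\varphi^{-1}(\{g\neq 0\})$ with the analytic domain $\{x\in X \st |f(x)|\le r|g(x)|\neq 0\}$, which is a neighbourhood of $\eta$ by (a) and (b), and transports the local ring and the closure condition through this isomorphism. One small imprecision: your $U=\{|f|\le s'|g|,\ g\neq 0\}$ is an analytic domain but not a rational or affinoid domain (the condition $g\neq 0$ is open), though this does not affect the argument since all that is needed is that $U$ is a neighbourhood of $\eta$ on which $\varphi$ restricts to an isomorphism.
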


\begin{proof}
\noindent
 \begin{enumerate}[(a)]
 \item
If we had $g(\eta) = 0$, since $\mathcal{O}_{X,\eta}$ 
is a field, there would exist an affinoid neighbourhood of $\eta$, $V$, such that
$g_{|V} =0$. Since for $p\in T$,  
$g(\varphi(p))\neq 0$ we should have $\varphi(T) \cap V = \emptyset$ which is impossible since 
$\eta \in \overline{\varphi(T)}$. \\
\item
The subset $\{x\in X \ \big| \  |f(x)| \leq s |g(x)| \} $ is a closed subset of  $X$ 
which contains $\varphi(T)$, hence by assumption also $\eta$. \\
\item
If we set $U= \{y\in Y \ \big| \  g(y)\neq 0 \}$, 
$\varphi_{|U}$ identifies through an isomorphism $U$ with  
$\varphi(U) = \{x\in X \ \big| \ |f(x)| \leq r |g(x)|\neq 0 \}$ 
which is an analytic domain of $X$, and 
a neighbourhood of $\eta$ according to the two preceding points.
So $\eta \in \varphi(U)$, let us say $\eta = \varphi(\eta')$ with $\eta' \in U$.
Now,  
$\mathcal{O}_{Y, \eta'} \simeq \mathcal{O}_{X,\eta} $ is a field and 
$\eta' \in \overline{T}$. 
\end{enumerate}
\end{proof}

\begin{cor}
\label{lemme_interieur}
Let $\eta \in X$ such that $\mathcal{O}_{X, \eta}$
is a field, and let $U$ be an overconvergent subanalytic subset of $X$.
If $\eta \in \overline{U}$, then $\overset{\circ}{U} \neq \emptyset$. 
\end{cor}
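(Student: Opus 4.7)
The plan is the following. By Theorem \ref{theoequiv}, $U$ is overconvergent constructible, so one can write $U=\bigcup_{i=1}^m\varphi_i(T_i)$ for finitely many constructible data $(Y_i,T_i)\stackrel{\varphi_i}{\dashrightarrow}X$. Since $\overline{U}=\bigcup_i\overline{\varphi_i(T_i)}$ and $\overset{\circ}{\varphi_{i_0}(T_{i_0})}\subseteq\overset{\circ}{U}$, the problem reduces to the case $U=\varphi(T)$ for a single constructible datum. I will then induct on the complexity $n$ of $\varphi$, proving the slightly stronger assertion $\eta\in\overline{\overset{\circ}{\varphi(T)}}$, which is what actually iterates.

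The base case, in which $\varphi(T)$ is (identified with) a semianalytic subset in $X$, is a refinement of Lemma \ref{lemme_semialg}: I reduce $\varphi(T)$ to a basic semianalytic piece whose closure contains $\eta$; the equalities $h_i=0$ forced at $\eta$ extend, since $\mathcal{O}_{X,\eta}$ is a field, to all of an arbitrarily small affinoid neighborhood $V\subseteq\Omega$ of $\eta$ (with $\Omega$ a prescribed open neighborhood of $\eta$). On $\overset{\circ}{V}$ the intersection with the remaining inequalities defines a non-empty strict analytic domain, which contains a rigid point $x$; by the ultrametric inequality the defining inequalities $|f_j|\le|g_j|\ne 0$ and $|F_k|<|G_k|$ persist on an open neighborhood of $x$ in $X$, so $x$ is an interior point of $\varphi(T)$ inside $\Omega$.

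For the inductive step, factor $\varphi=\psi\circ\varphi'$ where $\psi:(Y',T')\to X$ is the outermost elementary constructible datum, associated to $(f,g,r,s)$ as in Definition \ref{def_dce}, and $\varphi'$ has complexity $n-1$. From $\eta\in\overline{\psi(\varphi'(T))}\subseteq\overline{\psi(T')}$, Lemma \ref{lemme_type2} provides $\eta'\in Y'$ with $\psi(\eta')=\eta$ and $\mathcal{O}_{Y',\eta'}$ a field. Using $|f(\eta)|\le s|g(\eta)|<r|g(\eta)|$, the open set $W:=\{|f|<r|g|,\,g\ne 0\}$ is an open neighborhood of $\eta$ whose preimage $V':=\psi^{-1}(W)$ is an open neighborhood of $\eta'$ on which $\psi$ restricts to a homeomorphism. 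The key verification is that $\eta'\in\overline{\varphi'(T)}$ in $Y'$: every open $\Omega'\subseteq V'$ containing $\eta'$ is sent by $\psi$ to an open neighborhood of $\eta$ in $W$, which meets $\psi(\varphi'(T))$; the identity $V'=\psi^{-1}(W)$ together with injectivity of $\psi|_{V'}$ then lifts this intersection back to $\Omega'\cap\varphi'(T)\ne\emptyset$. The induction hypothesis (applied on the affinoid $Y'$, or in the base case directly to the semianalytic $T'$) produces interior points of $\varphi'(T)$ accumulating at $\eta'$ within $V'$, which $\psi$ transports through the local isomorphism to interior points of $\varphi(T)$ in $X$ accumulating at $\eta$, closing the induction.

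The main obstacle is precisely the need to strengthen the bare statement $\overset{\circ}{U}\ne\emptyset$ to the accumulation property $\eta\in\overline{\overset{\circ}{U}}$. Without this strengthening, the interior points produced by the inductive hypothesis in $Y'$ might lie far from $\eta'$, outside the analytic domain $V'$ on which $\psi$ is an isomorphism, and could not be transported back to $X$; tracking accumulation at $\eta$ is exactly what makes the local isomorphism of Lemma \ref{lemme_type2}(c) usable.
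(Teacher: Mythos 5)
Your proof is correct and follows essentially the same route as the paper: reduce to a single constructible datum, lift $\eta$ through the elementary constructible data via Lemma \ref{lemme_type2}, and conclude with (a refinement of) Lemma \ref{lemme_semialg} in the semianalytic case. The only organizational difference is that you run an explicit induction on the complexity with the strengthened conclusion $\eta\in\overline{\overset{\circ}{\varphi(T)}}$, whereas the paper iterates Lemma \ref{lemme_type2} all the way up the tower and then chooses the affinoid neighbourhood $V$ of the lifted point inside the locus where $\varphi$ is an isomorphism --- both devices serve the same purpose of guaranteeing that the interior points produced at the top can be transported back down to $X$, and your explicit verification that $\eta'\in\overline{\varphi'(T)}$ (rather than merely $\eta'\in\overline{T'}$) makes precise a step the paper leaves implicit in its ``repeated use'' of Lemma \ref{lemme_type2}.
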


\begin{proof}
First, according to theorem \ref{theo_eq}, we can assume that $U$ is 
an overconvergent constructible subset. 
Then, using similar arguments as in the beginning of lemma \ref{lemme_semialg}, 
we can assume that $U = \varphi(T)$ where 
$(Y,T) \stackrel{\varphi}{\dashrightarrow} X$ 
 is a constructible datum.
Hence $T$ is a  semianalytic subset of $Y$.
A repeated use of lemma \ref{lemme_type2} 
allows us to say that there exists an open neighbourhood $U$ of $\eta$,  
such that 
$\varphi^{-1} (U) \xrightarrow{\varphi_{| \varphi^{-1}(U)}} U$ 
is an isomorphism. 
Thanks to lemma \ref{lemme_type2} again, we can 
introduce $\eta '$, the only point of $\varphi^{-1}(U)$ such that 
$\varphi(\eta') = \eta$, and assert that
$\mathcal{O}_{Y,\eta'} $ is a field and that $\eta' \in \overline{T}$.
Now if we consider $V$ a strictly affinoid neighbourhood of $\eta'$ 
contained in $\varphi^{-1}(U)$, 
it is true that $\eta' \in \overline{T\cap V}$ (the adherence is here taken in $V$).
Now, $T\cap V$ is a  semianalytic subset of $V$ so 
according to lemma \ref{lemme_semialg}, $T\cap V$ has non empty interior
in $V$. 
We can then deduce that $T$ has non empty interior in $X$ 
whence $\varphi(T)$ has also non-empty interior. 
\end{proof}
Let $f = \sum_{n\in \N} a_nT^n$ be a series and $r\in \R_+^*$. 
We will say that the radius of convergence of $f$ is exactly $r$ when 
$|a_n|r^n \xrightarrow[n \to \infty]{} 0$ and $r$ is maximum for this property.
\begin{prop}
\label{CE1}
Let $X =\B^2= \mathcal{M}( k\{T_1,T_2 \} )$ be the closed bidisc, 
and let $0<r<1$ with $r \in |k^*|$, say 
$r= |\varepsilon|$ for some $\varepsilon \in k$,  
and let $f\in k\{r^{-1}u\}$ be some function whose radius of convergence is exactly 
$r$, and $\|f\| < 1$. We then define 
\[S = \{ x\in X \ \big| \  |T_1(x)| < r \ \text{and} \ T_2(x) = f(T_1(x)) \}.\]
Then $S$ is rigid-semianalytic but 
not overconvergent subanalytic. As a consequence, the class of 
overconvergent subanalytic subsets is not $G$-local. 
\end{prop}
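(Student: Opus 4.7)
Part one of the plan is routine. To show $S$ is rigid-semianalytic I would cover $X$ by the two strictly affinoid subdomains $U_1 := \{|T_1|\leq r\}$ and $U_2 := \{|T_1|\geq r\}$, both valid since $r = |\varepsilon| \in |k^*|$. On $U_1$ the series $f(T_1)$ converges and so $S \cap U_1 = \{|T_1|<r\} \cap \{T_2 = f(T_1)\}$ is semianalytic (one strict inequality together with one analytic equation). On $U_2$ the constraint $|T_1|<r$ is incompatible with $|T_1|\geq r$, so $S\cap U_2 = \emptyset$. Hence $S$ is $G$-locally semianalytic.

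For the more delicate second claim, I would argue by contradiction via corollary \ref{lemme_interieur}. The test point is $\eta := \iota(\xi) \in X$, where $\iota: \Br = \M(k\{r^{-1}T_1\}) \hookrightarrow X$ is the closed immersion $t \mapsto (t, f(t))$ onto the 1-dimensional analytic subvariety $Y := \{T_2 = f(T_1)\}$ of $\{|T_1|\leq r\}$, and $\xi$ is the Gauss point of $\Br$. Then $\eta$ is a non-rigid type 2 point with $|T_1(\eta)| = r$ and $|T_2(\eta)| = \|f\|$. Two verifications are straightforward: $\eta \in \bar{S}$ because $S = \iota(\{|T_1|<r\})$ and the Berkovich open disk is dense in the closed disk; and the topological interior $\mathring{S}$ is empty because $S$ sits inside the 1-dimensional analytic curve $Y$ inside the 2-dimensional ambient $X$.

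The heart of the argument, and the main obstacle, is to verify that $\mathcal{O}_{X,\eta}$ is a field. The key observation is that since $r<1$, every affinoid neighbourhood $V$ of $\eta$ in $X$ must overlap the region $\{|T_1|>r\}$, on which $f(T_1)$ diverges; hence $T_2 - f(T_1)$ is \emph{not} an element of $\mathcal{O}_{X,\eta}$, so the naive ``uniformizer'' of the curve $Y$ is unavailable. For a germ $h \in \mathcal{O}(V)$ with $h(\eta)=0$, after choosing a truncation $f_N$ of $f$ and a sufficiently small $\epsilon>0$, I would expand $h$ in Weierstrass style as $h = \sum_{j\geq 0} h_j(T_1)\bigl(T_2 - f_N(T_1)\bigr)^j$ with $h_j \in k\{(r+\epsilon)^{-1}T_1\}$; substituting $T_2 = f(T_1)$ then gives $h(T_1,f(T_1)) = \sum_j h_j\cdot (f-f_N)^j \in k\{r^{-1}T_1\}$. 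The vanishing $h(\eta)=0$ says this last series vanishes identically on $\Br$, and a transcendence argument --- using precisely that $f$ has radius of convergence exactly $r$, so that $g := f - f_N$ cannot satisfy any nontrivial algebraic equation over $k\{(r+\epsilon)^{-1}T_1\}$ without forcing $f$ to extend analytically to a disk of radius strictly greater than $r$ --- would force every $h_j = 0$. Thus $\mathfrak{m}_\eta = 0$, and $\mathcal{O}_{X,\eta}$ is a field.

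With the three ingredients in place --- $\eta \in \bar{S}$, $\mathcal{O}_{X,\eta}$ a field, and $\mathring{S} = \emptyset$ --- corollary \ref{lemme_interieur} delivers the contradiction: an overconvergent subanalytic subset of $X$ containing $\eta$ in its closure and having $\mathcal{O}_{X,\eta}$ a field must have nonempty interior. Hence $S$ cannot be overconvergent subanalytic, and combined with the first paragraph this also proves the final assertion, that overconvergent subanalyticity is not a $G$-local property.
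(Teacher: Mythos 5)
Your overall strategy is the same as the paper's: the same covering $\{|T_1|\le r\}$, $\{|T_1|\ge r\}$ for the rigid\nobreakdash-semianalytic half, the same test point $\eta$ (image of the Gauss point of $\B_r$ under $u\mapsto(u,f(u))$), and the same three ingredients $\eta\in\overline{S}$, $\overset{\circ}{S}=\emptyset$, $\mathcal{O}_{X,\eta}$ a field, fed into corollary \ref{lemme_interieur}. The one point where you depart from the paper is the claim that $\mathcal{O}_{X,\eta}$ is a field: the paper simply cites \cite[Lemma 2.21]{Duc_flatness}, whereas you attempt a direct proof, and that is where your argument has a genuine gap. The ``transcendence argument'' you invoke is not a supporting detail --- it is the entire content of the lemma --- and as stated it is neither the right statement nor proved. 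First, the relation $\sum_{j}h_j\,g^j=0$ you obtain is an \emph{analytic} relation (an infinite convergent series in $g$), not a polynomial one, so ``$g$ satisfies no nontrivial algebraic equation'' is not directly applicable; you would first have to apply Weierstrass preparation (after localizing on the base) to replace $\sum_j h_j Y^j$ by a distinguished polynomial $P(T_1,Y)$. Second, even granting a polynomial relation $P(T_1,g)=0$ over the overconvergent ring, the implication ``algebraic over overconvergent functions $\Rightarrow$ $f$ continues past radius $r$'' is a substantive theorem: the natural proof uses the henselianity of the local ring at the Gauss point (equivalently Krasner's lemma) to extend the section $T_1\mapsto(T_1,g(T_1))$ of the finite cover $V(P)$ across $\xi_r$, needs a separability discussion in positive residue characteristic, and only produces an extension of $g$ to a Swiss-cheese neighbourhood of $\xi_r$ --- from which the contradiction with ``radius of convergence exactly $r$'' still has to be extracted via the Laurent expansion on an outer annulus $\{r\le|T_1|\le r+\delta\}$. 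None of this is indicated in your sketch.

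There is also a smaller, fixable imprecision: the expansion $h=\sum_j h_j(T_1)\,(T_2-f_N(T_1))^j$ with $h_j\in k\{(r+\epsilon)^{-1}T_1\}$ presupposes that $h$ is defined on a product polydisc $\{|T_1|\le r+\epsilon\}\times\{|T_2-f_N(T_1)|\le\rho\}$, but a germ at $\eta$ is in general only represented on a neighbourhood that is a Swiss cheese in the $T_1$-direction (e.g.\ one may have to remove $\{|T_1-a|<s\}$ for some $|a|<r$), so the coefficients $h_j$ only live on such a domain; the argument must be carried out there. In short: everything in your proposal that coincides with the paper is correct, but the one step you replace --- the citation of Ducros's lemma --- is exactly the hard point, and your substitute for it is an assertion rather than a proof. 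Either cite the lemma as the paper does, or write out the Weierstrass preparation, henselianity/Krasner, and annulus arguments in full.
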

\begin{proof}
In more concrete terms, 
$S$ is the set of points of the \emph{curve} whose equation is  $T_2=f(T_1)$, restricted 
to the subset $\{ |T_1|<r \}$.
Let us consider
$$\begin{array}{rcl}
\B & \xrightarrow{\psi} & X \\
u & \mapsto & (\varepsilon u , f(\varepsilon u ) ) 
\end{array}$$
and let us set $\eta = \psi(g)$ where $g$ is the Gauss point of 
$\B$.
Then $S \subseteq \psi(\B)$ and $\eta \in \overline{S}$.
According to \cite[proposition 4.4.6]{Duc_flatness}
$\mathcal{O}_{X,\eta}$ is a field.  
Furthermore $\overset{\circ}{S} = \emptyset$ because
$S \subseteq Z:=\{ x\in \B_{(r,1)} \ \big| \ T_2(x) = f(T_1(x)) \}$, 
which is a Zariski closed subset of dimension $1$ of 
$\B_{(r,1)}$, which itself is of pure dimension $2$, so 
$Z$ is nowhere dense in $\B_{(r,1)}$ \cite[2.3.7]{Berko90}. 
Hence according to corollary \ref{lemme_interieur}, $S$ 
is not overconvergent subanalytic. However, if we consider the covering of $X$ given by  
$X_1 = \{ x  \in X \ \big| \  |T_1(x)| \leq r \}$, 
$X_2 = \{x \ \in X \ \big| \  |T_1(x)| \geq r \}$, then 
$S \cap X_1$ is indeed  semianalytic in $X_1$ and 
$S\cap X_2 = \emptyset$, so $S$ is rigid-semianalytic. \par
Now since the class of overconvergent subanalytic subsets 
contains the class of  semianalytic subsets, if the class of 
overconvergent subanalytic subsets was $G$-local, 
it should contain the class of rigid-semianalytic subsets, but we have shown that this is false. 
Hence the class of overconvergent subanalytic subsets is not $G$-local.
\end{proof}

\begin{rem}
 Actually, this example gives directly a counterexample to 
\cite[lemma 4.1]{Sch_sub} which in our feeling is the source of mistakes in \cite{Sch_sub}.
\end{rem}
As a corollary of this we obtain: 
\begin{prop}
\label{CE8}
\label{cex_SA}
Let $0<s<r<1$ with $s \in \val$, $f\in k\{r^{-1}u\}$ whose radius of convergence is exactly $r$ 
such that $\|f\|<1$, and let us set 
$\B^2 = \mathcal{M}( k\{T_1,T_2\})$. Define :
\[ S = \{x\in \B^2 \ \big| \ |T_1(x)| \leq s \ \and \ T_2(x) = f(T_1(x)) \}.\]
Then $S$ is a locally semianalytic subset of $\B^2$ which is not a  semianalytic 
subset of $\B^2$.
\end{prop}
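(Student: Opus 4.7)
The plan is to prove the two assertions separately. For local semianalyticity, the key observation is that whenever $s'\in \val$ satisfies $s<s'<r$, the series $f(T_1)$ converges absolutely on the strictly affinoid domain $V=\{|T_1|\leq s'\}$ (as $s'<r$ is within the radius of convergence), and hence defines an analytic function there. Consequently $S\cap V$ is cut out from $V$ by the semianalytic conditions $|T_1|\leq s$ and $T_2-f(T_1)=0$. Density of $\val$ in $\R_{>0}$, which holds since $k$ is non-trivially valued (as $s\in \val$ with $0<s<1$), guarantees the existence of such an $s'$. I would then split into two cases: for $x$ with $|T_1(x)|\leq s$, take $V$ above as a strictly affinoid neighbourhood of $x$; for $x$ with $|T_1(x)|>s$, pick any strictly affinoid neighbourhood of $x$ contained in the open subset $\{|T_1|>s\}$ of $\B^2$, which is automatically disjoint from $S$.

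For the second assertion, that $S$ is not semianalytic in $\B^2$, I would adapt the Gauss-point argument from proposition \ref{CE1}, but invoke lemma \ref{lemme_semialg} directly rather than going through corollary \ref{lemme_interieur}. Introduce the closed immersion $\psi:\B_r\hookrightarrow \B^2$ given by $u\mapsto (u,f(u))$, which is well defined since $r<1$ and $\|f\|<1$. Let $\eta_s$ denote the Gauss point of $\B_s\subset \B_r$ (which exists since $s\in \val$), and set $\eta:=\psi(\eta_s)\in \B^2$. Then $\eta\in \psi(\B_s)=S\subset \overline{S}$. Applying \cite[lemma 2.21]{Duc_flatness}, exactly as in the proof of proposition \ref{CE1}, yields that $\mathcal{O}_{\B^2,\eta}$ is a field. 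Moreover $S\subset \psi(\B_r)$, a Zariski-closed subset of pure dimension $1$ in the $2$-dimensional affinoid $\B^2$, hence nowhere dense by \cite[2.3.7]{Berko90}; in particular $\mathring{S}=\emptyset$. The contrapositive of lemma \ref{lemme_semialg} applied to $S$ and $\eta$ then precludes $S$ from being semianalytic in $\B^2$.

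The main obstacle is really only transferring the ``field stalk'' input from proposition \ref{CE1} to the point $\eta$ here, but since the geometric picture is identical --- a Gauss point of an embedded analytic curve, simply at radius $s$ rather than $r$ --- the same reference applies verbatim. The rest is essentially bookkeeping: checking the strictly affinoid status of the neighbourhoods $V$ in the first claim (requiring $s',s''\in\val$), and noting that $\psi(\B_r)$ is genuinely Zariski-closed and $1$-dimensional so that \cite[2.3.7]{Berko90} may be invoked.
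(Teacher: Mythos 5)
Your argument for local semianalyticity is correct and coincides with the paper's: cover $\B^2$ by a Weierstrass domain $\{|T_1|\le s'\}$ on which $f(T_1)$ is analytic and a Laurent domain $\{|T_1|\ge t\}$ with $s<t<s'<r$, the latter missing $S$ entirely.

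The proof of non-semianalyticity, however, has a fatal gap: for $\eta=\psi(\eta_s)$ the image of the Gauss point of $\B_s$, the local ring $\mathcal{O}_{\B^2,\eta}$ is \emph{not} a field. Indeed, pick any $s'$ with $s<s'<r$; then $V'=\{x\in\B^2 \st |T_1(x)|\le s'\}$ is an affinoid neighbourhood of $\eta$ (as $|T_1(\eta)|=s<s'$) on which the series $f(T_1)$ converges, so $T_2-f(T_1)$ defines an element of $\mathcal{O}_{\B^2,\eta}$. This element vanishes at $\eta$, yet is non-zero in the local ring because its zero locus $\psi(\B_{s'})$ is a one-dimensional Zariski-closed, hence nowhere dense, subset of $V'$; a local ring containing a non-zero non-unit is not a field. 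So lemma \ref{lemme_semialg} cannot be invoked at $\eta$. The reference \cite[lemma 2.21]{Duc_flatness} used in proposition \ref{CE1} applies to the Gauss point of the curve at radius \emph{exactly equal to the radius of convergence} of $f$: it is the non-extendability of $f$ past $r$ that makes the stalk there a field, and this is lost at radius $s<r$. (One cannot simply move to the radius-$r$ Gauss point either, since it does not lie in $\overline{S}$, $S$ being contained in the closed set $\{|T_1|\le s\}$.)

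The paper bridges exactly this gap with a step your proposal omits. Assuming $S$ semianalytic, one extracts a basic piece $T\subseteq S$ containing infinitely many points; since $\overset{\circ}{S}=\emptyset$ and intersections of conditions $\{|g_1|<|g_2|\}$, $\{|g_1|\le|g_2|\neq 0\}$ are analytic domains, $T$ must involve a non-trivial equation $\{h=0\}$ with $h\in k\{T_1,T_2\}$ a \emph{global} function on $\B^2$. Since $T_2-f(T_1)$ is irreducible in $k\{r^{-1}T_1,T_2\}$ and $V(h)\cap V(T_2-f(T_1))$ is infinite, a dimension count forces $V(T_2-f(T_1))\subseteq V(h)$ inside $\B_{(r,1)}$; in particular $h$ vanishes at the radius-$r$ Gauss point $\eta'=\psi(\eta_r)$ of the curve, where $\mathcal{O}_{\B^2,\eta'}$ \emph{is} a field. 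Lemma \ref{lemme_semialg} applied to the nowhere dense semianalytic set $V(h)$ and the point $\eta'$ then yields the contradiction. The globality of $h$ is what propagates the vanishing from radius $s$ up to radius $r$; your argument, staying at radius $s$, never reaches a point with field stalk.
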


\begin{proof}
If $S$ was a  semianalytic subset of $\B^2$, we could find $T \subseteq S$ 
which contains infinitely many points of $S$ such that 
$T$ is a basic semianalytic subset, and even, a finite intersection of sets of the form 
$\{x\in \B^2 \ \big| \ |g_1(x)| < |g_2(x)| \}$, $\{x\in \B^2 \ \big| \ |g_1(x)| \leq |g_2(x)| \neq 0 \}$ and 
$\{x \in \B^2 \ \big| \ h(x)=0\}$. 
Since an intersection of the two first kind of sets is a strictly analytic domain, and 
$T \subseteq S$, and $\overset{\circ}{S} = \emptyset$, in this intersection, there must be a non-trivial set of the form
$\{x\in \B^2 \ \big| \ h(x)=0 \}$. 
Now, let us consider in 
$\B_{(r,1)} = \mathcal{M}(k\{r^{-1}T_1,T_2\})$ the Zariski-closed subset 
$Z= V(T_2-f(T_1),h)$. By assumption, it is infinite. 
Moreover, since $\|f\| < 1$, $T_2-f(T_1)$ is irreducible (see the lemma above) in 
$\mathcal{M} ( k\{r^{-1}T_1, T_2\} )$, 
so for reasons of dimension, 
in $\mathcal{M} ( k\{r^{-1}T_1, T_2\} )$, 
$V(T_2-f(T_1) ) \subseteq V(h)$. 
But now if we introduce (as in the preceding proof) 
\[\begin{array}{rcl}
\Br & \xrightarrow{\psi} & \B^2 \\
u & \mapsto & ( u , f(u ) ) 
\end{array}\]
and $\eta = \psi(g)$ where $g$ is the Gauss point of 
$\Br$, then 
$\eta \in V(h)$ (where we see now $V(h)$ as a Zariski closed subset of $\B^2$), 
$\mathcal{O}_{\B^2,\eta}$ is a field, but 
$\overset{\circ}{V(h)}=\emptyset$, and since 
$V(h)$ is a  semianalytic (so overconvergent subanalytic) subset of 
$\B_{(r,1)}$, this contradicts lemma \ref{lemme_semialg}. \par
Let us now show that $S$ is a locally semianalytic subset of $\B^2$. 
Indeed, 
take $0<s<t<r$ with $t,r\in \val$, and consider  
$X_1 = \{ x\in \B^2 \ \big| \  |T_1(x)| \leq r \}$ and 
$X_2 = \{ x\in \B^2 \ \big| \  |T_1(x)| \geq t \}$. 
They define a wide covering of $\B^2$ and $X_1\cap S$ (resp. $X_2\cap S$) is  semianalytic in $X_1$ (resp. $X_2$), so $S$ is well locally semianalytic in $\B^2$.
\end{proof}

We have implicitly used:
\begin{lemme}
If $f\in k\{r^{-1}x \}$ and $\|f\| \leq 1$, then
$F(x,y) := y - f(x)$ is irreducible in 
$k\{r^{-1}x,y\}$.
\end{lemme}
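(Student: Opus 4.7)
My plan is to show that $(F)$ is a prime ideal of $k\{r^{-1}x, y\}$ by identifying the quotient explicitly with $k\{r^{-1}x\}$, which is an integral domain. Irreducibility will then follow, since in a domain, prime non-zero non-unit elements are irreducible.

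First, I would construct the $k$-algebra homomorphism
\[ \varphi : k\{r^{-1}x, y\} \longrightarrow k\{r^{-1}x\} \]
sending $x \mapsto x$ and $y \mapsto f(x)$. The hypothesis $\|f\| \leq 1$ is exactly what allows this substitution: by the universal property of the Tate algebra $k\{r^{-1}x,y\}$, one can substitute for $y$ any element of norm $\leq 1$ in a commutative Banach $k\{r^{-1}x\}$-algebra. Since $\varphi$ restricts to the identity on $k\{r^{-1}x\}$, it is surjective, and clearly $F = y - f(x) \in \ker(\varphi)$.

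Next, the key step: I would show $\ker(\varphi) = (F)$ via Weierstrass division. The crucial observation is that $F$, viewed as an element of $A\{y\}$ with $A = k\{r^{-1}x\}$, is $y$-distinguished of order $1$: its coefficient of $y^1$ is the multiplicative unit $1 \in A$, and since $\|f\| \leq 1$ we have $\|F\| = \max(\|f\|, 1) = 1 = \|1\| \cdot 1^1$, while all higher coefficients in $y$ vanish. By the Weierstrass division theorem proved in section \ref{section1.3}, each $g \in k\{r^{-1}x, y\}$ admits a unique decomposition $g = F q + R$ with $R \in A_0[y] = A$. Applying $\varphi$ gives $\varphi(g) = R$, whence $g \in \ker(\varphi) \iff R = 0 \iff g \in (F)$.

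Thus $k\{r^{-1}x, y\}/(F) \simeq k\{r^{-1}x\}$, a one-variable Tate algebra over $k$ and hence an integral domain. Therefore $(F)$ is prime; since $F$ is non-zero and the quotient is non-zero (so $F$ is not a unit), $F$ is irreducible. The main obstacle is really conceptual: recognizing that one should compute the quotient rather than attempt a direct factorization argument. Once that is in place, the only substantive verification is the distinguishedness of $F$, which is precisely where the hypothesis $\|f\| \leq 1$ is used.
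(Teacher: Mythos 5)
Your proof is correct and is essentially the paper's argument: the paper simply observes that $V(y-f(x))$ is isomorphic to $\B_r$, hence irreducible, which amounts to the identification $k\{r^{-1}x,y\}/(y-f(x)) \simeq k\{r^{-1}x\}$ that you establish. You merely make explicit (via the substitution homomorphism and Weierstrass division to compute the kernel) what the paper leaves as a one-line geometric remark.
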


\begin{proof}
As we have already seen, $V(f)$ is isomorphic to $\B_r$, so is irreducible.
\begin{comment}
If we consider a decomposition 
\begin{equation}
\label{irred}
F(x,y) = G(x,y) H(x,y)
\end{equation}
since 
$\|f\| \leq 1$ we can replace $y$ by $f(x)$ and \eqref{irred} becomes:
\[0 = F(x,f(x) ) = G(x,f(x))H(x,f(x)) .\]
So one of the two factors $G(x,f(x))$ or $H(x,f(x))$ must be $0$.
For instance say $G(x,f(x))=0$. Since $y-f(x)$ is $y$-distinguished of order $1$, if 
we perform a euclidean division of $G$ by $y-f(x)$ we obtain: 
\[G(x,y) = (y-f(x))q +R(x)\] 
with $R(x) \in k\{r^{-1}x\}$. Since 
$G(x,f(x)) =0$, $R(x)=0$.
Hence $y-f(x) = (y-f(x))qH$ so $qH=1$, and $H$ is invertible. 
\end{comment}
\end{proof}

\subsection{The other inequalities}
We will now explain the other proper inclusions appearing in figure 1.
The following proposition will be implicitly used in the rest of this section. In addition, it illustrates that 
the mixture of overconvergence and rigid-semianalytic subsets 
(which is a $G$-local property), is somehow too strong, 
in the sense that in proposition \ref{eqdef} above, the overconvergence condition seems to have disappeared.

\begin{prop}
\label{eqdef}
Let $S \subseteq X$. The following properties are equivalent:
\begin{enumerate}
\item $S$ is strongly subanalytic. 
\item There exist $n\in \mathbb{N}$ and $T \subseteq X \times \B^n$ a rigid-semianalytic subset such that 
\[ S = \pi(T \cap ( X \times ( \overset{\circ}{\B})^n ) )\] 
where 
$\pi : X\times \B^n \to X$ is the natural projection.
\end{enumerate}
\end{prop}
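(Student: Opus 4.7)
The plan is to show both implications separately; the direction $(2)\Rightarrow(1)$ is essentially formal, while $(1)\Rightarrow(2)$ requires a scaling trick to absorb the overconvergence parameter $r>1$ into the open-versus-closed ball distinction.

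For $(2)\Rightarrow(1)$, I would argue as follows. Given $T\subseteq X\times\B^n$ rigid-semianalytic with $S=\pi(T\cap(X\times\Bo^n))$, replace $T$ by $T\cap\{|t_i|<1,\ \forall i\}$; since strict inequalities $|t_i|<1$ describe a semianalytic subset of $X\times\B^n$, the intersection is still rigid-semianalytic in $X\times\B^n$. For any chosen $r>1$, view this new $T$ as a subset of $X\times\Br^n$ and take a finite affinoid cover of $X\times\Br^n$ consisting of $X\times\B^n$ together with the annular pieces $X\times\{|t_i|\geq 1\}\cap(X\times\Br^n)$: on the first piece $T$ is semianalytic by hypothesis, on the remaining pieces $T$ is empty. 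Hence $T$ is rigid-semianalytic in $X\times\Br^n$, and since $T\subseteq X\times\Bo^n\subseteq X\times\B^n$, we have $\pi(T\cap(X\times\B^n))=\pi(T)=S$, so $S$ is strongly subanalytic in the sense of $(1)$.

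For $(1)\Rightarrow(2)$, the key idea is to use a dilation in $k$. Start with $T\subseteq X\times\Br^n$ rigid-semianalytic and $S=\pi(T\cap(X\times\B^n))$. First replace $T$ by $\tilde T:=T\cap\{|t_i|\leq 1,\ \forall i\}$, which is still rigid-semianalytic in $X\times\Br^n$ and is now contained in $X\times\B^n$, while $\pi(\tilde T)=\pi(T\cap(X\times\B^n))=S$. Next choose $\lambda\in k^\times$ with $|\lambda|>1$, after enlarging $r$ if necessary so that $|\lambda|<r$: enlarging $r$ is harmless, since for any $r'\geq r$ one covers $X\times\mathbb{B}_{r'}^n$ by $X\times\Br^n$ (where $T$ is rigid-semianalytic) together with suitable annular affinoid domains on which $T$ is empty. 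Then $\tilde T\subseteq X\times\B^n\subseteq X\times\mathbb{B}_{|\lambda|}^n$, and the scaling map
\[\sigma:X\times\mathbb{B}_{|\lambda|}^n\xrightarrow{\sim}X\times\B^n,\qquad (x,t)\mapsto(x,\lambda^{-1}t)\]
is an isomorphism of strictly $k$-affinoid spaces sending $X\times\B^n$ onto $X\times\mathbb{B}_{1/|\lambda|}^n\subseteq X\times\Bo^n$. Setting $T':=\sigma(\tilde T)$, rigid-semianalyticity is preserved, so $T'$ is rigid-semianalytic in $X\times\B^n$; moreover $T'\subseteq X\times\mathbb{B}_{1/|\lambda|}^n\subseteq X\times\Bo^n$, whence $T'\cap(X\times\Bo^n)=T'$ and $\pi(T')=\pi(\tilde T)=S$.

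The main technical subtlety is the existence of a scalar $\lambda\in k^\times$ with $1<|\lambda|<r$. This forces an initial step in which we possibly enlarge $r$; I expect this to be the only non-formal point, and the argument above handles it using the fact that passing from $r$ to $r'>r$ preserves all the hypotheses. Under the standing assumption that $k$ is non-trivially valued (as is implicit for strictly $k$-affinoid spaces in this section), such a $\lambda$ exists after this enlargement, completing the proof.
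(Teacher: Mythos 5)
Your proof is correct and follows essentially the same route as the paper's: the direction $(2)\Rightarrow(1)$ is handled by exactly the same covering of $X\times\Br^n$ by $X\times\B^n$ together with annular pieces meeting $T$ trivially, and $(1)\Rightarrow(2)$ by the same homothety trick sending the closed unit polydisc inside the open one. The only (harmless) variation is that you enlarge $r$ so as to fit a scalar $\lambda$ with $1<|\lambda|<r$ and scale by $\lambda^{-1}$ after first cutting $T$ down to $X\times\B^n$, whereas the paper decreases $r$ into $|k^{*}|$ (invoking the argument of its lemma on radii in $\valeur$) and scales by exactly $r$; your version slightly more cleanly avoids the question of whether $|k^{*}|$ meets $(1,r)$, and the reference to ``$T$'' rather than $\tilde T$ in your enlargement step is just a notational slip.
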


\begin{proof}
Let us show that $(1) \Rightarrow (2)$. 
Let $S$ be a strongly subanalytic subset of $X$, so there exists 
$r>1$, $T\subseteq X \times \Br^n$ a 
rigid-semianalytic subset such that 
$S =\pi ( T \cap ( X \times \B^n ))$. 
Decreasing $r$ if necessary, we can assume that 
$|r|\in \valeur$. 
In fact, using similar arguments as the one given in 
remark \ref{rem_rayon}, we can even assume that 
$r\in |k|$. 
Then if we consider the homothety, which is 
an isomorphism: 
$h : X \times \Br^n \to X \times \B^n$, which can 
be defined as multiplication of each coordinate of 
$\Br^n$ by $\frac{1}{\lambda}$, this gives the 
following commutative diagram:
\[
\xymatrix{
X\times \Br^n \ar[r]^h \ar[rd]^{\pi} & X\times \B^n \ar[d]^{\pi'} \\
                     &    X
}\]
and $S=\pi(T \cap  ( X\times \B^n ) 
=\pi' \left(  h(T) \cap     ( X \times \B_{\frac{1}{r} }^n )     \right)$. 
Now $T' := h(T) \cap (X\times \B_{ \frac{1}{r} }^n )$ is a rigid-semianalytic subset of 
$X\times \B^n$ such that 
$T' \subseteq X \times (\overset{\circ}{\B} ) ^n$ 
and $S = \pi' (T') = \pi'(T' \cap (X \times  ( \overset{\circ}{\B})^n))$. \par
Conversely, let $T \subseteq X \times (\overset{\circ}{\B})^n$ be a rigid-semianalytic subset of $X\times \B^n$ and $S=\pi(T)$. 
For any $r>1$, we can define 
$X_0 = X \times \B^n$, and for $i=1\ldots n$, let
$X_i = \{ (x,t_1, \ldots ,t_n) \in X\times \Br^n\ \big| \ |t_i|\geq 1 \}$. 
So $\{X_i\}_{i\in \{0 \ldots n \}}$ is an admissible covering of 
$X\times \Br^n$. By assumption, $T\cap X_0$ is rigid-semianalytic, and 
$T\cap X_i = \emptyset$ for $i=1\ldots n$. So 
$T$ is rigid semianalytic in 
$X\times \Br^n$, and if we note 
$\pi : X\times \Br^n \to X$, $S = \pi(T)$, so $S$ is strongly subanalytic.
\end{proof}

\begin{prop}
 There exist strongly subanalytic subsets which are not $G$-overconvergent subanalytic.
\end{prop}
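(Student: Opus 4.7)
My plan is to exhibit an explicit $S \subseteq X = \B^2$ that is strongly subanalytic but not $G$-overconvergent subanalytic, adapting the obstruction mechanism of proposition \ref{CE1}. Recall that corollary \ref{lemme_interieur} forbids a set $U$ from being overconvergent subanalytic in $X$ whenever there exists $\eta \in \overline{U}$ with $\mathcal{O}_{X,\eta}$ a field and $\overset{\circ}{U} = \emptyset$. The difficulty relative to \ref{CE1} is that the obstruction must apply not just to $X$ globally but to every piece $X_i$ of every finite affinoid cover of $X$: in example \ref{exemple_base}, the cover $\{|u| \leq r\} \cup \{|u| \geq r\}$ neatly placed the single critical Berkovich point on the topological boundary of both pieces, thereby evading the field-local-ring argument on each piece.

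To force the obstruction to persist against every finite cover, I would take $S$ to be the projection $\pi(T \cap (X \times \overset{\circ}{\B}^n))$ of a rigid-semianalytic set $T \subseteq X \times \B^n$ built from a one-parameter family of $S_0$-type constructions as in example \ref{exemple_base}, arranging that $S$ carries not a single critical Berkovich point but an entire continuum $\{\eta_t\}$ of them. The strongly subanalytic property is then immediate from the explicit presentation of $S$ as such a projection and from proposition \ref{eqdef}. The set $S$ should moreover be locally contained in a Zariski-closed curve of $X$, so $\overset{\circ}{S} = \emptyset$, and each critical point $\eta_t$ should arise as the image of a Gauss-type point of a parameterizing embedded curve, for which \cite[Lemma 2.21]{Duc_flatness} guarantees that $\mathcal{O}_{X, \eta_t}$ is a field.

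Given any finite affinoid cover $\{X_i\}$ of $X$, the topological boundaries of its pieces form a finite union of codimension-one analytic subsets of $X$; since the critical locus $\{\eta_t\}$ is a 1-parameter Berkovich family inside the 2-dimensional $X$, it cannot be entirely contained in these finitely many boundaries. Hence some $\eta_t$ must lie in $\Int_X(X_i)$ for some $i$, which forces $\mathcal{O}_{X_i, \eta_t} = \mathcal{O}_{X, \eta_t}$ to remain a field. Corollary \ref{lemme_interieur} applied to $S \cap X_i$ within $X_i$ then contradicts the supposed overconvergent subanalyticity of $S \cap X_i$, since $\eta_t \in \overline{S \cap X_i}$ and $\overset{\circ}{(S \cap X_i)} = \emptyset$ by construction. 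The main obstacle in this plan is producing an explicit $T$ whose projection carries such a continuous family of critical points while remaining rigid-semianalytic; once such a $T$ is in hand, the pigeonhole argument over finite affinoid covers delivers the failure of $G$-overconvergent subanalyticity.
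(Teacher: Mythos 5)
Your proposal does not follow the paper's route and, as it stands, has two genuine gaps. First, the construction is missing: you acknowledge that you do not have a rigid-semianalytic $T$ whose projection carries a continuum of critical points, and it is far from clear that such a set exists inside $\B^2$ at all --- the paper's counterexample lives in $\B^3$, and the extra dimension is not an accident (see below). Second, and more seriously, the pigeonhole step is unsound. The topological boundary of an affinoid domain of $X$ is not a codimension-one analytic subset; by Gerritzen--Grauert it is contained in a finite union of loci of the form $\{|f|=|g|\}$, and such a locus can perfectly well contain a one-parameter family of Gauss-type points with field local rings (this is exactly the situation of proposition \ref{CE1}: the unique critical point $\eta$ sits on $\{|T_1|=r\}$, which is precisely why the two-piece cover there defeats the obstruction). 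There is no dimension count available for such non-analytic loci in a Berkovich space, so the claim that a one-parameter Berkovich family cannot be contained in finitely many boundaries is unjustified and, in general, false.

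The paper's proof uses a different and more robust mechanism: accumulation of bad sets at a rigid point. In $X=\B^3$ one takes $T=\{(x,y,z,t) \st |t|<1,\ x=yt,\ z=yf(t)\}$ with $f$ of radius of convergence exactly $1$, and $S=\pi(T)$; this is strongly subanalytic by proposition \ref{eqdef}. If $S$ were $G$-overconvergent subanalytic, some member of the cover would be an affinoid neighbourhood of the origin and hence would contain a small closed ball $\B_{\varepsilon}^3$ on which $S$ remains overconvergent subanalytic; slicing along $y=y_0$ with $0<|y_0|<\varepsilon$ and rescaling produces, inside a unit bidisc, exactly the set of proposition \ref{CE1} (the graph of a function whose radius of convergence is strictly smaller than the ambient radius), which is not overconvergent subanalytic. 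In other words, the extra parameter $y$ is used to rescale the \ref{CE1}-obstruction to all scales near a single rigid point --- against which every finite affinoid cover is powerless, since some member must be a neighbourhood of that point --- rather than to sweep out a transverse continuum of critical points. If you want to salvage your plan, this is the mechanism to aim for, and it naturally requires one more dimension than the bidisc.
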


\begin{proof}
Let $r>1$,  
$X = \mathcal{M}(k\{x,y,z\}) = \B^3$, 
$Y=\mathcal{M}(k\{x,y,z,t\})$ and
$\pi : Y=\mathcal{M}(k\{x,y,z,t\}) \to X = \mathcal{M}(k\{x,y,z\})$, 
the natural projection. We now choose 
$f\in k\{t\}$ whose radius of convergence is exactly $1$, and 
such that $\|f\| \leq 1$, and 
$T= \{(x,y,z,t) \in Y \ \big| \  |t|<1, x=yt, z=yf(t) \}$.
It is a rigid-semianalytic subset of $Y$, and 
$S = \pi (T)$ is a strongly subanalytic subset of $X$ according to the previous proposition.
Since the family of closed balls with center the origin is 
a fundamental system of neighbourhoods of the origin,
if $S$ was $G$-overconvergent subanalytic, for 
some $1 \geq |\mu| = \varepsilon >0$ small enough, 
$S' :=S \cap \B_{\varepsilon}^3$ would be overconvergent 
subanalytic in $\B_{\varepsilon}^3$. 
We then fix a
$y_0\in k^*$ such that 
$0<|y_0|<\varepsilon$, i.e. 
$\frac{|y_0|}{|\mu|} <1$ and define 
$X' := \{ (x,y,z) \in \B_{\varepsilon}^3 \ \big| \  y=y_0\}$.
Now $X'$ is isomorphic to the 
bidisc 
$\B_{\varepsilon}^2=\{(x,y) \ \big| \ |x| \leq \varepsilon \ \text{and} \ |y| \leq \varepsilon \}$,
and $S'':= S \cap X'$ should be overconvergent constructible in 
$X'$ thanks to lemma \ref{immersion} (2). 
If we make a dilatation of $X'$ by $\frac{1}{\mu}$ it becomes the bidisc of radius $1$:
the new coordinates are $x',z'$ defined by $x=\mu x' $ and $z =  \mu z'$. 
Now, in these new coordinates: 
\[S'' = \{(x',z') \in \B^2  \ \big| \  |x'| < \frac{|y_0|}{|\mu|} \ \text{and} \ 
z' = \frac{y_0}{\mu}f(\frac{x'\mu}{y_0} ) \}\]
should be overconvergent subanalytic in $\B^2$.
If we put $r:= \frac{|y_0|}{|\mu|} <1$ and 
$g(x') = \frac{y_0}{\mu}f(\frac{x'\mu}{y_0} )$, then 
the radius of convergence of $g$ is precisely $r$, 
$\|g\| < 1$ so 
$S'' = \{ (x',z') \in \B^2 \ \big| \  |x'|<r \ \text{and} \ z'=g(x') \}$, $S''$ should be overconvergent subanalytic in $\B^2$, but we proved the negation in proposition \ref{CE1}.
\end{proof}

\begin{prop}
\label{CE:OSnotrigSA}
 There exist overconvergent subanalytic subsets which are not rigid-semianalytic.
\end{prop}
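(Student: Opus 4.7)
The plan is to construct an explicit overconvergent subanalytic subset $S$ of a strictly $k$-affinoid space $X$ that is not rigid-semianalytic. A first observation fixes the ambient dimension: by theorem \ref{theo_dim2}, for $X$ regular of dimension two over algebraically closed $k$, every overconvergent subanalytic subset of $X$ is locally semianalytic, and any locally semianalytic subset is rigid-semianalytic (by extracting a finite subcover, via compactness of $X$, from the wide covering of affinoid neighbourhoods witnessing local semianalyticity). Hence the example must live in ambient dimension at least three, and I would take $X = \B^3$.

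For the construction I would form $S = \pi(T \cap (X \times \B))$ where $T \subseteq X \times \Br$ (with $r > 1$) is overconvergent semianalytic, defined by equations parametrising a two-dimensional subset of $X$ via an overconvergent function $f \in k\{r^{-1}t\}$ with $\|f\| \leq 1$ chosen with sufficiently transcendental analytic behaviour---for instance $T = \{(x,y,z,t) \st x = yt,\; z = y\,f(t)\}$ or a variant adapted from the parametrisations appearing in the counterexamples earlier in this section. By construction $S$ is overconvergent subanalytic. The crucial point is that the defining data lie in the overconvergent ring $\mathcal{A}\{r^{-1}t\}$, in contrast to the preceding proposition where the use of a non-overconvergent $f$ was precisely what put the example outside the $G$-overconvergent class.

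To show that $S$ is not rigid-semianalytic I would argue by contradiction. Suppose $\{X_i\}$ is a finite affinoid cover of $X$ with each $S \cap X_i$ semianalytic in $X_i$. I would then locate a Berkovich point $\eta$ in $\overline{S}$ whose local ring $\mathcal{O}_{X,\eta}$ is a field---typically the image under the parametrisation of a Gauss-type point, analysed as in the proof of proposition \ref{CE1} using \cite[lemma 2.21]{Duc_flatness}---pick some $X_i$ containing $\eta$, and apply lemma \ref{lemme_semialg} inside $X_i$ to conclude that $S \cap X_i$ must have non-empty interior near $\eta$. A shape-and-dimension analysis of $S$ at $\eta$, rooted in the transcendental character of $f$, then yields a contradiction, showing that $S$ cannot be semianalytic on any affinoid containing $\eta$.

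The main obstacle is the simultaneous calibration of $f$ and $\eta$: one must keep the defining functions overconvergent in $t$ (so that $S$ is genuinely overconvergent subanalytic and not merely strongly subanalytic) while arranging that the non-semianalyticity of $S$ at $\eta$ genuinely persists after restriction to every affinoid containing $\eta$. The three-dimensional ambient space is what provides the extra coordinate needed to encode this feature, paralleling the role played by the third coordinate in the preceding proposition's construction of a strongly subanalytic set that is not $G$-overconvergent subanalytic.
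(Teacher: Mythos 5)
Your construction of $S$ coincides with the paper's ($X=\B^3$, $T=\{x=yt,\ z=yf(t),\ |t|\le 1\}$ with $f$ overconvergent), and your preliminary observation that the example must live in ambient dimension at least three is correct. But the argument you propose for non-rigid-semianalyticity cannot be carried out. The set $S$ is the image of a two-dimensional analytic space, so it is contained in a Zariski-closed subset $Z$ of $\B^3$ of dimension at most $2$; since $Z$ is topologically closed, $\overline{S}\subseteq Z$, and hence for \emph{every} $\eta\in\overline{S}$ there is a nonzero $h\in k\{x,y,z\}$ with $h(\eta)=0$, i.e.\ a nonzero non-unit of $\mathcal{O}_{\B^3,\eta}$. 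So no point of $\overline{S}$ has a field as its local ring, and lemma \ref{lemme_semialg} is never applicable in the ambient threefold. (This is in sharp contrast with proposition \ref{CE1}, where the transcendence of $f$ forces the Zariski closure of the curve to be all of $\B^2$.) Even granting such an $\eta$, there is a second obstruction you do not address: the lemma must be applied inside the member $X_i$ of the adversarially chosen cover and requires $\mathcal{O}_{X_i,\eta}$ --- not $\mathcal{O}_{X,\eta}$ --- to be a field; whether this holds depends on $X_i$, and this dependence is exactly why the set of proposition \ref{CE1} \emph{is} rigid-semianalytic (on $\{|T_1|\le r\}$ the function $f$ becomes analytic and the local ring at $\eta$ acquires a non-unit).

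The paper sidesteps both problems by a scaling argument that reduces to dimension two, where the Gauss-point argument genuinely works. If $S$ were rigid-semianalytic then, the origin being a rigid point, some member of the cover would contain a small ball $\B_\varepsilon^3$ on which $S$ is semianalytic. Slicing at $y=y_0$ with $0<|y_0|<\varepsilon/r$ and rescaling $\B_\varepsilon^2$ to the unit bidisc turns the slice of $S$ into $\{|x'|\le s,\ z'=g(x')\}$, where $g$ has radius of convergence exactly $\rho$ with $s<\rho<1$: the overconvergence of $f$ is destroyed because the radius of convergence of the slice shrinks proportionally to $|y_0|$. Proposition \ref{cex_SA} --- whose proof is where the field-local-ring argument actually lives --- then shows this slice is not semianalytic in $\B^2$. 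You would need this reduction, or an equivalent localisation to a two-dimensional slice, to complete your proof.
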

\begin{proof}
Let $1<r=|\lambda|$, and 
$f\in k\{r^{-1}X\}$ whose radius of convergence is exactly $r$, and such that 
$\|f\| < 1$. 
We set 
$X= \B^3 = \mathcal{M} (k\{x,y,z\})$, 
$Y =\mathcal{M} (k\{x,y,z, r^{-1}t\}) $, and 
\[T = \{ (x,y,z,t)\in Y \ \big| \  x=yt , \ z=yf(t) , \ |t| \leq 1 \} \] 
and $S = \pi (T)$, where 
$\pi : \mathcal{M} (k\{x,y,z, r^{-1}t\})  \to \mathcal{M} (k\{x,y,z\}) $ is 
the natural projection.
Then $S$ is overconvergent subanalytic.
If $S$ was rigid-semianalytic, there would exist 
$\mu \in k$, with 
$0<\varepsilon := |\mu| < 1$ such that 
$S' = S \cap \B^3$ is  semianalytic in 
$\B_{\varepsilon}^3$ (we \emph{again} use that if $V$ is an affinoid  
domain of $\B^3$ that contains the origin, then there 
exists $\varepsilon >0$ such that 
$\B_{\varepsilon}^3 \subseteq V$). Let us introduce 
$y_0 \in k^*$ such that 
$0< | y_0| <\frac{\varepsilon}{r}$.
In particular 
$\frac{|y_0|}{\varepsilon} = \left| \frac{y_0}{\mu} \right| < \frac{1}{r}$.
Then $X' = \{ (x,y,z) \in \B_{\varepsilon}^3 \ \big| \  y=y_0\}$ is 
a Zariski-closed subset of $\B_{\varepsilon}^3$, isomorphic to a bidisc $\B^2$.
Now, 
$S'' := S \cap X'$ is defined by 
\[S'' = \{(x,z) \in \B_{\varepsilon}^2 \ \big| \  \left| \frac{x}{y_0} \right| \leq 1 \ \text{and} 
\ z=y_0f( \frac{x}{y_0}) \}.\]
As we said, $X'$ is isomorphic to $\B^2$ with coordinates $(x',z')$ where  
$x= \mu x'$ and $z = \mu z'$. In these new coordinates, 
$S'' = \{(x',z') \in \B^2 \ \big| \ \left| \frac{x'\mu}{y_0}\right| \leq 1 \ \text{and} \ 
z'\mu = y_0f(\frac{x'\mu}{y_0})\}$.
If we define 
$g(x') = \frac{y_0}{\mu} f(\frac{x'\mu}{y_0})$ and 
$s=\frac{|y_0|}{\varepsilon} = \left| \frac{y_0}{\mu}\right| <\frac{1}{r}$,
then $g$ has a radius of convergence which is exactly $\rho$ where 
$s<\rho = \left| \frac{y_0}{\mu} \right| r <1$, and 
$\|g\| < \|f\| < 1$, so 
$S'' = \{ (x',z') \in \B^2 \ \big| \  |x'| \leq s \ \text{and} \ z'=g(x')\}$, should be 
semianalytic, but is not (see proposition \ref{cex_SA}).
\end{proof}

\begin{rem}
\label{rem:Osgood}
The example given in the above Proposition is very closed to the so called Osgood example \cite[Theorem 1]{Osgood}.
This example asserts that that the subset of $\C^3$ parametrized by 
\[ x=u \hspace{3cm} y=uv \hspace{3cm} z=uve^v\]
does not satisfy any relation of the form $F(x,y,z)=0$ where $F$ is a germ of analytic functions around the origin. See also \cite[2.3]{Bie01}. \par 
The non-archimedean analogue of this fact hold in the non-archimedean setting (see the introduction of \cite{LRSurf} for instance).
The example studied in the above proposition amounts to consider the set parametrized by 
\[ x=uv \hspace{3cm} y=v \hspace{3cm} z=vf(v)\]
where $f$ is transcendental. 
Osgood's original argument would have equally worked here, but 
let us stress that our argument is different.
\end{rem}

From this one can deduce:
\begin{cor}
Let $X$ be a strictly $k$-analytic space which contains a closed ball of dimension $\geq 3$.
Then there are overconvergent subanalytic subsets of $X$ which are not rigid-semianalytic. 
In particular, the class of overconvergent subanalytic subsets of $X$ properly 
contains the class of locally semianalytic subsets of $X$.
\end{cor}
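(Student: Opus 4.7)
The plan is to transport the counterexample from the preceding proposition into $X$ via the embedding of a closed ball, and then to observe that locally semianalytic subsets of an affinoid space are rigid-semianalytic by compactness.

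First I would invoke the previous proposition to produce a subset $S_0 \subset \B^3$ which is overconvergent subanalytic in $\B^3$ but not rigid-semianalytic in $\B^3$. By rescaling the parameters $\lambda, y_0, f$ in that construction (multiplying them by sufficiently small constants of $k^\times$), one may arrange that $S_0$ lies in the open subpolydisc $\overset{\circ}{\B^3_{\rho}}$ for some $\rho<1$ with $\rho\in \val$; the counterexample argument is insensitive to this rescaling. Let $\B^3 \hookrightarrow X$ be an affinoid domain as in the hypothesis, and let $S \subset X$ denote the image of $S_0$.

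Next I would show $S$ is overconvergent subanalytic in $X$ by applying the local characterisation of proposition \ref{local}(5). For $x$ in the closed subpolydisc $\B^3_\rho$, the ball $\B^3$ itself is an affinoid neighbourhood of $x$ in $X$, and $S\cap \B^3 = S_0$ is overconvergent subanalytic in $\B^3$ by construction. For $x\in X$ not lying in $\B^3_\rho$, one picks affinoid neighbourhoods $V_1,\ldots,V_n$ of $x$ in $X$ whose union is contained in $X\setminus \B^3_{\rho'}$ for some $\rho<\rho'<1$ (such neighbourhoods exist because $\B^3_{\rho'}$ is closed); then $S\cap V_i =\emptyset$ is trivially overconvergent subanalytic in each $V_i$.

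Then, to show $S$ is not rigid-semianalytic in $X$, I argue by contradiction: a finite affinoid covering $\{X_i\}$ of $X$ with each $S\cap X_i$ semianalytic in $X_i$ would restrict to the finite affinoid covering $\{X_i \cap \B^3\}$ of $\B^3$, on which $(S\cap X_i) \cap \B^3 = S_0 \cap (X_i\cap \B^3)$ is semianalytic (intersection of a semianalytic subset with an affinoid subdomain), contradicting the previous proposition. Finally, for the second assertion, the inclusion of locally semianalytic sets in overconvergent subanalytic sets is the content of corollary \ref{cor_loc_ber} combined with corollary \ref{corlocalover} and theorem \ref{theoequiv}. To see strictness, suppose $S$ were locally semianalytic in $X$; restricting the defining affinoid neighbourhoods to $\B^3$ shows $S_0$ is locally semianalytic in the compact space $\B^3$, and compactness then yields a finite affinoid covering exhibiting $S_0$ as rigid-semianalytic in $\B^3$, again contradicting the previous proposition.

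The only mild technical point is the separation step in the second paragraph, ensuring that $S$ lies in the topological interior of $\B^3$ in $X$; this is what forces the preliminary rescaling, but once this is done the remaining arguments reduce to the routine observations that both the properties ``overconvergent subanalytic'' and ``semianalytic'' behave well under intersection with affinoid subdomains, combined with the compactness of the affinoid ball $\B^3$.
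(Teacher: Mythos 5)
Your strategy --- transporting the counterexample of the preceding proposition into $X$ through the embedded ball, checking the overconvergent subanalytic property pointwise via the local characterisation, and deducing non-(rigid/locally)-semianalyticity by restricting coverings back to $\B^3$ and invoking compactness --- is exactly the deduction the paper intends (it gives no proof and presents the corollary as immediate from the preceding proposition). Two details of your write-up need repair. First, rescaling $\lambda$ and $f$ does not confine $S_0$ to a subpolydisc: in the construction $S_0=\pi(T)$ with $T=\{x=yt,\ z=yf(t),\ |t|\leq 1\}$ the coordinate $y$ ranges over all of $\B$, so $S_0$ meets the boundary sphere $\{|y|=1\}$ no matter how the parameters are scaled (and $y_0$ is a parameter of the proof, not of the construction). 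The fix is to replace $S_0$ by $S_0\cap\{|y|\leq\rho\}$: this is still overconvergent subanalytic (proposition \ref{prop_gen}(3) together with theorem \ref{theoequiv}), it is contained in $\B^3_\rho$ because points of $S_0$ satisfy $|x|\leq|y|$ and $|z|\leq\|f\|\,|y|$, and the non-rigid-semianalyticity argument is unaffected since it only uses the slices $y=y_0$ with $|y_0|$ arbitrarily small. Second, your separation step has the inequality backwards: for $x\notin\B^3_\rho$ you cannot in general find neighbourhoods of $x$ avoiding the \emph{larger} ball $\B^3_{\rho'}$ with $\rho'>\rho$; use instead that $\B^3_\rho$ itself is compact, hence closed in $X$, and contains $S$, so that a sufficiently small affinoid neighbourhood of $x$ misses $S$. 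Finally, note that both your claim that ``$\B^3$ is an affinoid neighbourhood of $x$'' and the restriction arguments in your last paragraph rest on reading the hypothesis as saying that the embedded ball is a topological neighbourhood of the rigid point at which the counterexample is centred; this is the intended reading, but it is the real content of the ``mild technical point'' you mention, not something the rescaling alone provides.
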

In conclusion, in figure \ref{inclusions}, we have shown inequalities 
$1,4,5$ and $8$. Now $2,3,6,7$ are set-theoretical consequences of $4$, $5$ and of the inclusions from the left to the right.

\subsection{Berkovich points versus rigid points}
\label{section2.4}
Let $X= \affin{A}$ be a strictly $k$-affinoid space. 
We denote by $X_{\text{rig}}$ the set of rigid points of $X$.
When one deals with semianalytic or 
overconvergent subanalytic subsets $S$ of 
$X$, one can wonder if things change if we restrict to 
$S_{\text{rig}} = S \cap X_{\text{rig}}$.
Actually the following two propositions show that there is no difference if one works with Berkovich spaces or rigid spaces. \par 
To be precise, let us denote by 
$\mathcal{B}$ be the free boolean algebra whose set of variables 
consists in the set of \emph{formal inequalities} 
$\{|f| \leq |g| \}$, $\{|f| <|g| \}$ and $\{f=0\}$, for $f,g \in \mathcal{A}$. 
We denote by 
$SA_{\text{rig}}$ the class of  semianalytic subsets of 
$X_{\text{rig}}$ and by 
$SA_{\text{Ber}}$ the class of  semianalytic subsets of the Berkovich space $X$. 
Then we define natural maps 
$\alpha : \mathcal{B} \to SA_{\text{Ber}}$ and 
$\beta : \mathcal{B} \to SA_{\text{rig}}$ 
where for instance 
$\alpha ( \{|f| \leq |g| \} ) = \{x\in X \ \big| \ |f(x)| \leq |g(x)| \}$ and 
$\beta ( \{|f| \leq |g| \} )=\{x\in X_{\text{rig}} \ \big| \ |f(x)| \leq |g(x)| \}$. 
In addition we consider the forgetful map 
$\iota : SA_{\text{Ber}} \to SA_{\text{rig}}$: if $S \in SA_{\text{Ber}}$ is a semianalytic set, 
$\iota(S) = S\cap X_{\text{rig}}$. We then obtain the commutative diagram:
\[
\xymatrix{
  \mathcal{B} \ar[r]^{\alpha} \ar[rd]_{\beta} & SA_{\text{Ber}} \ar[d]^{\iota} \\
                                               & SA_{\text{rig}} 
}\]

\begin{prop}
\label{ineq_SA}
The map $\iota$ is bijective.
\end{prop}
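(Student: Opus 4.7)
The surjectivity of $\iota$ is immediate from the commutative diagram: by definition, every $T \in SA_{\text{rig}}$ is of the form $\beta(b)$ for some $b \in \mathcal{B}$, and then $T = \iota(\alpha(b))$. So the whole content of the proposition is the injectivity of $\iota$.

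Since $SA_{\text{Ber}}$ is stable under finite boolean combinations (semianalytic subsets form a boolean algebra), the symmetric difference $S_1 \triangle S_2$ of two elements of $SA_{\text{Ber}}$ is itself in $SA_{\text{Ber}}$. Applied to $S = S_1 \triangle S_2$, injectivity is therefore equivalent to the following claim: \emph{any nonempty $S \in SA_{\text{Ber}}$ contains a rigid point}. The plan is to prove this claim, adapting the decomposition strategy used in the proof of Lemma \ref{lemme_semialg}.

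By Remark \ref{rembasicsa} one can assume $S$ is a basic semianalytic subset. Using the disjoint decompositions
\[
\{|f| \leq |g|\} = \{f=g=0\} \sqcup \{|f|\leq|g|\neq 0\}, \qquad \{|f|<|g|\} = \{f=g=0\} \sqcup \{|f|<|g|\neq 0\}
\]
applied to each inequality defining $S$ and distributing intersections over unions, one writes $S$ as a finite union of pieces of the form
\[
S_0 = V(h_1,\ldots,h_p) \cap \bigcap_j \{|f_j|\leq |g_j| \neq 0\} \cap \bigcap_l \{|F_l| < |G_l| \neq 0\},
\]
and it suffices to find a rigid point inside any nonempty such $S_0$. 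Here $Y := V(h_1,\ldots,h_p) = \mathcal{M}(\mathcal{A}/(h_1,\ldots,h_p))$ is again a strictly $k$-affinoid space, inside $Y$ the non-strict conditions carve out a strictly affinoid subdomain minus a Zariski closed locus (the common vanishing of the $g_j$), and the strict conditions $|F_l|<|G_l|$ are open. Hence $S_0$ is a nonempty open subset of $Y$ for the Berkovich topology.

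The main (and essentially only) point is then to invoke the classical density of rigid points in a strictly $k$-affinoid space: any nonempty open subset of $Y$ contains a maximal point, i.e.\ a rigid point, and rigid points of the Zariski closed subspace $Y$ are a fortiori rigid points of $X$. This gives the required rigid point in $S_0 \subseteq S$ and completes the proof. The most delicate step is really the decomposition into pieces of the form $S_0$—one must be careful that the negation of $\{|f|\leq|g|\}$ is $\{|f|>|g|\}$, which is itself of the form $\{|g|<|f|\}$, so the boolean combinations stay within the two basic shapes that admit the above decomposition.
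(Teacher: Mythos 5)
Your overall strategy is the same as the paper's: reduce injectivity to showing that a nonempty semianalytic subset of $X$ contains a rigid point, decompose into basic pieces, absorb the equalities $h=0$ into a Zariski-closed subspace $Y$, and conclude from the remaining inequalities. However, two steps are wrong as written. First, the displayed identity $\{|f|<|g|\}=\{f=g=0\}\sqcup\{|f|<|g|\neq 0\}$ is false: $f=g=0$ forces $|f|=|g|$, so $\{f=g=0\}$ is disjoint from $\{|f|<|g|\}$; the correct (and simpler) statement is $\{|f|<|g|\}=\{|f|<|g|\neq 0\}$. As stated, your decomposition writes $S$ as a union of pieces $S_0$ that may contain points outside $S$, so finding a rigid point in a nonempty $S_0$ would not by itself produce a rigid point of $S$. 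This is trivially repaired, but it must be repaired.

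The more substantive problem is the claim that $S_0$ is \emph{open} in $Y$ for the Berkovich topology, which you then use to invoke density of rigid points in open sets. Sets of the form $\{|f|\leq|g|\neq 0\}$ are strictly $k$-analytic domains (unions of the rational domains $\{|f|\leq|g|,\ |g|\geq\varepsilon\}$), and these are in general \emph{not} open: already $\{x\in\mathcal{M}(k\{T\})\mid |T(x)|\geq 1\}$ contains the Gauss point $\eta_1$ but is not a neighbourhood of it, since the points $\eta_r$ with $r<1$ converge to $\eta_1$. So the step ``hence $S_0$ is a nonempty open subset of $Y$'' fails, and with it your appeal to density. The correct conclusion — and the one the paper draws — is that $S_0$ is a nonempty \emph{strictly $k$-analytic domain} of $Y$ (the strict inequalities $\{|F|<|G|\}$ are also strictly analytic domains, being unions of $\{|F|\leq\lambda|G|\neq 0\}$ over $\lambda<1$ in $\sqrt{|k^*|}$), and any nonempty strictly $k$-analytic domain contains a nonempty strictly $k$-affinoid domain, whose algebra is a nonzero strictly $k$-affinoid algebra and therefore has a maximal ideal, i.e.\ a rigid point. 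No density theorem is needed. With these two corrections your argument coincides with the paper's proof.
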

\begin{proof}
First, $\iota$ is surjective by definition. \par  
Now if $\iota(S_1) = \iota(S_2)$, we must show that $S_1=S_2$. Considering 
$S_1\setminus S_2$ and $S_2 \setminus S_1$, we can restrict to show that if 
$S \in SA_{\text{Ber}}$ and $\iota(S) = \emptyset$, then $S=\emptyset$.
According to what has been previously done, we can assume 
that $S \in SA_{\text{Ber}}$ is a finite intersection of subsets of the form 
$\{x\in X \ \big| \  |f(x)| \leq |g(x)| \neq 0 \}$, $\{x \in X \ \big| \  |f(x)| <|g(x)| \}$ and 
$\{x\in X \ \big| \  h(x)=0\}$, and that 
$\iota(S) = S \cap X_{\text{rig}} = \emptyset $.
Passing to $Y = \mathcal{M} (\mathcal{A} / \mathcal{I})$ where $\mathcal{I}$ is 
the ideal generated by the functions $h$ appearing in the third case ($h(x) =0$), 
we can assume that $S$ is a finite intersection of subsets of the form: 
$\{x\in X \ \big| \  |f(x)| \leq |g(x)| \neq 0 \}$ and $\{x \in X \ \big| \  |f(x)| <|g(x)| \}$. 
But then it forms a non empty strictly analytic domain of $X$ so 
$S \cap X_{\text{rig}} \neq \emptyset$.
\end{proof}
If we denote by
$CD$ the family of finite subsets of constructible data of $X$, by
$OC$ the family of overconvergent constructible subsets of $X$, 
and $OC_{\text{rig}}$ the family of subsets of $X_{\text{rig}}$ which are the intersection 
of an element of $OC$ with $X_{\text{rig}}$, then we can define as above the following commutative diagram: 
\[
\xymatrix{
 CD \ar[r]^{\alpha} \ar[rd]_{\beta} & OC \ar[d]^{\iota} \\
                                               & OC_{\text{rig}} 
}\]
To be precise, if $\mathcal{D}\in CD$ is the set of the constructible data 
$(X_i,T_i) \stackrel{\varphi_i}{\dashrightarrow} X$, then 
\[\alpha ( \mathcal{D} ) = \bigcup_{i=1}^n \varphi_i(T_i).\]
\begin{prop}
\label{proprigber}
In the above digram, $\iota$ is a bijection.
\end{prop}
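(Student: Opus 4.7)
The plan is to imitate the proof of Proposition \ref{ineq_SA}. Surjectivity of $\iota$ is immediate from the definition of $OC_{\text{rig}}$, so everything rests on showing injectivity. Since the class $OC$ of overconvergent constructible subsets of $X$ is stable under finite boolean combinations (Proposition \ref{prop_gen}(3)), if $S_1, S_2 \in OC$ satisfy $\iota(S_1) = \iota(S_2)$, then the symmetric difference $S_1 \triangle S_2 \in OC$ has empty intersection with $X_{\text{rig}}$. Thus it suffices to prove the following reduction:
\begin{equation*}
S \in OC \text{ and } S \cap X_{\text{rig}} = \emptyset \ \Longrightarrow \ S = \emptyset.
\end{equation*}

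To prove this, I would unfold the definition of overconvergent constructible set and write $S = \bigcup_{i=1}^m \varphi_i(T_i)$, where each $(Y_i, T_i) \stackrel{\varphi_i}{\dashrightarrow} X$ is a constructible datum. Since it suffices to show each $\varphi_i(T_i)$ is empty, I may assume $S = \varphi(T)$ for a single constructible datum. The key observation is that $\varphi$, being a finite composition of elementary constructible data, is in particular a composition of morphisms of $k$-affinoid spaces, and morphisms of $k$-affinoid spaces send rigid points to rigid points (the contraction of a maximal ideal with finite residue extension of $k$ is again such a maximal ideal). Hence if $p \in T$ were a rigid point of $Y$, then $\varphi(p)$ would be a rigid point of $X$ lying in $\varphi(T) \subseteq S$, contradicting $S \cap X_{\text{rig}} = \emptyset$. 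Therefore $T \cap Y_{\text{rig}} = \emptyset$.

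Now Remark \ref{remcdsa} says that $T$ is itself a semianalytic subset of $Y$. Applying Proposition \ref{ineq_SA} to the strictly $k$-affinoid space $Y$, we conclude from $T \cap Y_{\text{rig}} = \emptyset$ that $T = \emptyset$, and therefore $S = \varphi(T) = \emptyset$, which is what we needed.

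The only conceptual step is the reduction to the semianalytic case; there is no obstacle beyond carefully noting that morphisms of strict $k$-affinoid spaces preserve rigidity of points and that constructible data factor as such morphisms. In particular, unlike in the semianalytic case, we do not need to massage the defining inequalities or use the analytic-domain structure directly: all the work is already done by Proposition \ref{ineq_SA} once the rigid-point-preservation is in place.
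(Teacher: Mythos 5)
Your proof is correct and takes essentially the same approach as the paper: reduce via stability under boolean combinations to a single constructible datum $S=\varphi(T)$, observe that $\varphi$ preserves rigid points, and conclude from Remark \ref{remcdsa} together with Proposition \ref{ineq_SA} applied to the semianalytic set $T\subseteq Y$. The paper phrases the last step contrapositively ($T\neq\emptyset \Rightarrow T_{\text{rig}}\neq\emptyset \Rightarrow S_{\text{rig}}\neq\emptyset$), but the argument is the same.
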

\begin{proof}
Since we showed that $OC$ (and $OC_{\text{rig}}$) is stable under complementary, 
we can restrict to show that if 
$S \in OC $ is such that $\iota (S) = S \cap X_{\text{rig}} = \emptyset$, then 
$S = \emptyset$. To show this we can even assume that 
$S = \varphi(T)$, where 
$(Y,T) \stackrel{\varphi}{\dashrightarrow} X$ is a constructible datum.
But, if $T$ is a non empty  semianalytic subset of $Y$, according to proposition 
\ref{ineq_SA}, $T_{\text{rig}} \neq \emptyset$, so since $\varphi$ preserves the rigid points, 
$\varphi (T)_{\text{rig}} = S_{\text{rig}}$ is non empty.

\end{proof}

\section{Overconvergent subanalytic subsets when dim$(X)=2$}
\label{section3}
In this section, $k$ will be non-archimedean algebraically closed field. 
In that case,  a $k$-analytic space $X$  is said to be quasi-smooth \cite[section 5]{Duc_flatness} if for all $x\in X$ the local ring $\mathcal{O}_{X,x}$ is regular.
\subsection{Algebraization of functions}

\begin{prop}\label{topol}
Let $X$, $Y$ be two $k$-affinoid spaces, so that we can consider the cartesian diagram 
\[\xymatrix{
 & X\times Y \ar[dl]_{\pi_1} \ar[dr]^{\pi_2} & \\
X           &          &     Y
}\]
Let $z\in X\times Y$, and let us denote by $z_1= \pi_1(z)$, $z_2=\pi_2(z)$. 
Let us assume that $z_2 \in Y(k)=Y_{\text{rig}}$.
\begin{enumerate}[(a)]
\item 
Let $V$ be an affinoid domain of $X\times Y$ such that 
$z\in V$. 
There exists an affinoid domain $U$ of $X$ (which contains $z_1$) such that
if $W$ is an affinoid neighbourhood  of $z_2$ small enough, 
$V \cap ( X \times W) = U\times W$. 
\item 
Let $\mathcal{V}$ be a neighbourhood of $z$. There exists 
$U$ (resp. $W$) an affinoid neighbourhood of $z_1$ (resp. $z_2$) such that 
$\mathcal{V} \supseteq U\times W$ 
\end{enumerate}
\end{prop}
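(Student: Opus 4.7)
The plan is to first establish a uniform continuity statement for functions on $X \times Y$ along the fiber $X \times \{z_2\}$, and then to apply it to the defining inequalities of affinoid subdomains. Because $k$ is algebraically closed and $z_2 \in Y(k)$, we have $\mathcal{H}(z_2) = k$, so the closed immersion $\iota : X \hookrightarrow X \times Y$, $x \mapsto (x,z_2)$, identifies $X$ with the fiber $X \times \{z_2\}$. For $f \in \mathcal{O}(X \times Y)$ write $\bar{f} \in \mathcal{O}(X)$ for its pullback by $\iota$. The key lemma will be: for every $\varepsilon > 0$ there exists an affinoid neighbourhood $W$ of $z_2$ in $Y$ such that $\sup_{X \times W} |f - \pi_1^{*} \bar{f}| \leq \varepsilon$. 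To prove it, pick generators $t_1,\ldots,t_m$ of the maximal ideal $\mathfrak{m}_{z_2} \subset \mathcal{O}(Y)$. Then $f - \pi_1^{*} \bar{f}$ vanishes on $X \times \{z_2\}$ and therefore, by coherence of the ideal sheaf, admits a decomposition $\sum_i t_i h_i$ with $h_i \in \mathcal{O}(X \times Y)$. On the Weierstrass neighbourhood $W := \{y \in Y : |t_i(y)| \leq r_i \text{ for all } i\}$ of $z_2$ one obtains $|f - \pi_1^{*} \bar{f}| \leq \max_i r_i \|h_i\|_{X \times Y}$, which can be made arbitrarily small by shrinking the $r_i$.

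For part (a), I would invoke the Gerritzen--Grauert theorem to decompose $V = \bigcup_j V_j$ as a finite union of rational subdomains $V_j = \{w : |f_{j,i}(w)| \leq |g_j(w)|\}$ of $X \times Y$. Set $U_j := V_j \cap (X \times \{z_2\})$, viewed inside $X$ via $\iota$; this is a rational subdomain of $X$, and $U := \bigcup_j U_j = V \cap (X \times \{z_2\})$ is therefore an affinoid subdomain of $X$ containing $z_1$. Since each $V_j$ is compact and the functions $\{f_{j,i}, g_j\}_i$ generate the unit ideal, $|g_j|$ is bounded below by some $c_j > 0$ on $V_j$. Applying the key lemma simultaneously to all the $f_{j,i}$ and $g_j$ produces a single affinoid neighbourhood $W$ of $z_2$ on which every perturbation $|f_{j,i} - \pi_1^{*} \bar{f}_{j,i}|$ and $|g_j - \pi_1^{*} \bar{g}_j|$ is strictly less than $\min_j c_j$. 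For $(x,y) \in V_j \cap (X \times W)$, the ultrametric equality then forces $|\bar{g}_j(x)| = |g_j(x,y)|$ and $|\bar{f}_{j,i}(x)| \leq |\bar{g}_j(x)|$, so $x \in U_j$; conversely, for $(x,y) \in U_j \times W$ the same manipulations yield $(x,y) \in V_j$. Taking unions over $j$ gives $V \cap (X \times W) = U \times W$.

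For part (b), since $X \times Y$ is $k$-affinoid and hence good, $\mathcal{V}$ contains an affinoid neighbourhood $V$ of $z$. Applying (a) to $V$ produces $U$ and $W$ with $U \times W = V \cap (X \times W) \subseteq V \subseteq \mathcal{V}$. Here $W$ is a neighbourhood of $z_2$ by construction, and since $U = \iota^{-1}(V)$, continuity of $\iota$ together with the fact that $V$ is a neighbourhood of $z = \iota(z_1)$ imply that $U$ is a neighbourhood of $z_1$. The main technical point to guard against is the behaviour of part (a) near common zeros of the defining functions of the $V_j$; this is precisely where the unit-ideal condition in the definition of a rational subdomain, which yields the uniform lower bound $|g_j| \geq c_j > 0$ on each $V_j$, is indispensable, as without it no single $W$ could control the inequalities simultaneously in $x$.
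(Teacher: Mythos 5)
Your proof is correct and follows essentially the same route as the paper's: reduce via Gerritzen--Grauert to rational domains, uniformly approximate the defining functions on $X\times W$ by their pullbacks from the fibre over $z_2$, and use the ultrametric inequality together with the lower bound on $|g_j|$ coming from the unit-ideal condition to identify $V\cap(X\times W)$ with $U\times W$. The only differences are cosmetic: you supply a proof of the uniform approximation step (via the decomposition $f-\pi_1^{*}\bar{f}=\sum_i t_ih_i$ in $\mathfrak{m}_{z_2}\cdot\mathcal{O}(X\times Y)$) which the paper merely asserts, and your remark that $U=\iota^{-1}(V)$ is the correct justification that $U$ is an affinoid domain (a finite union of rational subdomains need not be affinoid in general, so the phrase ``is therefore an affinoid subdomain'' after the union should lean on that equality rather than on the union itself).
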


\begin{proof}

\begin{enumerate}[(a)]
\item 
\cite[2.2]{Sch_plane} 
Let us set $X = \affin{A}$ and $Y=\affin{B}$. 
First, using the Gerritzen-Grauert theorem, we can assume that 
$V$ is a rational domain of $X\times Y$ defined by:
\[V = \{x\in X\times Y \st |f_i(x)| \leq |g(x)|, \ i=1\ldots n \ \and \ |g(x)|\geq r\}\]
where $f_i,g \in \A \widehat{\otimes}_k \mathcal{B}$, and $r>0$. 
Since we assume that $z_2\in Y(k)$, it makes sense to evaluate the functions 
$f_i,g$ at $z_2$, and we will denote by 
${f_i}_{z_2}$, $g_{z_2}$ the corresponding functions, that we see as elements of 
$\mathcal{A}$ and of $\A \widehat{\otimes}_k \mathcal{B}$. 
In addition, since $z_2$ is a rigid point of $Y$, there exists 
an affinoid neighbourhood $T$ of $z_2$ in $Y$ such that
\begin{align}
\label{schout}
\forall i \ \sup_{x\in X\times T} |(f_i - {f_i}_{z_2})(x)| <r \\
\label{schou2} \sup_{x\in X\times T} | (g-g_{z_2})(x)|<r.
\end{align}
Since $g = g_{z_2} +(g-g_{z_2})$, we conclude from \eqref{schou2} that if $x\in X\times T$,
\begin{equation}
\label{Schou1}
|g(x)| \geq r \Leftrightarrow |g_{z_2}(x)| \geq r.
\end{equation}
Then since also 
$g_i = {g_i}_{z_2} +(g_i -{g_i}_{z_2} )$, from \eqref{schout}, \eqref{schou2} and \eqref{Schou1}, we conclude that  
if $x\in X\times T$ 
\[\big( |g(x)| \geq r \ \and \ |f_i(x) \leq |g(x)| \big)  \Leftrightarrow 
\big( |g_{z_2}(x)| \geq r \ \and \ |{f_i}_{z_2}(x)| \leq |g_{z_2}(x)| \big).\]
Hence, if we set 
\[U = \{ x\in X \st |(f_i)_{z_2}(x)| \leq |g_{z_2}(x)| \ \and \ |g_{z_2}(x)|\geq r \},\]
then $V\cap( X \times T) = U \times T$. 
It then follows that if $W$ is an affinoid domain of $Y$ such that 
$W \subset T$, then 
$V\cap( X \times W) = U \times W$. 

\item 
We can assume that $\mathcal{V} =V$ is an affinoid neighbourhood of $z$.
In (a), $V \cap ( X \times W)$ is still a neighbourhood of $z$, since 
$W$ is an affinoid neighbourhood of $z_2$ (because $z_2$ is a rigid point). 
If we denote by $s_{z_2} : X \to X\times Y$ the section of $\pi_1$ defined by 
$s_{z_2}(t) =(t,z_2)$, then 
\[s_{z_2}^{-1} ( ( V \cap ( X \times W) )  =s_{z_2}^{-1}(U\times W )  = U\] is 
an affinoid neighbourhood of $x$ (since $s_{z_2}(x) = z$). 
Thus $U$ is also an affinoid neighbourhood of $z_1$.
\end{enumerate}
\end{proof}

\begin{rem}
Without the assumption that $z_2 \in Y(k)$ the previous corollary would be false. 
Take for instance 
$X= \mathcal{M}(k\{x\})$ and $Y = \mathcal{M}(k\{y\})$, and 
let 
$\varphi : \mathcal{M}(k\{t\}) \to X \times Y$ be defined by 
$\varphi(t) = (t,-t)$. Let $\eta $ be the Gauss point of 
$\mathcal{M}(k\{t\})$ and $z := \varphi(\eta)$. Let 
$V = \{ p \in \mathcal{M}(k\{x,y\}) \ \big| \ |(x+y)(p)| \leq \frac{1}{2} \}$.
It is a neighbourhood of $z$.
However, $\pi_1(z)$ (resp $\pi_2(z)$) is the Gauss point $z_1=\eta_X$  of 
$\mathcal{M}(k\{x\})$ (resp. $z_2=\eta_Y$ the Gauss point of $\mathcal{M}(k\{y\}) $ ). 
It is then easy to see,
according to the description of an affinoid domain of the unit disc as a
Swiss cheese, that there does not exist an affinoid neighbourhood $U $ (resp. $W$) of $\eta_X$ (resp. $\eta_Y$) 
such that $V \supseteq U\times W$. 
For instance for the reason that in $U$ there would necessarily exist
a rigid point $x_0 \in \{ x\in k \st |x|\leq 1\}$ such that $\overline{x_0} =\overline{0}$ and  in 
$W$ a rigid point $y_0$ such that $\overline{y_0} = \overline{1}$ but 
$(x_0,y_0) \notin V$ (where $\overline{x}$ corresponds to the reduction of $x$ in 
$\tilde{k}$). 
\end{rem}

\begin{lemme}
\label{lemmet2}
Let $x \in X= \affin{A}$, and let 
$f= \sum\limits_{n\in \mathbb{N} } a_n T^n \in \mathcal{A}\{r^{-1} T \}$.
Let us assume that $f_x \neq 0$. 
Then there exist  $V = \affin{B}$ an affinoid domain of $X$ which contains  
$x$,
$P \in \mathcal{B}[T]$, and $u\in \mathcal{B}\{r^{-1}T \}$ a multiplicative unit 
such that $f_{| V \times \Br} = uP$.
\end{lemme}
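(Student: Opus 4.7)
The plan is to apply Weierstrass preparation (Corollary~\ref{preparation}) after finding an affinoid neighborhood of $x$ where $f$ becomes $T$-distinguished up to a scalar factor $a_s$.

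Since $f_x \neq 0$ and $|a_n(x)|r^n \to 0$, let $s$ be the largest integer with $|a_s(x)|r^s = \|f_x\|$. Applying Lemma~\ref{lemme_simple} with a sufficiently small $\varepsilon > 0$ (and using its final clause to enforce $s \in J$), I would write
\[ f = \sum_{\nu \in J} a_\nu (T^\nu + \phi_\nu), \]
where $J \subset \N$ is finite, $\|\phi_\nu\| < \varepsilon$, and no monomial $T^\mu$ with $\mu \in J$ occurs in any $\phi_\nu$. By maximality of $s$ we have $|a_\nu(x)/a_s(x)| < r^{s-\nu}$ strictly for each $\nu \in J$ with $\nu > s$, and $|a_s(x)| > 0$; continuity then supplies an affinoid neighborhood $V = \affin{B}$ of $x$ (a Laurent domain) on which $a_s$ is invertible in $\mathcal{B}$ and on which the strict inequalities $\|a_\nu/a_s\|_{\mathcal{B}} < r^{s-\nu}$ persist for all $\nu \in J$ with $\nu > s$.

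On $V \times \B_r$ we then factor $f = a_s \cdot g$ with
\[ g = T^s + \phi_s + \sum_{\nu \in J \setminus \{s\}} \tfrac{a_\nu}{a_s}(T^\nu + \phi_\nu) \in \mathcal{B}\{r^{-1}T\}. \]
The construction of Lemma~\ref{lemme_simple} guarantees that the coefficient of $T^s$ in $g$ equals $1$ (no contribution comes from any $\phi_\nu$ since $s \in J$), and choosing $\varepsilon$ small enough forces every coefficient $g_n$ with $n > s$ to satisfy $\|g_n\|\,r^n < r^s$. Hence $g$ is $T$-distinguished of order $s$ in $\mathcal{B}\{r^{-1}T\}$. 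Weierstrass preparation (Corollary~\ref{preparation}) then furnishes $g = ew$ with $e$ a multiplicative unit in $\mathcal{B}\{r^{-1}T\}$ and $w \in \mathcal{B}[T]$ a unitary polynomial of degree $s$. Setting $u := e$ and $P := a_s w \in \mathcal{B}[T]$ yields $f_{|V \times \B_r} = u P$, as required. Note that $P$ need not be unitary, so no multiplicativity is required of $a_s$ itself---this is what allows us to avoid shrinking $V$ further.

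The main delicate step is the simultaneous choice of $V$ and $\varepsilon$ in the second paragraph, ensuring that the finitely many ratios $a_\nu/a_s$ indexed by $J$ together with the uniformly small tail encoded in the $\phi_\nu$'s leave the $T^s$ coefficient of $g$ dominant. Precisely this split between a finite part (handled via the neighborhood $V$) and an infinite tail (handled by the norm bound $\varepsilon$) is what Lemma~\ref{lemme_simple} is designed to furnish, which is why the argument reduces cleanly to a one-variable Weierstrass preparation.
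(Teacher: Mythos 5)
Your strategy is essentially the paper's: identify the order $s$ of $f_x$, invoke Lemma \ref{lemme_simple} to isolate a finite set $J\ni s$ of dominant coefficients, pass to an affinoid neighbourhood of $x$ on which the $s$-th coefficient dominates, deduce that the resulting series is $T$-distinguished of order $s$, and conclude with Weierstrass preparation (Corollary \ref{preparation}). The one variation --- dividing by $a_s$ so that the distinguished series has leading coefficient $1$, instead of arranging for $a_s$ itself to be a multiplicative unit on $V$ as the paper does by imposing $|a_s(z)|=|a_s(x)|$ --- is harmless, since the lemma does not require $P$ to be unitary.

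There is, however, a concrete gap in your construction of $V$. Being $T$-distinguished of order $s$ requires not only $\|g_n\|r^n<\|g_s\|r^s$ for $n>s$, but also $\|g_s\|r^s=\|g\|$, i.e.\ $\|g_n\|r^n\le r^s$ for \emph{every} $n$, in particular for $n\in J$ with $n<s$; this part of the definition is genuinely used in Lemma \ref{multiplicatif} and in the division theorem (e.g.\ to get $\|g\|=\|g'\|$ and the bound $\|q\|\le \|f\|/\|g\|$). For such $n$ one has $g_n=a_n/a_s$, and the needed bound $\|a_n/a_s\|_{\mathcal{B}}\,r^n\le r^s$ holds at the point $x$ (because $s$ realizes $\max_n|a_n(x)|r^n$), but it is a non-strict inequality between sup-norms and does not automatically persist on an unspecified neighbourhood: nearby, $|a_n|$ may grow while $|a_s|$ shrinks. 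Your $V$ only enforces the conditions for $\nu>s$. The repair is exactly the paper's choice of $V$ as the rational domain $\{z\in X \st |a_s(z)|=|a_s(x)| \ \text{and} \ |a_i(z)|r^i\le|a_s(x)|r^s \ \text{for} \ i\in J\setminus\{s\}\}$, which builds the $\le$ constraints for \emph{all} indices of $J$ into the domain. A secondary point of the same nature: your bound on the tail coefficients $g_n$ for $n\notin J$ involves the products $\|a_\nu/a_s\|_{\mathcal{B}}\cdot\|\phi_\nu\|$, so ``choosing $\varepsilon$ small enough'' presupposes a bound on $\|a_\nu/a_s\|_{\mathcal{B}}$, while $V$ depends on $J$, which depends on $\varepsilon$; once the constraints above are imposed you get the uniform bound $\|a_\nu/a_s\|_{\mathcal{B}}\le r^{s-\nu}$ for $\nu\in J$, which lets you compute an admissible $\varepsilon$ from $J$ and break the apparent circularity. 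With these two adjustments your argument goes through and coincides with the paper's proof.
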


\begin{proof}
Since $f_x = \sum\limits_{n\in \mathbb{N} } a_n(x)T^n \neq 0$, this series 
is distinguished of some order $s\geq 0$ for some $s\geq 0$. 
We recall that this means that 
$|a_s(x)|r^s = \| f_x\|$ and that $s$ is the greatest rank for this property. \par
We now use lemma \ref{lemme_simple} in our specific situation where the polyradius 
$\underline{r}$ is in fact the real number $r$.
Hence we can introduce a finite subset $J \subseteq \mathbb{N}$ such that $s\in J$ and some series 
$\phi_n \in \mathcal{A}\{r^{-1} T \}$ for $n\in J$ satisfying 
$\| \phi_n \| <1$ such that 
$f= \sum\limits_{n\in J} a_n (X^n+\phi_n)$.\par
We then define $V$ as the rational domain: 
\[ V = \{ z\in X \ \big| \ |a_s(z)| = |a_s(x)| \ \text{and} \ 
|a_i(z)|r^i \leq |a_s(x)|r^s \ \text{for} \  i\in J\setminus \{s\} \}\]
and denote by $\mathcal{B}$ the affinoid algebra of $V$.
It is then true that $x\in V$. 
Moreover, 
on $V = \affin{B}$, one checks that 
$a_s$ is a multiplicative unit, and that on 
$\mathcal{B} \{r^{-1}T\}$, $f$ is distinguished of order $s$. One can 
then apply Weierstrass preparation (corollary \ref{preparation}) to conclude.
\end{proof}

\begin{rem}
The previous result (lemma \ref{lemmet2}) is false if we remove the assumption $f_x \neq 0$. \par
Indeed, let us consider a real number $r$ satisfying $0<r<1$, 
and let $f\in k\{r^{-1}x \}$ be a function 
whose radius of convergence is exactly $r$ and let us assume that $\|f\| <1$. 
Let then $\mathcal{A}= k\{y,t\}$, $X= \affin{A}$ the unit bidisc, $p$ the rigid point of $X$ 
corresponding to the origin, and let us consider
\[F(y,t,x) = y-tf(x) \in k\{y,t\}\{r^{-1}x\} = \mathcal{A}\{r^{-1}X \}. \] 
Then we claim that there does not exist $V=\affin{B}$ 
an affinoid domain of $X$ containing $p$ such that
$F_{|V\times \Br} = uP$ where $u$ is a multiplicative unit of 
$\mathcal{B}\{r^{-1}T\}$ and 
$P \in \mathcal{B}[t]$.
\end{rem}
\begin{proof}
Indeed otherwise, there would exist  some closed bidisc $V$ of radius  
$s = |\lambda| \in |k^{\times}|$ where $\lambda\in k^*$, and some 
$P \in k\{s^{-1}y,s^{-1}t\}[x]$ and 
a multiplicative unit $u\in k\{s^{-1}y,s^{-1}t \} \{r^{-1}x\}$ such that 
\begin{equation}
\label{contrex}
F_{|V\times \Br} = uP.
\end{equation} 
Let us  fix $t=\lambda$.  Then we consider 
\[G(y,x) = F(y,\lambda,x)=y-\lambda f(x) \in k\{y,r^{-1}x\}.\]
According to \eqref{contrex}, 
$G_{\Bs \times \Br} = u(y,\lambda,t) P(y,\lambda,t)$.
Replacing $y$ by $\dfrac{y}{\lambda}$ and $f$ by $\lambda f$, we then obtain that 
\[G(y,x) = y-f(x) \in k\{y,r^{-1}x\}\]
\[G= uP\]
where $u\in k\{y,r^{-1}x\}$ is a multiplicative unit,
$P \in k\{y\}[x]$ and $\|f\|<1$ has a radius of convergence exactly $r<1$. 
This implies that if we set 
\[S:=\{ (x,y) \in \B^2 \ \big| \ |x|\leq r \ \text{and} \ y=f(x) \} \]
then 
\[ S=\{ (x,y) \in \B^2 \ \big| \ |x|\leq r \ \text{and} \ P(x,y)=0 \} \]
so $S$ would be  semianalytic in $\B^2$, but in section \ref{section2}, 
we exploited many times that this is not the case.
\end{proof}

\begin{lemme}[Local algebraization of a function in a family of rings]
\label{algebraization}
Let $n$ be an integer and let us consider $a_0,\ldots, a_n$ 
some elements of $\{x\in k \st |x|\leq 1\}$ and  $r_0,\ldots,r_n$ some positive real numbers. 
Let $Y \subseteq \mathcal{M} ( k\{T\} ) =\B$  be the Laurent domain defined by
\[Y= \{ y\in \mathcal{M} ( k\{T\} ) \ \big| \ |(T-a_0)(y)| \leq r_0 \ and 
\ |(T-a_i)(y)| \geq r_i , \ i=1\ldots n \},\]
and let $X=\affin{A}$ be a $k$-affinoid space.
Let 
\[f\in \mathcal{O}(X \times Y)\] 
and let 
\[z\in X\times Y \] 
such that 
$\pi_1(z) = x\in X(k)$ and let us set 
$y := \pi_2(z)$. 
Assume that $f_x \in \mathcal{H}(x) \otimes \mathcal{O}(Y) \simeq 
\mathcal{O}(Y)$ is non-zero\footnote{Here $\mathcal{H}(x) \simeq k$ because $x\in X(k)$.}.
Then there exists $V=\affin{B}$ an affinoid neighbourhood of $x$, 
and $Y' \subset Y$ defined by 
\[Y'=\{ y\in \mathcal{M} ( k\{T\} ) \ \big| \ |(T-b_0)(y)| \leq s_0 \ \text{and} 
\ |(T-b_i)(y)| \geq s_i, \ i=1\ldots m \} \] 
an affinoid neighbourhood of $y$ such 
that 
\[ f_{|V \times Y'} = (uP)_{|V\times Y'}\] where 
the $s_i$'s are positive real numbers, $b_i \in k^\circ$, 
$u$ is a multiplicative unit of 
$V\times Y'$ and 
$P\in \mathcal{B}[T,(T-b_1)^{-1},\ldots,(T-b_m)^{-1}]$.
\end{lemme}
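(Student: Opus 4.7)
The plan is to reduce to lemma \ref{lemmet2}, which handles the disc case, by a suitable shrinking of $z$'s neighborhood combined with a Mittag-Leffler-type decomposition on the Laurent domain. Since $x \in X(k)$ is rigid, proposition \ref{topol}(b) ensures that any neighborhood of $z$ in $X \times Y$ contains a product $V \times Y_1$ of affinoid neighborhoods. Moreover, Laurent subdomains $Y_1 = \{|T - b_0| \leq s_0,\ |T - b_i| \geq s_i\}$ with rigid centers $b_i \in k^\circ$ form a neighborhood basis at any point of $\mathbb{B}$, so we may arrange $Y_1$ to have the prescribed shape, contained in $Y$ and as small as desired around $y$.

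\medskip

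On $X \times Y_1$, the Mittag-Leffler decomposition yields
\[f = \sum_{n \geq 0} \alpha_n (T - b_0)^n \ + \ \sum_{i=1}^m \sum_{k \geq 1} \beta_{i,k}(T - b_i)^{-k},\]
with coefficients $\alpha_n, \beta_{i,k} \in \mathcal{A}$. An analogue of lemma \ref{lemme_simple}, applied with the Banach basis $\{(T-b_0)^n\}_{n \geq 0} \cup \{(T-b_i)^{-k}\}_{i,k \geq 1}$ replacing the usual monomials, rewrites $f$ as a finite sum $f = \sum_{\mu \in J} \gamma_\mu (M_\mu + \phi_\mu)$ with $\phi_\mu$ of small norm, $\gamma_\mu \in \mathcal{A}$, and each $M_\mu$ a monomial from the above basis. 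The hypothesis $f_x \neq 0$ forces some $\gamma_\mu(x) \neq 0$; mimicking the argument of proposition \ref{unite}, we further shrink $V$ to a rational subdomain on which a single $\gamma_{\mu_0}$ is a multiplicative unit in $\mathcal{B} = \mathcal{O}(V)$ and dominates all the other $\gamma_\mu$.

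\medskip

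Dividing by $\gamma_{\mu_0}$ produces $f = \gamma_{\mu_0} \cdot g$, where $g$ has coefficient $1$ on the monomial $M_{\mu_0}$ and strictly smaller contributions from the remaining terms. By shrinking $Y_1$ to a Laurent neighborhood $Y'$ of $y$, we can ensure that $g$ becomes \emph{distinguished} in the ``direction'' of $M_{\mu_0}$, viewed as a formal variable --- either $T - b_0$ or some $(T - b_j)^{-1}$. Weierstrass preparation (corollary \ref{preparation}), applied over the complete normed coefficient ring obtained from $\mathcal{B}$ by adjoining the remaining Laurent variables as parameters, then gives $g = e \cdot w$ with $e$ a multiplicative unit and $w$ a polynomial of some degree $s$ in the distinguished variable. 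Setting $u := \gamma_{\mu_0} e$ and $P := w$, and using that $b_0 \in k^\circ$ identifies $\mathcal{B}[T - b_0]$ with $\mathcal{B}[T]$, yields the desired factorization $f_{|V \times Y'} = u P$ with $P \in \mathcal{B}[T, (T-b_1)^{-1}, \ldots, (T-b_m)^{-1}]$. The main obstacle lies in this final step: the distinguished monomial $M_{\mu_0}$ may sit on the polynomial side ($(T-b_0)^n$) or on a pole side ($(T-b_j)^{-k}$), and in each case one must identify the correct complete normed ring over which to run Weierstrass preparation and verify that the non-dominant monomials can genuinely be absorbed into $e$ once $Y'$ has been chosen small enough; this case analysis is the technical heart of the proof.
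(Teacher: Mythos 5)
Your reduction to a product neighbourhood $V\times Y_1$ via proposition \ref{topol}(b) is fine, but the core of your argument does not go through. The polydisc machinery you invoke --- lemma \ref{lemme_simple}, domination of a single monomial, then Weierstrass preparation --- is tied to the fact that $\mathcal{A}\{\underline{r}^{-1}X\}$ is a power series ring in which ``$X_n$-distinguished'' makes sense. The Mittag--Leffler algebra $\mathcal{O}(V\times Y_1)$ is not of this form: its Banach basis mixes nonnegative powers of $T-b_0$ with negative powers of the $T-b_i$, so it is not a ring of power series in any single variable over a Banach coefficient ring, there is no notion of distinguished element with respect to this basis, and corollary \ref{preparation} simply does not apply. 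The step you flag as ``the technical heart'' is therefore not a verification left to the reader but the actual obstruction. Moreover, even the domination step fails: if two coefficients $\gamma_{\mu_0},\gamma_{\mu_1}$ satisfy $|\gamma_{\mu_0}(x)|=|\gamma_{\mu_1}(x)|\neq 0$, no shrinking of $V$ around the rigid point $x$ makes one dominate the other; in proposition \ref{proplocalweie} such ties are broken by a Weierstrass automorphism permuting the monomials, and there is no analogue of that automorphism group on a one-dimensional Laurent domain.

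The idea you are missing is a dichotomy on $y$, together with the observation that the extra centers $b_{n+1},\ldots,b_m$ of $Y'$ must be \emph{created}, not inherited from $Y$. If $y$ is rigid, one shrinks $Y'$ to a closed disc around $y$ and applies lemma \ref{lemmet2} directly. If $y$ is not rigid, one first factors the one-variable function $f_x\in\mathcal{O}(Y)$ as $f_x=\prod_{i=1}^N(T-\alpha_i)^{d_i}\,g$ with $g$ invertible (the classical factorization on rational domains of the disc, \cite[2.2.9]{FvdP}), and then removes small closed balls around the zeros $\alpha_j$; this keeps $y$ in the domain precisely because $y$ is not rigid, and it is how the new poles $b_{n+j}=\alpha_j$ of $P$ arise. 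After dividing by $\prod(T-\alpha_i)^{d_i}$, the quotient $G$ is invertible on the fiber over $x$, hence on some $V\times Y'$, and the factorization $G=G_M\,u_M$ with $G_M$ a truncation of the Mittag--Leffler expansion and $u_M=1+G_M^{-1}(G-G_M)$ a multiplicative unit follows from a plain geometric-series estimate --- no Weierstrass preparation is needed in this case at all. Your proposal never isolates the zeros of $f_x$, never enlarges the set of removed balls, and relies on a preparation theorem in a ring where it is unavailable; as written it cannot be completed.
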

\begin{rem}
let us mention that in the proof we distinguish two very different cases.
\begin{enumerate}
\item If $y$ is a rigid point then $Y'$ can in fact be chosen to be a closed ball, 
i.e. $m=0$.
\item Otherwise, if $y$ is not a rigid point, then in fact 
$s_0 = r_0$, that is to say, we do not have to decrease the radius of 
the ambient closed ball, but in counterpart, we possibly have to remove 
some open balls.
\end{enumerate}
\end{rem}

\begin{proof}
If $y$ is a rigid point, we can indeed find a closed disc $Y'$ which contains $y$ 
and the result follows from lemma \ref{lemmet2}. \par
If $y$ is not a rigid point, $f_x \in \mathcal{H}(x) \otimes \mathcal{O}(Y) 
\simeq \mathcal{O}(Y)$. 
Then according to classical results on factorization of functions on rational 
domains of the closed disc (cf \cite[2.2.9]{FvdP}),  
there exist $\alpha_1, \ldots , \alpha_N \in k$, 
$d_1, \ldots , d_N \in \mathbb{N}$, $g$ an invertible 
function of $\mathcal{O}(Y)$ such that 
\begin{equation}
\label{functcour}
f_x =\prod_{i=1}^N (T-\alpha_i)^{d_i}g.
\end{equation}
We then set $m=n+N$, $b_i =a_i$ and $s_i=r_i$ for $i=0\ldots n$, 
and $b_{n+j} = \alpha_j$ for $j=1\ldots N$ and we take 
$s_{n+j} $ small enough so that 
$\{z\in Y \st  |T-\alpha_j|(z) \geq s_{n+j} \}$ is a neighbourhood of $y$ 
(this is possible because $y$ is not a rigid point).
Then we define 
\[ Y':=\{ y\in \mathcal{M} ( k\{T\} ) \ \big| \ |(T-b_0)(y)| \leq s_0 \ and 
\ |(T-b_i)(y)| \geq s_i , i=1\ldots m \}.\]
Next, we set 
\[G = f \prod _{i=1}^N (T-\alpha_i)^{-d_i} \in \mathcal{O}(X \times Y').\] 
Then, according to \eqref{functcour} $G_x =g$ which does not vanish on $Y'_x$. 
So there exists an affinoid neighbourhood 
$V=\affin{B}$ of $x$ such that $G$ is invertible on $V \times Y'$ 
because the locus of points $x$ where $G_x$ is invertible is open.
\begin{comment}
indeed $x$ is rigid so we can find $X_1 \supseteq X_2 \supseteq \ldots $ 
a decreasing sequence of affinoid neighbourhood such that 
$\{x\} = \cap_i X_i$ and if $G$ is not invertible on $X_i\times Y'$, pick $z_i \in 
X_i \times Y'$ such that $G(z_i)=0$ then by compactness we would find a point 
$z_{\infty} \in Y'_x$ such that $G(z_{\infty}) =0$).
\end{comment}
Now using the explicit description of 
$\mathcal{O}(V \times Y')$, we can write
\begin{equation*}
G= \sum_{\nu = (\nu_0, \ldots , \nu_n) \in \mathbb{N}^{m+1} } 
b_{\nu} (T-b_0)^{\nu_0} (T-b_1)^{-\nu_1} \ldots (T-b_m)^{-\nu_m}.
\end{equation*}
Now for 
$M \geq 0$ set 
\begin{equation*}
G_M= \sum_{|\nu|\leq M }b_{\nu} (T-b_0)^{\nu_0} (T-b_1)^{-\nu_1} \ldots (T-b_m)^{-\nu_m}.
\end{equation*} 
By definition, 
$G_M \in \mathcal{B}[T,(T-b_1)^{-1}, \ldots,(T-b_m)^{-1} ]$. 
In addition, 
\[G_M \xrightarrow[M \to \infty]{} G,\] 
so $G_M$ is invertible for $M$ big enough. 
For such an $M$, 
\begin{equation}
\label{funcfami}
 G=G_M+(G-G_M)= G_M(1+ G_M^{-1}(G-G_M) ).
 \end{equation} 
Moreover, if we take $M$ again larger, we can assume that  
$\|G_M^{-1} \| = \|G^{-1} \|$, and as a consequence  
\[\|G_M^{-1} (G-G_M)\| \xrightarrow[M \to \infty]{} 0.\] 
Thus, for $M$ large enough, if we set   
\[u_M=1+G_M^{-1}(G-G_M)\] 
then $u_M$ is a multiplicative unit, and according to \eqref{funcfami}
\[f=G_M u_M \prod_{i=1}^N(T-\alpha_i)^{d_i}.\]
We then set 
$u:=u_M$ and $P:=G_M \prod_{i=1}^N(T-\alpha_i)^{d_i}$ to conclude.
\end{proof}

\subsection{Blowing up}

From now on, $X$ will be a quasi-smooth $k$-analytic space of dimension $2$. \par
We now make two simple remarks that we will use in the proof of theorem \ref{theo_dim2}.

\begin{lemme}
\label{lemme_divide}
Let $\mathcal{A}$ be a $k$-affinoid algebra, $X= \affin{A}$, $0<r<s$ some real numbers and 
$h\in \mathcal{A}$. 
\begin{enumerate}
\item Consider the Weierstrass domain of $X$:
\[V = \{ x\in X \ \big| \ |h(x)| \leq s \}\] 
and let $S$ be a locally semi-analytic subset 
of $V$ such that 
\[S \subseteq \{ x\in X \ \big| \ |h(x)| \leq r\}.\]
Then $S$ is a also a locally semianalytic subset of $X$.
\item 
Consider the Laurent domain of $X$:
\[V = \{ x\in X \ \big| \ |h(x)| \geq r \}\] 
and let $S$ be a locally semi-analytic subset 
of $V$ such that 
\[S \subseteq \{ x\in X \ \big| \ |h(x)| \geq s\}.\]
Then $S$ is a also a locally semianalytic subset of $X$.
\end{enumerate}
\end{lemme}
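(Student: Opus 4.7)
The plan is to verify the local semianalytic property pointwise in $X$, with the crucial observation that the hypothesis $S \subseteq \{|h|\leq r\}$ (resp.\ $S \subseteq \{|h|\geq s\}$) is strictly stronger than $S \subseteq V$: it guarantees that every point of $S$ sits in the \emph{topological interior} of $V$, not merely on its Berkovich boundary. The argument then splits into two cases depending on whether the test point $x \in X$ lies inside that interior or not.

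For (1), given $x\in X$, I distinguish two cases. If $|h(x)| < s$, then $V=\{|h|\leq s\}$ is a neighbourhood of $x$ in $X$ (since its topological interior contains $\{|h|<s\}$). By hypothesis there is an affinoid neighbourhood $U$ of $x$ in $V$ with $S\cap U$ semianalytic in $U$. Because $V$ itself is a neighbourhood of $x$ in $X$, any neighbourhood of $x$ in $V$ is also a neighbourhood of $x$ in $X$; moreover an affinoid domain of an affinoid domain of $X$ is an affinoid domain of $X$ (transitivity), and semianalyticity in $U$ is intrinsic to $U$, so nothing changes. If instead $|h(x)| \geq s$, pick some real $r'$ with $r<r'<s$ and take the Laurent domain $U := \{z\in X \st |h(z)| \geq r'\}$. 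Then $U$ is an affinoid neighbourhood of $x$ in $X$ (its interior contains $\{|h|>r'\}$, which contains $x$), and
\[ S\cap U \subseteq \{|h|\leq r\}\cap \{|h|\geq r'\} = \emptyset\]
since $r<r'$, so $S\cap U$ is trivially semianalytic.

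Part (2) is symmetric. For $x\in X$ with $|h(x)|>r$, the Laurent domain $V=\{|h|\geq r\}$ is a neighbourhood of $x$ in $X$, so an affinoid neighbourhood of $x$ in $V$ on which $S$ is semianalytic also serves as such a neighbourhood in $X$. For $x$ with $|h(x)|\leq r$, choose $r<s'<s$ and take $U := \{z\in X \st |h(z)| \leq s'\}$, which is a Weierstrass neighbourhood of $x$ in $X$ (its interior contains $\{|h|<s'\}$); then $S\cap U \subseteq \{|h|\geq s\}\cap \{|h|\leq s'\}=\emptyset$.

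No genuine obstacle arises: the only point requiring care is the distinction between ``neighbourhood in $V$'' and ``neighbourhood in $X$'', which is resolved by the strict inequality $r<s$ (the gap is exactly what lets us either enlarge slightly inside $V$ or carve out a disjoint affinoid neighbourhood via the complementary Laurent/Weierstrass condition). I would keep the exposition symmetric between (1) and (2) so that one can treat them in parallel.
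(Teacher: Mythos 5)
Your proof is correct and is essentially the paper's own argument: the paper packages your two cases into a single wide covering $\{V,W\}$ of $X$ with $W=\{x\in X \ \big| \ |h(x)|\geq t\}$ for a fixed $t$ with $r<t<s$ (on which $S$ is empty by hypothesis), and then concludes by locality of the locally semianalytic property. Your pointwise split on $|h(x)|<s$ versus $|h(x)|\geq s$, with the auxiliary radius $r'$ playing the role of $t$, is just the unfolded version of that same covering argument.
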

\begin{proof} Choose a real number $t$ such that 
$r<t<s$.
\noindent
\begin{enumerate}
\item Let us set $W =\{ x\in X \ \big| \ t \leq |h(x)|  \}$. Then $\{V,W\}$ is a wide covering of 
$X$, and $S\cap V$ is by hypothesis locally semianalytic in $V$, and by assumption, 
$S\cap W = \emptyset$ so is also locally semianalytic in $W$, hence $S$ is locally semianalytic in $X$.
\item Likewise, let us set  
$W = \{ x\in X \ \big| \ |h(x)| \leq t \}$. Then $\{V,W\}$ is a wide covering of $X$, 
$S\cap V$ is locally semianalytic in $V$ and $S\cap W = \emptyset$, so $S$ is locally 
semianalytic in $X$.
\end{enumerate}
\end{proof}
This lemma will be used jointly with the following remark:
\begin{rem}
\label{rem_divide}
Let us consider a $k$-affinoid space $X =\affin{A}$, $f,g \in \mathcal{A}$, 
$0<s<r$ and 
\[(Z,S) \xrightarrow{\varphi} X\] 
the elementary constructible datum given by 
$Z= \affin{B}$ where 
$\mathcal{B}=  \mathcal{A}\{r^{-1}t \} /(f-tg)$ and 
\[S = \{ z\in Z \ \big| \ |f(z)|\leq s|g(z)|\neq 0 \}. \]
Moreover, let 
\[(Y,U) \stackrel{\psi}{\dashrightarrow} (Z,S)\] 
be a constructible datum.
\begin{enumerate}[A.]
\item 
Let us assume that $g | f $. In other words, there exists $h\in \mathcal{A}$ such that 
$f=gh$. Let us then consider  
$\mathcal{C}= \mathcal{A}\{r^{-1}t\}/(h-t)$ and $V=\M(\mathcal{C})$. 
Note that $V$ is the Weierstrass domain of $X$ defined by
\[V=\{x\in X \ \big| \ |h(x)| \leq r \}.\]
Let us denote by $\beta$ the map of the immersion of the affinoid domain 
$V$ inside $X$, and let 
\[T = \{x\in V \ \big| \ |h(x)|\leq s \ \and \ g(x)\neq 0  \}.\]
Since $f-tg = g(h-t)$, 
$(h-t) | (f-tg)$, and there is a closed immersion 
$V \xrightarrow{\alpha} Z$. Moreover, 
$\alpha (T) = S$.\par
Indeed $\alpha(T) \subseteq S$, follows from their respective definitions. 
Conversely, if 
$z\in S$, $(f-tg)(z)=0=g(z) (h-t)(z)$ but since 
$g(z)\neq 0$, $(h-t)(z) = 0$ which implies that 
$z\in V$, and by the definition of $S$, it follows that 
$z\in \alpha(T)$.\par
Let us then consider the following cartesian diagram of $k$-germs:
\[ 
\xymatrix{
(Y,U)  \ar@{.>}[r]^{\psi}  & (Z,S) \ar[r]^{\varphi} & X \\
(Y',U') \ar[u]^{\alpha'} \ar@{.>}[r]^{\psi'} & (V,T) \ar[u]^{\alpha} \ar[ur]_{\beta} &
}
\]
Here, 
$(Y',U') \stackrel{\psi'}{\dashrightarrow} (V,T)$ is still a constructible datum according 
to corollary \ref{lemmeintersection}.
Since $\alpha(T)=S$, it follows that 
$\alpha(\psi' (U'))) = \psi(U)$, so 
\begin{equation}
\label{eq_div1}
\varphi(\psi(U)) = \varphi ( \alpha (\psi' (U'))) = \beta (\psi'(U')).  
\end{equation}
Roughly speaking, we were starting with the constructible datum 
\[(Y,U) \stackrel{\psi}{\dashrightarrow}(Z,S) \xrightarrow{\varphi} X\] 
such that the elementary constructible 
datum of $\varphi$ was defined with functions $f$ and $g$ such that 
$g|f$. And we have been able to replace $\varphi$ by the  constructible datum 
\[(Y',U') \stackrel{\psi'}{\dashrightarrow}(V,T) \xrightarrow{\beta} X\]
where $V$ is a Weierstrass domain. Note moreover that 
$T$ and so also $\psi'(U')$ satisfy the hypothesis of lemma \ref{lemme_divide} (1).

\item 
If $f | g $, there exists $h\in \mathcal{A}$ such that 
$g=fh$. Let then 
$\mathcal{C}= \mathcal{A}\{r^{-1}t\}/(1-th)$, $V=\M(\mathcal{C})$. 
Note that $V$ is the Laurent domain of $X$ defined by
\[V=\{x\in X \ \big| \ |h(x)|\geq \frac{1}{r} \}.\]
Let us denote by $\beta$ the map of the immersion of the Laurent domain 
$V$ inside $X$, and let 
\[T = \{x\in V \ \big| \ |h(x)|\geq \frac{1}{s} \ \and \ g(x)\neq 0 \}.\]
Since 
$(1-th) | (f-tg)$, there is a closed immersion 
$V \xrightarrow{\alpha} Z$. Moreover, 
$\alpha (T) = S$.\par
\begin{comment}
Indeed $\alpha(T) \subseteq S$, follows from their respective definitions. 
Conversely, if 
$z\in S$, $(f-tg)(z)=0=f(z) (1-th)(z)$ but since 
$f(z)\neq 0$ (because $g(z)\neq 0$), $(1-th)(z) \neq 0$ which implies that 
$z\in V$, and by the definition of $S$, it follows that 
$z\in \alpha(T)$.\par
\end{comment}
We then consider the following cartesian diagram of $k$-germs:
\[ 
\xymatrix{
(Y,U)  \ar@{.>}[r]^{\psi}  & (Z,S) \ar[r]^{\varphi} & X \\
(Y',U') \ar[u]^{\alpha'} \ar@{.>}[r]^{\psi'} & (V,T) \ar[u]^{\alpha} \ar[ur]_{\beta} &
}
\]
Here, 
$(Y',U') \stackrel{\psi'}{\dashrightarrow} (V,T)$ is still a constructible datum.
Since $\alpha(T)=S$, it follows that 
$\alpha(\psi' (U'))) = \psi(U)$, so 
\begin{equation}
\label{eq_div}
\varphi(\psi(U)) = \varphi ( \alpha (\psi' (U'))) = \beta (\psi'(U')).  
\end{equation}
In that case, we were starting with the constructible datum 
$(Y,U) \stackrel{\psi}{\dashrightarrow}(Z,S) \xrightarrow{\varphi} X$ 
such that $f|g$, and we have been able to replace it by the following constructible datum 
$(Y',U') \stackrel{\psi'}{\dashrightarrow}(V,T) \xrightarrow{\beta} X$
where $V$ is a Laurent domain of $X$. Note moreover that 
$T$ and so also $\psi'(U')$ satisfies the hypothesis of lemma \ref{lemme_divide} (2).
\end{enumerate}
\end{rem}

\begin{rem}
\label{rem_bu}
We are going to use some blowing up of $k$-analytic spaces in the following context: 
$X$ will be a quasi-smooth $k$-analytic space of dimension $2$, and we will blow up a rigid point 
$p$ of $X$. In particular, the resulting blowing up $\tilde{X}$ will be still 
quasi-smooth. To give a precise description of the situation, 
since $k$ is algebraically closed, we can assume that $X = \B^2$ and $p$ 
is the origin. 
The blowing up can then be described with two charts as follows. We consider
\[
\begin{array}{rrcl}
  X_1 = \mathcal{M}(k\{x,t_1\})& \xrightarrow{\pi_1}       &\B^2=\mathcal{M}(k\{x,y\})  \\
       (x,t_1)                     &\mapsto   & (x,t_1 x)    
\end{array}
\hspace{5pt}
 \left|
 \hspace{5pt} 
\begin{array}{rrcl}
  X_2 = \mathcal{M}(k\{y,t_2\}) & \xrightarrow{\pi_2}  &\B^2=\mathcal{M}(k\{x,y\})  \\
        (y,t_2)                 & \mapsto              & (t_2y,y)    
\end{array}
\right.
\] 
Then $\tilde{\B^2}$ is obtained by gluing $X_1$ and $X_2$ along the domains 
$U_1 = \{z\in X_1 \ \big| \ t_1(z) \neq 0 \}$ and  
$U_2 = \{z\in X_2 \ \big| \ t_2(z) \neq 0 \}$ via the isomorphism 
\[
\begin{array}{ccc}
U_1 & \to & U_2 \\
(x,t_1) & \mapsto & (xt_1,t_1^{-1}).
\end{array}
\]
\end{rem}

\begin{prop}
\label{blowup}
Let $X=\affin{A}$ be a quasi-smooth $k$-affinoid space 
of dimension $2$ and let $f,g \in \mathcal{A}$. Then there exists a succession 
of blowing up of rigid points 
$\pi : \tilde{X} \to X$ such that for all 
$x\in \tilde{X}$, $f_x |g_x$ or 
$g_x|f_x$. Remark that $\tilde{X}$ is still quasi-smooth.
\end{prop}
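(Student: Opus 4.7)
The plan is to interpret the divisibility condition as principalization of the ideal $(f, g)$ and invoke the classical principalization by blow-ups of closed points on a regular two-dimensional base. First, in any local ring $\mathcal{O}_{\tilde X, x}$, the condition ``$f_x \mid g_x$ or $g_x \mid f_x$'' is equivalent to $(f, g)\mathcal{O}_{\tilde X, x}$ being principal: if $(f, g) = (h)$, write $h = \alpha f + \beta g$ with $f = h a$, $g = h b$, so that $\alpha a + \beta b = 1$; since $\mathcal{O}_{\tilde X, x}$ is local, one of $a, b$ must be a unit, yielding the divisibility. The converse is trivial. It therefore suffices to find a composition $\pi : \tilde X \to X$ of blow-ups of rigid points such that $(f, g)\mathcal{O}_{\tilde X}$ is locally principal everywhere.

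Next, the plan is to control the bad locus $B := \{y \in X \st (f, g)\mathcal{O}_{X, y} \text{ is not principal}\}$. At any non-rigid $y \in X$ the local ring $\mathcal{O}_{X, y}$ is a field or a discrete valuation ring, by quasi-smoothness and $\dim X = 2$, so divisibility (hence principality) is automatic; thus $B$ consists only of rigid points. At a rigid $y$, $\mathcal{O}_{X, y}$ is a 2-dimensional regular local ring, hence a UFD (Auslander-Buchsbaum); writing $f = d f'$, $g = d g'$ with $d = \gcd(f, g)$ and $\gcd(f', g') = 1$, non-principality of $(f, g)$ at $y$ amounts to $f', g' \in \mathfrak{m}_y$, which forces $y$ to be isolated in $B$ (since $V(f', g')$ has dimension $0$). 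Since $B$ is closed and $X$ is compact, $B$ is a finite set of rigid points.

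The third and main step iterates the blow-up. Let $\pi_1 : X_1 \to X$ be the blow-up of $B$; since we blow up rigid points on a quasi-smooth surface, $X_1$ is again quasi-smooth of dimension $2$. Inductively let $\pi_{n+1} : X_{n+1} \to X_n$ be the blow-up of the (finite) bad locus $B_n$. The crucial point is termination, $B_n = \emptyset$ for $n$ large. For a bad point $x$, set $m = \min(\mathrm{ord}_x f, \mathrm{ord}_x g)$ and choose local parameters $(u, v)$; in each chart of the blow-up of $x$ (say $u = u$, $v = u w$), the total transforms satisfy $f \circ \pi = u^{\mathrm{ord}_x f} \tilde f$ and $g \circ \pi = u^{\mathrm{ord}_x g} \tilde g$ with $\tilde f, \tilde g$ not divisible by $u$, and the new bad points lie on the exceptional divisor $\{u = 0\}$. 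At each such point $(0, w_0)$, the new minimal order is at most $m$, with equality only if the relevant initial form is the pure $m$-th power $c(v - w_0 u)^m$. A standard secondary invariant from the Zariski-Abhyankar principalization argument on surfaces then shows that only finitely many consecutive blow-ups can occur without strict decrease of $m$, and induction on $m$ concludes.

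The final assembly is straightforward: each of the finitely many original points of $B$ produces, after its finite chain of blow-ups, a neighborhood in which $(f, g)$ becomes principal everywhere, and these chains are disjoint on the exceptional fibres, so they can be performed independently and then composed into a single $\pi : \tilde X \to X$. The main obstacle in this plan is the termination in the ``pure power'' case, where a single blow-up fails to strictly decrease the primary invariant $m$; handling this requires the classical Zariski-Abhyankar secondary invariant, rather than the order of vanishing alone.
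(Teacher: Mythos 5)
Your strategy is sound but genuinely different from the paper's. You recast the statement as principalization of the ideal $(f,g)$ and run the classical surface algorithm: reduce divisibility to local principality, observe (as the paper also does) that non-rigid points have local rings that are fields or discrete valuation rings so the bad locus is a finite set of rigid points, blow up, and terminate via the order $m=\min(\mathrm{ord}_x f,\mathrm{ord}_x g)$ together with a Zariski--Abhyankar secondary invariant. The paper avoids the ideal-theoretic bookkeeping entirely by a trick: it applies embedded resolution to the single function $h=fg(f-g)$, obtaining a normal crossings divisor after finitely many point blow-ups (citing Koll\'ar and noting the argument transfers to the analytic setting by compactness), and then concludes with a two-line local computation --- after dividing by the common factor one has $f_x=v\xi_1^p$, $g_x=w\xi_2^q$, $f_x-g_x=z\xi_1^a\xi_2^b$ with $v,w,z$ units, and reduction modulo $\xi_1$ forces $a=0$, $b=q$ and then $q=0$, so one of $f_x,g_x$ is a unit. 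What the paper's route buys is that the only nontrivial resolution-theoretic input is embedded resolution of a plane curve, with a precise citation, and the rest is elementary commutative algebra. What your route would buy is independence from the auxiliary factor $f-g$, but at the cost that the step you yourself flag --- termination when the initial form is a pure $m$-th power, controlled by the secondary invariant --- is exactly the hard part and is asserted rather than proved; to complete your argument you would need to supply or precisely cite that termination statement and check, as the paper does for Koll\'ar's procedure, that it carries over verbatim to blow-ups of rigid points of a quasi-smooth $k$-analytic surface. The preliminary reductions in your proposal (divisibility equivalent to principality in a local domain, finiteness of the bad locus via the UFD property of the regular local rings at rigid points) are correct.
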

\begin{proof}
We may assume that $X$ is irreducible. 
If $f=0$ or $g=0$, there is nothing to prove, so we may assume that $f\neq 0$ and 
$g\neq 0$. 
Likewise, if $f=g$, there is nothing to do, so we may also assume 
that $f-g\neq 0$.\par
Let $h=fg(f-g)$. Hence, $h\neq 0$. 
We can find a succession 
of blowing up of rigid points 
$\pi : \tilde{X} \to X$ such that 
$\pi^* (h)$ is a normal crossing divisor. Indeed, the classical proof 
(see \cite[1.8]{Kol}) that 
works in the algebraic case, or the complex analytic  case, can be translated \emph{verbatim} 
in our context, and since we are dealing with a compact space, 
the local procedure  of \cite[1.8]{Kol} needs only to 
be applied to a finite number of points. Let then $x\in \tilde{X}$. 
\par If $x$ is not a rigid point, 
$\mathcal{O}_{\tilde{X},x}$ is a field or a discrete valuation ring and the result is clear. \par
Otherwise, if $x$ is a rigid point, its local ring is a regular local ring of dimension 2. By assumption, $h =fg(f-g)$ is a normal crossing divisor, thus can be written in $\mathcal{O}_{\tilde{X},x}$ as 
\begin{equation}
\label{NCD}
(fg(f-g))_x=u\xi_1^n\xi_2^m
\end{equation} 
where $\xi_1,\xi_2$ is a system of local parameters around $x$ and $u$ 
is a unit in $\mathcal{O}_{\tilde{X},x}$. 
Dividing by the common divisor of $f_x$ and $g_x$ in $\mathcal{O}_{\tilde{X},x}$, 
we can assume for instance 
that $f_x=v\xi_1^p$ and $g_x=w\xi_2^q$ and $f_x-g_x=z\xi_1^a\xi_2^b$ where $v$, $w$ and $z$ are units of $\mathcal{O}_{\tilde{X},x}$. \par 
If $p > 0$ then modulo $\xi_1$ we obtain $f=0$, so  
$f_x-g_x = w\xi_2^q$ modulo $\xi_1$. This implies that $a=0$ and that $b=q$.
So $f_x=(f_x-g_x)+g_x$ is divisible by $\xi_2^q$, and this implies that $q=0$. 
So $g_x$ is invertible and, $g_x|f_x$.\par 
And if $p=0$, then $f_x$ is invertible, so $f_x| g_x$. 
\end{proof}

\begin{lemme}
\label{blowup2}
Let $X$ be a good quasi-smooth strictly $k$-analytic space of dimension $2$.
\begin{enumerate}
 \item Let $q\in X_{\text{rig}}$ and 
$\pi : \tilde{X} \to X$ the blowing-up of $X$ at $q$, and 
let $S \subseteq \tilde{X}$ be a locally semianalytic 
subset. Then $\pi(S)$ is locally semianalytic.
\item If $\pi : \tilde{X} \to X$ is a succession of blowing-up of rigid points, 
and $S \subseteq \tilde{X}$ is locally semianalytic, then 
$\pi(S)$ is also locally semianalytic.
\end{enumerate}
\end{lemme}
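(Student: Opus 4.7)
The plan is to deduce part (2) from part (1) by an immediate induction on the number of blow-ups, so I focus on (1). Since being locally semianalytic is a local property on $X$, I argue pointwise. If $y\in X\setminus\{q\}$, then $\pi$ restricts to an isomorphism from a neighbourhood of $\pi^{-1}(y)$ onto a neighbourhood of $y$, so local semianalyticity of $S$ at $\pi^{-1}(y)$ transfers directly to $\pi(S)$ at $y$; only the case $y=q$ needs real work.

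For $y=q$, using that $X$ is quasi-smooth of dimension $2$ at the rigid point $q$ and that $k$ is algebraically closed, I would replace $X$ by a suitable affinoid neighbourhood of $q$ to reduce to the model situation $X=\B^2$ with $q$ the origin. Then $\tilde X$ is covered by the two explicit charts $X_1=\mathcal{M}(k\{x,t_1\})$ and $X_2=\mathcal{M}(k\{y,t_2\})$ of Remark \ref{rem_bu}, and the exceptional divisor $E=\pi^{-1}(q)$ is a compact projective line. Because $S$ is locally semianalytic, each point of $E$ has a strictly affinoid neighbourhood in $\tilde X$ on which $S$ is semianalytic; compactness of $E$ produces a finite subcover by such neighbourhoods $W_1,\ldots,W_n$, which I may arrange to each lie in one of the two charts and to have interiors that still cover $E$. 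Using that $\pi$ is proper and that the interiors of the $W_i$ cover $E=\pi^{-1}(q)$, I find $r>0$ such that $\pi^{-1}(V)\subset\bigcup_i W_i$ for $V:=\B_r^2$. It then suffices to prove that $\pi(S)\cap V$ is semianalytic in $V$.

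The central step is to show, for each $W_i\subset X_1$ meeting $E$, that $\pi_1(S\cap W_i\cap\pi^{-1}(V))$ is semianalytic in $V$ (symmetrically for $X_2$). For each defining inequality $|f|\leq|g|$ of $S\cap W_i$, I would first extract the largest power of $x$, the local equation of $E$ in $X_1$, dividing $f$ and $g$ (after shrinking, using that the local rings of $\tilde X$ are regular of dimension $2$): write $f=x^a\widetilde f$ and $g=x^b\widetilde g$ with $\widetilde f,\widetilde g$ not identically zero on $E\cap W_i$. Applying Lemma \ref{algebraization} to $\widetilde f$ and $\widetilde g$, I write them as $uP(t_1)$ and $vQ(t_1)$ on a smaller affinoid, with $u,v$ multiplicative units and $P,Q$ (Laurent) polynomials in $t_1$ whose coefficients are analytic in $x$. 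Substituting $t_1=y/x$ and clearing denominators by multiplying through with the necessary powers of $x$ and of the linear forms $y-b_j x$, the inequality $|f|\leq|g|$ becomes a semianalytic condition on $(x,y)$ involving functions analytic on $V$. Intersecting with the semianalytic sector $\{|y|\leq|x|\}\cap V$ (and adjoining $(0,0)$ exactly when $S$ meets the part of $E$ contained in $W_i$), I obtain $\pi_1(S\cap W_i\cap\pi^{-1}(V))$ as a semianalytic subset of $V$. The symmetric analysis in $X_2$ treats the sector $\{|x|\leq|y|\}\cap V$, and the finite union expresses $\pi(S)\cap V$ as a semianalytic subset of $V$. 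The main obstacle is precisely this algebraisation-and-substitution step: the change of variables $t_1=y/x$ is not analytic at $x=0$, and without Lemma \ref{algebraization} reducing the defining functions to polynomial form in the transversal variable to $E$, one could not clear the denominators and obtain a genuinely semianalytic image.
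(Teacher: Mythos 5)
Your proposal is correct and follows essentially the same route as the paper's proof: reduce to the model case $X=\B^2$ with $q$ the origin, work chart by chart, cover the exceptional divisor by finitely many product-type affinoid neighbourhoods on which $S$ is semianalytic, factor out powers of the local equation $x$ of $E$, algebraize the remaining factors via Lemma \ref{algebraization} into a multiplicative unit times a Laurent polynomial in $t$, and then substitute $t=y/x$ and clear denominators to land back in semianalytic conditions on $\B^2_\varepsilon$. The paper makes the product structure of the neighbourhoods explicit via Proposition \ref{topol} and records that multiplicative units have constant absolute value (so the inequalities become $|P_j|\Diamond_j\lambda_j|Q_j|$), but these are exactly the points you use implicitly, so there is no substantive difference.
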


\begin{proof}
(2) is a consequence of (1) so we only have to show (1). \par
The problem is local on $X$, and since outside $q$, $\pi$ is a local isomorphism, 
we can restrict to an affinoid neighbourhood of $q$, and since 
$X$ is regular at $q$, we can assume that 
$X=\B^2$ and $q$ is the origin. \par
Then $\pi : \tilde{X} \to X$ can be described with two charts, one of them being 
\[
\begin{array}{rrcl}
\pi_1 :&  X_1 = \mathcal{M}(k\{x,t\})& \to       &X=\mathcal{M}(k\{x,y\})  \\
        &   (x,t)                     &\mapsto   & (x,tx)    
\end{array}
\] 
The other chart being analogous we only consider $\pi_1$.
Now, changing 
$S$ to $S \cap X_1$, we can restrict to show that 
if $S$ is locally semianalytic in $X_1$, so is $\pi_1(S)$. 
Since 
$\pi_1$ induces an isomorphism between $X_1 \setminus V(x)$ and 
$\{p\in \B^2 \ \big| \ |y(p)| \leq |x(p)| \neq 0 \}$, 
we only have to show that 
$\pi_1(S)$ is semianalytic around $q$, the origin of $\B^2$.\par
Now if for each $p\in \mathbf{E} := V(x) \subseteq X_1$ we can find $V_p$ an affinoid neighbourhood of $p$, 
and $\varepsilon_p>0$ such that 
$\pi_1(V_p \cap S) \cap \B_{\varepsilon_p}^2 $ is 
semianalytic in $\B_{\varepsilon_p}^2 \subseteq X$, then by compactness of $\mathbf{E}$, we can extract 
$V_1 , \ldots, V_n$ a finite covering of $\mathbf{E}$ and $\varepsilon >0$ such that 
\[\cup_{i=1}^n (\pi_1(V_i\cap S)) \cap \B_{\varepsilon}^2 = \pi_1(S) \cap \B_{\varepsilon}^2\] is 
semianalytic in $\B_{\varepsilon}^2$. 
So we fix $p\in \mathbf{E} = V(x)$ and try to find $V$ an affinoid neighbourhood of $p$, 
and $\varepsilon>0$ such that 
$\pi_1(V \cap S) \cap \B_{\varepsilon}^2 $ is 
semianalytic in $\B_{\varepsilon}^2$. \par
Since $S$ is locally semianalytic in $X_1$, we can find $V$ an affinoid neighbourhood 
of $p$ such that $V\cap S$ is  semianalytic in $V$. 
According to corollary \ref{topol}, 
we can assume that\footnote{Here we use the explicit description of 
affinoid domains of $\B$.} $V = \B_{\varepsilon}\times W$ where 
\[W=\{ w\in \mathcal{M}(k\{t\} ) \ \big| \ 
|(t-a_0)(w)| \leq r_0 \ \text{and} \ |(t-a_i)(w)| \geq r_i, \ i=1\ldots n\}\]
for some $a_0, \ldots , a_n \in k^\circ$ and $r_0,\ldots,r_n \in \R_+$. \par
To simplify the notation, we can also assume that the  semianalytic subset $S$ of $V$ 
we are dealing with is of the following form:
\[S= \bigcap_{j=1}^m \{v \in V \ \big| \  |f_j(v)| \Diamond_j |g_j(v)| \}.\] 
Now recall that $V=\B_{\varepsilon} \times W $ with 
$\B_{\varepsilon} = \mathcal{M} ( k\{ \varepsilon^{-1}x \} )$.
So we can factor each $f_j$ and $g_j$ by the greatest power of $x$ which is a factor, hence introduce some integers 
$b_j, c_j$ such that 
 \[S= \bigcap_{j=1}^m \{v \in V \ \big| \  |x^{b_j}\tilde{f}_j(v)| \Diamond_j |x^{c_j}\tilde{g}_j(v)| \}\] 
where the series $\tilde{f}_j(0,t)$ and $\tilde{g}_j(0,t)$ are non zero,
 and $f_j = x^{b_j}\tilde{f}_j, g_j = x^{c_j}\tilde{g}_j$. 
But to simplify the notation, we will 
use $f_j$ (resp. $g_j$) instead of $\tilde{f}_j$ (resp. $\tilde{g}_j$), so that 
\[S= \bigcap_{j=1}^m \{v \in V \ \big| \  |x^{b_j}f_j(v)| \Diamond_j |x^{c_j}g_j(v)| \}\] 
where the series $f_j(0,t)$ and $g_j(0,t)$ are non zero.\par
Then according to lemma \ref{algebraization} 
we can decrease $\varepsilon $ and $W$ so that for each 
$f_j,g_j \in \{f_1 \ldots f_m , g_1, \ldots , g_m\}$,  
$f_j=u_jP_j$ (resp. $g_j=v_jQ_j$) where $u_j$ (resp. $v_j$) is a multiplicative unit, and 
$P_j$ (resp. $Q_j$) $\in k\{\varepsilon^{-1} x\}[t,(t-a_1)^{-1}, \ldots , (t-a_n)^{-1} ]$. \par
Said differently, and with different notation, there exists an integer $N$ such that 
$f_j=u_j.\frac{P_j}{\left( (t-a_1)\ldots (t-a_n) \right)^N}$ where 
$u_j$ is a multiplicative unit and $P_j \in   k\{\varepsilon^{-1} x\}[t]$ 
(and resp. $g_j=v_j.\frac{Q_j}{\left( (t-a_1)\ldots (t-a_n) \right)^N}$). Hence 

\[ |f_j(v)| \Diamond_j |g_j(v)| \Leftrightarrow 
\left| u_j(v) \frac{P_j(v)}{( (t-a_1)\ldots (t-a_n))^N(v) } \right| \Diamond_j 
\left| v_j(v) \frac{Q_j(v)}{ ((t-a_1)\ldots (t-a_n))^N(v)} \right| 
\Leftrightarrow |P_j(v)| \Diamond_j \lambda_j |Q_j(v)|\] 
where 
\[ \lambda_j = \frac{\|v_j\|}{\|u_j\|} \in |k^{\times}| . \]
Moreover,
\[S\cap V = (S\cap \{v\in V \ \big| \ x(v)=0\}) \cup (S\cap \{v\in V \ \big| \ x(v)\neq 0\} )\] and 
$\pi_1 ( \{v\in V \ \big| \ x(v)=0\}) = q$, the origin of $\B^2$.\par
So, adding if necessary the origin to 
$\pi_1 (S \cap \{v\in V \ \big| \ v(x)\neq 0\} )$ 
(which will not change the fact that it is  semianalytic), 
we can restrict to show that $\pi_1( S \cap \{ v\in V \ \big| \ v(x) \neq 0\})$ is 
 semianalytic around the origin. Moreover since on 
$\{ v\in V \ \big| \ v(x) \neq 0\}$, $\pi_1$ is bijective, the following holds:
\[ \pi_1\left( \bigcap_{j=1}^m \{v\in V \ | \ |x^{b_j}f_j(v)| \Diamond_j |x^{c_j}g_j(v)| \} \cap \{v\in V \ \big| \ x(v)\neq 0 \} \right) \]
\[= 
\bigcap_{j=1}^m \pi_1\left( \{ v\in V \ \big| \ |x^{b_j}f_j(v)| \Diamond_j |x^{c_j}g_j(v)| \} \right) \cap \{v\in V \ \big| \ x(v) \neq 0\}.\]
Now since $y=tx$ and $P_j \in k\{\varepsilon^{-1}x\}[t]$ there exists an integer $M \geq 0$ such that 
$x^M P_j(x,t) \in k\{ \varepsilon^{-1}x\}[tx] = k\{ \varepsilon^{-1}x\}[y]$, i.e. 
$x^MP_j(x,t) = \pi^* (\tilde{P}_j (x,y) )$ 
for some 
$\tilde{P}_j(x,y) \in k\{ \varepsilon^{-1}x\}[y]$ and such that 
$x^M Q_j(x,t) \in  k\{ \varepsilon^{-1}x\}[y]$, i.e. 
$x^MQ_j(x,t) = \pi^* (\tilde{Q}_j (x,y) )$ 
for some 
$\tilde{Q}_j(x,y) \in k\{ \varepsilon^{-1}x\}[y]$.\par
Now on 
$\{v\in V \ \big| \ v(x)\neq 0\}$,
\[ 
|x^{b_j}f_j(v)|\Diamond_j |x^{c_j}g_j(v)| \Leftrightarrow |x^{M+b_j}f_j(v)|\Diamond_j |x^{M+c_j}g_j(v)| 
\Leftrightarrow | x^{b_j}\tilde{P}_j(\pi_1(v))| \Diamond_j \lambda_j |x^{c_j}\tilde{Q}_j(\pi_1(v))|.\]
From that we conclude that 
\[z\in \pi_1\left( \bigcap_{j=1}^m \{v\in V \ \big| \ |x^{b_j}f_j(v)| \Diamond_j |x^{c_j}g_j(v)| \} \cap \{v\in V \ \big| \ x(v) \neq 0\} \right) \]

\[\Leftrightarrow z\in \bigcap_{j=1}^m \{z\in \pi_1 ( V ) \ \big| \ |x^{b_j}\tilde{P}_j(z) | 
\Diamond_j | x^{c_j}\tilde{Q}_j(z)| \} \cap \{z\in X \ \big| \ x(z) \neq 0 \}. \]
Since $\pi_1 (\B_{ \varepsilon}^2) \subseteq \B_{\varepsilon}^2$ and is 
 semianalytic in $\B_{\varepsilon}^2$, we conclude that 
$\pi_1( S \cap \{ v\in V \ \big| \ v(x) \neq 0\})$ is  semianalytic 
in $\B_{\varepsilon }^2$, 
which ends the proof.
\end{proof}

\begin{theo}
\label{theo_dim2}
 Let $X$ be a good quasi-smooth strictly $k$-analytic space of dimension $2$ with $k$ algebraically closed, and 
$S \subseteq X$. Then $S$ is overconvergent subanalytic subset if and only if 
$S$ is locally semianalytic.
\end{theo}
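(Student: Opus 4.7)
The direction ``locally semianalytic $\Rightarrow$ overconvergent subanalytic'' is immediate from Corollary \ref{cor_loc_ber}, so the real content is the converse. Using Theorem \ref{theoequiv} and the stability of local semianalyticity under finite unions, this reduces to the following claim: for every constructible datum $(Y,T) \stackrel{\varphi}{\dashrightarrow} X$ on a good quasi-smooth $k$-analytic surface, the image $\varphi(T)$ is locally semianalytic in $X$. I will argue by induction on the complexity $n$ of $\varphi$. The base case $n=0$ is trivial, since then $T$ is a semianalytic subset of $X$.

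For the inductive step, write $\varphi = \varphi_1 \circ \varphi'$, where $\varphi_1:(X_1,S_1) \to X$ is the last elementary step, defined by some pair of functions $f,g \in \A$, and $\varphi':(Y,T) \stackrel{\varphi'}{\dashrightarrow} (X_1,S_1)$ has complexity $n-1$. Apply Proposition \ref{blowup} to $X$ and these $f,g$ to get a succession of blow-ups of rigid points $\pi:\tilde X \to X$ such that at every point of $\tilde X$ one of $f\mid g$, $g\mid f$ holds; by Remark \ref{rem_bu}, $\tilde X$ remains a good quasi-smooth surface. Pulling back $\varphi$ via $\pi$ gives, by Corollary \ref{cor1prod}, a constructible datum $\tilde\varphi:(\tilde Y,\tilde T) \stackrel{\tilde\varphi}{\dashrightarrow} \tilde X$ of complexity $n$ with $\pi(\tilde\varphi(\tilde T)) = \varphi(T)$, whose last elementary step is defined by $\pi^*(f),\pi^*(g)$.

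The divisibility condition is open: at each point of $\tilde X$ one can find an affinoid neighbourhood $W$ on which either $\pi^*(f)\mid \pi^*(g)$ or $\pi^*(g)\mid \pi^*(f)$ holds globally. On such a $W$, Remark \ref{rem_divide} lets us replace the last elementary step of $\tilde\varphi_{|W}$ by the inclusion $\beta:V \hookrightarrow W$ of a Weierstrass or Laurent subdomain. The resulting constructible datum lands in $V$ with complexity $n-1$ and $V$ is itself a good quasi-smooth surface, so by the inductive hypothesis its image is locally semianalytic in $V$. Moreover, the specific form of the target set prescribed by Remark \ref{rem_divide} places this image inside the smaller Weierstrass or Laurent region required by Lemma \ref{lemme_divide}, so its push-forward along $\beta$ is locally semianalytic in $W$. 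Patching over a finite cover of $\tilde X$ by such affinoids yields that $\tilde\varphi(\tilde T)$ is locally semianalytic in $\tilde X$; Lemma \ref{blowup2} then gives that $\varphi(T) = \pi(\tilde\varphi(\tilde T))$ is locally semianalytic in $X$, completing the induction.

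The main obstacle is verifying the compatibility of all the moving pieces: that the pullback via the proper but non-elementary morphism $\pi$ indeed produces a constructible datum whose complexity is still $n$, that on the two types of affinoid pieces ($f\mid g$ versus $g\mid f$) Remark \ref{rem_divide} produces precisely the ambient domain and containment hypothesis required by Lemma \ref{lemme_divide}, and that after this reduction the complexity drops strictly from $n$ to $n-1$ so that the induction terminates. Once these bookkeeping checks are cleanly in place the argument goes through, and the essential geometric input is concentrated in Proposition \ref{blowup} and Lemma \ref{blowup2}, both specific to the 2-dimensional regular setting.
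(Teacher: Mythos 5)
Your proposal is correct and follows essentially the same route as the paper's own proof: induction on the complexity of a constructible datum, Proposition \ref{blowup} to arrange divisibility after blowing up rigid points, Remark \ref{rem_divide} together with Lemma \ref{lemme_divide} to trade the last elementary step for a Weierstrass or Laurent domain on a finite wide cover of $\tilde X$, and Lemma \ref{blowup2} to descend along $\pi$. The bookkeeping points you flag (pullback preserving complexity, matching the containment hypotheses of Lemma \ref{lemme_divide}) are exactly the ones the paper handles via the cartesian diagrams of Corollary \ref{cor1prod} and the explicit description of $T$ in Remark \ref{rem_divide}.
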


\begin{proof}
 Since the problem is local, we can assume that $X$ is affinoid and that 
$S=\varphi(U)$ where 
$(Y,U) \stackrel{\varphi}{\dashrightarrow} X$ is a constructible datum, and just check that 
$S$ is locally semianalytic. We do it by induction on the complexity of $\varphi$.
So let 
$(Y,U) \stackrel{\varphi}{\dashrightarrow} X$ be a constructible datum, that 
we decompose as 
\[ \varphi = (Y,U) \stackrel{\psi}{\dashrightarrow} Z \xrightarrow{\chi} X \]
where $\chi$ is an elementary constructible datum, and $\psi$ a 
constructible datum whose complexity is one less than $\varphi$. So we can introduce 
$f,g \in \mathcal{A}$, $0<s<r$ such that 
$Z = \eclatement$.
According to proposition \ref{blowup}, we can find a succession 
of blowing-up of rigid points 
$\pi : \tilde{X} \to X$ such that for all $x\in \tilde{X}$,
$f_x | g_x$ or $g_x | f_x$. 
According to remark \ref{rem_bu}, 
$\tilde{X}$ is still quasi-smooth. 
This gives us the following cartesian diagram: 
\[\xymatrix{
(Y,U) \ar@{.>}[r]^{\varphi} & X \\
(Y',U') \ar@{.>}[r]^{\varphi'} \ar[u]^{\pi'} & \tilde{X} \ar[u]_{\pi}
} \]
Then $\varphi(U) = \pi (\varphi'(U'))$. 
Moreover, since $\tilde{X}$ is compact, 
we can then find a finite wide covering 
$\{X_i\}_{i=1}^n$ of $\tilde{X}$ by affinoid domains 
such that for all $i$,  
$f_{|X_i}|g_{|X_i}$ or $g_{|X_i}|f_{|X_i}$. 
We denote by 
$\pi_i : X_i \to X$ the composition of the embedding of 
the affinoid domain $X_i \to \tilde{X}$ with $\pi : \tilde{X} \to X$. 
This gives the following cartesian diagrams:
\[\xymatrix{
(Y,U) \ar@{.>}[r]^{\psi} & Z \ar[r]^{\chi} & X \\
(Y_i,U_i) \ar@{.>}[r]^{\psi_i} \ar[u]^{\pi_i''}  & 
Z_i \ar[u]^{\pi_i'}  \ar[r]^{\chi_i} & X_i \ar[u]_{\pi_i} 
}\]
Then 
\[\varphi(U) = \pi ( \varphi'(U') ) =
\pi \left(\bigcup_{i=1}^n \chi_i (\psi_i (U_i) ) \right)\]
But 
$(Y_i,U_i) \stackrel{\psi_i}{\dashrightarrow} Z_i$ 
is a constructible datum of lower complexity than $\varphi$, so 
that we would like to use our induction hypothesis, and claim that 
$\psi_i(U_i)$ is locally semianalytic. However, $Z_i$ is not necessarily still 
quasi-smooth so we cannot do that.
However, since 
$f_{|X_i}|g_{|X_i}$ or $g_{|X_i}|f_{|X_i}$, 
according to remark \ref{rem_divide}, 
we can in fact replace $Z_i$ by a 
Weierstrass (or a Laurent) domain of 
$X_i$, and hence assume that $Z_i$ is quasi-smooth. 
Thus by induction hypothesis  
$\psi_i(U_i)$ is locally semianalytic in $Z_i$. 
\par 
Next we use 
lemma \ref{lemme_divide} to assert that 
$\chi_i(\psi_i(U_i))$ is locally semianalytic in $X_i$.
So 
\[\displaystyle \varphi'(U') = \bigcup_{i=1}^n (\chi_i ( \psi_i(U_i) ) \] 
is 
locally semianalytic in $\tilde{X}$, since $\{X_i\}$ was a wide covering of 
$\tilde{X}$. 
Finally, according to  
lemma \ref{blowup2},  
$\pi (\varphi'(U'))=S$ is also locally semianalytic.  
\end{proof}

\bibliographystyle{alpha}
\bibliography{bibli}

\end{document}